\documentclass[a4paper,11pt,reqno,english]{amsart}

\usepackage{babel}
\usepackage[T1]{fontenc}
\usepackage{lmodern}
\rmfamily
\DeclareFontShape{T1}{lmr}{b}{sc}{<->ssub*cmr/bx/sc}{}
\DeclareFontShape{T1}{lmr}{bx}{sc}{<->ssub*cmr/bx/sc}{}

\RequirePackage{amsmath}
\RequirePackage{amssymb}
\RequirePackage{amsfonts}
\RequirePackage{mathtools}
\RequirePackage{esint}
\RequirePackage{mathrsfs}
\RequirePackage{bm}
\RequirePackage{upgreek}

\numberwithin{equation}{section}
\usepackage{enumitem}

\newcommand{\cG}{\mathcal{G}}
\newcommand{\cH}{\mathcal{H}}
\newcommand{\cP}{\mathcal{P}}
\newcommand{\cS}{\mathcal{S}}

\newcommand{\bR}{\mathbb{R}}
\newcommand{\R}{\bR}
\newcommand{\bS}{\mathbb{S}}

\newcommand{\sC}{\mathscr{C}}
\newcommand{\sP}{\mathscr{P}}
\newcommand{\sU}{\mathscr{U}}
\newcommand{\sA}{\mathscr{A}}
\newcommand{\Heat}{\mathbf{H}}

\newcommand{\eps}{\varepsilon}
\newcommand{\even}{\mathrm{even}}
\newcommand{\odd}{\mathrm{odd}}
\newcommand{\pb}{\mathrm{par}}

\DeclareMathOperator{\supp}{supp}

\DeclareMathOperator{\tr}{tr}
\DeclareMathOperator{\Ker}{Ker}

\DeclareMathOperator{\dist}{dist}

\newcommand{\de}{\partial}
\newcommand{\ssubset}{\subset\joinrel\subset}   

\usepackage[hidelinks,
			hyperindex = true]{hyperref}
\usepackage{aliascnt}
\usepackage[noabbrev]{cleveref}

\usepackage{amsthm}

\theoremstyle{plain}
\newtheorem{theorem}{Theorem}[section]
\newaliascnt{corollary}{theorem}
\newtheorem{corollary}[corollary]{Corollary}
\aliascntresetthe{corollary}
\newaliascnt{lemma}{theorem}
\newtheorem{lemma}[lemma]{Lemma}
\aliascntresetthe{lemma}
\newaliascnt{proposition}{theorem}
\newtheorem{proposition}[proposition]{Proposition}
\aliascntresetthe{proposition}
\newaliascnt{definition}{theorem}
\newtheorem{definition}[definition]{Definition}
\aliascntresetthe{definition}
\newaliascnt{counterexample}{theorem}

\aliascntresetthe{counterexample}
\newaliascnt{conjecture}{theorem}

\aliascntresetthe{conjecture}
\newaliascnt{remark}{theorem}
\newtheorem{remark}[remark]{Remark}
\aliascntresetthe{remark}
\theoremstyle{remark}

\Crefname{claim}{claim}{claims}
\Crefname{claim}{Claim}{Claims}
\Crefname{assumption}{assumption}{assumptions}
\Crefname{assumption}{Assumption}{Assumptions}

\title[The generic structure of the singular set
in the Stefan problem]{The generic structure of the singular set\\
in the Stefan problem}
\author{Giacomo Colombo and Alessio Figalli}\date{}

\address{ETH Z\"urich, Department of Mathematics, R\"amistrasse 101, 8092, Z\"urich, Switzerland}
\email{giacomo.colombo@math.ethz.ch}  
\email{alessio.figalli@math.ethz.ch}
 
\begin{document}

\begin{abstract}
    We show that, in every space dimension $n\ge2$, the singular set $\Sigma$ of a generic (in the Baire-category sense) solution of the Stefan problem is countably parabolically $(n-2)$-rectifiable. In particular, $\dim_\pb(\Sigma)\le n-2$. 
    
    This is achieved by proving the following sharp result: in every space dimension, points of the maximal stratum of a generic solution are discrete in space-time.
\end{abstract}

\maketitle
\section{Introduction}
The Stefan problem \cite{Stefan} is one of the most classical free boundary problems and it aims to describe the evolution of free interfaces such as ice melting into water. The temperature $\theta\ge0$ evolves with the heat equation $\partial_t\theta=\Delta\theta$ in the water region $\{\theta>0\}$, with the additional boundary condition $|\nabla\theta|^2 = \partial_t\theta$ at the interface between ice and water $\partial\{\theta>0\}$ (called the free boundary). 

Assuming a non-zero speed of the free boundary and using the Baiocchi-Duvaut transformation $u(x,t) \coloneqq \int_0^t\theta(x,s)ds$ as in \cite{Duvaut} (see also \cite{surveyStefan}), one can check that $\partial\{\theta>0\}= \partial\{u>0\}$ and the Stefan problem becomes locally equivalent to the parabolic obstacle problem
\begin{equation}\label{eq:stefan}
    \begin{cases}
        (\Delta-\partial_t) u = \chi_{\{u>0\}}      & \text{in } \Omega\times[0,T],\\
        u\ge 0 \text{ and }\partial_tu\ge0          & \text{in } \Omega\times[0,T],\\
        \partial_tu > 0                             & \text{in } \{u > 0\}
    \end{cases}\qquad\Omega\subset\R^n,
\end{equation}
with the initial condition
\begin{equation}\label{eq:stefan-initial}
    u(\cdot,0)\equiv0\qquad\text{in }\Omega.
\end{equation}
Hence, from now on, we will focus on \eqref{eq:stefan}--\eqref{eq:stefan-initial}.

The modern regularity theory for the free boundary in this problem began with the seminal work \cite{Caffarelli77}. The main results therein state that the free boundary $\partial\{u>0\}$ is locally a graph of class $C^\infty$ in space and time, outside of a closed set $\Sigma$ of singular points at which the contact set $\{u=0\}$ has zero density.

Then, the focus of subsequent studies has been the singular part of the free boundary. Though this part may exist in all dimensions, and examples in the stationary case show that it can be pretty wild \cite{schaefferSingularities}, one can still prove some structure. In particular, denoting by $\Sigma_t$ the singular set at time $t$, variants of techniques used in the elliptic counterpart give $C^1$ regularity in space and $C^{1/2}$ in time of the singular set \cite{parabolicMonneau1,parabolicMonneau2} and show the optimal bound (in space) on the dimension of the singular set $\dim(\Sigma_t)\le n-1$.
Recently, the optimal bound in space-time on the singular set was established in \cite{FRS}, where the authors proved $\dim_\pb(\Sigma)\le n-1$. This bound has been refined in \cite{genericStefan4D} to show rectifiability, and extended to Lipschitz right hand sides in \cite{boundSingularSetLipRHS}.

In the last years, there has been growing interest in the generic behavior of the singular set in many geometric variational problems \cite{HS,CMS,genericRegularityIsoperimetricRegions,chodosh2025genericregularityminimizinghypersurfaces,multeplicity1MCF,huiskenConjectureMCF}, and more specifically free boundary problems \cite{FRS,FRSObst,genericStefan4D,monneauGeneric2d,genericSignorini3d,fernandezreal2023genericpropertiesfreeboundary,fernandezreal2025linearizedequationgenericregularity,carducci2024genericregularityfreeboundaries,carducci2026singularsetstefanproblem}.
For the Stefan problem, however, these techniques yield only {generic regularity in time}: as shown in \cite{FRS,genericStefan4D}, when $n\le4$, the singular set $\Sigma_t$ is empty for a.e. $t$.\footnote{Specifically, the set $\cS$ of times for which $\Sigma_t$ is not empty satisfies $\dim_\cH \cS = 0$ for $n=2$, $\cH^{1/2}(\cS)=0$ for $n=3$ and $\cH^1(\cS)=0$ for $n=4$ \cite{FRS,genericStefan4D}.}
These results shed little light on the generic structure of the whole free boundary in space-time.

\smallskip

The starting point of this paper is the following question:
\begin{center} {\it Can one improve, for a generic solution, the optimal bound $\dim_\pb(\Sigma)\le n-1$? }
\end{center}
Two features set this question apart from those previously considered in the free boundary literature. First, in contrast with the elliptic setting, singularities in the Stefan problem are persistent, as is also the case in other parabolic problems (such as the mean curvature flow). Second, all currently existing techniques rely on monotone perturbations, ultimately exploited through the maximum principle; for the Stefan problem, however, a translation in time is a monotone perturbation that does not change the structure of the free boundary, so one must impose further conditions on admissible perturbations to guarantee that they genuinely modify the singular set.

To state our result precisely, we first recall the structure of the singular set $\Sigma = \Sigma(u)$ of a solution $u$. At every singular point $(x_0,t_0)\in\Sigma(u)$, there is a unique quadratic blow-up
\[
    p_{x_0,t_0}(x)=\tfrac12\, x\cdot A_{x_0,t_0}x,\qquad A_{x_0,t_0}\in \R^{n\times n}\text{ symmetric},\quad A_{x_0,t_0}\ge0,\quad \tr A_{x_0,t_0}=1,
\]
in the sense that $$u(x_0+rx,t_0+r^2t) = r^2p_{x_0,t_0}(x)+o(r^2)\qquad \text{as $r\to0^+$}$$ \cite{Caffarelli77,parabolicMonneau1,parabolicMonneau2}; by contrast, at regular points, the blow-up is the one-sided profile $\tfrac12(x\cdot\nu)_+^2$. Stratifying the singular set by the dimension of the kernel, for $m\in\{0,\dots,n-1\}$ we set
\[
    \Sigma_m(u) \coloneqq \{(x_0,t_0)\in\Sigma(u)\,:\,\dim\Ker A_{x_0,t_0}=m\},
\]
so that $\Sigma(u)=\bigcup_{m}\Sigma_m(u)$ and $\dim_\pb\Sigma_m(u)\le m$ \cite{FRS}. With this notation, the \emph{maximal stratum} $\Sigma_{n-1}(u)$ corresponds to the rank-one case $A=\nu\otimes\nu$, namely to the blow-up $\tfrac12(x\cdot\nu)^2$.

The main result of this paper is the following.
\begin{theorem}\label{theorem:generic regularity}
    Let $\Omega\subset\R^n$ be a bounded smooth domain.
    In the space of solutions of \eqref{eq:stefan}--\eqref{eq:stefan-initial}, endowed with the topology
inherited from $C^0(\overline\Omega\times[0,T])$, there is a residual set $\cG$ such that, for every $u\in\cG$, the maximal stratum $\Sigma_{n-1}(u)$ is discrete in space-time.
\end{theorem}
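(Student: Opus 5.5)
Since $\Sigma_{n-1}(u)$ sits inside the closed set $\Sigma(u)$, discreteness follows if every compact piece contains only finitely many points of $\Sigma_{n-1}(u)$ that stay at least $\delta$ apart and at least $\delta$ away from the boundary. So we exhaust by countably many sets. For $k\in\mathbb N$, let $\cG_k$ be the set of solutions $u$ for which no two points $(x_1,t_1)\neq(x_2,t_2)$ of $\Sigma_{n-1}(u)\cap\{\dist(x,\de\Omega)\ge 1/k,\ t\in[1/k,T-1/k]\}$ satisfy $|x_1-x_2|+|t_1-t_2|^{1/2}<\eta_k(u)$ together with a quantitative nondegeneracy condition. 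A simpler choice is to let $\cG_k$ be the set of $u$ whose maximal stratum, restricted to that compact region, consists of at most $N$ points for some $N$. We then set $\cG=\bigcap_k\cG_k$ and prove that each $\cG_k$ contains an open dense set. Openness comes from the upper semicontinuity of the singular set and of the blow-ups: by uniqueness and continuity of $(x_0,t_0)\mapsto A_{x_0,t_0}$ together with compactness of solutions, if $u_j\to u$ uniformly, then limits of points of $\Sigma_{n-1}(u_j)$ lie in $\Sigma_{n-1}(u)\cup\Sigma_{n-2}(u)\cup\dots$. We therefore phrase $\cG_k$ through closed conditions on accumulation, so that the complement is closed.

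Density is the real content. We would argue in the style of \cite{FRS} and use a one-parameter family. Given $u$ coming from boundary data $g$, we consider the perturbed solutions $u^\lambda$ obtained by increasing the boundary data by $\lambda\varphi$, with $\varphi\ge0$, and we show that for a.e.\ $\lambda$ the maximal stratum of $u^\lambda$ is discrete. Note that $\varphi$ cannot simply be a constant in time: a time translation is exactly the degenerate monotone perturbation mentioned in the introduction. We would instead take $\varphi$ so that $u^{\lambda'}-u^\lambda$ dominates $c(\lambda'-\lambda)$ times a function that is \emph{not} a multiple of $\de_t u$. The maximum principle then gives strict monotonicity, $u^{\lambda'}\ge u^\lambda+c(\lambda'-\lambda)$ in the interior, so the free boundaries are ordered and disjoint. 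At a point of $\Sigma_{n-1}(u^\lambda)$ the blow-up is $\tfrac12(x\cdot\nu)^2$, and the higher-order expansion of \cite{FRS,genericStefan4D} gives
$$u^\lambda(x_0+rx,t_0+r^2t)=r^2p+r^{\gamma}q+o(r^\gamma)$$
with $\gamma>2$, where $q$ is a caloric polynomial, generally of degree $3$ at generic points.

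The key step is a dimension-reduction and covering argument in the parameter together with space-time. Ordering gives the inequality $\lambda'-\lambda\lesssim r^{\gamma}$ whenever a singular point of $u^{\lambda'}$ lies within parabolic distance $r$ of one of $u^\lambda$. For this we need to show that, if $\Sigma_{n-1}(u^\lambda)$ accumulates at a point, then the cubic term $q$ must be degenerate along the accumulation direction and the frequency must improve to $\gamma\ge 3$ or higher.

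The main obstacle is showing that accumulation forces enough extra decay to beat the dimension of the set of $\lambda$. Two kinds of accumulation must be handled. Accumulation in a spatial direction within $\Ker A$ would use the $C^{1,\alpha}$ structure and the $(n-1)$-dimensional size of the stratum. Accumulation in time, which corresponds to persistent singularities, would use the nonzero speed $\de_t u>0$. What we need is that the set of "bad" pairs $(\lambda,(x_0,t_0))$ has a projection to $\lambda$ of zero measure.

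Concretely, we would try to prove a dichotomy at accumulation points of $\Sigma_{n-1}$: either $\gamma\ge 3+\epsilon$, or the point is isolated. After that, we would use the covering estimate: cover the bad points at scale $r$ by $r^{-(n-1)-2}$ parabolic cylinders, counting the time dimension as $2$, but exploiting that they form a $1$-dimensional parabolic family in $(x,t)$ after the ordering. Each cylinder rules out an interval of $\lambda$ of length $r^{\gamma}$. If the exponents fit, meaning the effective dimension is smaller than $\gamma$, the bad set of $\lambda$ is null. We expect this exponent matching, especially ruling out curves of rank-one singular points that are stationary or slowly moving, to be the hardest step. It will likely need a refined blow-up, for example a quadratic-in-time caloric correction, at points of the maximal stratum that accumulate.
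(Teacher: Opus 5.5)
\textbf{The cleaning step fails.} Ordering plus the expansions cannot give $\lambda'-\lambda\lesssim r^\gamma$ with $\gamma>2$. Any argument using only these ingredients applies verbatim to the time-translated family $u^\lambda(x,t)=u(x,t+\lambda)$. This family is strictly monotone, yet $\Sigma(u^{\lambda'})=\Sigma(u^\lambda)-(0,\lambda'-\lambda)$. So every singular point persists, in the same stratum, at parabolic distance $r=(\lambda'-\lambda)^{1/2}$, and $\lambda'-\lambda=r^2$.

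Your remedy does not help. Compose the time translation with spatial translations and rotations of small speed. Locally this is still an ordered family, and its first variation $C\partial_tu+V\cdot\nabla u+(w\cdot x)\partial_nu-x_n\partial_wu$ is not a multiple of $\partial_tu$. Yet all its members have the same singular set up to a rigid motion. This degeneracy is exactly why monotone arguments give only generic-in-time results \cite{FRS,genericStefan4D}.

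There is also no frequency gain to exploit. At every point of $\Sigma_{n-1}$ the cubic term contains $6at|x_n|$ with $a>0$ (nondegeneracy of $\partial_tu$), so $\gamma=3$ also at accumulation points. What degenerates there is only the tangential Hessian $\mathsf H=D^2_xq_2^\even|_{\bm e_n^\perp}$. Points with $\Ker\mathsf H=\{0\}$ are automatically isolated.

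\textbf{The missing idea is a first-variation condition that is not monotone.} The paper proceeds as follows:
\begin{itemize}
\item Stratify $\Sigma_{n-1}$ into $\Sigma^m_{n-1}$, with $m=\dim\Ker\mathsf H$.
\item From the cubic expansion of the solution $\varphi$ of the linearized problem, read off the even jet $\alpha^\even_\varphi+x\cdot v^\even_\varphi$.
\item Call $\varphi$ transversal if this jet differs from that of every $\theta\partial_tu+V\cdot\nabla u$ with $V\in F=(\Ker\mathsf H)^\perp\cap\bm e_n^\perp$.
\item Prove that if $z_0\in\Sigma^m_{n-1}(u)$, $m\ge1$, and $z_\eps\in\Sigma^m_{n-1}(u_\eps)$ converge to $z_0$ along a transversal family, then $|\eps|\le C_N\dist_\pb(z_\eps,z_0)^N$ for every $N$.
\end{itemize}
This estimate (\Cref{prop:cleaning with transversality}) needs an improvement-of-flatness iteration. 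It shows that $u_\eps$ is approximated by nonlinear Ans\"atze built from the jet of $\varphi$, with error $o(|\eps|r^3)$ at all scales down to the displacement. It also uses that membership in $\Sigma^m_{n-1}(u_\eps)$ forces $\operatorname{rank}\mathsf H=n-1-m$, together with the \L ojasiewicz inequality. Monotonicity enters only after a time shift $T_{Cs}$, which does not change transversality. Its sole role is to make the parameter a continuous function on the union of the strata.

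\textbf{Three further points need repair.}
\begin{itemize}
\item \emph{Exponent versus dimension.} An exponent $3+\epsilon$ cannot beat the dimension of the bad set, which can be as large as $n-1$; your own count even uses $n+1$. The projection argument (\Cref{prop:gmt}) needs the cleaning exponent to exceed that dimension. The paper closes this only because it has every power $N$: it takes $N=n+4$ against $\cH^{n+2}_\pb$ of a bounded set.
\item \emph{Realizing the perturbations.} You must produce the required first variations from boundary data. The paper builds a transversal solution locally near each $z\in\Sigma^m_{n-1}\cap K$. It then approximates it by $\varphi_{g,u}$, $g\in C^\infty_c(\Gamma_+(u))$, via a Runge approximation theorem in the time-dependent, possibly singular domain $\{u>0\}$ (\Cref{prop:runge}). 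Openness of transversality and compactness give finitely many $g_i$, used one at a time on pieces of $K$.
\item \emph{Openness of your class.} ``At most $N$ points'' is not an open condition. Upper semicontinuity allows clusters of points of $\Sigma_{n-1}(u_j)$ near a single point of $\Sigma_{n-1}(u)$. The right open condition is $\Sigma^h_{n-1}(u)\cap K=\emptyset$ for $h>m$, open by convergence of the two-sided Ans\"atze. For $m=0$ it gives discreteness, so one removes $\Sigma^{n-1}_{n-1},\dots,\Sigma^1_{n-1}$ successively.
\end{itemize}
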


Since the lower strata always satisfy $\dim_\pb\Sigma_m(u)\le m\le n-2$ and are countably parabolically rectifiable \cite{genericStefan4D}, we deduce the following.
\begin{corollary}\label{corollary:rectifiability}
    If $n\ge2$, for every $u\in\cG$ the singular set $\Sigma(u)$ is countably parabolically $(n-2)$-rectifiable. In particular,
    \[
        \dim_\pb\Sigma(u)\le n-2.
    \]
\end{corollary}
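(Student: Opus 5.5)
The corollary should follow directly from \cref{theorem:generic regularity} together with the known structure of the lower strata. I would decompose
\[
\Sigma(u)=\Sigma_{n-1}(u)\cup\bigcup_{m=0}^{n-2}\Sigma_m(u),
\]
and treat the two pieces separately.

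\textbf{Lower strata.} For $m\le n-2$, the results recalled before the statement apply. These come from \cite{FRS} and from the rectifiability refinement in \cite{genericStefan4D}. They give that each $\Sigma_m(u)$ is countably parabolically $m$-rectifiable, for \emph{every} solution $u$ and not only generic ones. Next, a countably parabolically $m$-rectifiable set with $m\le n-2$ is also countably parabolically $(n-2)$-rectifiable. Indeed, each covering piece (an image of a parabolically Lipschitz map from an $m$-dimensional parabolic space, or a $C^1$-type parabolic graph of dimension $m$) can be embedded in an $(n-2)$-dimensional one, either by adding dummy variables or by viewing an $m$-dimensional graph as a subset of an $(n-2)$-dimensional graph. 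I would check this step against the precise definition of parabolic rectifiability used in \cite{genericStefan4D}. It is, however, standard monotonicity of the notion in the dimension parameter.

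\textbf{Maximal stratum.} For $u\in\cG$, \cref{theorem:generic regularity} gives that $\Sigma_{n-1}(u)$ is discrete in space-time. A discrete subset of the $\sigma$-compact space $\overline\Omega\times[0,T]$ is countable: every compact set meets it in finitely many points. A countable set of points is countably parabolically $0$-rectifiable, since it is a countable union of singletons. Because $n\ge2$ gives $0\le n-2$, it is in particular countably parabolically $(n-2)$-rectifiable.

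\textbf{Conclusion.} A finite union of countably rectifiable sets is countably rectifiable, so $\Sigma(u)$ is countably parabolically $(n-2)$-rectifiable. The dimension bound then follows from two facts. First, countable parabolic $(n-2)$-rectifiability implies $\cH^{n-2+\eps}_\pb(\Sigma(u))=0$ for every $\eps>0$, because parabolically Lipschitz images of $(n-2)$-dimensional parabolic space have parabolic dimension at most $n-2$. Second, countable stability of Hausdorff dimension applies. Alternatively, one can use directly $\dim_\pb\Sigma_m\le m\le n-2$ from \cite{FRS} and $\dim_\pb\Sigma_{n-1}(u)=0$ from countability, together with finite stability of $\dim_\pb$.

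The only point that needs care is the bookkeeping in the lower-strata step. One must make sure the rectifiability statement from \cite{genericStefan4D} is available in every dimension $n$, and that "rectifiable of dimension $m$" passes up to dimension $n-2$ under the paper's definition.
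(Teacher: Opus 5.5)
Your argument is correct and is essentially the paper's proof. The paper combines \Cref{theorem:generic regularity} with the countable parabolic rectifiability of the lower strata $\Sigma_m(u)$, $m\le n-2$, from \cite{genericStefan4D}, and packages this bookkeeping by citing \cite[Lemma 6.3]{genericStefan4D}.

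The only slip is your justification that $\Sigma_{n-1}(u)$ is countable. A discrete set need not meet every compact set in finitely many points; taken literally with the compact set $\overline\Omega\times[0,T]$, your claim would even force finiteness. Countability still holds, either because every discrete subset of a separable metric space is countable, or because $\Sigma_{n-1}(u)$ is relatively closed in $Q$ by \Cref{lem:stratification closed conditions} and hence meets each compact subset of $Q$ in finitely many points.
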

We emphasize that, in contrast with the generic-in-time results of \cite{FRS,genericStefan4D}, no restriction on the dimension $n$ is required.

\subsection{Strategy of the proof}
To prove \Cref{theorem:generic regularity}, we shall analyse how the
maximal stratum $\Sigma_{n-1}$ behaves under perturbations.
Our starting point is that, around a singular point in the maximal
stratum, a solution can be approximated to arbitrarily high order
by the sum of two independent solutions at a regular point
(see \Cref{theorem:splitting at singular points} and
\Cref{corollary:uniform smooth expansion maximal stratum}).
Thus, we need to investigate the stability properties of the free
boundary at such points, as well as the regularity of its
perturbations.\footnote{We note that, as a byproduct, we prove that, for every solution, the
projection of the maximal stratum onto the time axis has Hausdorff
dimension zero; see \Cref{remark:generic maximal stratum fixed solution}.}

For this heuristic discussion, we can think of the free boundaries
associated with perturbed solutions $u_\eps$ (so $u_0=u$) as smooth graphs
\[
    \{x_n=f_\eps^\pm(x',t)\},
\]
choosing coordinates so that the origin belongs to $\Sigma_{n-1}(u)$
and the hypersurfaces $\{x_n=f_0^\pm(x',0)\}$ are tangent to
$\{x_n=0\}$ at $x=0$.
Also, for each fixed $\eps$, the two graphs approach each other as $t$
increases and meet tangentially at points of $\Sigma_{n-1}(u_\eps)$.
Note that points at which the Hessian
\[
    D_{x'}^2(f_0^+-f_0^-)(0,0)
\]
is nondegenerate are isolated within the maximal stratum (see \Cref{lem:stratification maximal stratum}), so our aim is to show that singular points at which this Hessian has a kernel of dimension $m\ge1$ do not persist under generic perturbations.
Denoting the corresponding strata by $\Sigma_{n-1}^m$, as made precise in \Cref{subsec:stratify}, we shall eliminate them successively for $m=n-1,n-2,\ldots,1$, so that only $\Sigma_{n-1}^0$ survives generically.

The basic argument underlying many generic regularity results relies on constructing perturbations under which singular points move sufficiently rapidly as the parameter changes.
More precisely, we want to construct perturbations for which the value and $x'$-gradient of
\[
    \left.\partial_\eps(f_\eps^+-f_\eps^-)\right|_{\eps=0}
\]
at $(0,0)$ cannot be simultaneously generated by a spatial rotation and a space-time translation of $u$ (see \Cref{definition:transversality}).
For such transversal perturbations, we show that singular points persisting in the same stratum must move with infinite speed (compared with the size of the perturbation).
Quantitatively, one of our key estimates shows the following: if $z_0\in\Sigma_{n-1}^m(u_0)$ with $m\ge1$, the perturbation is transversal at $z_0$, and $z_\eps\in\Sigma_{n-1}^m(u_\eps)$ converges to $z_0$, then
\begin{equation}\label{eq:introduction explain cleaning}
    |\eps|=O\!\left(\dist_\pb(z_\eps,z_0)^N\right)
    \qquad\text{for every }N\ge1,
\end{equation}
where $\dist_\pb$ denotes the parabolic distance (see \Cref{prop:cleaning with transversality}).

This type of argument was introduced in the free-boundary context in \cite{FRSObst}, where estimates such as \eqref{eq:introduction explain cleaning} are called \emph{cleaning estimates}, and is central to subsequent generic regularity results in free boundary problems.
However, in these works, as also in recent generic regularity results for area-minimizing hypersurfaces
(see \cite{CMS,CMSImproved,chodosh2025genericregularityminimizinghypersurfaces}),
the cleaning estimate relies on a strictly monotone family of perturbations.
For the Stefan problem, instead, time translations
$u_\eps(x,t)=u(x,t+\eps)$ give a monotone family, and this
monotonicity is preserved when they are combined with spatial
translations and rotations whose velocities with respect to $\eps$
are sufficiently small.
Since all these perturbations preserve the structure of the singular
set, monotonicity alone cannot imply
\eqref{eq:introduction explain cleaning}.

Because of this, we consider a differentiable family $u_\eps$ and study
its first variation
\[
    \varphi\coloneqq
    \left.\partial_\eps u_\eps\right|_{\eps=0}.
\]
The condition $\partial_tu>0$ in $\{u>0\}$ allows us to characterize
$\varphi$ through a linear problem
(see \eqref{eq:linearized problem} and \Cref{lem:existence linear}),
in contrast with the nonlinear equations obtained by linearizing at
singular profiles in \cite{FS19,CinftyObstacle,branchingTwoPhaseBernoulli}.
Writing $u^\pm$ for the two solutions with free boundaries
$\{x_n=f_0^\pm(x',t)\}$ and $\varphi^\pm$ for the restrictions of
$\varphi$ to $\{u^\pm>0\}$, we have the expansions (see \Cref{prop:C3beta expansion linear})
\begin{equation}\label{eq:introduction expansion perturbation}
    \varphi^\pm
    =\big(\alpha_\varphi^\pm+x\cdot v_\varphi^\pm
        +Q_\varphi^\pm(x,t)\big)\partial_nu^\pm
    -x_n(v_\varphi^\pm\cdot\nabla u^\pm)
        +\text{higher order terms}.
\end{equation}
Here $\alpha_\varphi^\pm\in\R$, $v_\varphi^\pm\perp\bm e_n$, and
$Q_\varphi^\pm$ are parabolically $2$-homogeneous polynomials satisfying
$(\Delta-\partial_t)(x_nQ_\varphi^\pm)=0$.
The coefficients $\alpha_\varphi^\pm$ and $v_\varphi^\pm$ describe the
variations of the heights and normals, respectively, while
$Q_\varphi^\pm$ accounts for the quadratic deformation of the two graphs $f_0^\pm$.
Our goal is to find conditions on these coefficients which exclude
variations generated by translations and rotations and allow us to
prove \eqref{eq:introduction explain cleaning}.
Note that the coefficients $\alpha_\varphi^\pm$ alone cannot provide
such a condition, since any pair of values can be obtained by combining
a time translation with a translation in the normal direction
(see \Cref{lem:jets rototranslations}), so we also need to consider
the coefficients $v_\varphi^\pm$.

So, let us assume that $\varphi$ is transversal at the origin and that
\eqref{eq:introduction explain cleaning} fails.
At parabolic scale $r$, the terms involving $Q_\varphi^\pm$ contribute at order $|\eps|r^3$ to the perturbed solution, and therefore the estimate
\[
    \|u_\eps-u-\eps\varphi\|_{L^2}=o(|\eps|)
\]
is insufficient to compare their coefficients.
In addition, as there is no a priori relation between the displacement of the singular point $\dist_\pb(z_\eps,0)$ and $\eps$, terms of arbitrarily high order in the expansion of the translated solution cannot a priori be neglected relative to $\eps$.
To overcome this, we first construct nonlinear perturbations of $u$ adapted to the coefficients of $\varphi$, and then, using an improvement of flatness approach, we show that these perturbations approximate the solutions $u_\eps$ with an error $o(|\eps|r^3)$ (see \Cref{theorem:improvement of flatness}). We can therefore use these approximations in place of $u_\eps$ to understand how the free boundary changes at the singular points $z_\eps$.

In particular, this construction gives an $O(|\eps|)$ bound for the difference between the Hessians of the polynomial approximations of $f_\eps^+-f_\eps^-$ and $f_0^+-f_0^-$ at the singular points $z_\eps$, after a common translation in space-time and a rotation (see \eqref{eq:cleaning full Hessian comparison}).
On the other hand, persistence in $\Sigma_{n-1}^m$ requires the perturbed Hessian to have rank $n-1-m$, so (after a further reduction) we can use the \L ojasiewicz inequality to show that $\alpha_\varphi^+-\alpha_\varphi^-$ and $v_\varphi^+-v_\varphi^-$ coincide with those generated by a common translation in space-time, contradicting transversality and therefore proving \eqref{eq:introduction explain cleaning} (see \Cref{lem:lojasiewicz} and \Cref{prop:cleaning with transversality}).

\smallskip

At this point, the missing ingredient is to understand how to obtain transversal perturbations by changing only the boundary data of the original Stefan problem.
To this end, we first exploit the fine analysis of the free boundary near the maximal stratum to construct locally transversal perturbations (see \Cref{lem:local construction}),
and then approximate them in $L^2$ by global solutions with smooth boundary data supported in $\{u>0\}\cap(\partial\Omega\times(0,T))$.
This extension requires a Runge approximation argument, which is made delicate by the fact that the free boundary may contain singular points.
To achieve this, we need to carefully exploit the time monotonicity of $\{u(\cdot,t)>0\}$ and the zero capacity of the singular set in order to establish the ingredients needed to run the standard duality argument used in the proof of the Runge approximation (see \Cref{prop:runge}).
This is the argument behind \Cref{theorem:construction transversal perturbation}.

Finally, the cleaning estimate \eqref{eq:introduction explain cleaning} and
\Cref{theorem:construction transversal perturbation} allow us to
successively eliminate the strata
$\Sigma_{n-1}^{n-1},\ldots,\Sigma_{n-1}^1$ on any prescribed compact
subset of $\Omega\times(0,T)$ by arbitrarily small perturbations, so
openness and Baire's theorem yield
\Cref{theorem:generic regularity}.

\subsection*{Use of AI}
This project began in fall 2022, and the first complete draft of the manuscript was finished in June 2026, entirely without AI assistance. An abstract describing the results was submitted in July 2026 to the ``17th International Conference on Free Boundary Problems: Theory and Applications (FBP 2026)'', where the results were presented by the first author in September 2026. During subsequent revisions, AI tools were used exclusively to improve the clarity and presentation of the manuscript. All mathematical ideas, results, and arguments were developed independently by the authors.

\subsection{Structure of the paper}
In \Cref{section:preliminaries} we collect preliminary regularity
results and stratify the maximal stratum according to the cubic
expansion. In \Cref{section:linearized problem} we establish existence
and stability for the linearized problem, together with cubic
expansions at points of the maximal stratum and a Runge approximation
result. In \Cref{section:improvement of flatness} we relate the cubic
expansions of perturbed solutions to those of the first variation,
with estimates uniform in the displacement of the singular point.
These estimates are used in \Cref{section:cleaning}, where we define
transversality and prove the cleaning estimate
\eqref{eq:introduction explain cleaning}, while
\Cref{section:construction transverse pertubations} is devoted to
constructing transversal perturbations through changes of the boundary
data. Finally, in \Cref{section:main result} we combine the results obtained
to prove \Cref{theorem:generic regularity}.

\subsection{Acknowledgments} 
Both authors were partially supported by the European Research Council (ERC) under the European Union’s Horizon 2020 research and innovation programme, through the grant agreement No. 721675 ``Regularity and Stability in Partial Differential Equations''.

\section{Preliminary results}
\label{section:preliminaries}

    We denote by $\mathbf H$ the heat operator:
    \[
        \Heat \coloneqq \Delta-\partial_t.
    \]
    We equip $\R^{n+1}=\R^n\times \R$ with the parabolic distance
    \[
    \begin{aligned}
        &\dist_\pb((x,t),(y,s))\coloneqq \sqrt{|x-y|^2+|s-t|},\\
        &|(x,t)|_\pb\coloneqq\dist_\pb((x,t),(0,0))
        =\sqrt{|x|^2+|t|}.
    \end{aligned}
    \]
    Given $(x_0,t_0)\in\R^{n+1}$ we define the parabolic cylinders
    \[
        \sC_r(x_0,t_0) \coloneqq B_r(x_0)\times(t_0-r^2,t_0].
    \]
    Also, we use the notation
    \[
        \cP\coloneqq \big\{p(x) = \tfrac12 x\cdot Ax\,:\,
        A\in\R^{n\times n}\text{ is symmetric and nonnegative with }\tr(A)=1\big\}.
    \]

\subsection{Classical estimates}
As general references on the Stefan problem, we refer to \cite{parabolicObstacle,surveyStefan}.
By \cite{Caffarelli77}, a solution $u$ of
\[
    \Heat u=\chi_{\{u>0\}},\qquad u\ge0,\qquad \partial_tu\ge0
    \quad\text{in }\sC_1,
\]
with $(0,0)\in\de\{u>0\}$, enjoys the following regularity and non-degeneracy estimates:
\begin{equation}\label{eq:optimal regularity}
    \sup_{\sC_{1/2}} \bigl(|D^2u| + |\partial_tu|\bigr)\le C_n\|u\|_{L^\infty(\sC_1)},
    \qquad \sup_{B_r(0)} u(\cdot,0)\ge c_nr^2
    \quad \forall\,r\in(0,1),
\end{equation}
where $C_n,c_n>0$ are dimensional constants.
Also, if $u,v$ solve \eqref{eq:stefan} in $\sC_1$ then
\begin{equation}\label{eq:comparison principle}
    \Heat |u-v|\ge0\quad\text{in }\sC_1,
\end{equation}
as can be shown following, for instance, the proof of \cite[Lemma 6.1]{FRS}.
This implies in particular the comparison principle for \eqref{eq:stefan} (i.e., solutions that are ordered on a parabolic boundary are ordered also inside).

We will also make use of the following Caccioppoli-type estimate (see \cite[Lemma 6.1]{FRS}):
\begin{lemma}\label{lem:caccioppoli}
    Let $u_i\colon \sC_2 \to\R$, $i= 1,2$, solve
    \begin{equation}\label{eq:approx stefan}\begin{cases}
        \Heat u_i = \chi_{\{u_i>0\}}(1+\eps_i(x,t))\\
        u_i\ge0,\\
        \partial_tu_i\ge0,
    \end{cases}\qquad\text{in }\sC_2,\end{equation}
    with $|\eps_i(x,t)|\le \bar \eps<\frac1{100}$, and denote $w\coloneqq u_1-u_2$. Then
    \[
        \|w\|_{L^\infty(\sC_1)}\le C(\|w\|_{L^2(\sC_2)}+\bar\eps)
    \]
    and
    \[
        \left(\int_{\sC_1}|\nabla w|^2+|w\Heat w|+|\partial_tw|^2\right)^{1/2}\le C(\|w\|_{L^2(\sC_2)}+\bar\eps)
    \]
    where $C$ is a dimensional constant.
    \end{lemma}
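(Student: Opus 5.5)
We prove the Caccioppoli-type estimate in \Cref{lem:caccioppoli} in two steps: first a subsolution inequality for $|w|$ with an error of order $\bar\eps$, then energy estimates for $w$ itself.

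\textbf{Step 1: a subsolution inequality.} Write $f_i\coloneqq\chi_{\{u_i>0\}}(1+\eps_i)$, so that $\Heat w=f_1-f_2$. We claim that
\[
    \Heat|w|\ge -2\bar\eps \quad\text{in }\sC_2
\]
in the distributional sense. To see this, use Kato's inequality in the form
\[
    \Heat w_+\ge \chi_{\{w>0\}}\Heat w .
\]
On $\{w>0\}$ we have $u_1>u_2\ge0$, hence $\chi_{\{u_1>0\}}=1$. Therefore
\[
    f_1-f_2\ge (1+\eps_1)-(1+\eps_2)\chi_{\{u_2>0\}}\ge (1-\bar\eps)-(1+\bar\eps)=-2\bar\eps .
\]
The same argument applies to $w_-$ with the roles of $u_1,u_2$ exchanged, which gives the claim.

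\textbf{Step 2: the $L^\infty$ bound.} The function $|w|+\bar\eps|x|^2$ is a nonnegative subcaloric function, since $\Heat(|w|+\bar\eps|x|^2)\ge -2\bar\eps+2n\bar\eps\ge0$. The parabolic local maximum principle (mean value inequality for subcaloric functions, e.g.\ via Moser iteration) then gives
\[
    \sup_{\sC_{3/2}}|w|\le C\big(\|w\|_{L^2(\sC_2)}+\bar\eps\big).
\]

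\textbf{Step 3: the energy bounds.} Fix a cutoff $\eta$ supported in $\sC_{3/2}$ with $\eta\equiv1$ on $\sC_1$. Testing $\Heat w=f_1-f_2$ against $w\eta^2$ gives the gradient bound, because
\[
    |f_1-f_2|\,|w|\le 3\,|w|\,\chi_{\{u_1>0\}\cup\{u_2>0\}},
\]
and this is controlled by Step 2.

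For the term $|w\Heat w|$, the key point is the following. Where $u_1>0$ and $u_2>0$ we have $|\Heat w|\le2\bar\eps$. Where $u_2=0$ and $u_1>0$ we have $w=u_1$, and we need $\int u_1\,\chi_{\{u_2=0\}}$ to be small. Rather than estimating this region directly, we argue as follows. Since $\operatorname{sign}(w)(f_1-f_2)\ge-2\bar\eps$ by Step 1, we have
\[
    |w\,\Heat w|\le w\,\Heat w+4\bar\eps|w| .
\]
This is the identity $|a|=a+2a_-$ applied to $a=w\Heat w$, using $(w\Heat w)_-\le 2\bar\eps|w|$. Then $\int\eta^2 w\Heat w$ is controlled by integrating by parts, which produces the gradient energy plus the time derivative of $w^2$, so
\[
    \int_{\sC_1}|w\Heat w|\le C\big(\|w\|_{L^2(\sC_2)}+\bar\eps\big)^2 .
\]

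For $\partial_t w$, test with $\partial_t w\,\eta^2$. This yields
\[
    \int\eta^2|\partial_tw|^2+\frac12\frac{d}{dt}\int\eta^2|\nabla w|^2
    = -\int\eta^2(f_1-f_2)\,\partial_t w+\text{cutoff terms}.
\]
The obstacle is the term $\int(f_1-f_2)\partial_tw$. We handle it using the monotonicity $\partial_tu_i\ge0$. Since $\partial_tu_i=0$ on $\{u_i=0\}$, we can write
\[
    \chi_{\{u_i>0\}}\,\partial_tu_j=\partial_tu_j-\chi_{\{u_i=0\}}\,\partial_tu_j .
\]
The cross terms therefore have a sign, namely $\chi_{\{u_1>0\}}\partial_tu_2\le\partial_tu_2$, up to $O(\bar\eps)$ corrections. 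This sign structure lets us absorb the term. Alternatively, one can bound it by $\int|f_1-f_2|\,|\partial_t w|$ and use that $|f_1-f_2|$ is at most $2\bar\eps$ plus the indicator of the symmetric difference of the positivity sets. On the symmetric difference, $|\partial_tw|=\partial_tu_i$ is small, because by the optimal regularity estimate \eqref{eq:optimal regularity} we have $\partial_tu_i\lesssim u_i^{1/2}\lesssim|w|^{1/2}$ there; this last bound is exactly where care is needed. Following \cite[Lemma 6.1]{FRS}, I would first try the monotonicity route.

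Combining Steps 2 and 3 gives both estimates in \Cref{lem:caccioppoli}.
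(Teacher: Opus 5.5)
Your outline follows the argument of \cite[Lemma 6.1]{FRS}, which the paper cites instead of proving: one-sided sign conditions, a subcaloric $L^\infty$ bound, and energy estimates. Steps 1 and 2 are correct, and so is your bound for $\int_{\sC_1}|w\Heat w|$.

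The gradient estimate, however, fails as you wrote it. Testing with $w\eta^2$ and using $|f_1-f_2|\,|w|\le 3|w|$ together with Step 2 only gives
\[
    \int_{\sC_1}|\nabla w|^2\le C\bigl(\|w\|_{L^2(\sC_2)}+\bar\eps\bigr)+C\|w\|_{L^2(\sC_2)}^2,
\]
that is, $\|\nabla w\|_{L^2(\sC_1)}\lesssim(\|w\|_{L^2(\sC_2)}+\bar\eps)^{1/2}$. This has the wrong homogeneity when $\|w\|_{L^2(\sC_2)}+\bar\eps$ is small. You must not take absolute values there. By Step 1, $w\Heat w=|w|\operatorname{sign}(w)(f_1-f_2)\ge-2\bar\eps|w|$, so the integration by parts gives
\[
    \int\eta^2|\nabla w|^2\le 2\bar\eps\int\eta^2|w|+C\|w\|_{L^2(\sC_2)}^2\le C\bigl(\|w\|_{L^2(\sC_2)}+\bar\eps\bigr)^2.
\]
This is the same mechanism you then use correctly for $|w\Heat w|$.

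For $\partial_tw$, the same issue decides which route works, and you should commit to the monotonicity route, which closes exactly. Expand $(f_1-f_2)\partial_tw$ and use $\chi_{\{u_i>0\}}\partial_tu_i=\partial_tu_i$ a.e.\ together with $0\le\chi_{\{u_i>0\}}\partial_tu_j\le\partial_tu_j$. This gives the pointwise bound
\[
    (f_1-f_2)\,\partial_tw\ \ge\ (\eps_1-\eps_2)\,\partial_tw\ \ge\ -2\bar\eps\,|\partial_tw| .
\]
Hence $-\int\eta^2(f_1-f_2)\partial_tw\le 2\bar\eps\int\eta^2|\partial_tw|$, which is absorbed by Young's inequality. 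The cutoff terms are controlled by the gradient bound on an intermediate cylinder. These are the same two one-sided inequalities the paper uses in the proof of \Cref{lem:existence linear} and in \eqref{eq:energy1}--\eqref{eq:energy2}.

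Drop your alternative, which estimates $|f_1-f_2|\,|\partial_tw|$ in absolute value on the symmetric difference of the positivity sets via $\partial_tu_i\lesssim u_i^{1/2}$. It needs quantitative control of that set, which the hypotheses do not provide, so it does not yield the required quadratic bound.

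Finally, $\nabla\partial_tw$ need not be in $L^2$. Justify the test function $\eta^2\partial_tw$ via backward difference quotients in time (or mollification), as in \Cref{lem:caccioppoli linear}.
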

    As already discussed in the introduction, blow-ups are classified. More precisely, Caffarelli's dichotomy \cite{Caffarelli77,parabolicMonneau1,parabolicMonneau2} states that, at a free boundary point  $(x_0,t_0)\in\de\{u>0\}$, one of the following holds:
    \begin{enumerate}[label = (\alph*)]
        \item either
        \[
            r^{-2}u(x_0+rx,t_0+r^2t) = \tfrac12(x\cdot\nu)_+^2 + o(1)\qquad\text{in }L^\infty(\sC_1)
        \]
        for some $\nu\in\bS^{n-1}$, and the free boundary is locally a smooth graph;
        \item or for some unique $p_{x_0,t_0}\in\cP$,         \[
            r^{-2}u(x_0+rx,t_0+r^2t) = p_{x_0,t_0}(x) + o(1)\qquad\text{in }L^\infty(\sC_1).
        \]
    \end{enumerate}
    We use the stratification $\Sigma(u)=\bigcup_{k=0}^{n-1}\Sigma_k(u)$
defined in the introduction.

\subsection{Some results on the free boundary}
    We collect some results on the size of the singular set (see \cite[Proposition 5.7, Corollary 8.8, Lemma 8.9]{FRS}):
    \begin{lemma}
    We have $\dim_\pb(\Sigma_k(u))\le k$.
    \end{lemma}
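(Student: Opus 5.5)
The plan is a covering argument based on the parabolic dimension of the blow-ups along the stratum, following the scheme used for the elliptic obstacle problem and its parabolic analogue in \cite{FRS}.

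\textbf{Step 1: extension to a frequency-type statement.} For $(x_0,t_0)\in\Sigma_k(u)$, set $L\coloneqq\Ker A_{x_0,t_0}$, so $\dim L=k$. We first show the following: for every $\delta>0$ there is $r_0=r_0(x_0,t_0,\delta)>0$ such that
\[
    \Sigma_k(u)\cap\sC_{r}(x_0,t_0)\subset\big\{(x,t):\dist(x-x_0,L)\le\delta r\big\}\qquad\text{for all }r<r_0.
\]
The argument is by compactness. Suppose there are points $(x_j,t_j)\in\Sigma_k(u)$ with $r_j\coloneqq\dist_\pb((x_j,t_j),(x_0,t_0))\to0$ and $\dist(x_j-x_0,L)>\delta r_j$. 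Rescale $u_j(x,t)\coloneqq r_j^{-2}u(x_0+r_jx,t_0+r_j^2t)$. By the uniqueness of blow-ups in Caffarelli's dichotomy, $u_j\to p_{x_0,t_0}$. The rescaled points $(y_j,s_j)$ lie on the parabolic unit sphere and converge to some $(\bar y,\bar s)$ with $\dist(\bar y,L)\ge\delta$.

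The key input is a uniform version of blow-up convergence, i.e.\ a Monneau/Weiss-type monotonicity. Monneau's formula, as used in \cite{parabolicMonneau1,parabolicMonneau2}, gives continuity of $(x,t)\mapsto p_{x,t}$ along $\Sigma$. It also gives $u_j(\cdot+y_j,\cdot+s_j)$ close to $p_{x_j,t_j}$ at unit scale, with a modulus uniform in $j$. Passing to the limit, $p_{x_0,t_0}$ would be a blow-up of $p_{x_0,t_0}$ itself at $(\bar y,\bar s)$. But $p_{x_0,t_0}$ is invariant only under translations in $L\times\R$; at points with $\bar y\notin L$ it is positive and is not singular. This gives the contradiction.

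For the time direction we use $\partial_tu\ge0$: the limit is time-independent, so only the spatial component matters. This is consistent with the fact that time should count as two parabolic dimensions, which is exactly what the estimate needs.

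\textbf{Step 2: covering.} Fix $\beta>k$ and $\delta$ small. For each $j$, let $E_j\subset\Sigma_k(u)$ be the set of points for which Step 1 holds with $r_0\ge1/j$. Then $\Sigma_k(u)=\bigcup_jE_j$, so it suffices to show $\cH^\beta_\pb(E_j)=0$.

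Cover $E_j\cap\sC_\rho(z)$ with $\rho<1/j$ and $z\in E_j$. By Step 1, $E_j\cap\sC_\rho(z)$ lies in a $\delta\rho$-neighbourhood of a $k$-plane in space, times a time interval of length $\rho^2$. Such a slab can be covered by $C(\delta\rho)^{-k}\rho^{k}\cdot(\delta\rho)^{-2}\rho^2$ parabolic cylinders of size $\delta\rho$.

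Here the time factor is the problem, so it must be sharpened. Applying Step 1 at each point of $E_j$ also confines the time variable. Near $(x_0,t_0)$, by nondegeneracy \eqref{eq:optimal regularity} together with $\partial_tu\ge0$ and $p_{x_0,t_0}>0$ off $L$, singular points with $x$ near $x_0$ cannot occur at times $t\ll t_0$ of order $\rho^2$ away. This reduces the time factor to $O(1)$. Iterating the covering then gives $\cH^{\beta}_\pb(E_j\cap\sC_\rho(z))\le C\,(C\delta^{k-\beta})^{i}\rho^\beta\to0$ for $\delta$ small.

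\textbf{Main obstacle.} The expected main obstacle is the time direction. A naive count treats time as two extra parabolic dimensions, and controlling it requires showing that singular points of a fixed stratum cannot accumulate in time. I would try first to use monotonicity of $\{u(\cdot,t)>0\}$ in $t$, combined with the positivity of the limiting profile off $L$, as the key input. If that fails, I would fall back on the quantitative estimates of \cite{FRS}, from which this lemma is quoted.
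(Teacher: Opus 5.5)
Your Step 1 is correct, and in fact needs no Monneau formula: if $(y_j,s_j)\to(\bar y,\bar s)$ with $\dist(\bar y,L)\ge\delta$, then $0=u_j(y_j,s_j)\to p_{x_0,t_0}(\bar y)>0$ (using $u_j(y_j,s_j)\ge u_j(y_j,0)$ if $s_j>0$). But Step 1 gives no information in the time direction, and that is where your proof has a gap. Take a singular point $(x_0+\ell,t_0+h)$ with $\ell\in L$, $|\ell|\le\delta\rho$ and $|h|\sim\rho^2$. It lies at parabolic distance $\sim\rho$ and satisfies your confinement trivially. A configuration such as $\{(x_0+se,\,t_0+as)\}$ with $e\in L$, $a\neq0$, has parabolic dimension $2$ and Step 1 cannot see it. The time-independence of the blow-up, which you invoke as if it helped, is exactly why blow-ups carry no information on the time coordinates of nearby singular points. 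With Step 1 alone your count gives $C\delta^{-k-2}$ cylinders of size $\delta\rho$, hence only $\dim_\pb\Sigma_k(u)\le k+2$. Gaining the two time dimensions is the entire content of the lemma. It is a main result of \cite{FRS}, and the paper does not reprove it: it quotes \cite[Proposition 5.7, Corollary 8.8, Lemma 8.9]{FRS}.

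The step you propose to close this gap does not work. Take $(x,t)\in\Sigma_k(u)$ with $x-x_0\in L$, $|x-x_0|\le\delta\rho$ and $t_0-t=\rho^2/2$. Since $A_{x_0,t_0}(x-x_0)=0$, we have $p_{x_0,t_0}(\cdot-x)=p_{x_0,t_0}(\cdot-x_0)$. By continuity of blow-ups, both $u(\cdot,t)$ and $u(\cdot,t_0)$ then equal $p_{x_0,t_0}(\cdot-x_0)+o(\rho^2)$ on $B_\rho(x_0)$. This is compatible with $u(\cdot,t)\le u(\cdot,t_0)$, with nondegeneracy $\sup_{B_r(x)}u(\cdot,t)\ge c_nr^2$, and with $p_{x_0,t_0}>0$ off $L$, because the whole increment $\int_t^{t_0}\partial_tu$ is hidden in the $o(\rho^2)$ errors.

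To exclude such points you need two quantitative inputs played against each other:
\begin{itemize}
\item a lower bound on $\partial_tu$ near $\Sigma$, for instance via barriers for positive caloric functions in $\{u>0\}$, or $\fint_{\sC_r}\partial_tu\ge cr$ on $\Sigma_{n-1}$ as in \eqref{eq:definition nondegeneracy u_t};
\item a rate for $u-p$ better than $o(r^2)$.
\end{itemize}
This is what \cite{FRS} carry out. Compare also \eqref{eq:cleaning fixed solution}: there, $|t-t_0|=o(|x-x_0|^N)$ on $\Sigma_{n-1}$ comes from the coefficient $6a>0$ of $|x_n|t$ in the cubic expansion, not from the quadratic blow-up.

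What you need is an estimate $|t-t'|\le C|x-x'|^2$ between nearby singular points, uniform on the pieces $E_j$ of a countable decomposition. The constants must be uniform on $E_j$, since a pointwise estimate at $z_0$ only controls the earlier of two points. With such an estimate, the spatial projection is bi-Lipschitz from $(E_j,\dist_\pb)$ onto its image, and your Step 1 plus covering closes the argument. Without it, the proposal proves only the bound $k+2$. Also, the iteration factor should read $\delta^{\beta-k}$, not $\delta^{k-\beta}$.
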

   Next, define the arrival time and its domain by
    \[
        \tau(x)\coloneqq\inf\{t\in[0,T]:u(x,t)>0\},\qquad
        \Omega_+\coloneqq\{x\in\Omega:0<\tau(x)<T\}.
    \]
    By the assumptions $\partial_tu>0$ in $\{u>0\}$ and $u(\cdot,0)\equiv0$, it holds 
    \[
        \de\{u>0\}\cap(\Omega\times(0,T))
        = \{(x,\tau(x)):x\in\Omega_+\}.
    \]
    We collect some properties of $\tau$.
    \begin{lemma}\label{lem:zero capacity singular set}
       Let $\Sigma$ be the singular set and set
\[
    S\coloneqq\{x\in\Omega_+:(x,\tau(x))\in\Sigma\}.
\]
 Then $S$ is relatively closed in $\Omega_+$, and $\tau\in C^\infty(\Omega_+\setminus S)$ with $\nabla\tau\neq0$. In addition,
$ \Sigma$ has zero $W^{1,2}(\R^{n+1})$-capacity.
    \end{lemma}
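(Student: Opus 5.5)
We prove the three assertions in turn: closedness of $S$, smoothness of $\tau$ away from $S$, and the capacity statement.

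\emph{Closedness of $S$.} Since $\partial_tu\ge0$, the set $\{u>0\}$ is increasing in $t$. Since $u$ is continuous, $\tau$ is lower semicontinuous. In fact $\tau$ is continuous on $\Omega_+$: upper semicontinuity follows from nondegeneracy \eqref{eq:optimal regularity}. Indeed, at $(x_0,\tau(x_0))$ the function $u(\cdot,\tau(x_0))$ takes values $\ge c_nr^2$ in $B_r(x_0)$. Together with $\partial_tu>0$ in $\{u>0\}$, this forces $u(x,t)>0$ for $x$ near $x_0$ and $t$ slightly larger than $\tau(x_0)$, at least after a short argument using the parabolic structure. Hence the graph map $x\mapsto(x,\tau(x))$ is a homeomorphism of $\Omega_+$ onto $\partial\{u>0\}\cap(\Omega\times(0,T))$. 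The set $\Sigma$ is closed, because the regular set is relatively open by Caffarelli's dichotomy: near a regular point the free boundary is a smooth graph with all nearby points regular. Therefore $S$, the preimage of $\Sigma$ under this homeomorphism, is relatively closed in $\Omega_+$.

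\emph{Smoothness of $\tau$ off $S$.} Near a regular point the free boundary is a smooth hypersurface in space-time. We must check that it can be written as a graph $t=\tau(x)$ with $\nabla\tau\ne0$. The natural route is to differentiate $u=\partial_tu=0$ on the free boundary. By \cite{Caffarelli77}, $\partial_tu$ is a positive caloric function in $\{u>0\}$ vanishing on the smooth free boundary, so Hopf's lemma gives a nonzero gradient of $\partial_tu$ there. We expect the classical identity $\partial_t\theta=|\nabla\theta|^2$, with $\theta=\partial_tu$, to give
\[
\nabla\tau=-\frac{\nabla\theta}{\partial_t\theta}=-\frac{\nabla\theta}{|\nabla\theta|^2}\neq0,
\]
so that $\theta$ defines the free boundary as a level set with nonvanishing time derivative. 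Alternatively, in the blow-up $\tfrac12(x\cdot\nu)_+^2$ the time direction is transversal to the free boundary after accounting for the positive normal velocity. The implicit function theorem then gives $\tau\in C^\infty$ near $x_0$ with $\nabla\tau\neq0$.

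\emph{Zero capacity of $\Sigma$.} This is the main obstacle. The plan is to use the dimension bound $\dim_\pb\Sigma\le n-1$ from the preceding lemma. Parabolic Hausdorff dimension $n-1$ corresponds to the parabolic dimension $n+2$ of space-time, so $\Sigma$ has codimension $3$ there. Sets with $\mathcal H^{s}_\pb$-measure zero for some $s<n$ should have zero $W^{1,2}(\R^{n+1})$-capacity. To show this, we cover $\Sigma$ by parabolic cylinders $\sC_{r_i}(z_i)$ with $\sum r_i^{s}$ small. On each cylinder we place a cutoff $\eta_i$ equal to $1$ on the cylinder and supported in its double, with $|\nabla\eta_i|\lesssim r_i^{-1}$ and $|\partial_t\eta_i|\lesssim r_i^{-2}$. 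Since the cylinder has volume $\sim r_i^{n+2}$, the full $W^{1,2}$ norm with the spatial gradient gives
\[
\|\eta_i\|_{W^{1,2}}^2\lesssim r_i^{n}+\text{(time-derivative term)}\, r_i^{n-2}.
\]

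The time-derivative term is the delicate point. If capacity is meant with respect to the full space-time gradient, the contribution $r_i^{n-2}$ is still summable because $s=n-1+\delta$ with $\delta<1$ is not enough in general, so we will instead use a finer dimension bound: $\dim_\pb\Sigma_m\le m$, with the top stratum handled by the $C^{1}$-in-space, $C^{1/2}$-in-time structure, or by rectifiability \cite{genericStefan4D}. Alternatively, we exploit the fact that $\Sigma$ lies on the graph of $\tau$. The singular points lie in $\{\partial_tu=0\}$ as a Lipschitz-type graph, so we may use anisotropic cylinders elongated in time, of size $r\times r$, whose time derivative cost is only $r^{-1}$. Taking $\phi=\max_i\eta_i$ then yields a test function with small $W^{1,2}$ norm equal to $1$ near $\Sigma$, which proves that $\Sigma$ has zero capacity.
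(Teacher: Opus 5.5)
The first two assertions are essentially acceptable; the paper simply cites \cite[Theorem 3.1]{FRS} for them. The capacity part, however, has a genuine gap, and its main line is wrong. The $W^{1,2}(\R^{n+1})$-capacity is isotropic: $\partial_t$ is weighted exactly like $\nabla_x$. So what matters is Euclidean codimension two in $\R^{n+1}$, i.e.\ $\sigma$-finiteness of the Euclidean measure $\cH^{n-1}$, and the parabolic ``codimension three'' heuristic is irrelevant. In particular, your claim that $\cH^s_\pb$-null sets with $s<n$ have zero capacity is false. Take a compact subset of $\R^n\times\{0\}$ of Euclidean dimension $s'\in(n-1,n)$. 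It is $\cH^s_\pb$-null for every $s\in(s',n)$, because the two distances agree on a time slice. Yet it has positive capacity, since zero $W^{1,2}(\R^{n+1})$-capacity forces $\cH^s=0$ for all $s>n-1$.

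Your own computation shows that parabolic cylinders cost $r^{n-2}$, so they would need $\cH^{n-2}_\pb(\Sigma)=0$, which fails in general on $\Sigma_{n-1}$. Switching to Euclidean balls (your time-elongated cylinders) costs $r^{n-1}$ per ball. But then $\dim_\pb\Sigma\le n-1$ only yields $\cH^s(\Sigma)=0$ for $s>n-1$, and sets of dimension exactly $N-2$ in $\R^N$ can have positive $2$-capacity. A \emph{measure} bound is required. Lying on the graph of $\tau$ does not provide one, even if $\tau$ were Lipschitz, since that graph is $n$-dimensional.

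The missing ingredient is what the paper uses. First, $\Sigma$ has locally $\sigma$-finite Euclidean $\cH^{n-1}$ measure in $\R^{n+1}$ \cite[Theorem 1.2]{genericStefan4D}. Second, sets of finite $\cH^{N-2}$ measure in $\R^N$ have zero $W^{1,2}$-capacity \cite[Theorem 4.16]{EvansGariepy}, here with $N=n+1$. Countable subadditivity finishes the argument.

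Your covering idea does work for the lower strata: $\cH^{n-1}\le\cH^{n-1}_\pb$ locally and $\dim_\pb\Sigma_m\le m\le n-2$ give $\cH^{n-1}(\Sigma_m)=0$. On $\Sigma_{n-1}$, though, the final step ``$\max_i\eta_i$ has small norm'' cannot work as stated. Covers with $\sum_i r_i^{n-1}$ arbitrarily small exist only when $\cH^{n-1}=0$, and this fails in general: think of a closing ice slab independent of $x'$, whose singular set contains a piece of an $(n-1)$-plane. With finite positive measure you need the finer argument behind \cite[Theorem 4.16]{EvansGariepy}.

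A smaller slip in the first part: you swapped the semicontinuities. Continuity of $u$ makes $\{\tau<s\}=\bigcup_{t<s}\{u(\cdot,t)>0\}$ open, so $\tau$ is \emph{upper} semicontinuous for free. The nontrivial direction is lower semicontinuity (no vertical segments of free boundary). Your use of nondegeneracy, which is only a supremum bound on balls, does not give it. It does follow from the identity $\partial\{u>0\}\cap(\Omega\times(0,T))=\{(x,\tau(x)):x\in\Omega_+\}$ stated just before the lemma, by a closed-graph argument.
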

    \begin{proof}
        The first assertions follow from Caffarelli's dichotomy and the regular free-boundary theory, see \cite[Theorem 3.1]{FRS}. 
Next, since $\Sigma$ has locally $\sigma$-finite Euclidean $\cH^{n-1}$ measure in $\R^{n+1}$ (by \cite[Theorem 1.2]{genericStefan4D}), it has zero $W^{1,2}$-capacity (see for instance \cite[Theorem 4.16]{EvansGariepy}).
    \end{proof}

\subsection{The structure of the free boundary near the maximal stratum}
    We collect some consequences of the smooth expansion near singular points of the maximal stratum proved in \cite{FRS}.
    First, we recall the polynomial Ansätze given by \cite[Definitions 13.3 \& 13.4]{FRS}.
    \begin{definition}\label{definition:polynomial ansatz}
        Given an integer $J\ge3$, we say that a function $\sP$ is an Ansatz at order $J$ if there is a rotation $\mathsf{R}\in SO(n)$ and $\alpha\in\R$ such that
        \[
            \sP(x,t) = \tfrac12(\sA)_+^2(\mathsf{R}(x+\alpha\bm{e_n}),t)
        \]
        where $\sA$ is a polynomial of the form
        \[
            \sA(x,t) = x_n + \sum_{\ell=2}^{J-1} Q_\ell + x_n\sum_{\ell=2}^{J-1}R_\ell
        \]
        and such that:
        \begin{enumerate}[label=(\roman*)]
            \item $Q_\ell$, $R_\ell$ are parabolically $\ell$-homogeneous polynomials;
            \item $\Heat(x_n Q_\ell)\equiv0$;
            \item $\Heat \Big(\tfrac12\sA^2(x,t)\Big) = 1 + O(|(x,t)|_\pb^{J})$.
        \end{enumerate}
    \end{definition}
    The polynomials $R_\ell$ are determined recursively once $Q_2,\ldots,Q_\ell$ are fixed, see \cite[Definitions 13.3 \& 13.4]{FRS}.

  Also, given $\beta,R>0$ and $(x_0,t_0)\in\R^n\times\R$ we define
    \begin{equation}\label{eq:supercritical paraboloids}
        \Omega^\beta_R(x_0,t_0) \coloneqq \{(x,t)\in \R^n\times\R\,:\,|x-x_0|<R(t_0-t)^{\frac{1}{2+\beta}},\,t\le t_0\}.
    \end{equation}
    We also write $\Omega^\beta_R$ when $(x_0,t_0)=(0,0)$.
    We will make use of the following statement, proved in \cite[Theorem 13.5]{FRS}.
    \begin{theorem}\label{theorem:smooth expansion}
        Given $\beta>0$ and $k\ge4$ there exist constants $\eps_\circ,\bar r$ depending only on $n,\beta,k$ such that the following holds.

        Let $u$ satisfy $\Heat u=\chi_{\{u>0\}}$, $u\ge0$, and $\partial_tu\ge0$ in $\Omega^\beta_1\cap\sC_1$. Suppose that $0\in\partial\{u>0\}$ and
        \[
            \|u-\tfrac12(x_n)^2_+\|_{L^\infty(\Omega^\beta_1\cap\sC_1)}\le \eps_\circ.
        \]
        Then, for some Ansatz $\sP_k$ of order $k$
        \[
            \|u-\sP_k\|_{L^\infty(B_r\times(-r^2,-r^2/100))}\le r^{k+1/2}
        \]
        for $r\in(0,\bar r)$.
    \end{theorem}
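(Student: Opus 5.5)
The plan is an induction on the order $k$, using an improvement-of-approximation scheme at dyadic scales. Each step rests on a compactness argument and a Liouville theorem for the linearization at the half-space profile $\tfrac12(x_n)_+^2$.

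\textbf{Step 1: rescaled closeness.} First I will use the smallness $\|u-\tfrac12(x_n)_+^2\|_{L^\infty(\Omega^\beta_1\cap\sC_1)}\le\eps_\circ$, together with the regularity and non-degeneracy estimates \eqref{eq:optimal regularity} and the comparison principle \eqref{eq:comparison principle}. The goal is closeness of the rescalings $u_r(x,t)\coloneqq r^{-2}u(rx,r^2t)$ to a half-space profile on the rescaled domains $r^{-1}(\Omega^\beta_1\cap\sC_1)$. These domains contain, at unit parabolic scale, the regions $\{|x|<r^{-\beta/(2+\beta)}(-t)^{1/(2+\beta)}\}$, which become wide in space as $r\to0$. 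This is the key use of the supercritical paraboloid: on each annulus $B_1\times(-1,-1/100)$ we see a neighbourhood whose spatial size grows like $r^{-\beta/(2+\beta)}$. Since points with $t<0$ in the region are regular (flatness plus the dichotomy), the regular free-boundary theory gives that $\partial\{u_r>0\}$ is a smooth graph $x_n=g_r(x',t)$ there, with estimates. A hodograph-type transformation then converts the problem into a nonlinear parabolic equation with an oblique boundary condition on a fixed half-space.

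\textbf{Step 2: induction on $k$.} Assume $u$ is approximated by an Ansatz $\sP_{k-1}$ with error $r^{k-1/2}$ on $B_r\times(-r^2,-r^2/100)$. Normalise $w_r\coloneqq(u-\sP_{k-1})(r\cdot,r^2\cdot)/r^{k+1/2-\delta}$ and let $r\to0$ along subsequences. Using \Cref{lem:caccioppoli} with right-hand side errors $\eps_i=O(|(x,t)|_\pb^{J})$ (condition (iii) of \Cref{definition:polynomial ansatz}), the limits $w$ solve the linearized problem: the heat equation in $\{x_n>0\}$ with $w=0$ on $\{x_n=0\}$, on an ancient region. They grow at most polynomially of degree $k+1$ over the expanding spatial domains. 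A Liouville theorem, obtained by odd reflection across $\{x_n=0\}$ and polynomial growth for ancient caloric functions, gives $w=x_n\,q$ with $q$ a caloric-compatible polynomial, i.e.\ $\Heat(x_nq)=0$. The parabolically $(k-1)$-homogeneous part of $q$ defines the new $Q_{k-1}$; the correction $R_{k-1}$ is then fixed recursively so that (iii) holds at order $k$. Absorbing the linear parts of $q$ into the rotation $\mathsf R$ and the shift $\alpha$ yields $\sP_k$. Iterating over dyadic scales with geometric summation, as in a Campanato/Schauder iteration, and choosing $\delta$ small, upgrades the error to $r^{k+1/2}$.

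\textbf{Main obstacle.} I expect the hardest step to be the growth control needed for the Liouville theorem. The solution is only known in a one-sided-in-time region, and only on annuli $(-r^2,-r^2/100)$ rather than full cylinders, since the origin itself may be singular. The rotations and translations used at different scales must also be shown to converge, so that a single Ansatz works for all $r<\bar r$. I would first try to propagate the estimate backward in time using the width $r^{-\beta/(2+\beta)}$ of the paraboloid, which dominates polynomial growth. I would then control the drift of $\mathsf R$ and $\alpha$ by the summable errors $r^{k+1/2}$ at consecutive scales.
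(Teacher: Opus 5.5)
Your Step~1 is circular, and this leaves the induction in Step~2 without a base case. The hypothesis is a single-scale bound. For the rescalings it gives only $\|u_r-\tfrac12(x_n)_+^2\|_{L^\infty}\le\eps_\circ r^{-2}$, which is void once $r\lesssim\sqrt{\eps_\circ}$. Regular free-boundary theory needs scale-invariant flatness on a full backward cylinder, but around a free boundary point $(x_0,t_0)$ the cusp only contains cylinders of radius $\lesssim(-t_0)^{1/(2+\beta)}$. So when $(-t_0)^{1/(2+\beta)}\lesssim\sqrt{\eps_\circ}$ you know neither that the point is regular (Caffarelli's dichotomy does not decide this) nor any estimate there. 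You certainly have no estimates on $\partial\{u_r>0\}$ that are uniform in $r$. Flatness at every scale near the tip is the lowest-order content of the theorem itself. Hence your inductive hypothesis (an Ansatz $\sP_{k-1}$ with error $r^{k-1/2}$ for all small $r$, starting from $k-1=3$) is never established. The inductive step does not close as written either. An error $r^{k-1/2}$ does not give the initial smallness an order-$k$ iteration must start from, and your normalized $w_r=r^{-(k+1/2-\delta)}(u-\sP_{k-1})\circ S_r$ is only bounded by $r^{-1+\delta}$.

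The scheme behind \cite[Theorem 13.5]{FRS}, which this paper reproduces in the proof of \Cref{prop:C3beta expansion linear} and in \Cref{lem:inductive step improvement of flatness}, works instead at fixed order $k$ and uses no hodograph transform. One takes $\eps_\circ=\eps_\circ(n,\beta,k)$ so small that the trivial Ansatz $\tfrac12(x_n)_+^2$ already satisfies the iteration hypothesis (error at most $\eta r^{k+\alpha}$) at the finitely many initial dyadic scales the iteration uses. This is why $\eps_\circ$ depends on $k$. At each later scale, the flatness and the confinement of the free boundaries to a thin slab (cf.\ \eqref{eq:improvement of flatness flatness assumption} and \eqref{eq:prop improvement of flatness base hp2}) are deduced from the approximation already obtained at previous scales, not from a priori regularity.

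The second missing ingredient is the improvement lemma itself. A blow-up of one fixed solution along $r\to0$ yields no rate. You need a statement proved by contradiction over sequences of solutions and Ans\"atze, as in \Cref{lem:improvement key lemma} (linear analogue: \Cref{lem:improvement linear key lemma}). Its hypotheses are what resolves the growth issue you single out as the main obstacle: $H^1$ bounds growing like $(2^m)^{k+\alpha}$, $\alpha<1$, at $\Lambda$ consecutive dyadic scales on $\Omega^\beta_R$ with $R\ge\Lambda$. These bounds are supplied by the iteration at previous scales. Since $\Omega^\beta_R$ exhausts $\R^n\times(-\infty,0)$ as $R\to\infty$, the limit is an ancient function on the whole half-space, vanishing for $x_n\le0$, with growth below $k+1$. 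Odd reflection and the Liouville theorem then apply, and no backward-in-time propagation is needed.

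The $H^1$ compactness comes from a Caccioppoli inequality in cusps (\cite[Lemma 13.7]{FRS}, cf.\ \Cref{lem:improvement caccioppoli}), not from \Cref{lem:caccioppoli}, which is stated on full cylinders. Its proof uses $\partial_tu\ge0$ so that backward time difference quotients of $w$ are admissible test functions. Finally, the caloric polynomial $x_nq$ produced by Liouville is ruled out by choosing the Ansatz at each new scale as a minimizer of a Gaussian-weighted $L^2$ distance. This gives orthogonality to the tangent directions $(x_n)_+q$, and the Gaussian decay controls the region far from the origin. The resulting summable parameter increments give convergence of the rotations and translations to a single Ansatz valid for all $r<\bar r$.
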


    The expansion given by \Cref{theorem:smooth expansion} yields $C^\infty$ regularity of the free boundary and of the solution in $\overline{\{u>0\}}$. We will use the following restatement of the parabolic Whitney extension theorem.
        
    \begin{lemma}\label{lem:whitney}
        Let $E$ be a compact subset of $\R^{n+1}$ and let $M\ge0$ be an integer. For each $z=(x,t)\in E$, let $P_{2M,z}$ be a polynomial of parabolic degree at most $2M$. Suppose that, whenever $z=(x,t),w=(y,s)\in E$ and $|\alpha|+2j\le2M$,
        \[
            |\partial_x^\alpha\partial_t^jP_{2M,z}(0,0)-\partial_x^\alpha\partial_t^jP_{2M,w}(x-y,t-s)|
            \le A\dist_\pb(z,w)^{2M+1-|\alpha|-2j}.
        \]
        Then there exists $F\in C^{2M,M}(\R^{n+1})$ whose parabolic $2M$-jet at every $z=(x,t)\in E$ is $P_{2M,z}$, i.e.,
                \[
            F(x+\xi,t+\tau)=P_{2M,z}(\xi,\tau)+o(|(\xi,\tau)|_\pb^{2M}).
        \]
    \end{lemma}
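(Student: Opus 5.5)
This is the parabolic version of Whitney's extension theorem, and the plan is to run the classical Whitney construction with parabolic cubes in place of Euclidean ones.

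\emph{Step 1: Whitney decomposition.} Set $\delta(z)\coloneqq\dist_\pb(z,E)$. Decompose $\R^{n+1}\setminus E$ into parabolic dyadic boxes $Q$ of the form $2^{-k}$-cube in $x$ times $4^{-k}$-interval in $t$, chosen so that $\operatorname{diam}_\pb Q\simeq\dist_\pb(Q,E)$. Then build a partition of unity $\{\phi_Q\}$ subordinate to slightly dilated boxes $Q^*$, satisfying
\[
|\partial_x^\alpha\partial_t^j\phi_Q|\le C\,\ell(Q)^{-|\alpha|-2j}.
\]
This is obtained by anisotropic rescaling of a fixed bump function. Bounded overlap of the $Q^*$ follows from the usual argument, since the parabolic metric is a doubling quasi-metric on $\R^{n+1}$ and parabolic boxes are its balls up to constants.

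\emph{Step 2: Definition of $F$.} For each box $Q$ pick $w_Q\in E$ with $\dist_\pb(Q,w_Q)\le C\,\dist_\pb(Q,E)$. Write $\widehat P_{w}(x,t)\coloneqq P_{2M,w}(x-y,t-s)$ for the jet recentered at $w=(y,s)$. Define $F=\widehat P_{2M,z}$ on $E$ in the sense of jets, and
\[
F\coloneqq\sum_Q\phi_Q\,\widehat P_{w_Q}\qquad\text{off }E.
\]
Off $E$, $F$ is smooth.

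\emph{Step 3: Estimates near $E$.} Fix $z_0\in E$ and a point $\zeta\notin E$ with $d\coloneqq\dist_\pb(\zeta,z_0)$. Since $\sum_Q\phi_Q\equiv 1$,
\[
F-\widehat P_{z_0}=\sum_Q\phi_Q\bigl(\widehat P_{w_Q}-\widehat P_{z_0}\bigr).
\]
The hypothesis is exactly the parabolic Taylor compatibility. By Taylor-expanding the polynomial difference $\widehat P_{w_Q}-\widehat P_{z_0}$ at the point $\zeta$, and using $\dist_\pb(w_Q,z_0)\lesssim d$, the hypothesis gives
\[
|\partial_x^\alpha\partial_t^j(\widehat P_{w_Q}-\widehat P_{z_0})(\zeta)|\le CA\,d^{2M+1-|\alpha|-2j}.
\]
Here one uses that parabolic degree $\le 2M$ polynomials are controlled by their jet at one point, with weights $d^{|\alpha|+2j}$. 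Now apply Leibniz together with the derivative bounds on $\phi_Q$, using $\ell(Q)\gtrsim\delta(\zeta)$ and $\delta(\zeta)\le d$. This yields
\[
|\partial_x^\alpha\partial_t^j(F-\widehat P_{z_0})(\zeta)|\le CA\,d^{2M+1-|\alpha|-2j}.
\]
The same bound holds for $\zeta\in E$ directly from the hypothesis.

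\emph{Step 4: Regularity and conclusion.} Step 3 shows that all derivatives $\partial_x^\alpha\partial_t^j F$ with $|\alpha|+2j\le 2M$ extend continuously across $E$, with values given by the jets. It also gives the expansion $F(z_0+\cdot)=P_{2M,z_0}+O(|\cdot|_\pb^{2M+1})$, which is stronger than the required $o(|\cdot|_\pb^{2M})$.

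It remains to check that these limits are genuine derivatives at points of $E$. For this, one verifies the parabolic difference quotients, or integrates along $x$-segments and $t$-segments. The $t$-direction needs care: a $t$-increment of size $h$ costs $h^{1/2}$ in the parabolic metric, but the weight $2j$ in the estimates accounts for this correctly. Compactness of $E$ then gives uniform bounds, and a cutoff far away makes $F$ globally defined in $C^{2M,M}(\R^{n+1})$.

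\emph{Main obstacle.} The main obstacle is the bookkeeping in Step 3, namely the polynomial-comparison estimate in the anisotropic setting. The approach is to rescale parabolically to $d=1$, where all norms on polynomials of parabolic degree $\le 2M$ are equivalent. Alternatively, one can cite Stein's Whitney extension theorem for general quasi-metrics and homogeneous structures, of which this lemma is a special case.
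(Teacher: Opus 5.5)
Your strategy is the right one, and it is in substance what the paper uses: the paper gives no argument of its own and simply invokes the parabolic Whitney extension theorem \cite[Theorem B.1]{DanielliPTOP} with modulus $\omega(r)=Ar$, which is proved by exactly this parabolic Whitney construction. However, the key estimate in Step 3 does not follow from what you wrote. In the Leibniz expansion of $\partial_x^\alpha\partial_t^j\bigl(\phi_Q(\widehat P_{w_Q}-\widehat P_{z_0})\bigr)(\zeta)$, consider a term in which $\partial_x^{\alpha-\beta}\partial_t^{j-i}$, with $(\beta,i)\neq(\alpha,j)$, falls on $\phi_Q$. It is bounded by $C\ell(Q)^{-(|\alpha-\beta|+2(j-i))}\,A\,d^{2M+1-|\beta|-2i}$, and for the boxes with $\zeta\in Q^*$ one has $\ell(Q)\simeq\delta(\zeta)$. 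To reach $d^{2M+1-|\alpha|-2j}$ you would need $\ell(Q)\gtrsim d$. The two facts you quote, $\ell(Q)\gtrsim\delta(\zeta)$ and $\delta(\zeta)\le d$, point in opposite directions and cannot be chained to give this. When $\zeta$ is much closer to $E$ than to $z_0$, your bound degenerates like a negative power of $\delta(\zeta)$. That is exactly the regime that matters for continuity and differentiability at $z_0$, and it is the one place in Whitney's theorem where an additional idea is needed.

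The standard remedy is as follows. Pick $\hat z\in E$ with $\dist_\pb(\zeta,\hat z)=\delta(\zeta)$. Since $\sum_Q\phi_Q\equiv1$ off $E$ as a locally finite sum, you have $\sum_Q\partial_x^\gamma\partial_t^k\phi_Q\equiv0$ for $(\gamma,k)\neq(0,0)$. Moreover, $\partial_x^\beta\partial_t^i(\widehat P_{\hat z}-\widehat P_{z_0})$ does not depend on $Q$. Hence, in every term where derivatives fall on $\phi_Q$, you may replace $\widehat P_{z_0}$ by $\widehat P_{\hat z}$. Now $\dist_\pb(w_Q,\hat z)\lesssim\delta(\zeta)\simeq\ell(Q)$, so your polynomial-comparison argument gives $|\partial_x^\beta\partial_t^i(\widehat P_{w_Q}-\widehat P_{\hat z})(\zeta)|\le CA\,\delta(\zeta)^{2M+1-|\beta|-2i}$. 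The powers of $\delta(\zeta)$ then match, and these terms are bounded by $CA\,\delta(\zeta)^{2M+1-|\alpha|-2j}\le CA\,d^{2M+1-|\alpha|-2j}$. The remaining term $\sum_Q\phi_Q\,\partial_x^\alpha\partial_t^j(\widehat P_{w_Q}-\widehat P_{z_0})(\zeta)$ is controlled by your existing bound. Equivalently, prove the estimate first with $z_0$ replaced by $\hat z$, and then add $\widehat P_{\hat z}-\widehat P_{z_0}$, which the hypothesis controls since $\dist_\pb(\hat z,z_0)\le2d$. With this correction your Step 4 works as stated. Whenever one more $x$-derivative is admissible, the exponent $2M+1-|\alpha|-2j$ is at least $2$; whenever one more $t$-derivative is admissible, it is at least $3$. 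So the difference quotients at points of $E$ converge, even though a $t$-increment $h$ only gains $|h|^{1/2}$ in parabolic distance.
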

    \begin{proof}
        This is \cite[Theorem B.1]{DanielliPTOP}, applied with modulus
        $\omega(r)=Ar$.
    \end{proof}
    
    We next use \Cref{lem:whitney} to show that the set where  $u$ admits
    uniform expansions of order $2M+1$ can be covered with $C^{2M,M}$ graphs.
        \begin{lemma}\label{lem:smooth covering}
        Let $u$ satisfy $\Heat u=\chi_{\{u>0\}}$, $u\ge0$, and $\partial_tu\ge0$ in $B_2\times(-2,2)$, and let $M\ge3$, $C>0$, and $r_\circ\in(0,1/100)$. There are $r_1=r_1(n,M,C,r_\circ)\in(0,r_\circ/10)$ and $\eps_\circ=\eps_\circ(n,M)>0$ such that the following holds. 
        
        Let $E\Subset\partial\{u>0\}\cap(B_1\times(-1,1))$ satisfy $\operatorname{diam}_\pb E<r_1$, and suppose that for each $z=(x_z,t_z)\in E$ there is an Ansatz $\sP_z$ of order $2M+1$ satisfying
        \begin{align}
            &\|u(x_z+\cdot,t_z+\cdot)-\sP_z\|_{L^\infty(B_r\times(-r^2,-r^2/100))}
            \le Cr^{2M+2}\qquad \forall\,r\in(0,r_\circ),\label{eq:u P}\\
            &\|\sP_z-\tfrac12(x_n)_+^2\|_{L^2(\sC_1)}\le\eps_\circ. \label{eq:P xn}
        \end{align}
        Then, writing $x=(x',x_n)$, there is $f\in C^{2M,M}(\R^{n-1}\times\R)$ such that
        \[
            E\subset\{x_n=f(x',t)\}.
        \]
    \end{lemma}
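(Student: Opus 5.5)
The plan is to apply the parabolic Whitney extension (\Cref{lem:whitney}) to the projected set $E'\coloneqq\{(x_z',t_z):z\in E\}\subset\R^{n-1}\times\R$. The jets will come from the zero set of each Ansatz. First I extract from each $\sP_z$ a local graph description of its positivity set. Write $\sP_z=\tfrac12(\sA_z)_+^2(\mathsf R_z(x+\alpha_z\bm e_n),t)$. The smallness \eqref{eq:P xn}, with $\eps_\circ$ small, forces $\mathsf R_z$ to be close to the identity, $|\alpha_z|$ small, and the coefficients of $Q_\ell,R_\ell$ small. The last point uses that the space of Ans\"atze of order $2M+1$ is finite-dimensional and that the coefficients depend continuously on the $L^2$ norm.

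Hence $\partial_n$ of $\sA_z(\mathsf R_z(\cdot+\alpha_z\bm e_n),\cdot)$ is close to $1$ on $\sC_1$. By the implicit function theorem for polynomials, $\{\sA_z\circ(\mathsf R_z,\mathrm{id})=0\}\cap\sC_{1/2}$ is a graph $\{x_n=g_z(x',t)\}$, with $g_z$ smooth, $g_z(0,0)=0$ (since $z\in\partial\{u>0\}$ forces $\alpha_z$ to match), and all derivatives up to order $2M$ bounded by a constant $C(n,M)$. I then set $P_{2M,(x_z',t_z)}$ to be the parabolic Taylor polynomial of degree $2M$ of $x_n^z+g_z(\cdot-x_z',\cdot-t_z)$ at $(x_z',t_z)$. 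This makes sense because $x_z$ has height $x_{z,n}$, so $P(0,0)=x_{z,n}$ in local coordinates.

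The key step, and the main obstacle, is the Whitney compatibility condition. Given $z,w\in E$ with $d=\dist_\pb(z,w)<r_1$, I have to show that the graphs $g_z$ (shifted to $z$) and $g_w$ (shifted to $w$) agree to order $2M+1$ in a cylinder of size comparable to $d$ around $z$. I would compare the Ans\"atze. From \eqref{eq:u P} applied at scales $r\sim d$ at both points (this needs $d\ll r_\circ$, whence $r_1<r_\circ/10$), one gets
\[
    \|\sP_z(\cdot-(x_z,t_z))-\sP_w(\cdot-(x_w,t_w))\|_{L^\infty(\mathcal D_d)}\le C d^{2M+2}
\]
on a region $\mathcal D_d$ of size $\sim d$ lying in the backward time slabs $(-r^2,-r^2/100)$ of both points. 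Such a region exists after choosing $r$ a suitable multiple of $d$, with $|t_z-t_w|\le d^2$. Both functions are $\tfrac12 a_+^2$ with $|\nabla a|\approx1$, and the positivity sets are nondegenerate: in $\{a>0\}$ the function grows quadratically in the distance to $\{a=0\}$. An $L^\infty$ difference $\delta=Cd^{2M+2}$ on a region of size $d$ then forces the zero sets to be within $\sim\delta/d$ in a Hausdorff sense. Indeed $\tfrac12a^2$ differing by $\delta$ where $a\sim d$ gives $|a_1-a_2|\lesssim\delta/d=Cd^{2M+1}$ on the positive part.

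So the two polynomial defining functions differ by $O(d^{2M+1})$ on a sub-region of size $\sim d$ where both are positive. Since they are polynomials of bounded degree, Markov/Bernstein inequalities propagate this bound. The $(\alpha,j)$-derivative differs by $O(d^{2M+1-|\alpha|-2j})$ on all of $\sC_{d}$ around $z$. Taking the implicit-function graphs then gives the same estimate for $\partial_{x'}^\alpha\partial_t^j$ of the graph functions at $(x_z',t_z)$. The Taylor remainders of $g_w$ of order $2M+1$ contribute only $O(d^{2M+1-|\alpha|-2j})$, by the uniform derivative bounds. This is exactly the hypothesis of \Cref{lem:whitney} with $A=A(n,M,C,r_\circ)$.

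The delicate points are locating a common region inside both backward slabs, and the degeneracy of $t$-derivatives, which gain only in parabolic scaling. If the direct slab overlap fails, I would instead compare each Ansatz to $u$ on a chain of intermediate scales.

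Finally, \Cref{lem:whitney} produces $f\in C^{2M,M}$ whose $0$-jet at $(x_z',t_z)$ equals $P(0,0)=x_{z,n}$. Hence $z\in\{x_n=f(x',t)\}$ for every $z\in E$. Compactness of $E'$ follows from compactness of $E$, and the projection is injective on $E$ because each $z$ lies on its Ansatz graph; alternatively, injectivity is a byproduct of the compatibility estimate at order zero.
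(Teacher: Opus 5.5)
Your proposal is correct and follows essentially the paper's argument. You compare the two Ans\"atze on a common cylinder of size $\sim\dist_\pb(z,w)$ lying in both positive phases and both backward slabs, transfer the $O(\dist_\pb^{2M+2})$ bound to derivatives by rescaling and polynomial norm equivalence, pass to the implicit graphs, and apply \Cref{lem:whitney} on the projection. The only difference is that the paper avoids your division by $a_1+a_2$ by working with the defining function $B_z=\partial_n\tfrac12\sA_z^2$ instead.

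The one point to tighten is the distance in the Whitney hypothesis. \Cref{lem:whitney} on the projected set needs your bounds in terms of $\dist_\pb((x_z',t_z),(x_w',t_w))$, not $\dist_\pb(z,w)$. So you must use the order-zero estimate to prove $\dist_\pb(z,w)\le C\,\dist_\pb((x_z',t_z),(x_w',t_w))$, exactly as the paper does; injectivity alone is not enough. Your first injectivity argument, that each $z$ lies on its own Ansatz graph, does not give this comparability, so rely on the order-zero argument.
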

    \begin{proof}
        For $z=(x_z,t_z)\in E$, let $\sA_z$ denote the signed polynomial associated with $\sP_z$, so that
        \[
            \sP_z(x,t)=\tfrac12\bigl(\sA_z(x,t)\bigr)_+^2,
        \]
        and set
        \[
            \widetilde\sP_z\coloneqq\tfrac12\sA_z^2,
            \qquad
            B_z\coloneqq\partial_n\widetilde\sP_z.
        \]
        Let $z_i=(x_i,t_i)\in E$, $i=0,1$, and let $\rho\coloneqq\dist_\pb(z_0,z_1)$. By \eqref{eq:P xn} applied at $z=z_0$ and $z=z_1$, if $\eps_\circ$ is small enough we can find a cylinder $Q$ of parabolic size comparable to $\rho$ which lies both in the positive phases of the two Ans\"atze and in the regions where both expansions hold. Thus, for $(x,t)\in Q$,  \eqref{eq:u P} gives
        \begin{equation}\label{eq:base point whitney}
            \|\widetilde{\sP}_{z_0}(\cdot-x_0,\cdot-t_0)
            -\widetilde{\sP}_{z_1}(\cdot-x_1,\cdot-t_1)\|_{L^\infty(Q)}
            \le C\rho^{2M+2}.
        \end{equation}
        Parabolic rescaling and equivalence of norms for polynomials give
        \begin{equation}\label{eq:whitney derivative compatibility}
            \left|\partial_x^\alpha\partial_t^jB_{z_0}(0,0)
            -\partial_x^\alpha\partial_t^jB_{z_1}(x_0-x_1,t_0-t_1)\right|
            \le C\rho^{2M+1-|\alpha|-2j}
            \qquad \forall\,|\alpha|+2j\le2M.
        \end{equation}
        Since $z\in E\subset\partial\{u>0\},$ we have $\sA_z(0,0)=0$ and $B_z(0,0)=0$. Moreover, by \eqref{eq:P xn} and polynomial norm equivalence, after decreasing $\eps_\circ$ and $r_1$,
        \[
            \tfrac12\le\partial_nB_z(x,t)\le\tfrac32
            \qquad\text{for }|(x,t)|_\pb\le2r_1.
        \]
 Write $x_i=(x_i',x_{i,n}) \in \R^{n-1}\times \R$ and define
        \[
            d\coloneqq\dist_\pb((x_0',t_0),(x_1',t_1)),\qquad
            h\coloneqq|x_{0,n}-x_{1,n}|.
        \]
Since $B_{z_1}(0,0)=0$ and $\partial_nB_{z_1}\ge\tfrac12$,
the mean value theorem and \eqref{eq:whitney derivative compatibility} with $|\alpha|=j=0$ give
\[
    \frac{h}2
    \le \bigl|B_{z_1}((x_{0,n}-x_{1,n})\bm e_n,0)\bigr|
    \le |B_{z_1}(x_0-x_1,t_0-t_1)|+Cd
    \le C\rho^{2M+1}+Cd.
\]
Since $\rho^2=d^2+h^2$, decreasing $r_1$ gives $\rho\le Cd$;
in particular, the projection $\pi(x,t)\coloneqq(x',t)$
is injective on $E$.

Since $\partial_nB_z(0,0)>0$, the equation
$B_z(y',x_n-x_{z,n},s)=0$ defines $x_n$ as a smooth
function of $(y',s)$ near the origin; let $G_z$ denote
its parabolic Taylor polynomial of degree $2M$.
Implicit differentiation and Taylor's formula, together with
\eqref{eq:whitney derivative compatibility} and $\rho\le Cd$, give
\[
\bigl|\partial_{y'}^\alpha\partial_s^jG_{z_0}(0,0)
    -\partial_{y'}^\alpha\partial_s^j
      G_{z_1}(x_0'-x_1',t_0-t_1)\bigr|
    \le Cd^{2M+1-|\alpha|-2j}
    \qquad\forall\,|\alpha|+2j\le2M.
\]
Applying \Cref{lem:whitney} on $\pi(E)$ gives
$f\in C^{2M,M}(\R^{n-1}\times\R)$ with
\[
    f(x_z',t_z)=G_z(0,0)=x_{z,n}
    \qquad\forall\,z\in E,
\]
which proves the result.
    \end{proof}
    We will need the following consequence.
\begin{lemma}\label{lem:smoothness close maximal stratum}
    Suppose that $u$ solves \eqref{eq:stefan} in $\Omega^\beta_2\cap\sC_2$,  $(0,0)\in\partial\{u>0\}$, the blow-up of $u$ at the origin is $\tfrac12(x_n)_+^2$, and 
    \[
        \|u-\tfrac12(x_n)_+^2\|_{L^\infty\big(\Omega^\beta_2\cap\sC_2\big)}\le \eps_\circ.
    \]
 Given an integer $M \geq 0,$ if $\eps_\circ=\eps_\circ(n,\beta,M)$ is small enough, there is $f\in C^{2M,M}(\R^{n-1}\times\R)$ such that
    \[
        \{u>0\}=\{x_n>f(x',t)\}
        \quad\text{in }\Omega^\beta_1\cap\sC_{1/2}.
    \]
    Moreover, there is $C=C(n,\beta)$ such that
    \begin{equation}\label{eq:linear background estimates}
        |\nabla u(x,t)-x_n\bm{e_n}|\le C|(x,t)|_\pb^2,
        \qquad
        |D^2u(x,t)-\bm{e_n}\otimes\bm{e_n}|\le C|(x,t)|_\pb
    \end{equation}
    in $\{u>0\}\cap\Omega^\beta_1\cap\sC_1$.
\end{lemma}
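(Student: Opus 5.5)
The plan is to apply \Cref{theorem:smooth expansion} at every free boundary point near the origin. Then \Cref{lem:smooth covering} turns the uniform expansions into a single $C^{2M,M}$ graph, and the gradient and Hessian bounds come from comparing $u$ with the Ansatz at nearby points.

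\emph{Step 1: flatness propagates to nearby points.} Fix $z=(x_z,t_z)\in\partial\{u>0\}\cap\Omega^\beta_1\cap\sC_{1/2}$. The first task is to show that the translate $u_z\coloneqq u(x_z+\cdot,t_z+\cdot)$ satisfies the hypotheses of \Cref{theorem:smooth expansion} after a fixed rescaling. The geometry of the supercritical paraboloids helps here. Since $z\in\Omega^\beta_1$ and $t\mapsto (-t)^{1/(2+\beta)}$ grows faster than $(-t)^{1/2}$, there is a fixed $\rho=\rho(n,\beta)>0$ such that the rescaled region $\Omega^{\beta}_{1}(z)\cap\sC_\rho(z)$, after parabolic scaling by $\rho$, stays inside $\Omega^\beta_2\cap\sC_2$. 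If this fails at the exact exponent, I would use a slightly smaller $\beta'<\beta$, which costs only constants.

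On this region $u_z$ is $\eps_\circ$-close to $\tfrac12(x_n+x_{z,n})_+^2$. Nondegeneracy \eqref{eq:optimal regularity}, together with $z\in\partial\{u>0\}$ and $u$ being $\eps_\circ$-close to $\tfrac12(x_n)_+^2$, forces $|x_{z,n}|\le C\eps_\circ^{1/2}$. Hence after rescaling by $r=\rho$, the function $r^{-2}u_z(r\cdot,r^2\cdot)$ is $C(\eps_\circ^{1/2}/\rho)$-close to $\tfrac12(x_n)_+^2$. For $\eps_\circ(n,\beta,M)$ small this is below the threshold of \Cref{theorem:smooth expansion} with $k=2M+1$, which applies because $2M+1\ge4$ once $M\ge2$; for smaller $M$ I would just take $M=2$. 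The output is an Ansatz $\sP_z$ of order $2M+1$ with $\|u_z-\sP_z\|\le Cr^{2M+3/2}$ on $B_r\times(-r^2,-r^2/100)$ for $r<\bar r\rho$. This is stronger than \eqref{eq:u P}.

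For \eqref{eq:P xn}, closeness of $\sP_z$ to $\tfrac12(x_n)_+^2$ follows from the expansion at scale $\bar r\rho$ combined with the flatness of $u_z$. Finite-dimensionality of the space of Ansätze with bounded coefficients controls the norms.

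\emph{Step 2: cover and glue.} Cover $\partial\{u>0\}\cap\Omega^\beta_1\cap\sC_{1/2}$ by finitely many sets $E_i$ with $\operatorname{diam}_\pb E_i<r_1$ and apply \Cref{lem:smooth covering} to each, obtaining local graphs $f_i$. To glue them into one $f$, I would instead apply \Cref{lem:whitney} once to the whole compact closure of the free boundary. The compatibility estimate \eqref{eq:whitney derivative compatibility} is local, and pairs at distance $\ge r_1$ satisfy it trivially with a larger constant $A$ because all jets are bounded. Injectivity of the projection also needs points far apart to have distinct $(x',t)$. This follows from the flat two-sided picture: in each fixed-$(x',t)$ vertical segment, $u$ is positive exactly on $\{x_n>f\}$, since $\partial_n u>0$ near the positive phase by closeness to $x_n\bm e_n$. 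This monotonicity in $x_n$ also identifies $\{u>0\}=\{x_n>f\}$.

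\emph{Step 3: derivative bounds.} For $(x,t)\in\{u>0\}$, let $z$ be the free boundary point below it on the same vertical line, so that $d\coloneqq\dist_\pb((x,t),z)\le C|(x,t)|_\pb$. Interior parabolic estimates for $u-\sP_z$, which solves the heat equation up to an $O(|\cdot|_\pb^{J})$ error in the common positive phase, give $\nabla u=\nabla\sP_z+O(d^{2})$. The Ansatz at the origin satisfies $\nabla\sP_0=x_n\bm e_n+O(|(x,t)|_\pb^2)$, because the $Q_2,R_2$ corrections are quadratic. Comparing $\sP_z$ with $\sP_0$ via \eqref{eq:base point whitney} then yields \eqref{eq:linear background estimates}; the Hessian estimate is obtained the same way one order lower.

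I expect the main obstacle to be Step 1. The delicate part is making the geometric containment of translated paraboloids precise, uniformly up to the boundary of $\Omega^\beta_1$, while keeping the flatness constant small after rescaling.
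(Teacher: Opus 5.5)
Steps 1 and 2 follow the paper's route, but with two fixable slips. First, an error $r^{2M+3/2}$ is \emph{weaker}, not stronger, than the bound $Cr^{2M+2}$ required in \eqref{eq:u P}, so \Cref{lem:smooth covering} as stated does not apply to your Ansatz. The paper fixes this by applying \Cref{theorem:smooth expansion} at order $2N+2$ with $N=\max\{M,3\}$ (note that \Cref{lem:smooth covering} requires $M\ge3$), which gives error $r^{2N+5/2}$. It then truncates the Ansatz to order $2N+1$, at a cost $O(r^{2N+2})$. Second, your single global Whitney application is a reasonable alternative to the paper's patching of local graphs. However, injectivity of $(x,t)\mapsto(x',t)$ should come from the fact that all free boundary points lie in the slab $\{|x_n|\le C\eps_\circ^{1/2}\}$. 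Vertically aligned points are therefore at distance $\ll r_1$, and the local bound $\rho\le Cd$ from the proof of \Cref{lem:smooth covering} rules them out. $L^\infty$-closeness to $\tfrac12(x_n)_+^2$ does not by itself give $\partial_nu>0$ near the free boundary. Also, the inclusion $\Omega^\beta_1(x_z,t_z)\subset\Omega^\beta_2$ holds exactly, by the triangle inequality and $-t_z,\ t_z-t\le -t$, so no shrinking of $\beta$ is needed.

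The genuine gap is in Step 3. \Cref{theorem:smooth expansion} and \eqref{eq:u P} control $u-\sP_z$ only on the windows $B_r\times(-r^2,-r^2/100)$, which lie strictly in the past of $z$. Your base point $z=(x',f(x',t),t)$ has the same time as $(x,t)$. With $d=|x-x_z|$, points at distance at least $d/2$ from $x_z$ with times in $(t-d^2/400,t]$ belong to none of these windows. So you have no $L^\infty$ bound on $u-\sP_z$ on any backward cylinder around $(x,t)$ to feed into interior estimates.

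Moving the base point to a later time $t+cd^2$ does not help in the part of the cusp where $|x|\gg\sqrt{-t}$ (for instance $x=\tfrac12(-t)^{1/(2+\beta)}\bm e_n$). There $t+cd^2>0$, and $u$ is not defined. The expansion at the origin has the same limitation, since its windows reach $(x,t)$ only when $|x|<10\sqrt{-t}$. This region is exactly what the cusp geometry adds, and it requires regularity of $u$ up to the free boundary at a fixed time.

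The paper obtains this in three moves. It applies the first assertion with $M=3$ inside $\Omega^\beta_{7/4}$ to get uniformly $C^3$ free boundaries. Boundary Schauder estimates (localized as in \cite[Lemma 13.7]{FRS}) then give $|D^3u|\le C$ in $\{u>0\}\cap\Omega^\beta_{3/2}\cap\sC_{r_4}$. Finally it Taylor expands in space at the fixed time $t$ along the vertical segment from $x_z$ to $x$, using that $u$ and $\widetilde\sP_{3,z}(\cdot-x_z,\cdot-t)$ share their spatial derivatives through order two at $z$. With that in place, your comparison of $\sP_z$ with $\sP_0$ via \eqref{eq:base point whitney} goes through as you describe. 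The region away from the origin follows from standard interior and boundary estimates.
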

\begin{proof}
    Set $\Gamma\coloneqq\partial\{u>0\}$ and $N\coloneqq\max\{M,3\}$. Using the inclusion
    \[
        \Omega^\beta_1(x_z,t_z)\subset\Omega^\beta_2
        \qquad \forall\,z=(x_z,t_z)\in\Gamma\cap\Omega^\beta_1\cap\sC_1,
    \]
    we deduce that the function
   $4u(x_z+x/2,t_z+t/4)$
    satisfies the hypotheses of \Cref{theorem:smooth expansion}. 
    Applying this theorem at order $2N+2$ and truncating at order $2N+1$, we obtain an order-$(2N+1)$ Ansatz $\sP_z$ satisfying $\|\sP_z-\tfrac12(x_n)_+^2\|_{L^2(\sC_1)}\le\eps_\circ$ and
    \[
        \|u(x_z+r\,\cdot,t_z+r^2\,\cdot)-\sP_z(r\,\cdot,r^2\,\cdot)\|_{L^\infty(B_1\times(-1,-1/100))}
        \le C_Nr^{2N+2}\qquad \forall\,r\in(0,r_{2N+2}),
    \]
    for some $r_{2N+2}=r_{2N+2}(n,\beta,N)>0$.
    Applying \Cref{lem:smooth covering} gives local $C^{2N,N}$ graphs,
    and the smallness of $\|\sP_z-\tfrac12(x_n)_+^2\|_{L^2(\sC_1)}$
    implies $\partial_n\sA_{\sP_z}(0,0)>0$, so the local graphs agree
    on overlapping domains and define a single $C^{2N,N}$ graph,
    which proves the first assertion.

    For the second assertion, applying \Cref{theorem:smooth expansion}
    at order four and truncating at order three gives Ansätze $\sP_{3,z}$
    with error $O(r^4)$ on $B_r\times(-r^2,-r^2/100)$
    for $0<r<r_4$, uniformly for
    $z\in\Gamma\cap\Omega^\beta_{3/2}\cap\sC_{r_4}$,
    where $r_4=r_4(n,\beta)>0$ is sufficiently small.
    Set $\widetilde{\sP}_{3,z}\coloneqq\tfrac12\sA_{\sP_{3,z}}^2$.
    Applying the first assertion with $M=3$ inside $\Omega^\beta_{7/4}$
    gives uniform $C^3$ free-boundary regularity, so boundary and
    interior estimates (localized as in \cite[Lemma 13.7]{FRS}) yield
    \begin{equation}\label{eq:smoothness third derivative bound}
        |D^3u|\le C
        \qquad\text{in }\{u>0\}\cap\Omega^\beta_{3/2}\cap\sC_{r_4}.
    \end{equation}
    Fix $(x,t)\in\{u>0\}\cap\Omega^\beta_1\cap\sC_{r_4/10}$ and set
    \[
        r\coloneqq|(x,t)|_\pb,\qquad
        x_z\coloneqq(x',f(x',t)),\qquad z\coloneqq(x_z,t).
    \]
    Since $f(0,0)=0$ and $\nabla'f(0,0)=0$, we have
    $f(x',t)=O(r^2)$, $|z|_\pb\le2r$, and $|x-x_z|\le Cr$.
    Hence, \eqref{eq:base point whitney} for the cubic Ans\"atze,
    together with polynomial norm equivalence, gives
    \begin{equation}\label{eq:taylor ansatz}
        |\nabla\widetilde{\sP}_{3,z}(x-x_z,0)-x_n\bm e_n|\le Cr^2,\qquad
        |D^2\widetilde{\sP}_{3,z}(x-x_z,0)-\bm e_n\otimes\bm e_n|\le Cr.
    \end{equation}
    Moreover, since $u$ and $\widetilde{\sP}_{3,z}(\cdot-x_z,\cdot-t)$
    have the same spatial derivatives through order two at $z$,
Taylor's formula and
    \eqref{eq:smoothness third derivative bound} give
    \[
        |\nabla u(x,t)-\nabla\widetilde{\sP}_{3,z}(x-x_z,0)|
      \le Cr^2,\qquad
        |D^2u(x,t)-D^2\widetilde{\sP}_{3,z}(x-x_z,0)|\le Cr.
    \]
    Together with \eqref{eq:taylor ansatz}, this proves the estimates
    near the origin, while the bounds on the rest of
    $\{u>0\}\cap\Omega^\beta_1\cap\sC_1$ follow from standard
    interior and boundary estimates.
\end{proof}

As observed in \cite{FRS}, even if the flatness assumption in \Cref{theorem:smooth expansion} is ``one-sided'', the previous results also apply at singular points in the maximal stratum. More precisely, 
thanks to \cite[Lemma 8.4]{FRS}, any solution with $(0,0)\in\Sigma_{n-1}$ satisfies
\begin{equation}\label{eq:definition nondegeneracy u_t}
    \fint_{\sC_r} \partial_tu\ge cr\qquad\forall\,r\in(0,1),
\end{equation}
with $c>0$.

\begin{theorem}\label{theorem:splitting at singular points}
    For any $M,c>0$ there are $\rho,C,\beta>0$, depending only on $n,M,c$,
    with the following property. Suppose that $u$ solves \eqref{eq:stefan} in
    $B_1\times(-1,1)$, that $(0,0)\in\Sigma_{n-1}(u)$, that
    $\|u\|_{L^\infty(B_1\times(-1,1))}\le M$, and that
    \eqref{eq:definition nondegeneracy u_t} holds with constant $c$.
    Then, after a rotation,
    \[
        u=u^++u^-\qquad\text{in }\Omega^\beta_1\cap\sC_\rho,
    \]
    where $u^\pm$ are solutions of \eqref{eq:stefan} in
    $\Omega^\beta_1\cap\sC_\rho$, $u^+u^-=0$, and
    \[
        |u^+-\tfrac12(x_n)_+^2|+|u^--\tfrac12(x_n)_-^2|
        \le Cr^3\quad\text{in }\Omega^\beta_1\cap\sC_r
        \qquad \forall\,r\in(0,\rho).
    \]
\end{theorem}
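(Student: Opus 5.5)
We plan to obtain the splitting by applying the one-sided smooth expansion results twice, once on each side of the hyperplane $\{x_n=0\}$. We first rotate so that the blow-up at the origin is $\tfrac12 x_n^2=\tfrac12(x_n)_+^2+\tfrac12(x_n)_-^2$.

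\textbf{Step 1: quantitative closeness to the blow-up on a supercritical paraboloid.} By the uniqueness of blow-ups and a compactness argument, we want $\|u-\tfrac12 x_n^2\|_{L^\infty(\sC_r)}\le \omega(r)r^2$ with a modulus $\omega$ depending only on $n,M,c$. Compactness is available because the class of solutions with $\|u\|_{L^\infty}\le M$ satisfying \eqref{eq:definition nondegeneracy u_t} with constant $c$ is closed under limits and under rescalings at singular points of the maximal stratum. The nondegeneracy \eqref{eq:definition nondegeneracy u_t} is used, as in \cite[Lemma 8.4 and Section 13]{FRS}, to improve this to the statement that the contact set $\{u=0\}$ contains a neighbourhood of $\{x_n=0\}$ in the past region. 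Concretely, the positive phase should split into two components $\{x_n>f^+\}$ and $\{x_n<f^-\}$ inside $\Omega^\beta_1\cap\sC_\rho$, with $f^-\le f^+$. The heuristic is that $\partial_t u\gtrsim r$ on average forces $u(\cdot,t)$ to vanish near the origin for $t\le -C|x|^{2+\beta}$. This is exactly why one works in the region $|x|<(-t)^{1/(2+\beta)}$ and not in full cylinders. We expect this to be the \textbf{main obstacle}: proving a uniform two-sided separation, i.e.\ that $\{u>0\}\cap\Omega^\beta_1\cap\sC_\rho$ is disconnected into an upper and a lower piece, with $\beta,\rho$ uniform. We would first try to follow the argument of \cite[Proposition 13.1 and Lemma 8.4]{FRS}. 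In that argument, the monotonicity $\partial_tu\ge0$ together with the quadratic closeness at every scale gives $u(x,t)=0$ whenever $|x_n|\le c'(-t)^{1/2}$ and $(x,t)\in\Omega^\beta_1$, with $t$ small.

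\textbf{Step 2: definition of $u^\pm$ and one-sided expansions.} We set $u^+\coloneqq u\chi_{\{x_n>0\}}$ and $u^-\coloneqq u\chi_{\{x_n<0\}}$ in $\Omega^\beta_1\cap\sC_\rho$. By Step 1 the support of $u$ avoids a neighbourhood of $\{x_n=0\}$ there, so each $u^\pm$ solves \eqref{eq:stefan} and $u^+u^-=0$. Moreover $u^+$ is $\eps_\circ$-close to $\tfrac12(x_n)_+^2$ on $\Omega^\beta_1\cap\sC_1$ after rescaling by a small $\rho$. Since $(0,0)\in\partial\{u^+>0\}$, \Cref{theorem:smooth expansion} and \Cref{lem:smoothness close maximal stratum} (with $M=0$ or $3$) apply to $u^+$, and symmetrically to $u^-(x',-x_n,t)$.

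\textbf{Step 3: the cubic bound.} From \eqref{eq:linear background estimates} we get $|\nabla u^+-x_n\bm e_n|\le C|(x,t)|_\pb^2$ in $\{u^+>0\}$. The free boundary $x_n=f^+$ satisfies $f^+(0,0)=0$ and $\nabla' f^+(0,0)=0$ (tangency forced by the blow-up), so $|f^+|\le Cr^2$ in $\sC_r$. Integrating the gradient bound from the free boundary along $\bm e_n$ gives $|u^+-\tfrac12(x_n)_+^2|\le Cr^3$ in $\Omega^\beta_1\cap\sC_r$. At points where exactly one of $u^+$ and $\tfrac12(x_n)_+^2$ vanishes, we use $|x_n|\le Cr^2$, so both are $O(r^4)$. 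The same argument gives the bound for $u^-$. The constants remain uniform in $n,M,c$ by the compactness in Step 1.
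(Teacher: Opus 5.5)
\textbf{Gap: the separation of the two phases (Step 1, and hence Step 2).}

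The consequence you attribute to the argument of \cite{FRS}, namely $u=0$ on $\{|x_n|\le c'(-t)^{1/2}\}\cap\Omega^\beta_1$ for small $-t$, is false. Points with $x'=0$ and $|x_n|\le c'(-t)^{1/2}$ lie in $\Omega^\beta_1$ once $-t$ is small. Rescaling at $r\simeq(-t)^{1/2}$ would then make $r^{-2}u(r\cdot,r^2\cdot)$ vanish on $\{|y_n|\le c'\}\cap B_1$ at a fixed negative time, which contradicts its uniform convergence to $\tfrac12x_n^2$.

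Near a point of $\Sigma_{n-1}$ the contact set is instead a slab of width of order $a|t|+\sum_ib_ix_i^2$. It comes from the cubic terms $|x_n|\bigl(6at-3\sum_ib_ix_i^2\bigr)$ in \eqref{eq:expansion cubic points}. No bound of the form $\|u-\tfrac12x_n^2\|_{L^\infty(\sC_r)}\le\omega(r)r^2$ detects this slab. The nondegeneracy \eqref{eq:definition nondegeneracy u_t} also disappears at that level, since $\fint_{\sC_\rho}\partial_t\bigl(r^{-2}u(r\cdot,r^2\cdot)\bigr)\ge cr\rho\to0$.

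Moreover, the slab is centred on the curved surface $x_n\approx-P_2(x',0,t)$, and in $\Omega^\beta_1$ one may have $|x'|^2\gg|t|$. So the slab need not contain any neighbourhood of $\{x_n=0\}$. For example, take a radial solution whose annular contact set disappears on a sphere of radius $R$. The tangent hyperplane at a point of that sphere enters the outer positive phase as soon as $|x'|\gtrsim(R|t|)^{1/2}$, and this still happens inside the cusp. Hence $u\chi_{\{x_n>0\}}$ is in general not a solution. The splitting must follow the two components of the positive phase, $u^\pm=u\chi_{\{\pm(x_n-f^\pm)>0\}}$, as in your own ``concretely'' sentence.

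\textbf{The missing ingredient.} The paper's proof starts from a uniform cubic expansion, obtained from the argument of \cite[Theorem 4.3]{genericStefan4D}. After a rotation,
\[
    \Bigl\|u-\tfrac12x_n^2-|x_n|\Bigl(6at+(a+b)x_n^2-3\sum_{i=m+1}^{n-1}b_ix_i^2\Bigr)-x_nP_2\Bigr\|_{L^\infty(\sC_r)}\le Cr^{3+2\beta}\qquad\forall\,r\in(0,1/2),
\]
with all coefficients bounded by $C(n,M,c)$. Crucially, $a\ge\underline a(n,c)>0$, thanks to \eqref{eq:definition nondegeneracy u_t}.

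This uniform, nondegenerate cubic expansion is what the paper feeds into the proof of \cite[Theorem 13.1]{FRS} to separate the phases in $\Omega^\beta_1$. Heuristically, the slab width $\gtrsim\underline a|t|$ dominates cubic corrections of size $|x|^3\lesssim|t|^{3/(2+\beta)}$ there, since $\beta<1$. The same expansion is also the source of the uniformity of $\rho,C,\beta$ in $n,M,c$, which a quadratic modulus cannot provide. Once the splitting along the two components is available, your Step 3 is fine.
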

\begin{proof}
    The argument in the proof of
    \cite[Theorem 4.3]{genericStefan4D} gives, after decreasing the exponent and
    rotating,
    \[
        \bigg\|u-\tfrac12x_n^2
        -|x_n|\bigg(6at+(a+b)x_n^2
        -3\sum_{i=m+1}^{n-1}b_ix_i^2\bigg)-x_nP_2
        \bigg\|_{L^\infty(\sC_r)}
        \le Cr^{3+2\beta}
        \qquad \forall\,r\in(0,1/2),
    \]
    where $m\in\{0,\dots,n-1\}$, $a,b_i>0$,
    $b\coloneqq\sum_{i=m+1}^{n-1}b_i$, and $P_2$ is parabolically
    two-homogeneous, even in $x_n$, and satisfies $\Heat(x_nP_2)=0$. Also, all
    coefficients are bounded by $C$, and $a\ge\underline a(n,c)>0$ (by \eqref{eq:definition nondegeneracy u_t}).
Thanks to this, we can apply the proof of \cite[Theorem 13.1]{FRS} to conclude.
\end{proof}

To adapt the expansion at singular points, we introduce two-sided Ansätze (cp. \Cref{definition:polynomial ansatz}).
\begin{definition}\label{definition:two-sided Ansatz}
We say that $\sP$ is a two-sided Ansatz of order $J\ge3$ if there are one rotation $\mathsf{R}\in SO(n)$, one normal translation $\alpha\in\R$, and two polynomials $\sA^\pm$ of the form specified in \Cref{definition:polynomial ansatz} such that
    \[
        \sP(x,t)=\tfrac12[\sA^+(\mathsf{R}(x+\alpha\bm e_n),t)]_+^2
        +\tfrac12[\sA^-(\mathsf{R}(x+\alpha\bm e_n),t)]_-^2.
    \]
    Also, we define
        \[
        \sA^\even \coloneqq \tfrac12(\sA^+-\sA^-),\quad \sA^\odd \coloneqq \tfrac12(\sA^++\sA^-).
    \]
\end{definition}

The following corollary gives expansions of arbitrary order at points of
$\Sigma_{n-1}$, with uniform estimates and continuous dependence of the
coefficients on the solution and the singular point.
\begin{corollary}\label{corollary:uniform smooth expansion maximal stratum}
    Let $u$ satisfy the hypotheses of \Cref{theorem:splitting at singular points}. Then, for any integer $J\ge3$, there are $C,\beta,r_J>0$, depending only on $n,\|u\|_{L^\infty},c,$ and $J$, such that
    \[
        \|(u-\sP_J)(r\cdot,r^2\cdot)\|_{L^\infty(B_1\times(-1,-1/100))}
        \le Cr^{J+1/2}\qquad \forall\,r\in(0,r_J),
    \]
    where $\sP_J$ is a suitable two-sided Ansatz at order $J$.

    In particular, suppose that $u_k\to u$ in
    $L^2(B_1\times(-1,1))$ and that $z_k\in\Sigma_{n-1}(u_k)$ satisfies
    $z_k\to(0,0)$.
    If $\sP_{J,k}$ denotes the two-sided Ansatz of $u_k$ at $z_k$ of order $J$, then
    $C,\beta,r_J$ can be chosen
    independently of $k$ and
    \[
        \sP_{J,k}\longrightarrow\sP_J.
    \]
\end{corollary}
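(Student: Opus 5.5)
The plan is to apply \Cref{theorem:splitting at singular points} to split $u=u^++u^-$ near the origin. Then I apply the one-sided expansion \Cref{theorem:smooth expansion} to each piece separately at the origin, and glue the two one-sided Ansätze into a two-sided one.

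\textbf{The one-sided expansions.} After the rotation given by \Cref{theorem:splitting at singular points}, $u^+$ solves \eqref{eq:stefan} in $\Omega^\beta_1\cap\sC_\rho$ and satisfies $|u^+-\tfrac12(x_n)_+^2|\le Cr^3$ in $\Omega^\beta_1\cap\sC_r$. Rescaling $u^+_r(x,t)\coloneqq r^{-2}u^+(rx,r^2t)$ with $r$ small (depending only on $n,\|u\|_{L^\infty},c$) makes $u^+_r$ $\eps_\circ$-close to $\tfrac12(x_n)_+^2$ in $\Omega^{\beta'}_1\cap\sC_1$ for any $\beta'\ge\beta$, since $\Omega^\beta_1$ rescales into a region containing $\Omega^{\beta'}_1$. \Cref{theorem:smooth expansion} with $k=J$ then gives an Ansatz $\sP^+=\tfrac12(\sA^+)_+^2$ with the required error on $B_r\times(-r^2,-r^2/100)$. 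The same argument applied to $u^-(x',-x_n,t)$, reflecting back, gives $\sP^-=\tfrac12(\sA^-)_-^2$. Since $u^+u^-=0$ and each $\sP^\pm$ is supported on its own side, summing yields $\|u-\sP_J\|\le Cr^{J+1/2}$ on the cylinders $B_r\times(-r^2,-r^2/100)$; note these lie in $\Omega^\beta_1$ for small $r$.

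\textbf{Matching to \Cref{definition:two-sided Ansatz}.} The definition requires a \emph{common} rotation $\mathsf R$ and normal translation $\alpha$. Since $(0,0)\in\partial\{u^\pm>0\}$ and both blow-ups are $\tfrac12(x_n)_\pm^2$ in the same rotated frame, both one-sided Ansätze vanish at the origin and have normal $\bm e_n$ there. I can therefore absorb any residual translation or rotation of each piece into the lower-order polynomials $Q_\ell$. Concretely, I take $\mathsf R$ to be the rotation from the splitting theorem and $\alpha=0$, re-expressing $\sA^\pm$ in that frame with linear part $x_n$. The uniqueness of the polynomial coefficients, given by the recursive determination of the $R_\ell$, keeps the form of \Cref{definition:polynomial ansatz}.

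\textbf{Uniformity and continuity.} The constants above depend only on $n,M,c,J$ through \Cref{theorem:splitting at singular points} and \Cref{theorem:smooth expansion}. For $u_k\to u$ with $z_k\in\Sigma_{n-1}(u_k)$ and $z_k\to0$, I first check that \eqref{eq:definition nondegeneracy u_t} holds at $z_k$ with a uniform $c$. This should follow from $L^2$ convergence together with the estimates \eqref{eq:optimal regularity} and \Cref{lem:caccioppoli}, which upgrade to uniform convergence and convergence of $\partial_tu_k$ in $L^2$; alternatively I can rederive it from \cite[Lemma 8.4]{FRS} with uniform constants. The expansions for $u_k(z_k+\cdot)$ then hold with uniform $C,\beta,r_J$. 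The coefficients of $\sP_{J,k}$ are bounded, so any subsequence has a convergent sub-subsequence with some limit $\sP$. Passing to the limit in the uniform estimate gives $\|u-\sP\|\le Cr^{J+1/2}$ on the same cylinders. Two Ansätze with this property must coincide, because their difference is a polynomial-type object of degree below $J+1/2$ that is $O(r^{J+1/2})$ at all small scales. Hence $\sP=\sP_J$ and the whole sequence converges.

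\textbf{Main obstacle.} I expect the hardest step to be this last uniformity. Both the splitting rotation and the nondegeneracy constant $c$ must be controlled along $z_k\to0$. The rotations $\mathsf R_k$ converge because the blow-ups $p_{z_k}$ converge to $p_0=\tfrac12x_n^2$ by the same subsequence-plus-uniqueness argument. Once that holds, the remaining steps reduce to compactness.
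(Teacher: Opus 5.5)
Your proposal follows the paper's proof: split with \Cref{theorem:splitting at singular points}, apply \Cref{theorem:smooth expansion} to $u^+$ and to the reflection of $u^-$, glue the two pieces, obtain uniform constants from a uniform $L^\infty$ bound and a common nondegeneracy constant, and conclude from bounded coefficients, compactness, and uniqueness of the limit expansion; two small fixes are to apply the expansion at order $\max\{J,4\}$ and truncate (that theorem needs $k\ge4$), and to keep the same $\beta$, since for $\beta'>\beta$ the cusp $\Omega^{\beta'}_1$ is wider near $t=0$ and is not contained in the rescaled domain. The step to pin down is the common constant in \eqref{eq:definition nondegeneracy u_t}: it must come from rerunning the argument of \cite[Lemma 8.4]{FRS} with constants that are stable under $u_k\to u$ (this is what the paper does), because $L^2$ convergence of $\partial_t u_k$ controls $\fint_{\sC_r(z_k)}\partial_t u_k$ only at each fixed $r$, not uniformly as $r\to0$.
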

\begin{proof}
    To prove the first assertion, it suffices to apply \Cref{theorem:splitting at singular points,theorem:smooth expansion}
    to $u^+$ and to the inward reflection of $u^-$, at
    order $\max\{J,4\}$, and truncate to order $J$. 
    
  Next, if $u_k\to u$ in $L^2$, the local
    $L^2$--$L^\infty$ estimate for subcaloric functions gives a uniform
    bound for $u_k$ on a smaller cylinder, while the proof of
    \cite[Lemma 8.4]{FRS} gives a common constant in
    \eqref{eq:definition nondegeneracy u_t}. Hence the constants $C,\beta$, and $r_J$
    can be chosen independently of $k$, and the coefficients of the Ans\"atze are uniformly bounded.
    Since every limit of $\sP_{J,k}$ is an order-$J$ expansion of $u$ and
    therefore equals $\sP_J$ by uniqueness, the whole sequence
    converges.
\end{proof}

\subsection{Stratification of the maximal stratum}\label{subsec:stratify}

Recall that, thanks to  \cite[Theorem 4.3]{genericStefan4D}, given $(x_0,t_0)\in\Sigma_{n-1}(u)$ we can write, for $t \leq 0$,
\begin{equation}\label{eq:expansion cubic points}
    u(\mathsf{R}x+x_0,t+t_0)
    =\tfrac12x_n^2
    +|x_n|\bigg(6at+(a+b)x_n^2
    -3\sum_{i=m+1}^{n-1}b_ix_i^2\bigg)+p_3(x,t)
    +o(|(x,t)|_\pb^3),
\end{equation}
for some $\mathsf{R}\in SO(n)$ and $m\in\{0,\dots,n-1\}$, where $a,b_i>0$,
$b\coloneqq\sum_{i=m+1}^{n-1}b_i$, and $p_3=x_nP_2$, with $P_2$
parabolically two-homogeneous, even in $x_n$, and satisfying
$\Heat(x_nP_2)=0$.

We denote by $\Sigma_{n-1}^m(u)$ the set of points of $\Sigma_{n-1}(u)$ at
which \eqref{eq:expansion cubic points} holds with this value of $m$. Thus,
$m$ is the dimension of the kernel of the tangential quadratic form in the
cubic term.
The next proposition shows that, near each point of $\Sigma_{n-1}^m$, the maximal
stratum is contained in an $m$-dimensional manifold of any prescribed
finite regularity.
\begin{proposition}\label{lem:stratification maximal stratum}
    Let $u$ satisfy
    \[
        \Heat u=\chi_{\{u>0\}},\qquad u\ge0,\qquad \partial_tu\ge0
        \quad\text{in }B_1\times(-1,1),\qquad
        \partial_tu>0\quad\text{in }\{u>0\},
    \]
    and let $z_0=(x_0,t_0)\in\Sigma_{n-1}^m(u)$. Then for each integer $M\ge0$ there are a neighborhood $U$ of $z_0$ and an $m$-dimensional manifold $\bm M^m\subset U$ of class $C^{2M,M}$ such that
    \[
        \Sigma_{n-1}(u)\cap U\subset\bm M^m.
    \]
    Moreover, for every integer $N\ge1$,
    \begin{equation}\label{eq:cleaning fixed solution}
        |t-t_0|=o(|x-x_0|^N)
        \qquad\text{as }(x,t)\in\Sigma_{n-1}(u)\to(x_0,t_0).
    \end{equation}
\end{proposition}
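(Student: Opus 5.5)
The plan is to reduce everything to the geometry of two ordered, smooth graphs that touch tangentially. After a translation and rotation I place $z_0$ at the origin with blow-up $\tfrac12x_n^2$. I then apply \Cref{theorem:splitting at singular points} to write $u=u^++u^-$ in $\Omega^\beta_1\cap\sC_\rho$.

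Next I apply \Cref{lem:smoothness close maximal stratum} to $u^+$ and to the reflection $x_n\mapsto-x_n$ of $u^-$, with $M$ replaced by some $M'\gg M$. This gives graphs $f^\pm\in C^{2M',M'}$ such that
\[
\{u^+>0\}=\{x_n>f^+\},\qquad \{u^->0\}=\{x_n<f^-\}.
\]
Since $u^+u^-=0$, the difference $g\coloneqq f^+-f^-$ satisfies $g\ge0$. Near the origin, the free boundary of $u$ is singular, hence in $\Sigma_{n-1}(u)$, exactly where the two graphs meet, that is, where $g=0$. Because $g\ge0$, every such point also satisfies $\nabla_{x'}g=0$.

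Reading off the free boundaries of the two one-sided Ans\"atze from the cubic expansion \eqref{eq:expansion cubic points} (via \Cref{corollary:uniform smooth expansion maximal stratum}), I expect
\[
g(x',t)=-12a\,t+6\sum_{i=m+1}^{n-1}b_ix_i^2+O\big(|(x',t)|_\pb^3\big),\qquad a,b_i>0 .
\]

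\textbf{Manifold structure.} Since $\partial_tg(0,0)=-12a\neq0$, the implicit function theorem gives $\{g=0\}=\{t=h(x')\}$ near the origin, with $h\in C^{2M'-1}$, $h(0)=0$, $\nabla h(0)=0$ and $D^2h(0)=\operatorname{diag}(0,\dots,0,b_{m+1},\dots,b_{n-1})/2$. Differentiating $g(x',h(x'))\equiv0$ gives $\nabla_{x'}g+\partial_tg\,\nabla h=0$. Hence the singular points project into the critical set $\{\nabla h=0\}$.

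The equations $\partial_{x_i}h=0$ for $i=m+1,\dots,n-1$ have Jacobian in $(x_{m+1},\dots,x_{n-1})$ equal to the invertible block $\operatorname{diag}(b_i)$ at the origin. The implicit function theorem again gives a $C^{2M'-2}$ map $\phi$ such that $\{\partial_{x_i}h=0,\ i>m\}$ is the graph $\{x''=\phi(y)\}$ with $y=(x_1,\dots,x_m)$. Then
\[
\bm M^m\coloneqq\{(y,\phi(y),f^+(y,\phi(y),\tau(y)),\tau(y))\},\qquad \tau(y)\coloneqq h(y,\phi(y)),
\]
is an $m$-dimensional manifold containing $\Sigma_{n-1}(u)\cap U$. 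Choosing $M'$ large compared to $M$ makes it of class $C^{2M,M}$.

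\textbf{Cleaning estimate.} For \eqref{eq:cleaning fixed solution}, singular points correspond to parameters $y$ in a closed set $Z$. On $Z$ we have $\nabla h=0$, hence $\nabla\tau=0$ as well, since $\nabla\tau=\nabla_yh+D\phi^{T}\nabla_{x''}h$. So I must show that a smooth function whose gradient vanishes on $Z$ satisfies $\tau(y)-\tau(y_0)=o(|y-y_0|^N)$ for $y,y_0\in Z$. This is the step I expect to be the main obstacle. Whitney's classical example shows this is false for general $C^k$ functions with a critical arc, so the argument must use the extra structure here.

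What I would try first is to work with the jets rather than with $\tau$ itself, in the spirit of the proof of \Cref{lem:smooth covering}. At every point $z\in\Sigma_{n-1}$, the two-sided Ansatz of order $J$ from \Cref{corollary:uniform smooth expansion maximal stratum} yields a Taylor polynomial $G_z$ of $g$ with $G_z(0,0)=0$ and $\nabla'G_z(0,0)=0$. Comparing the Ans\"atze at $z$ and $z_0$ as in \eqref{eq:base point whitney}, with $\rho=\dist_\pb(z,z_0)$, gives
\[
|G_{z_0}(z-z_0)|\le C\rho^{J},\qquad |\nabla'G_{z_0}(z-z_0)|\le C\rho^{J-1}.
\]
Since $\partial_tG_{z_0}\approx-12a$ is nondegenerate, solving in $t$ gives $|t-t_0-\tilde h(x'-x_0')|\le C\rho^J$, where $\tilde h$ is the Taylor polynomial of $h$. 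Then $|\nabla\tilde h(x'-x_0')|\le C\rho^{J-1}$.

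I would then argue by induction on the homogeneity degree $k$. Suppose that along the sequence of singular points the homogeneous parts of $\tilde\tau$ (the restriction of $\tilde h$ to the manifold) of degree $<k$ are $o(|y-y_0|^{N})$ in the relevant directions. Then the Euler identity $y\cdot\nabla p_k=k\,p_k$ lets the gradient bound control the value $p_k(y-y_0)$. I would use the $o(\cdot)$ in the statement (which allows $J\to\infty$ with $M'$), together with $\rho\le C|x-x_0|+C|t-t_0|^{1/2}$, to absorb the time term and conclude $|t-t_0|\le C_N|x-x_0|^{N}$. If the Euler-identity induction fails for mixed degrees, my fallback is to exploit the time monotonicity $\partial_tu\ge0$ more directly, namely that $h$ is the meeting time of the two fronts, to obtain a one-sided ordering that rules out Whitney-type behavior.
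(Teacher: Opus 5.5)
You have two genuine gaps.

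\textbf{The manifold step does not see the nearby singular points.} \Cref{theorem:splitting at singular points} and \Cref{lem:smoothness close maximal stratum} give the splitting $u=u^++u^-$ and the graph representation by $f^\pm$ only inside the backward cusp $\Omega^\beta_1\cap\sC_\rho$ at $z_0$. Outside that cusp, $f^\pm$ are just Whitney extensions with no relation to $\partial\{u>0\}$. In fact $g(0,t)\approx-12at<0$ for small $t>0$, so $g\ge0$ is false on a full neighborhood.

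Inside the cusp, your own expansion gives $g\ge12a|t|-C|(x',t)|_\pb^3>0$ for $t<0$. (There $|x|^{2+\beta}<|t|$ with $\beta<1$, so $|x'|^3=o(|t|)$.) Hence $z_0$ is the only singular point the graphs actually describe, and the inclusion $\Sigma_{n-1}(u)\cap U\subset\{g=0,\ \nabla_{x'}g=0\}$ is unjustified.

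The paper proceeds differently:
\begin{enumerate}[label=(\roman*)]
\item it takes the two-sided Ans\"atze at every point of $\Sigma_{n-1}(u)$ near $z_0$, with uniform constants from \Cref{corollary:uniform smooth expansion maximal stratum};
\item it checks Whitney compatibility as in the proof of \Cref{lem:smooth covering};
\item it extends the jets to $\mathscr G^\pm\in C^{2J,J}$ with $\mathscr G^+=\mathscr G^-$ and $\nabla_{x'}(\mathscr G^+-\mathscr G^-)=0$ on the singular set.
\end{enumerate}
The tangency comes from the common rotation and translation in \Cref{definition:two-sided Ansatz}, not from a sign condition. You use exactly this jet comparison in your last paragraph, but it is needed already here. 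Once $\mathscr G^\pm$ are available, your two-step implicit-function construction is equivalent to the paper's one-step version with $\Psi=(\nabla_{x''}\mathscr G^\even,\mathscr G^\even)$. (Incidentally, the entries of $D^2h(0)$ are $b_i/a$, not $b_i/2$; this is harmless.)

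\textbf{The cleaning estimate is flagged but not proved.} The Euler-identity induction does not close, because your bound is on the gradient of the whole Taylor polynomial, which mixes homogeneous degrees. For instance, for $P=p_4+p_5$ it only yields $4p_4+5p_5=O(|y|^{N+1})$, i.e.\ $P=-\tfrac14p_5+O(|y|^{N+1})$, and nothing controls $p_5$ on the singular set. The monotonicity fallback is not an argument. Nor is any extra structure needed: \eqref{eq:cleaning fixed solution} is pointwise at the fixed base point (no two-point version is claimed), and the Whitney-type obstruction disappears once the time function is $C^{N+1}$.

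The missing tool is the \L ojasiewicz inequality, \Cref{lem:lojasiewicz}, applied to a polynomial. With $z_0=(0,0)$:
\begin{enumerate}[label=(\roman*)]
\item take the regularity high enough that $\tau\in C^{N+1}$, and let $P_N$ be its degree-$N$ Taylor polynomial at $0$;
\item at projections $y$ of singular points you have $\nabla\tau(y)=0$, hence $\nabla P_N(y)=O(|y|^N)$;
\item since $P_N$ is analytic with $P_N(0)=0$, there is $\theta<1$ with $|P_N(y)|^\theta\le C|\nabla P_N(y)|\le C|y|^N$, so $|P_N(y)|\le C|y|^{N/\theta}=o(|y|^N)$;
\item therefore $t=\tau(y)=P_N(y)+O(|y|^{N+1})=o(|y|^N)=o(|x|^N)$.
\end{enumerate}
This is exactly Step 2 of the paper's proof.
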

Before proving the proposition, we introduce the following terminology.
\begin{definition}\label{definition:substratification}
    Let $\Omega\subset\R^n$ be a bounded smooth domain, let $u$ solve \eqref{eq:stefan}, let $K\Subset\Omega\times(0,T)$, and let $m\in\{0,\dots,n-1\}$. We say that $u$ is an $m$-singular solution in $K$ if
    $\Sigma_{n-1}^h(u)\cap K=\emptyset$ for every $h>m$.
\end{definition}

We first note that this is an open condition.
\begin{lemma}\label{lem:stratification closed conditions}
    Let $K\Subset U\Subset B_1\times(-1,1)$, and let $u_k,u$ solve
    \eqref{eq:stefan} on a common neighborhood of $\overline U$, with
    $u_k\to u$ in $L^2(U)$.

    If $z_k\in\Sigma_{n-1}(u_k)\cap K$ and $z_k\to z\in K$, then
    $z\in\Sigma_{n-1}(u)$. If $u$ is $m$-singular in $U$, then $u_k$ is
    $m$-singular in $K$ for all sufficiently large $k$.
\end{lemma}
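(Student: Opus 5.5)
The plan is to prove the two assertions in turn, both by compactness combined with the uniform expansions of \Cref{corollary:uniform smooth expansion maximal stratum}.

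\emph{First assertion.} Let $z_k\in\Sigma_{n-1}(u_k)\cap K$ with $z_k\to z\in K$. Since $u_k\to u$ in $L^2(U)$ and all functions solve \eqref{eq:stefan}, \Cref{lem:caccioppoli} (with $\eps_i=0$) upgrades this to local uniform convergence. By nondegeneracy \eqref{eq:optimal regularity}, $z\in\partial\{u>0\}$. The rescalings $r^{-2}u_k(z_k+r\cdot,\,\cdot\,)$ satisfy the uniform quadratic bounds \eqref{eq:optimal regularity}. By Caffarelli's dichotomy, the blow-ups of $u_k$ at $z_k$ are $\tfrac12(x\cdot\nu_k)^2$.

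To pass to the limit I will use the Weiss/Monneau monotonicity available at singular points. Uniformly in $k$, this gives
\[
\|r^{-2}u_k(z_k+r\cdot)-\tfrac12(x\cdot\nu_k)^2\|_{L^\infty(\sC_1)}\le\omega(r)
\]
with a common modulus $\omega$. This uniform decay is exactly what the proof of \Cref{corollary:uniform smooth expansion maximal stratum} provides once \eqref{eq:definition nondegeneracy u_t} holds with a common constant, which comes from \cite[Lemma 8.4]{FRS}. Passing $k\to\infty$ along a subsequence with $\nu_k\to\nu$, the blow-up of $u$ at $z$ is $\tfrac12(x\cdot\nu)^2$. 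This is a polynomial of rank one, so $z$ is singular and lies in $\Sigma_{n-1}(u)$. The point needing care is the uniformity of the modulus. If monotonicity formulas alone do not give it, I would argue instead through Weiss energy upper semicontinuity: the Weiss energy at $z$ equals the limit of energies at $z_k$, which all equal the value for a polynomial $p\in\cP$. This rules out a regular blow-up at $z$ (whose energy is half of it), and the set of limiting homogeneous singular blow-ups is closed, forcing rank one via the closedness of $\{\dim\Ker A\ge n-1\}$.

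\emph{Second assertion.} I will argue by contradiction. Suppose, along a subsequence, there are $z_k\in\Sigma_{n-1}^{h_k}(u_k)\cap K$ with $h_k>m$; passing to a further subsequence, $h_k=h>m$ is constant and $z_k\to z\in K$. By the first part, $z\in\Sigma_{n-1}(u)\cap U$. \Cref{corollary:uniform smooth expansion maximal stratum}, applied with $J=3$, gives $\sP_{3,k}\to\sP_3$. Therefore the cubic coefficients in \eqref{eq:expansion cubic points} converge, together with the rotations $\mathsf R_k$ (after a subsequence). In particular, the tangential quadratic forms $\sum_{i>h} b_{i,k}x_i^2$, written in rotated coordinates, converge to the corresponding form for $u$ at $z$.

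The rank of a limit of matrices is at most the liminf of the ranks (lower semicontinuity goes the other way), so
\[
n-1-m_z\le n-1-h,
\]
that is, $m_z\ge h>m$, where $m_z$ is the kernel dimension of the limit form at $z$. Hence $z\in\Sigma_{n-1}^{m_z}(u)\cap U$ with $m_z>m$, contradicting the assumption that $u$ is $m$-singular in $U$.

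The main obstacle is the first assertion, namely the uniform control ensuring that the limit point is singular rather than regular. For the second assertion, one must also check that the Ansatz convergence in \Cref{corollary:uniform smooth expansion maximal stratum} applies to $u_k$ near $z_k$ and not only near the origin; this follows after translating $z$ to the origin.
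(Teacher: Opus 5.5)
Your argument for the second assertion is the paper's own: once $z\in\Sigma_{n-1}(u)$ is known, the convergence $\sP_{3,k}\to\sP_3$ from \Cref{corollary:uniform smooth expansion maximal stratum} (centered at $z$), together with lower semicontinuity of the rank, gives a kernel of dimension $\ge h>m$ at $z$. The gap is in the first assertion, on which this step depends.

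\textbf{The primary route is circular.} It starts from a modulus $\omega$ for $u_k$ at $z_k$ that is uniform in $k$, justified by a common constant in \eqref{eq:definition nondegeneracy u_t}. The paper obtains that common constant, and the uniform expansions, only when the limit point is already in $\Sigma_{n-1}(u)$. This is among the hypotheses of the second half of \Cref{corollary:uniform smooth expansion maximal stratum}. Monotonicity formulas alone give no rate that is uniform in $k$. Such a modulus already contains the conclusion: it cannot exist if, for example, $z$ is a regular point of $u$.

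\textbf{The fallback stops short.} It correctly gives $z\in\Sigma(u)$, via Weiss upper semicontinuity or Caffarelli's flatness-implies-regularity applied to $u_k$ near $z_k$. But the Weiss energy takes the same value at every $p\in\cP$, so it says nothing about the rank of the blow-up $p_z$ of $u$ at $z$. Nothing in your argument shows that $p_z$ is the limit of the blow-ups $p_k=\tfrac12(x\cdot\nu_k)^2$ of $u_k$ at $z_k$. That identification requires exchanging the limits $r\downarrow0$ and $k\to\infty$.

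\textbf{The missing ingredient is Monneau's monotonicity formula.} Apply it to $u_k$ at $z_k$ with the \emph{fixed} polynomial $p_z\in\cP$, which is admissible exactly because $z$ is singular. Let $M(r,v,y,p)$ denote the Gaussian-weighted $L^2$ distance between the parabolic rescaling of $v$ at $y$ at scale $r$ and $p$. Monotonicity in $r$, up to the usual truncation errors (uniform in $k$), gives for each fixed $r$
\[
    \|p_k-p_z\|^2\le C\,M(0^+,u_k,z_k,p_z)\le C\,M(r,u_k,z_k,p_z)\longrightarrow C\,M(r,u,z,p_z)\qquad\text{as }k\to\infty.
\]
The right-hand side tends to zero as $r\downarrow0$, because $p_z$ is the blow-up of $u$ at $z$. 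Hence $p_k\to p_z$. Symmetric matrices of rank at most one form a closed set, and $\tr A=1$ rules out $A=0$, so $p_z=\tfrac12(x\cdot\nu)^2$, that is, $z\in\Sigma_{n-1}(u)$. This is the standard argument behind the paper's reference to \cite{FRS}. With it in place, your use of the uniform expansions in the second step is legitimate.
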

\begin{proof}
    The first assertion is standard, see for instance \cite{FRS}. The second assertion follows
   from the convergence of the two-sided expansions in
    \Cref{corollary:uniform smooth expansion maximal stratum}.
\end{proof}

Before proving \Cref{lem:stratification maximal stratum}, we recall
the following \L ojasiewicz inequality
(see for instance \cite[Proposition 6.8]{CiteLojasiewicz}).
\begin{lemma}\label{lem:lojasiewicz}
    Let $f$ be a real analytic function on a neighborhood of the origin in $\R^n$, with $f(0)=0$. Then there are constants $C>0$ and $\theta\in(0,1)$ such that
    \begin{equation}\label{eq:lojasiewicz}
        |f(x)|^\theta\le C|\nabla f(x)|
    \end{equation}
    in some neighborhood of $0$.
\end{lemma}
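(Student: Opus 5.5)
The plan is to prove \Cref{lem:lojasiewicz} with the tools of real analytic and subanalytic geometry: the curve selection lemma and Puiseux expansions of one-variable subanalytic functions. The idea is to reduce the inequality to a one-dimensional statement along analytic arcs, where it becomes the elementary fact that $|g'(\tau)|\gtrsim|g(\tau)|^{(k-1)/k}$ when $g(\tau)\sim a\tau^k$ with $a\neq0$. If $f\equiv0$ near $0$ there is nothing to prove, so we assume $f\not\equiv0$.

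\emph{Step 1 (critical values).} First I would show that, in a small ball $B_\delta$, $\nabla f(x)=0$ implies $f(x)=0$. Suppose instead that $0$ lies in the closure of $\{\nabla f=0,\ f\neq0\}$. This set is semianalytic, so the curve selection lemma provides a real analytic arc $\gamma\colon[0,\eta)\to\R^n$ with $\gamma(0)=0$ and $\gamma(\tau)$ in this set for $\tau>0$. Then $\tfrac{d}{d\tau}f(\gamma(\tau))=\nabla f(\gamma)\cdot\gamma'=0$, so $f(\gamma(\tau))$ is constant and equal to $f(0)=0$, which is a contradiction. Consequently \eqref{eq:lojasiewicz} holds trivially at critical points, and it only needs to be checked where $\nabla f\neq0$.

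\emph{Step 2 (reduction to a one-variable function).} For small $s>0$ I would set
\[
\phi(s)\coloneqq\inf\{|\nabla f(x)|:\ |x|\le\delta,\ |f(x)|=s\}.
\]
The function $\phi$ is subanalytic in $s$, since it is defined by a first-order formula over analytic data. Moreover $\phi(s)>0$ for small $s>0$, by compactness and Step 1. A one-variable subanalytic function has a Puiseux expansion at $0^+$, so either $\phi\equiv0$ near $0$ (which is excluded) or $\phi(s)=c\,s^{\theta}(1+o(1))$ with $c>0$ and $\theta\in\mathbb{Q}$. This already gives $|\nabla f|\ge \tfrac c2|f|^\theta$ on $B_\delta$. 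It remains to prove $\theta<1$, and this is the main step.

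\emph{Step 3 ($\theta<1$).} By curve selection applied to the set of points realizing, or almost realizing, the infimum defining $\phi$, I would find an analytic arc $\gamma(\tau)$ with $\gamma(0)=0$, $|f(\gamma(\tau))|=s(\tau)\to0$ and $|\nabla f(\gamma(\tau))|\le 2\phi(s(\tau))$. Reparametrize so that $\gamma(\tau)=\tau^p v+O(\tau^{p+1})$; in particular $|\gamma'(\tau)|\le C\tau^{p-1}$. Write $f(\gamma(\tau))=a\tau^k+\dots$ with $a\neq0$ and $k\ge p$. The inequality $k\ge p$ holds because $f$ is Lipschitz near $0$ and vanishes there, so $|f(\gamma(\tau))|\le C|\gamma(\tau)|\le C\tau^p$. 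Then
\[
|k a|\tau^{k-1}\approx\Big|\tfrac{d}{d\tau}f(\gamma)\Big|\le|\nabla f(\gamma)|\,|\gamma'|\le C\phi(s)\,\tau^{p-1},
\]
so $\phi(s)\gtrsim\tau^{k-p}\approx s^{(k-p)/k}$. Hence the exponent satisfies $\theta\le (k-p)/k<1$, since $p\ge1$.

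I expect the main obstacle to be making Step 3 rigorous. The infimum defining $\phi$ need not be attained along a single arc, so one has to argue carefully with the semianalytic set $\{(x,s): |f(x)|=s,\ |\nabla f(x)|\le2\phi(s)\}$ and select an arc in it. One also needs the Puiseux property of $\phi$, which relies on Tarski--Seidenberg-type stability of subanalytic sets. An alternative route would avoid subanalytic machinery by working with the resolution of singularities of $f$, which reduces $f$ locally to a monomial, where the inequality can be checked by hand.
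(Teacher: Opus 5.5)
The paper gives no argument of its own for this lemma; it quotes the \L ojasiewicz gradient inequality from \cite[Proposition 6.8]{CiteLojasiewicz}. Your sketch is a correct, essentially self-contained version of the standard subanalytic proof. Step~1 shows that locally critical points lie in $\{f=0\}$, which makes $\phi>0$. The Puiseux expansion of the one-variable subanalytic function $\phi$ then produces some exponent $\theta_0$. Finally, the arc computation along an almost-minimizing analytic arc gives $\theta_0\le (k-p)/k\le 1-1/k<1$. Compared with the citation, your route makes the mechanism behind $\theta<1$ explicit, namely the orders of vanishing along arcs. It does not really lower the prerequisites, though. Subanalyticity of $\phi$ (Gabrielov's complement theorem, or o-minimality of $\R_{\mathrm{an}}$), Puiseux expansions of one-variable subanalytic functions, and curve selection for subanalytic rather than merely semianalytic sets are results of the same depth as the one being cited.

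A few details need tidying.

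\begin{itemize}
\item \emph{The arc need not end at the origin.} In Step~3 the curve selection should be applied to the bounded subanalytic set $A=\{(x,s):|x|\le\delta,\ |f(x)|=s>0,\ |\nabla f(x)|\le2\phi(s)\}$ at a point $(x^*,0)\in\overline A$, obtained by compactness. Almost-minimizers may accumulate at any $x^*\in\{f=0\}\cap\overline{B_\delta}$, so the arc need not satisfy $\gamma(0)=0$. This is harmless: your estimate only uses $f(x^*)=0$ and the Lipschitz bound for $f$ on a neighborhood of $\overline{B_\delta}$. Hence $k\ge p$ and the rest of Step~3 go through verbatim.

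\item \emph{The Puiseux exponent may be nonpositive.} For $f=x_1$ one gets $\phi\equiv1$, so $\theta_0\le0$ is possible. You must therefore take some $\theta\in(0,1)$ with $\theta\ge\theta_0$ and use $|f|\le1$ near $0$.

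\item \emph{The final bound holds only on a smaller neighborhood.} The estimate $|\nabla f|\ge\frac c2|f|^{\theta_0}$ is only valid where $0<|f|<s_1$, so it holds on a smaller neighborhood of $0$, not on all of $B_\delta$. At points with $f=0$ the inequality is trivial once $\theta>0$.

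\item \emph{Finiteness and positivity of $\phi$.} Finiteness of $\phi(s)$ for small $s$ uses that $|f|(\overline{B_\delta})$ is an interval containing $0$ and a positive value. Positivity $\phi(s)>0$ uses that the infimum is attained on the compact level set, together with Step~1 applied on the closed ball.
\end{itemize}
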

\begin{proof}[Proof of \Cref{lem:stratification maximal stratum}]
    Up to a translation and a rotation, we can assume that $z_0$ is the origin and that
    \eqref{eq:expansion cubic points} holds with $\mathsf{R}=\mathrm{Id}$. We shall write
    \[
        x=(x',x'',x_n)\in\R^m\times\R^{n-1-m}\times\R.
    \]
    We first construct the manifold $\bm M^m$, and then prove
    \eqref{eq:cleaning fixed solution} using the \L ojasiewicz inequality.

    \smallskip\noindent\textbf{Step 1.}
    Fix $M\ge0$ as in the statement and choose
    $J\ge\max\{2M+1,3\}$. By
    \Cref{corollary:uniform smooth expansion maximal stratum}
    and the proof of \Cref{lem:smooth covering}, after decreasing $U$
    we obtain functions $\mathscr G^\pm$ of class $C^{2J,J}$ such that
    \[
        \Sigma_{n-1}(u)\cap U\subset
        \left\{
            x_n=\mathscr G^+(x',x'',t)=\mathscr G^-(x',x'',t),\quad
            \nabla_{x',x''}(\mathscr G^+-\mathscr G^-)(x',x'',t)=0
        \right\}.
    \]
    Set
    \[
        \mathscr G^\even\coloneqq\tfrac12(\mathscr G^+-\mathscr G^-),
        \qquad
        \mathscr G^\odd\coloneqq\tfrac12(\mathscr G^++\mathscr G^-).
    \]
    The cubic expansion in \eqref{eq:expansion cubic points} gives
    \[
        \partial_t\mathscr G^\even(0)=-6a,
        \qquad
        D^2_{x''}\mathscr G^\even(0)
        =6\operatorname{diag}(b_{m+1},\ldots,b_{n-1}),
    \]
    so if we define
    \[
        \Psi(x',x'',t)\coloneqq
        \bigl(\nabla_{x''}\mathscr G^\even(x',x'',t),
        \mathscr G^\even(x',x'',t)\bigr)
    \]
    we have
    \[
        D_{(x'',t)}\Psi(0)=
        \begin{pmatrix}
            6\operatorname{diag}(b_{m+1},\ldots,b_{n-1})&*\\
            0&-6a
        \end{pmatrix}.
    \]
    Since all diagonal elements are non-zero, the implicit function theorem gives a function
    $\Phi=(\Phi_{x''},\Phi_t)\colon\R^m\to \R^{n-1-m}\times\R$ satisfying
    \[
        \nabla_{x''}\mathscr G^\even
        (x',\Phi_{x''}(x'),\Phi_t(x'))=0,
        \qquad
        \mathscr G^\even
        (x',\Phi_{x''}(x'),\Phi_t(x'))=0.
    \]
    Therefore
    \[
        \bm M^m\coloneqq
        \{(x',\Phi_{x''}(x'),
        \mathscr G^\odd(x',\Phi_{x''}(x'),\Phi_t(x')),
        \Phi_t(x'))\}
    \]
    is of class $C^{2M,M}$ and contains every nearby point of
    $\Sigma_{n-1}(u)$.
    By \eqref{eq:expansion cubic points} and the equations defining $\Phi$,
    \[
        T_0\bm M^m=\R^m\times\{0\}.
    \]

    \smallskip\noindent\textbf{Step 2.}
    If $m=0$, the origin is isolated in $\Sigma_{n-1}(u)$,
    so we may assume $m\ge1$. 
    
    Fix $N\ge1$ and take $J$
    large enough in Step 1 that the time component $\Phi_t$ belongs to $C^{N+1}$.
    Differentiating the equation
    $$\mathscr G^\even(x',\Phi_{x''}(x'),\Phi_t(x'))=0$$
    and using $\nabla_{x'}\mathscr G^\even(x',x'',t)=0$
    at points $(x,t)\in\Sigma_{n-1}(u)\cap U$, together with
    $\partial_t\mathscr G^\even(0)=-6a\ne0$,
    we obtain $\nabla\Phi_t(x')=0$.
 Hence, if $P_N$ is the degree-$N$ Taylor polynomial of $\Phi_t$ at the origin,
for  $(x,t)\in\Sigma_{n-1}(u)\cap U$ we have
    \[
        t=\Phi_t(x')=P_N(x')+O(|x'|^{N+1}),\qquad
        \nabla P_N(x')=\nabla\Phi_t(x')+O(|x'|^N)=O(|x'|^N).
    \]
   In particular, thanks to \Cref{lem:lojasiewicz}, 
    \[
        |P_N(x')|^\theta\le C|\nabla P_N(x')|
        \le C|x'|^N,
    \]
     and since $\theta<1$ it follows that
    \[
        |t|=o(|x'|^N)=o(|x|^N)
        \qquad\text{as }(x,t)\in\Sigma_{n-1}(u)\to (0,0),
    \]
    as desired.
\end{proof}

\begin{remark}\label{remark:generic maximal stratum fixed solution}
    If $\pi_t$ denotes projection onto the time axis, \eqref{eq:cleaning fixed solution} and \Cref{prop:gmt} imply that 
    \[
        \dim_{\mathcal H}(\pi_t(\Sigma_{n-1}))=0.
    \]
    In other words, \cite[Theorem 1.2]{genericStefan4D} holds with $\Sigma^\infty\coloneqq\Sigma_{n-1}$.
\end{remark}
\section{Properties of the linearized operator}
\label{section:linearized problem}

Given $\Omega\subset\R^n$ a bounded smooth domain, $T>0$,  and a solution $u$ of  \eqref{eq:stefan}--\eqref{eq:stefan-initial}, we want to study the behaviour of a one-parameter family of solutions $u_\eps$, with $u_0=u$, whose parabolic boundary values depend
differentiably on the parameter.
To do this, we set 
$Q\coloneqq\Omega\times(0,T)$ and consider the linearized problem 
\begin{equation}\label{eq:linearized problem}
    \begin{cases}
        \Heat \varphi = 0    &\text{in }\{u>0\}\cap Q,\\
        \varphi = 0         &\text{in }\{u=0\}\cap Q.
    \end{cases}
\end{equation}
We will prove that such a family $u_\eps$ 
is differentiable in $L^2(Q)$ and that its derivative at
$\eps=0$ solves \eqref{eq:linearized problem}. Also, we 
will show a Runge-approximation
property for solutions of \eqref{eq:linearized problem}, as well as a
$C^{3,\alpha}$ expansion at points in the maximal stratum.

As boundary data for $\varphi$, we consider
\begin{equation}\label{eq:conditions boundary data}
    g\in C^\infty_c\Big(\{u>0\}\cap
       \big(\partial\Omega\times(0,T)\big)\Big),
\end{equation}
extended by zero to the whole parabolic boundary
\[
    \partial_\pb Q=(\overline\Omega\times\{0\})
      \cup(\partial\Omega\times(0,T)).
\]
We assume
\begin{equation}\label{eq:boundary conditions u}
    u\in C^1_{x,t}
       \big(\overline Q\big),\qquad
    \partial_tu\ge c_K>0
       \quad\text{for each }K\ssubset
       \{u>0\}\cap\partial_\pb Q.
\end{equation}

\subsection{Existence and linearization}
We first show the well-posedness of the linearized problem
\eqref{eq:linearized problem}.
Here and in the sequel, $\operatorname{Tr}_{\partial_\pb Q}$ will denote the trace of a Sobolev function on the parabolic boundary of $Q$.

A function
$\varphi\in H^1(Q)$ is called a weak solution of \eqref{eq:linearized problem} if it vanishes a.e. inside $\{u=0\}$ and satisfies
\[
    \int_Q\bigl(\nabla\varphi\cdot\nabla\zeta
        +\partial_t\varphi\,\zeta\bigr)=0
\]
for every $\zeta\in H^1_0(Q)$ vanishing a.e. in $\{u=0\}$.

\begin{lemma}\label{lem:existence linear}
    Given $g$ satisfying \eqref{eq:conditions boundary data}, there is a
    unique weak solution $\varphi_{g,u}\in H^1(Q)$ of
    \eqref{eq:linearized problem} with boundary data $g$.
    Also,
    \[
        \|\varphi_{g,u}\|_{H^1(Q)}\le C(u,\Omega,T,g),\qquad \text{and the map $g\mapsto\varphi_{g,u}$ is linear}.
    \]
   If in addition $u_k$ solve \eqref{eq:stefan}--\eqref{eq:stefan-initial} and $
        \big\|\operatorname{Tr}_{\partial_\pb Q}(u_k-u)
               -\eps_kg\big\|_{L^\infty(\partial_\pb Q)}
        =o(|\eps_k|)
$ for some sequence $\eps_k\to 0$, 
    then 
    \[
        \frac{u_k-u}{\eps_k}\longrightarrow\varphi_{g,u}
        \quad\text{in }L^2(Q).
    \]
\end{lemma}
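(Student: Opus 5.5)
Existence and uniqueness come from Lax--Milgram in a closed subspace. I construct an extension $G\in H^1(Q)$ of the boundary datum $g$ that vanishes in $\{u=0\}$. Since $g$ is compactly supported in $\{u>0\}\cap(\partial\Omega\times(0,T))$, and since \eqref{eq:boundary conditions u} gives $\partial_tu\ge c_K>0$ near $\supp g$, the free boundary stays away from a neighbourhood of $\supp g$. So $G$ can be taken smooth and supported in a small neighbourhood of $\supp g$ inside $\{u>0\}$. Let $V\coloneqq\{\zeta\in H^1_0(Q):\zeta=0\text{ a.e. in }\{u=0\}\}$, a closed subspace. I look for $\varphi=G+\psi$ with $\psi\in V$ solving
\[
\int_Q\nabla\psi\cdot\nabla\zeta+\partial_t\psi\,\zeta=-\int_Q\nabla G\cdot\nabla\zeta+\partial_tG\,\zeta\qquad\forall\,\zeta\in V.
\]
The form $\int\nabla\psi\cdot\nabla\zeta+\partial_t\psi\,\zeta$ is not coercive on $H^1$ in $t$. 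So I will use the standard parabolic Galerkin/Lions framework instead: work in $L^2(0,T;H^1_0)$ with $\partial_t\psi\in L^2(0,T;H^{-1})$, where the constraint space at time $t$ is $H^1_0(\{u(\cdot,t)>0\})$. The monotonicity $\partial_tu\ge0$ makes these spaces increasing in $t$, so they are compatible with the time derivative. Equivalently, I can obtain $\varphi$ as the limit of penalized problems $\Heat\varphi_\delta=\delta^{-1}\chi_{\{u=0\}}\varphi_\delta$, and then get the $H^1$ bound, including $\partial_t\varphi$, from the energy estimate obtained by testing with $\partial_t\psi$, using $\zeta=\psi$ for the gradient bound. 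Uniqueness follows by testing the difference of two solutions with itself. Here $\int_Q\partial_tw\,w=\frac12\int_\Omega w^2(T)\ge0$, because $w=0$ at $t=0$, since $u(\cdot,0)\equiv0$. Linearity in $g$ then follows from uniqueness.

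For the linearization, set $w_k\coloneqq(u_k-u)/\eps_k$. The plan has four steps.

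\emph{Step (i): uniform $L^\infty$ bound.} By \eqref{eq:comparison principle}, $\Heat|u_k-u|\ge0$, so the maximum principle gives $\|u_k-u\|_{L^\infty(Q)}\le\|\operatorname{Tr}(u_k-u)\|_{L^\infty}=O(|\eps_k|)$. Hence $w_k$ is bounded in $L^\infty$.

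\emph{Step (ii): uniform energy bound.} A Caccioppoli argument in the spirit of \Cref{lem:caccioppoli} gives an $H^1_{\rm loc}$ bound. Near the lateral boundary I instead use the smoothness of $u$ where $\partial_tu>0$, together with the fact that outside $\supp g$ both $u_k$ and $u$ have traces $o(\eps_k)$. This gives $w_k$ bounded in $H^1(Q)$ modulo boundary layers.

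\emph{Step (iii): identifying the limit.} Along a subsequence $w_k\to\varphi$ in $L^2$. The equation $\Heat(u_k-u)=\chi_{\{u_k>0\}}-\chi_{\{u>0\}}$ has a right-hand side supported in $\{u_k>0\}\triangle\{u>0\}$. Since $u_k,u$ are time monotone with free boundaries given by arrival times $\tau_k,\tau$, and by nondegeneracy $|u_k-u|\le C\eps_k$ forces $|\tau_k-\tau|\to0$, this symmetric difference has measure $o(1)$. Dividing by $\eps_k$, the resulting sign-controlled measure $\Heat w_k$ concentrates near $\{u=0\}$. So for test functions $\zeta\in C^\infty_c(\{u>0\}\cap Q)$, which are eventually supported in $\{u_k>0\}\cap\{u>0\}$, we get $\int w_k\Heat^*\zeta\to0$, and hence $\Heat\varphi=0$ in $\{u>0\}$. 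In $\{u=0\}^\circ$ the bound $u_k\le C\eps_k$ and nondegeneracy give $u_k=0$ at distance $\gg\sqrt{\eps_k}$ from the free boundary, so $\varphi=0$ there. By Lemma~\ref{lem:zero capacity singular set}, $\{u=0\}\setminus\{u=0\}^\circ$ is the graph of $\tau$, which is smooth off a capacity-zero set, so $\varphi$ vanishes on it in the $H^1$ sense. The same zero-capacity fact lets me extend the weak formulation from $C^\infty_c(\{u>0\}\cap Q)$ to all of $V$, by density of such functions in $V$.

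\emph{Step (iv): conclusion.} The boundary data converge to $g$, so uniqueness identifies $\varphi=\varphi_{g,u}$, and the whole sequence converges.

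The main obstacle is Step (ii), the uniform $H^1$ bound for $w_k$ up to the free boundary and near singular points, together with the density of $C^\infty_c(\{u>0\}\cap Q)$ in $V$. For the bound I would first try testing $\Heat(u_k-u)=\chi_{\{u_k>0\}}-\chi_{\{u>0\}}$ against $(u_k-u)$. The right-hand side has the favourable sign: it is $\ge0$ exactly where $u_k>0=u$, so $(u_k-u)(\chi_{\{u_k>0\}}-\chi_{\{u>0\}})\ge0$. This yields $\int|\nabla(u_k-u)|^2\le C\eps_k^2$ directly, as in \cite[Lemma 6.1]{FRS}.
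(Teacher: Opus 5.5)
Your existence and uniqueness argument is a workable alternative to the paper's. The paper never solves the linear problem directly: it obtains $\varphi_{g,u}$ as the weak $H^1$ limit of $(U_\eps-u)/\eps$, where $U_\eps$ solves the Stefan problem with lateral data exactly $u+\eps g$. A Rothe scheme, or your penalization, works because forward time steps only see the increasing spaces $H^1_0(O_t)$. Lions' theorem itself does not apply verbatim to time-dependent spaces, so you would have to run the discrete scheme.

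\textbf{The gap: the convergence statement for an arbitrary sequence $u_k$.} The traces of $u_k$ are only $L^\infty$-close to $u+\eps_kg$. The error $e_k\coloneqq\operatorname{Tr}(u_k-u)-\eps_kg$ is $o(\eps_k)$ in sup norm. On compact subsets of $\{u>0\}\cap(\partial\Omega\times(0,T))$, where $\partial_tu\ge c_K>0$, time monotonicity does not stop $e_k$ from oscillating in space at high frequency. Then the caloric extension of $e_k/\eps_k$ has unbounded energy, and $w_k$ is unbounded in $H^1(Q)$.

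In particular, testing with $u_k-u$ as in your last paragraph cannot give $\int_Q|\nabla(u_k-u)|^2\le C\eps_k^2$. It produces the lateral term $\int_{\partial\Omega\times(0,T)}(u_k-u)\,\partial_\nu(u_k-u)$, which you cannot control; the Caccioppoli estimate of \Cref{lem:caccioppoli} is interior. So your Step (ii) gives only $H^1_{\rm loc}$ bounds. In Step (iv) you therefore know neither that the limit $\varphi$ lies in $H^1(Q)$ nor that its trace is $g$. Your uniqueness holds only within $H^1(Q)$ weak solutions with trace $g$, so it cannot be invoked, and uniform convergence of the boundary data does not identify the limit. Note that the real difficulty is at the lateral boundary, not at the free boundary or the singular set. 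In the interior, $w\,\Heat w\ge0$ and $\partial_tw\,\Heat w\ge0$ already give Caccioppoli bounds across the free boundary.

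\textbf{The fix.} The missing step is the reduction the paper makes via comparison, which you use in Step (i) only for $u_k-u$. Let $U_\eps$ solve \eqref{eq:stefan}--\eqref{eq:stefan-initial} with lateral data exactly $u+\eps g$, which is admissible for small $|\eps|$ by \eqref{eq:conditions boundary data}--\eqref{eq:boundary conditions u}, and set $w_\eps\coloneqq U_\eps-u$.
\begin{itemize}
\item Since $U_\eps,u>0$ in a fixed neighbourhood of $\supp g$, boundary caloric estimates give $|w_\eps\partial_\nu w_\eps|+|\partial_tw_\eps\partial_\nu w_\eps|\le C\eps^2$ there. On the rest of the lateral boundary, $w_\eps=0$.
\item Test the equation for $w_\eps$ with $w_\eps$ and with its backward time difference quotients, using $w_\eps\Heat w_\eps\ge0$, $\partial_tw_\eps\Heat w_\eps\ge0$ and $w_\eps(\cdot,0)=0$. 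This gives a global $H^1(Q)$ bound for $w_\eps/\eps$.
\item The lateral trace $g$ passes to the weak limit, and your Step (iii) identifies the limit as a weak solution, hence as $\varphi_{g,u}$.
\item Finally, \eqref{eq:comparison principle} gives $\|u_k-U_{\eps_k}\|_{L^\infty(Q)}\le\|\operatorname{Tr}_{\partial_\pb Q}(u_k-U_{\eps_k})\|_{L^\infty(\partial_\pb Q)}=o(|\eps_k|)$. This transfers the $L^2(Q)$ convergence to the arbitrary sequence $u_k$ without any regularity of its boundary data.
\end{itemize}
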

\begin{proof}
    We first prove existence. By
    \eqref{eq:conditions boundary data} and \eqref{eq:boundary conditions u},
    the boundary data $u+\eps g$ are nonnegative and nondecreasing in time
    when $|\eps|$ is small. Let $U_\eps$ solve
    \eqref{eq:stefan} in $Q$ with parabolic boundary data $u+\eps g$, and note that $|U_\eps-u|\le |\eps|\|g\|_\infty$, so setting $w_\eps:= U_\eps-u$, the comparison principle gives
    \[
      |w_\eps|\le C|\eps|\quad\text{in }Q.
    \]
    On the other hand, $w_\eps=\eps g$ and $\partial_tw_\eps=\eps\partial_tg$ on the lateral boundary.
Thus, since $U_\eps,u>0$ in a fixed neighborhood of $\supp g$, boundary caloric estimates give
    \[
        |w_\eps\partial_\nu w_\eps|
        +|\partial_tw_\eps\partial_\nu w_\eps|
        \le C\eps^2
        \qquad\text{on }\partial\Omega\times(0,T).
    \]
    Also, since the time derivative of a nonnegative monotone solution
    vanishes almost everywhere on its contact set,
    \[
    \begin{split}
        w_\eps\,\Heat w_\eps
          &=U_\eps\,\chi_{\{U_\eps>0,u=0\}}
            +u\,\chi_{\{u>0,U_\eps=0\}}\ge0,\\
        \partial_tw_\eps\,\Heat w_\eps
          &=\partial_tU_\eps\,\chi_{\{U_\eps>0,u=0\}}
            +\partial_tu\,\chi_{\{u>0,U_\eps=0\}}\ge0.
    \end{split}
    \]
    Hence, for a.e. $t\in(0,T)$,
    \[
        \frac d{dt}\int_\Omega w_\eps^2
        +2\int_\Omega|\nabla w_\eps|^2
        \le2\left|\int_{\partial\Omega}
             w_\eps\partial_\nu w_\eps\right|
        \le C\eps^2.
    \]
    Analogously, we can test the equation for $w_\eps$ against $\frac{w_\eps(t)-w_\eps(t-h)}h$, 
   integrate in time, let $h\to 0^+$, and use that $w_\eps(\cdot,0)=0$,
    $\partial_tw_\eps\Heat w_\eps\ge0$, and
    $|\partial_tw_\eps\partial_\nu w_\eps|\le C\eps^2$, to obtain
    \[
        2\int_0^s\!\int_\Omega|\partial_tw_\eps|^2
        +\int_\Omega|\nabla w_\eps(\cdot,s)|^2
        \le C\eps^2
        \qquad\text{for a.e. }s\in(0,T).
    \]
This proves that
    \begin{equation}\label{eq:global caccioppoli}
        \left\|\frac{U_\eps-u}{\eps}\right\|_{H^1(Q)}
        \le C(u,\Omega,T,g).
    \end{equation}
   By \eqref{eq:global caccioppoli}, up to a subsequence,
    $\frac{U_\eps-u}{\eps} \rightharpoonup\varphi$ in $H^1(Q)$ and
    $\frac{U_\eps-u}{\eps} \to\varphi$ in $L^2(Q)$. Note that, since $U_\eps\to u$ uniformly,
    $\Heat\bigl((U_\eps-u)/\eps\bigr)=0$ on every compact subset of $\{u>0\}\cap Q$
    for small $\eps$, so $\Heat\varphi=0$ in $\{u>0\}\cap Q$.
    Moreover, extending $u$ to 0 for $t<0$, we can choose $A>0$ large enough so that, for any $\delta>0$
    $u(\cdot,\cdot-A|\eps|)\le u+\eps g\le
    u(\cdot,\cdot+A|\eps|)$ on $\de\Omega\times(0,T-\delta)$ and $\Omega\times\{0\}$, for $|\eps|$ small. Thus, the comparison principle gives
    \[
        u(\cdot,\cdot-A|\eps|)
        \le U_\eps\le
        u(\cdot,\cdot+A|\eps|)\qquad\text{in }\Omega\times(0,T-\delta),
    \]
    so $C^1$ regularity in time implies
    \begin{equation}\label{eq:linearization pointwise bound}
       \left| \frac{U_\eps-u}{\eps}\right| \le A\partial_tu+o(1)\qquad\text{in }L^\infty(\Omega\times(\delta,T-\delta)),
    \end{equation}
  therefore $|\varphi|\le A\partial_tu$. In particular,
    $\varphi=0$ almost everywhere in $\{u=0\}$.
    Finally, the weak $H^1$ convergence also implies that the trace of $\varphi$ coincides with $g$, which proves existence.
    
    Testing the equation for the difference of two solutions with the
    difference itself gives uniqueness, so $\varphi=\varphi_{g,u}$
    and the whole family converges in $L^2(Q)$. In addition, uniqueness implies linearity.

   Finally, let $U_{\eps_k}$ denote the family
    just constructed. By \eqref{eq:comparison principle} and the parabolic
    maximum principle,
    \[
        \|u_k-U_{\eps_k}\|_{L^\infty(Q)}
        \le
        \|\operatorname{Tr}_{\partial_\pb Q}
          (u_k-U_{\eps_k})\|_{L^\infty(\partial_\pb Q)}
        =o(|\eps_k|),
    \]
    so the asserted $L^2$ convergence follows.
\end{proof}

\begin{remark}\label{remark:existence linear variant}
    Let $Q' = \Omega'\times(a,b)\ssubset Q$, with $\Omega'$ smooth, and let $f\in H^1_0(\Omega'\cap\{u(\cdot,a)>0\})$. Then the same proof of \Cref{lem:existence linear} gives a unique $\Psi\in H^1(Q')$ satisfying
    \[
        \begin{cases}
            \Heat\Psi=0 &\text{in }Q'\cap\{u>0\},\\
            \Psi=0 &\text{in }Q'\cap\{u=0\},\\
            \Psi=0 &\text{on }\partial\Omega'\times(a,b),\\
            \Psi(\cdot,a)=f &\text{in }\Omega'.
        \end{cases}
    \]
    Moreover, with $\|\cdot\|_2$ denoting the norm in $L^2(\Omega')$,
    \[
        \begin{split}
            \operatorname*{ess\,sup}_{a<t<b}\|\Psi(t)\|_2^2
            +\|\nabla_x\Psi\|_{L^2(Q')}^2
                &\le C\|f\|_2^2,\\
            \|\partial_t\Psi\|_{L^2(Q')}^2
            +\operatorname*{ess\,sup}_{a<t<b}\|\nabla_x\Psi(t)\|_2^2
                &\le C\|\nabla f\|_2^2.
        \end{split}
    \]
    Indeed, the only additional check is monotonicity in time of the auxiliary functions $U_\eps$, which follows since $\Delta U_\eps(\cdot,a) - \chi_{\{U_\eps(\cdot,a)>0\}} = \partial_tu+\eps \Delta f\ge0$ for $f$ compactly supported and $|\eps|$ small. The general statement follows by approximation.
\end{remark}

The next result shows the continuous
dependence of $\varphi_{g,u}$ with respect to $u$.

\begin{corollary}\label{lem:controlled background linearized compactness}
    Let $u_k$ and $u$ belong to $C^1_{x,t}(\overline Q)$ and solve \eqref{eq:stefan}--\eqref{eq:stefan-initial} in $Q$.
    Assume that $u_k\to u$ uniformly, fix $g$ satisfying \eqref{eq:conditions boundary data}, and assume that
    $\sup_k\|u_k\|_{C^1_{x,t}(\overline Q)}<+\infty$ and
    $\inf_k\inf_{\supp g}\partial_tu_k>0$.
    Then
    \[
        \varphi_{g,u_k}\to\varphi_{g,u}
        \quad\text{in }L^2(Q).
    \]
\end{corollary}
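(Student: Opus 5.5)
We argue by compactness and uniqueness: bound $\varphi_k\coloneqq\varphi_{g,u_k}$ uniformly in $H^1(Q)$, extract a limit, and show that every limit is a weak solution of \eqref{eq:linearized problem} for $u$ with data $g$. Uniqueness from \Cref{lem:existence linear} then forces $\varphi_{g,u_k}\to\varphi_{g,u}$ along the whole sequence.

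\textbf{Uniform bounds.} I would revisit the proof of \Cref{lem:existence linear} and track how its constant depends on $u$. The constant comes from three places:
\begin{itemize}[label=--]
\item the comparison bound $|U_\eps-u|\le|\eps|\|g\|_\infty$, which is independent of $u$;
\item the boundary caloric estimates for $w_\eps$ near $\supp g$, which need $U_\eps,u_k>0$ on a fixed neighborhood of $\supp g$;
\item the monotonicity of the data $u_k+\eps g$ for small $|\eps|$, which needs $\partial_tu_k\ge c>0$ on $\supp g$.
\end{itemize}
The positivity in the second item follows from $\inf_k\inf_{\supp g}\partial_tu_k>0$ together with the uniform $C^1$ bound. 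Indeed, this gives a uniform neighborhood $N$ of $\supp g$ in $\overline Q$ and $\delta>0$ with $u_k\ge\delta$ on $N$ for all $k$. The reason is that $u_k(x,t)\ge c(t-\tau_k(x))$, and $\partial_tu_k>0$ on $\supp g$ forces $\tau_k<t$ there with a uniform margin. The third item is the same hypothesis directly. Hence the range of admissible $\eps$ and the constant in \eqref{eq:global caccioppoli} are uniform in $k$, and $\sup_k\|\varphi_k\|_{H^1(Q)}<\infty$. Up to a subsequence, $\varphi_k\rightharpoonup\varphi$ in $H^1$ and $\varphi_k\to\varphi$ in $L^2$, and the trace of $\varphi$ is $g$.

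\textbf{Identifying the limit.} First, $\Heat\varphi=0$ in $\{u>0\}\cap Q$. On any compact $K\subset\{u>0\}\cap Q$, uniform convergence gives $u_k>0$ on $K$ for large $k$, so $\Heat\varphi_k=0$ there and the equation passes to the limit weakly. Second, $\varphi=0$ a.e. in $\{u=0\}$. Here I would use the pointwise bound \eqref{eq:linearization pointwise bound} with a uniform constant $A$, namely $|\varphi_k|\le A\partial_tu_k$ on $\Omega\times(\delta,T-\delta)$. The constant $A$ is uniform because it depends only on $\|g\|_{C^1}$ and the lower bound on $\partial_tu_k$ on $\supp g$. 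It then suffices to show $\partial_tu_k\to0$ in $L^1$ on $\{u=0\}$. Fix $(x,t)$ with $u(x,t)=0$. Then $u(x,s)=0$ for $s\le t$, so $u_k(x,s)\to0$ uniformly for $s\le t$. Integrating, $\int_{\{u=0\}}\partial_tu_k\le\int_\Omega u_k(x,\tau(x)\wedge T)\,dx\to0$, using $\partial_tu_k\ge0$ and Fubini along vertical lines (the set $\{u(x,\cdot)=0\}$ is an interval $[0,\tau(x)]$). Hence $\varphi_k\to0$ in $L^1(\{u=0\})$, and therefore $\varphi=0$ there.

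\textbf{Weak formulation (main obstacle).} It remains to check $\int_Q\nabla\varphi\cdot\nabla\zeta+\partial_t\varphi\,\zeta=0$ for every $\zeta\in H^1_0(Q)$ vanishing on $\{u=0\}$. Such a $\zeta$ need not vanish on $\{u_k=0\}$, and $\{u>0\}$ may be strictly larger than the limit of the sets $\{u_k>0\}$ near the free boundary. To handle this, I would approximate $\zeta$ in $H^1$ by test functions compactly supported in $\{u>0\}\cap Q$, for which the identity follows from the interior equation above. The free boundary is the graph $t=\tau(x)$, with $\tau$ smooth off the zero-capacity set $S$ of \Cref{lem:zero capacity singular set}. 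First, cut off near $\Sigma$ using capacity. Second, away from $\Sigma$, the free boundary is smooth and $\zeta$ vanishes on one side of it. Translating $\zeta$ forward in time, $\zeta_h(x,t)=\zeta(x,t-h)\psi(t)$, pushes the support into $\{u>0\}$ by the time monotonicity of $\{u(\cdot,t)>0\}$. This gives $H^1$ approximations with compact support in $\{u>0\}$. This density step is the delicate point, and I would expect it to reuse the same capacity-plus-monotonicity ingredients as the Runge argument. Once the identity holds, $\varphi$ is the weak solution, and uniqueness concludes the proof.
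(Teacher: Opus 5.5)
Your proposal is correct and follows the paper's proof. It uses $k$-uniform constants in \eqref{eq:global caccioppoli} and \eqref{eq:linearization pointwise bound}, gets vanishing of the limit on $\{u=0\}$ by integrating $\partial_tu_k$ over the initial intervals $\{u(x,\cdot)=0\}$, and concludes by uniqueness from \Cref{lem:existence linear}; you also spell out the density of test functions compactly supported in $\{u>0\}\cap Q$, which the paper compresses into ``it follows'' (forward time translation alone suffices there, since $\overline{O_{t-h}}\subset O_t$, so the capacity cutoff is unnecessary).

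One small slip: the uniform positivity of $u_k$ near $\supp g$ does not follow from the derivative lower bound and the $C^1$ bound via $u_k(x,t)\ge c(t-\tau_k(x))$, which the hypotheses do not give. It does follow at once from $u>0$ on the compact set $\supp g$ together with $u_k\to u$ uniformly.
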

\begin{proof}
    Equations \eqref{eq:global caccioppoli} and
    \eqref{eq:linearization pointwise bound} give, for some $A,L>0$
    independent of $k$,
    \[
        \sup_k\|\varphi_{g,u_k}\|_{H^1(Q)}<+\infty,\qquad
        |\varphi_{g,u_k}|\le A\partial_tu_k,\qquad
        0\le\partial_tu_k\le L.
    \]
    For a.e. $x$, the set
    $\{t:u(x,t)=0\}$ is an initial interval on which
    $0\le u_k\le\|u_k-u\|_{L^\infty(Q)}$, so integrating
    $\partial_tu_k$ in time gives
    \[
        \int_{\{u=0\}\cap Q}|\varphi_{g,u_k}|^2\le A^2\int_{\{u=0\}\cap Q}|\partial_tu_k|^2
        \le A^2L\int_{\{u=0\}\cap Q}\partial_tu_k
        \le C\|u_k-u\|_{L^\infty(Q)}\to0.
    \]
    It follows that any accumulation point of $\varphi_{g,u_k}$ is a weak solution to \eqref{eq:linearized problem} with background solution $u$ and boundary datum $g$. The result follows by the uniqueness part in \Cref{lem:existence linear}.
\end{proof}

\subsection{\texorpdfstring{$C^{3+\alpha}$}{} expansion at the maximal stratum}
We recall that $\sC_r=B_r\times(-r^2,0]$, while
$\Omega_R^\beta=\Omega_R^\beta(0,0)$ is defined in
\eqref{eq:supercritical paraboloids}.
After decreasing $\beta$ if necessary, we assume that $0<\beta<1$.
In this section we show a $C^{3+\alpha}$ expansion for solutions of
\eqref{eq:linearized problem} in a setting that can be applied also at
singular points in the maximal stratum. 

For this purpose, fix
$u\colon\Omega^\beta_1\cap\sC_1\to[0,+\infty)$ solving
\eqref{eq:stefan}, with $(0,0)$ a free boundary point, and satisfying
\begin{equation}\label{eq:cubic flatness}
    \|u-\tfrac12(x_n)_+^2\|_{L^\infty(\Omega^\beta_1\cap\sC_r)}
       \le Cr^3 \qquad \forall\, r \in (0,1).
\end{equation}
After a fixed initial parabolic rescaling,
\Cref{lem:smoothness close maximal stratum} gives a smooth
free-boundary graph near the origin and the estimates
\eqref{eq:linear background estimates}.
We also note that, thanks to \Cref{theorem:splitting at singular points}, at a singular point in the maximal stratum, after a fixed initial rescaling, this setting applies
separately to $u^+$ and (after reflection) to $u^-$.

Given $\alpha\in\R$, $v \in \R^n$ with $v\perp\bm e_n$, and a parabolically $2$-homogeneous
polynomial $Q$ satisfying $\Heat(x_nQ)\equiv0$, we write
\begin{equation}\label{eq:ansatz linearized problem}
    \upvarphi[\alpha,v,Q]
    \coloneqq(\alpha+x\cdot v+Q)\partial_nu
      -x_n\partial_vu.
\end{equation}
In the case of $u^-$, defining $I_-(x',x_n)\coloneqq(x',-x_n)$ and
$u_\sharp^-\coloneqq u^-\circ I_-$, it follows directly from
\eqref{eq:ansatz linearized problem} that
\begin{equation}\label{eq:lower branch ansatz dictionary}
    \upvarphi_{u_\sharp^-}[\alpha,v,Q]\circ I_-
      =-\upvarphi_{u^-}[\alpha,v,Q\circ I_-].
\end{equation}
The next proposition shows that solutions of \eqref{eq:linearized problem} admit
a cubic expansion of the form \eqref{eq:ansatz linearized problem}.
\begin{proposition}\label{prop:C3beta expansion linear}
    There is $C>0$ such that every weak solution $\varphi$ of
    \eqref{eq:linearized problem} in $\Omega^\beta_1\cap\sC_1$ admits a
    unique triple $(\alpha_*,v_*,Q_*)$ as above satisfying
    \[
        |\alpha_*|+|v_*|+\|Q_*\|_{L^2(\sC_1)}
        \le C\|\varphi\|_{L^2(\Omega^\beta_1\cap\sC_1)}
    \]
    and, for every $N\ge1$, there are $C_N,r_N>0$, independent of
    $\varphi$, such that
    \[
        \|(\varphi-\upvarphi[\alpha_*,v_*,Q_*])
              (r\cdot,r^2\cdot)\|_
           {L^2(\Omega^\beta_N\cap\sC_1)}
        \le C_N\|\varphi\|_{L^2(\Omega^\beta_1\cap\sC_1)}r^{3.9}
        \qquad \forall\,r\in(0,r_N).
    \]
\end{proposition}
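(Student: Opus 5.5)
The plan is to prove the expansion by a Campanato-type iteration in the $\Omega^\beta$ geometry: approximate $\varphi$ at each dyadic scale by a finite-dimensional family of \emph{model solutions}, and then sum the geometric errors.

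\textbf{Step 1: model solutions.} The family $\upvarphi[\alpha,v,Q]$ has a geometric meaning. The functions $\partial_n u$ and $x_n\partial_vu - x_v\partial_nu$ are the first variations of $u$ under normal translations and rotations. The term $Q\,\partial_nu$ is the linearization of a quadratic deformation of the free boundary, coming from a perturbation of the Ansatz $\tfrac12\sA^2$ in \Cref{definition:polynomial ansatz}.

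\begin{itemize}
\item I will first check that $\Heat\upvarphi[\alpha,v,Q]=O(|(x,t)|_\pb^{2})$ in $\{u>0\}$ and that it vanishes on the free boundary to the appropriate order. For this I use $\Heat\partial_nu=0$, the estimates \eqref{eq:linear background estimates}, and $\Heat(x_nQ)=0$.
\item For the flat profile $u_0=\tfrac12(x_n)_+^2$, the model is $\upvarphi=(\alpha+x\cdot v+Q)x_n$ in $\{x_n>0\}$. These are exactly the caloric polynomials vanishing on $\{x_n=0\}$ of parabolic degree $\le3$.
\end{itemize}

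\textbf{Step 2: Liouville theorem.} I will prove: if $\psi$ is caloric in $\{x_n>0\}$, vanishes on $\{x_n=0\}$, and grows at most like $|(x,t)|_\pb^{3.9}$ in the supercritical region $\Omega^\beta_\infty\cap\{t\le0\}$, then $\psi=x_nP$ with $P$ of parabolic degree $\le2$.

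Rather than odd reflection across $\{x_n=0\}$, I will work directly in $\{x_n>0\}$. The homogeneous caloric functions there vanishing on the boundary are $x_n$ times Hermite-type polynomials, whose degrees are integers. Hence there is a gap between degree $3$ and degree $4$, which is why the exponent $3.9<4$ works. The supercritical domain $\Omega^\beta$ only sees the backward region with $|x|\lesssim|t|^{1/(2+\beta)}$, which contains every parabolic cylinder after rescaling. So an $L^2$ growth bound there plus interior estimates still controls $\psi$ on $\sC_R$ for large $R$. This is where the enlarged domains $\Omega^\beta_N$ in the statement come from.

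\textbf{Step 3: improvement of the error at dyadic scales.} I will argue by compactness and contradiction. Suppose that, after subtracting the best model $\upvarphi[\alpha_k,v_k,Q_k]$ at scale $r_k$, the normalized remainder does not improve by a factor $2^{-3.9}$ at scale $r_k/2$. Rescaling $u$ to $u(r\cdot,r^2\cdot)/r^2\to\tfrac12(x_n)_+^2$, with a cubic rate by \eqref{eq:cubic flatness}, I extract a limit. The limit is caloric in $\{x_n>0\}$, vanishes on $\{x_n=0\}$, is orthogonal to the degree-$\le3$ models, and has controlled growth. By Step 2 it is zero, a contradiction.

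\begin{itemize}
\item \emph{Compactness.} Weak solutions are $H^1$-controlled via a localized Caccioppoli estimate, as in \Cref{remark:existence linear variant}. Boundary Harnack or boundary Schauder estimates near the smooth graph from \Cref{lem:smoothness close maximal stratum} give uniform convergence up to the boundary.
\item \emph{Curvature error.} The discrepancy between $\upvarphi$ built from $u$ and the flat model is $O(r)$ relative to the scale, by \eqref{eq:linear background estimates}. This perturbation is summable.
\end{itemize}

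\textbf{Step 4: summation.} Iterating Step 3 gives coefficients $(\alpha_k,v_k,Q_k)$ whose increments decay geometrically, namely
\[
|\alpha_{k+1}-\alpha_k|\lesssim 2^{-3.9k},\qquad |v_{k+1}-v_k|\lesssim 2^{-2.9k},\qquad \|Q_{k+1}-Q_k\|\lesssim 2^{-1.9k},
\]
each multiplied by $\|\varphi\|_{L^2}$. The coefficients therefore converge to $(\alpha_*,v_*,Q_*)$ with the stated bound. Uniqueness follows because two expansions would differ by a model of size $o(r^3)$, which forces all coefficients to vanish by homogeneity.

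\textbf{Main obstacle.} I expect the hardest part to be handling the supercritical domain together with the fact that $\upvarphi$ is built from $u$ rather than from the flat profile. The remainder $\Heat\upvarphi$ and the mismatch of the free boundary with $\{x_n=0\}$ must be shown to be of relative order $r^{\beta'}$. My plan is to express everything through the graph $f$, using $f=O(|(x',t)|_\pb^2)$, together with the higher-order expansion in \Cref{theorem:smooth expansion}.
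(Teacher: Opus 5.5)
Your outline follows the paper's route. It has model solutions $\upvarphi[\alpha,v,Q]$ with $\Heat\upvarphi=O(|(x,t)|_\pb^2)$, and a compactness argument whose limit is caloric in $\{x_n>0\}$ and vanishes on $\{x_n=0\}$. It then uses a Liouville theorem with the gap between parabolic degrees $3$ and $4$ (your half-space Hermite classification is equivalent to the paper's odd reflection), and ends with dyadic summation.

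\textbf{The gap is in Step~3.} You normalize by the best-fit remainder $E(r_k)$ at the single scale $r_k$ and try to show it shrinks by $2^{-3.9}$ at scale $r_k/2$. That one-scale ratio improvement cannot come out of Step~2. With this normalization the blow-up limit has no upper growth bound at scales $2r_k,4r_k,\dots$: earlier ratio improvements only bound the larger-scale errors from \emph{below} in terms of $E(r_k)$. So the Liouville theorem has nothing to act on.

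The ratio bound is in fact not true in general when best fits are taken in $L^2$ of a bounded rescaled domain, as in the statement. This already fails for exact solutions of the flat problem, because solutions of different homogeneity are not orthogonal there. A combination $p_4+\lambda p_6$ of homogeneous solutions whose residuals are strongly correlated can nearly cancel at scale $1$ without cancelling at scale $1/2$.

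What is missing is a fixed-rate, multi-scale induction. The paper works as follows:
\begin{enumerate}
\item It assumes the bound $\eta r_m^{3.9}$ at $M+1$ consecutive dyadic scales, see \eqref{eq:linear finite scale hypothesis}.
\item It re-centers all of them on one model via \Cref{lem:ansatz linear vs parameters}.
\item This yields the growth bound \eqref{eq:assumption linear key lemma 1} up to scale $2^\Lambda$, with $\Lambda\to\infty$ along the contradiction sequence.
\item \Cref{lem:improvement linear key lemma} then shows that the same rate $\eta r^{3.9}$ propagates to the next scale for a Gaussian-weighted best fit.
\end{enumerate}
Normalizing instead by $\sup_{\rho\ge r}\rho^{-3.9}E(\rho)$ would serve the same purpose.

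\textbf{Smaller points.} Your justification in Step~2 is wrong at large scales. A fixed cusp $\Omega^\beta_R$ is \emph{thinner} than parabolic cylinders there, since $|t|^{1/(2+\beta)}\ll|t|^{1/2}$. A Liouville theorem with growth only on such a cusp is false: $x_ne^{x_1+t}$ is caloric, vanishes on $\{x_n=0\}$, and is $O(|(x,t)|_\pb)$ in $\Omega^\beta_R\cap\{t\le0\}$. The argument works for two reasons. The growth is imposed on the self-similar family $S_{2^m}(\Omega^\beta_R\cap\sC_1)$, and $R\to\infty$; equivalently $S_r^{-1}(\Omega^\beta_1)=\Omega^\beta_{r^{-\gamma_\beta}}$ exhausts $\R^n\times(-\infty,0)$. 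So the limit is an ancient solution with growth on full cylinders.

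Strong $L^2$ compactness also needs $\partial_t\varphi\in L^2$ on cusps. The paper obtains this in \Cref{lem:caccioppoli linear} by testing with backward difference quotients, which are admissible because $\partial_tu\ge0$; boundary Schauder estimates are not needed.

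Finally, since $\partial_nu(r\cdot,r^2\cdot)\approx rx_n$, the increments in Step~4 for $\alpha,v,Q$ are $2^{-2.9k},2^{-1.9k},2^{-0.9k}$, not $2^{-3.9k},2^{-2.9k},2^{-1.9k}$. This is harmless, since they are still summable.
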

To prove the proposition, we need some preliminary results.
First, we estimate how far $\upvarphi[\alpha,v,Q]$ is from solving
\eqref{eq:linearized problem}.

\begin{lemma}\label{lem:pde linear ansatz}
    Suppose $|\alpha|+|v|+\|Q\|_{L^2(\sC_1)}\le1$. Then
    \[
        |\Heat\upvarphi[\alpha,v,Q]|
            \le C|x,t|_\pb^2\quad\text{ in }\{u>0\}\cap \Omega^\beta_1\cap\sC_1
    \]
    for some
    $C=C(n,\beta,u)$. Also,
$\upvarphi[\alpha,v,Q]$ vanishes inside $\{u=0\}$.
\end{lemma}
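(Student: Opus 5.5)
We compute $\Heat\upvarphi[\alpha,v,Q]$ directly in $\{u>0\}$, where $\Heat u=1$ and $u$ is smooth. Since $\upvarphi$ is linear in $(\alpha,v,Q)$, we treat the three pieces separately.

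\emph{The $\alpha$ and $v$ pieces.} Differentiating $\Heat u=1$ in $\{u>0\}$ gives $\Heat\partial_nu=0$ and $\Heat\partial_vu=0$ there. The $\alpha$-term is therefore exactly caloric. For the $v$-terms we use the product rule $\Heat(fg)=f\Heat g+g\Heat f+2\nabla f\cdot\nabla g$ (recall $\Heat$ includes $-\partial_t$). This gives
\[
    \Heat\big((x\cdot v)\partial_nu-x_n\partial_vu\big)
    =2v\cdot\nabla\partial_nu-2\bm e_n\cdot\nabla\partial_vu
    =2\partial_v\partial_nu-2\partial_n\partial_vu=0.
\]
This reflects the fact that these terms are the infinitesimal rotation in the $(v,\bm e_n)$-plane, applied to $u$ and truncated consistently. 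So the $\alpha$- and $v$-parts are exactly caloric in $\{u>0\}$.

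\emph{The $Q$ piece.} Here
\[
    \Heat(Q\partial_nu)=(\Heat Q)\partial_nu+2\nabla Q\cdot\nabla\partial_nu .
\]
By \eqref{eq:linear background estimates} we may write $\partial_nu=x_n+E_1$ with $|E_1|\le C|(x,t)|_\pb^2$, and $\nabla\partial_nu=\bm e_n+E_2$ with $|E_2|\le C|(x,t)|_\pb$. Substituting,
\[
    \Heat(Q\partial_nu)=x_n\Heat Q+2\partial_nQ+(\Heat Q)E_1+2\nabla Q\cdot E_2 .
\]
The leading part is $x_n\Heat Q+2\partial_nQ=\Heat(x_nQ)=0$ by assumption. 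For the errors, $Q$ is parabolically $2$-homogeneous with $\|Q\|_{L^2(\sC_1)}\le1$, so $\Heat Q$ is a bounded constant and $|\nabla Q|\le C|(x,t)|_\pb$. Hence both error terms are $O(|(x,t)|_\pb^2)$, which is the claimed bound. The constant depends on $u$ only through \eqref{eq:linear background estimates}. The only delicate point is applying those gradient and Hessian estimates uniformly up to the free boundary in $\Omega^\beta_1\cap\sC_1$; \Cref{lem:smoothness close maximal stratum} supplies exactly this, after the fixed initial rescaling.

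\emph{Vanishing.} In $\{u=0\}$, which by \Cref{lem:smoothness close maximal stratum} is the region $\{x_n\le f\}$, we have $u\equiv0$. Since $u\in C^{1}$, $\nabla u=0$ there as well, including on the boundary of the contact set. Hence $\partial_nu=\partial_vu=0$ there, and $\upvarphi[\alpha,v,Q]=0$ in $\{u=0\}$.
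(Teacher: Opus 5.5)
Your proof is correct and follows the paper's argument. The $\alpha$- and $v$-terms are exactly caloric in $\{u>0\}$; you check the rotational term by an explicit product-rule computation, whereas the paper just notes it is an infinitesimal rotation. The $Q$-term is handled by the same expansions $\partial_nu=x_n+O(|(x,t)|_\pb^2)$ and $\nabla\partial_nu=\bm e_n+O(|(x,t)|_\pb)$ together with $\Heat(x_nQ)=0$, and vanishing on the contact set follows from $u=\nabla u=0$ there, exactly as in the paper.
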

\begin{proof}
    First note that the functions $\alpha\partial_nu$ and
    $(v\cdot x)\partial_nu-x_n\partial_vu$ solve
    \eqref{eq:linearized problem}  (the second one is the derivative
    generated by a spatial rotation). So it remains to consider
    $Q\partial_nu$. In the positive phase,
    \[
        \Heat(Q\partial_nu)
        =\partial_nu\,\Heat Q
          +2\nabla\partial_nu\cdot\nabla Q
          +Q\Heat\partial_nu.
    \]
    Also, by \eqref{eq:linear background estimates},
    \[
        \partial_nu=x_n+O(|x,t|_\pb^2),\qquad
        \nabla\partial_nu=\bm e_n+O(|x,t|_\pb),\qquad
        \Heat\partial_nu=0.
    \]
    Since $x_n\Heat Q+2\partial_nQ=\Heat(x_nQ)=0$, the first assertion
    follows. 
    The second assertion is immediate, since $u=\nabla u=0$ on the contact set.
    \end{proof}

The next lemma controls the difference between the coefficients
of two Ansätze by their $L^2$ distance at each scale.
\begin{lemma}\label{lem:ansatz linear vs parameters}
    Set
    \[
        K\coloneqq B_{1/4}\times(-3/4,-1/2)\cap\{x_n>1/100\}.
    \]
    Given two triples $(\alpha_i,v_i,Q_i)$ as above,
    set $\upvarphi_i\coloneqq\upvarphi[\alpha_i,v_i,Q_i]$ for $i=1,2$, and put
    \[
       \mathcal N_r\coloneqq|\alpha_1-\alpha_2|+r|v_1-v_2|
          +r^2\|Q_1-Q_2\|_{L^2(\sC_1)}.
    \]
    There are $c,C,r_0>0$ such that
    \[
       cr\mathcal N_r\le
       \|(\upvarphi_1-\upvarphi_2)(r\cdot,r^2\cdot)\|_{L^2(K)}
       \le Cr\mathcal N_r
       \qquad \forall\,r\in(0,r_0).
    \]
\end{lemma}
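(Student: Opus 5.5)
By linearity of $(\alpha,v,Q)\mapsto\upvarphi[\alpha,v,Q]$, it suffices to treat a single triple $(\alpha,v,Q)\coloneqq(\alpha_1-\alpha_2,v_1-v_2,Q_1-Q_2)$. For it we must show $\|\upvarphi[\alpha,v,Q](r\cdot,r^2\cdot)\|_{L^2(K)}\simeq r\mathcal N_r$, where $\mathcal N_r=|\alpha|+r|v|+r^2\|Q\|_{L^2(\sC_1)}$. The plan is to compare $\upvarphi$ at scale $r$ with an explicit model obtained by replacing $u$ by its blow-up $\tfrac12(x_n)_+^2$. The model is exactly homogeneous, so the comparison reduces to a finite-dimensional norm-equivalence statement.

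\textbf{Step 1: model function.} On $K$ we have $x_n>1/100$. Since the free boundary is the graph $\{x_n=f(x',t)\}$ with $f=O(|(x',t)|_\pb^2)$ (\Cref{lem:smoothness close maximal stratum}), the rescaled set $\{(rx,r^2t):(x,t)\in K\}$ lies in $\{u>0\}\cap\Omega^\beta_1\cap\sC_1$ for $r<r_0$. There \eqref{eq:linear background estimates} gives
\[
\partial_nu(rx,r^2t)=rx_n+O(r^2),\qquad \nabla u(rx,r^2t)=rx_n\bm e_n+O(r^2).
\]
Because $v\perp\bm e_n$, the second estimate gives $\partial_vu(rx,r^2t)=O(r^2|v|)$. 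Using the homogeneity of $Q$, we get on $K$
\[
\upvarphi(rx,r^2t)=r\bigl(\alpha+r\,x\cdot v+r^2Q(x,t)\bigr)x_n+E_r,\qquad |E_r|\le Cr^2\bigl(|\alpha|+r|v|+r^2\|Q\|_{L^2(\sC_1)}\bigr)=Cr^2\mathcal N_r,
\]
where $Q$ is bounded on $K$ by $C\|Q\|_{L^2(\sC_1)}$ by norm equivalence. The term $-x_n\partial_vu$ contributes $r x_n\,O(r^2|v|)=O(r^3|v|)$, which is absorbed into $E_r$.

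\textbf{Step 2: norm equivalence for the model.} Set $\tilde\alpha=\alpha$, $\tilde v=rv$, $\tilde Q=r^2Q$, so that the main term equals $r\,x_n(\tilde\alpha+x\cdot\tilde v+\tilde Q)$. The map
\[
(\tilde\alpha,\tilde v,\tilde Q)\mapsto x_n(\tilde\alpha+x\cdot\tilde v+\tilde Q)|_K
\]
is linear on a finite-dimensional space. It is injective, because $K$ is open and $\tilde\alpha+x\cdot\tilde v+\tilde Q$ is a polynomial whose components have distinct parabolic degrees $0,1,2$; its vanishing on an open set forces all three components to vanish. Hence its $L^2(K)$ norm is comparable to $|\tilde\alpha|+|\tilde v|+\|\tilde Q\|_{L^2(\sC_1)}=\mathcal N_r$, and the main term has norm comparable to $r\mathcal N_r$.

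\textbf{Step 3: conclusion.} Combining the two steps,
\[
\bigl|\|\upvarphi(r\cdot,r^2\cdot)\|_{L^2(K)}-r\,\|x_n(\tilde\alpha+x\cdot\tilde v+\tilde Q)\|_{L^2(K)}\bigr|\le Cr^2\mathcal N_r.
\]
Choosing $r_0$ small enough that $Cr_0$ is below half the lower equivalence constant gives both inequalities.

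The only delicate point is the uniformity in Step 1. The error $E_r$ must be relative, i.e.\ $O(r^2\mathcal N_r)$ and not $O(r^2)$, and the rescaled $K$ must stay inside the positive phase, away from the free boundary, where \eqref{eq:linear background estimates} holds. Both follow because $K$ sits at a fixed positive height in $x_n$ and inside $\Omega^\beta_1$ after rescaling, since $t\le-1/2$ and $|x|\le1/4$ give $|x|<(-t)^{1/(2+\beta)}$.
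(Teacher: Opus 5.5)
Your proposal is correct and follows essentially the same route as the paper: you use \eqref{eq:linear background estimates} to replace $\nabla u(r\cdot,r^2\cdot)$ by $rx_n\bm e_n$ with a relative error $O(r^2\mathcal N_r)$. You then conclude by finite-dimensional norm equivalence, absorbing the error for small $r$. The only cosmetic difference is that you check positivity on the rescaled $K$ via the free-boundary graph, whereas the paper reads it off directly from \eqref{eq:cubic flatness}.
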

\begin{proof}
    By \eqref{eq:cubic flatness}, $u(rx,r^2t)>0$ for every $(x,t)\in K$
    and all sufficiently small $r$. Moreover,
    \eqref{eq:linear background estimates} gives
    \[
        |\nabla u(rx,r^2t)-rx_n\bm e_n|\le Cr^2
        \qquad\text{in }K,
    \]
    therefore
    \[
      (\upvarphi_1-\upvarphi_2)(rx,r^2t)
       =rx_n\bigl((\alpha_1-\alpha_2)
          +rx\cdot(v_1-v_2)+r^2(Q_1-Q_2)\bigr)+E_r,
      \qquad \|E_r\|_{L^2(K)}\le Cr^2\mathcal N_r.
    \]
    Thus, finite-dimensional norm equivalence for polynomial norms, together with the fact that $\|E_r\|_{L^2(K)} \ll r \mathcal N_r$, proves both
    bounds.
\end{proof}

We next establish the energy estimate in $\Omega_R^\beta$
needed for compactness. For convenience, we define the parameter
\begin{equation}\label{eq:cusp scaling exponent}
    \gamma_\beta\coloneqq\frac{\beta}{2+\beta}.
\end{equation}
\begin{lemma}\label{lem:caccioppoli linear}
    There are $C,R_0>0$, depending only on $n$ and $\beta$, such that the
    following holds. Let $R\ge R_0$, and suppose that $u$ solves
    \eqref{eq:stefan} in $\Omega_R^\beta\cap\sC_1$. Let
    $\varphi\in H^1(\Omega_R^\beta\cap\sC_1)$ satisfy
    \[
        \Heat\varphi=f\quad\text{in }\{u>0\},
        \qquad \varphi=0\quad\text{in }\{u=0\},
    \]
    where $f\in L^\infty(\{u>0\}\cap\Omega_R^\beta\cap\sC_1)$. Then
    \[
        \int_{\Omega_{2^{-\gamma_\beta}R}^\beta\cap\sC_{1/2}}
            \left(|\nabla\varphi|^2
                 +|\partial_t\varphi|^2\right)
        \le C\bigg(
            \int_{\Omega_R^\beta\cap\sC_1}\varphi^2
              +\|f\|_\infty^2\bigg).
    \]
\end{lemma}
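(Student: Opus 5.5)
The estimate is a Caccioppoli inequality for the heat equation, localized to the cusp-shaped region $\Omega^\beta_R$. The only new feature is that the vanishing of $\varphi$ on $\{u=0\}$ lets us test the equation with functions that are not zero on the contact set.

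\textbf{Cutoff.} We build a cutoff $\eta(x,t)$ adapted to the cusp geometry: $\eta\equiv1$ on $\Omega^\beta_{2^{-\gamma_\beta}R}\cap\sC_{1/2}$ and $\supp\eta\subset\Omega^\beta_{R}\cap\sC_1$, away from the lateral boundary of both. A natural choice is
\[
\eta=\chi\bigl(|x|/(R(-t)^{1/(2+\beta)})\bigr)\,\psi(x,t),
\]
with $\psi$ a standard cutoff for $\sC_1$ equal to $1$ on $\sC_{1/2}$. We must check $|\nabla\eta|^2+|\partial_t\eta|\le C$ on the support. On the support $|x|\le 1$, so $R(-t)^{1/(2+\beta)}\gtrsim|x|$. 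The scaling $\gamma_\beta$ and the threshold $R\ge R_0$ are designed so that the gap between the two paraboloids is bounded below by a fixed amount in the relevant range of $t$. Near $t=0$ the region degenerates, but there $|x|\lesssim R|t|^{1/(2+\beta)}$, and we should argue the weights stay bounded. I expect this step to be the main obstacle. What I would try first: if the naive cutoff has gradient of order $|t|^{-1/(2+\beta)}$ near the tip, absorb it using $\varphi=0$ there, since near the tip of the cusp the region is contained in $\{u=0\}$ or lies in the regular part. Alternatively, use that $2^{-\gamma_\beta}R$ versus $R$ corresponds exactly to comparing $\Omega^\beta_R$ at times $t$ and $t/2$-type rescalings. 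If that also fails, take the cutoff depending on $t$ only through the time window and on $x$ through a fixed-width annulus, which works because $R\ge R_0$ makes the cusp contain a full cylinder $B_{1}\times(-1,-c]$ together with a fixed neighbourhood.

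\textbf{Gradient estimate.} Test the equation with $\zeta=\varphi\eta^2$. This is admissible even though $\varphi$ is only known to solve the equation in $\{u>0\}$, because $\zeta$ vanishes a.e. on $\{u=0\}$, as in the weak formulation of \eqref{eq:linearized problem}. Integrating by parts and using Young's inequality gives
\[
\sup_t\int\varphi^2\eta^2+\int|\nabla\varphi|^2\eta^2\le C\int\varphi^2(|\nabla\eta|^2+|\partial_t\eta|)+C\int|f||\varphi|\eta^2 .
\]
The last term is bounded by $\|f\|_\infty^2+\int\varphi^2$.

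\textbf{Time-derivative estimate.} Test with $\partial_t\varphi\,\eta^2$, justified by difference quotients as in the proof of \Cref{lem:existence linear}. It vanishes a.e. on $\{u=0\}$ because $\varphi\equiv0$ there and $\{u=0\}$ is monotone in time. This gives
\[
\int|\partial_t\varphi|^2\eta^2+\sup_t\int|\nabla\varphi|^2\eta^2\le C\int|\nabla\varphi|^2(|\nabla\eta|^2+|\partial_t\eta|)+C\|f\|_\infty^2 .
\]
This requires a slightly larger cutoff. We run the gradient estimate first on an intermediate region, for instance $\Omega^\beta_{2^{-\gamma_\beta/2}R}\cap\sC_{3/4}$, and then the time-derivative estimate with a nested cutoff. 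Combining the two bounds proves the lemma.
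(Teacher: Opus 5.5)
You have the right test functions. Testing with $\varphi\eta^2$, and via backward difference quotients with $\partial_t\varphi\,\eta^2$, is exactly how the paper obtains the weak inequalities $\varphi\Heat\varphi\ge-\|f\|_\infty|\varphi|$ and $\partial_t\varphi\Heat\varphi\ge-\|f\|_\infty|\partial_t\varphi|$. The paper then defers to the proof of \cite[Lemma 13.7]{FRS} for the cusp geometry. That cusp estimate is the step you flag as the main obstacle, and it is the actual content of the lemma; your proposal leaves it open.

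\textbf{Why your cutoff requirement fails.} The condition $|\nabla\eta|^2+|\partial_t\eta|\le C$ cannot be met. The region $\Omega^\beta_{2^{-\gamma_\beta}R}\cap\sC_{1/2}$ reaches the tip $(0,0)$. Set $g(t)\coloneqq R(-t)^{1/(2+\beta)}$. For $t$ close to $0$, any admissible cutoff must drop from $1$ to $0$ across the shell $2^{-\gamma_\beta}g(t)\le|x|\le g(t)$, so $|\nabla\eta|\gtrsim R^{-1}(-t)^{-1/(2+\beta)}$ there. At fixed $x$ it must also go from $1$ to $0$ within a time interval of length comparable to $-t$, so $|\partial_t\eta|\gtrsim(-t)^{-1}$ somewhere. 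In particular, the gap between the two cusps is not bounded below, contrary to what you assert. Hence neither $\int\varphi^2|\nabla\eta|^2$ nor $\int\varphi^2|\partial_t\eta|$ is controlled by $\int\varphi^2$. The same problem appears in your second estimate, with $|\nabla\varphi|^2$ in place of $\varphi^2$.

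\textbf{Why the fallbacks fail.}
\begin{enumerate}
\item Nothing in the lemma places the tip region in $\{u=0\}$, and in the applications both phases reach the tip.
\item A fixed-width annulus cannot separate the two cusps as $t\to0$.
\item Rescaling dyadic time slabs and summing ordinary Caccioppoli inequalities costs a factor $4^k$ on the $k$-th slab, coming from the time cutoffs.
\end{enumerate}
Your argument also never genuinely uses $R\ge R_0$, which should have been a warning sign.

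\textbf{The missing idea.} Keep $\eta\,\partial_t\eta$ with its sign instead of bounding it by $|\partial_t\eta|$. Take $\eta=\chi(\sigma)$ with $\sigma\coloneqq(1-|x|/g(t))/(1-2^{-\gamma_\beta})$ and $\chi$ nondecreasing. Since $\Omega^\beta_R$ shrinks as $t\uparrow0$, this gives $\partial_t\eta\le0$. On the shell $\{0<\sigma<1\}$ a direct computation gives
\[
    |\partial_t\sigma|\ge\Lambda(t)\,|\nabla\sigma|^2,
    \qquad \Lambda(t)\coloneqq c_\beta R^2(-t)^{-\beta/(2+\beta)},
\]
that is, the cusp boundary moves much faster than the parabolic scale. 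Testing with $\varphi\eta^2$ produces the weight
\[
2|\nabla\eta|^2+\eta\,\partial_t\eta=\chi'\bigl(2\chi'|\nabla\sigma|^2-\chi|\partial_t\sigma|\bigr),
\]
which is $\le0$ wherever $\chi'/\chi\le\Lambda/2$.

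Now choose $\chi(\sigma)=\sigma^k$ near $\sigma=0$, with $k\ge1+1/\beta$.
\begin{itemize}
\item On the thin layer $\{\sigma<2k/\Lambda(t)\}$, the weight is at most $2k^2(2k/\Lambda)^{2k-2}|\nabla\sigma|^2\le C_kR^{2-4k}(-t)^{((2k-2)\beta-2)/(2+\beta)}\le C$.
\item On the part where $\chi$ rises to $1$, $\chi'/\chi$ is bounded, so the weight is nonpositive as soon as $R\ge R_0$. This is where the hypothesis $R\ge R_0$ enters.
\end{itemize}
Multiply this $\eta$ by a standard cutoff away from the tip. Your gradient estimate then holds. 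The same weight multiplies $|\nabla\varphi|^2$ when you test with $\partial_t\varphi\,\eta^2$, so your nested-cutoff scheme then closes the argument.
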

\begin{proof}
    Testing the weak equation with $\eta\varphi$ for a nonnegative
    compactly supported cutoff $\eta$ gives, in the weak sense,
        \begin{equation}\label{eq:energy1}
        \varphi\Heat\varphi\ge-\|f\|_\infty|\varphi|.
    \end{equation}
    Since $\partial_tu\ge0$ we have
    $\{u(\cdot,t-h)>0\}\subset\{u(\cdot,t)>0\}$ for $h>0$, so
    \[
        \eta(x,t)\frac{\varphi(x,t)-\varphi(x,t-h)}{h}
    \]
    is an admissible test function; integrating in time and letting
    $h\downarrow0$ gives, again in the weak sense,
        \begin{equation}\label{eq:energy2}
        \partial_t\varphi\Heat\varphi
        \ge-\|f\|_\infty|\partial_t\varphi|.
    \end{equation}
    Using \eqref{eq:energy1} and \eqref{eq:energy2}, we obtain the desired
    estimate by applying the proof of \cite[Lemma 13.7]{FRS}.
\end{proof}

We denote by $G$ the backward fundamental solution of the heat equation: 
\begin{equation}\label{eq:gaussian kernels}
    G(x,t)\coloneqq\frac1{(4\pi|t|)^{n/2}}
        \exp\bigg(\frac{|x|^2}{4t}\bigg),\qquad t<0.
\end{equation}

The following is the compactness lemma used in the iteration. If $b$ is
a background solution, $\upvarphi_b[\alpha,v,Q]$ denotes the expression in
\eqref{eq:ansatz linearized problem} with $u$ replaced by $b$.

\begin{lemma}\label{lem:improvement linear key lemma}
    Given $C_0,L>1$, there is an integer $\Lambda\ge8$, depending only
    on $n,\beta,C_0,L$, with the following property. Let
    $R\ge\Lambda$, and let $\bar u$ be a solution of
    \eqref{eq:stefan} in $ \Omega_{2^{\gamma_\beta\Lambda}R}^\beta\cap\sC_{2^\Lambda}
$
    such that
    \[
        \left\|\bar u-\tfrac12(x_n)_+^2\right\|_
        {L^2(B_\Lambda\times(-2,-1))}\le\Lambda^{-1}.
    \]
    Suppose that
    $w\in H^1_{\rm loc}
        \bigl(\Omega_{2^{\gamma_\beta\Lambda}R}^\beta
                    \cap\sC_{2^\Lambda}\bigr)$.
    Set $w_r(x,t)\coloneqq w(rx,r^2t)$ for $r>0$. Assume that, for every
    $m\in\{0,\ldots,\Lambda\}$,
    \begin{equation}\label{eq:assumption linear key lemma 1}
        \left(\int_{\Omega_R^\beta\cap\sC_1}
        \bigl(|w_{2^m}|^2+|\nabla w_{2^m}|^2
                  +|\partial_tw_{2^m}|^2\bigr)\right)^{1/2}
        \le C_0(2^m)^{3.9}.
    \end{equation}
    Assume also that
    \begin{equation}\label{eq:assumption linear key lemma 2}
        \begin{cases}
        |\Heat w|\le\Lambda^{-1}
          &\text{in }\Omega_R^\beta\cap
          (B_\Lambda\times(-\Lambda,-\Lambda^{-1}))
          \cap\{x_n\ge\Lambda^{-1}\},\\
        w=0
          &\text{in }\Omega_R^\beta\cap
          (B_\Lambda\times(-\Lambda,-\Lambda^{-1}))
          \cap\{x_n\le-\Lambda^{-1}\}.
        \end{cases}
    \end{equation}
    Finally, suppose that
    \begin{equation}\label{eq:assumption linear key lemma 3}
        \left|\int_{B_\Lambda\times(-2,-1)}
        w\,\upvarphi_{\bar u}[\widetilde\alpha,\widetilde v,\widetilde Q]
        G\right|\le\Lambda^{-1}
    \end{equation}
    whenever $\widetilde v\perp\bm e_n$, $\widetilde Q$ is
    parabolically $2$-homogeneous,
    $\Heat(x_n\widetilde Q)=0$, and
$
        |\widetilde\alpha|+|\widetilde v|
           +\|\widetilde Q\|_{L^2(\sC_1)}\le1.
$
    Then
    \[
        \|w_{1/2}\|_{L^2(\Omega_R^\beta\cap\sC_1)}\le L^{-1}.
    \]
\end{lemma}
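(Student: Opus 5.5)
We argue by contradiction and compactness, in the standard way for improvement-of-flatness lemmas. Fix $C_0,L$ and suppose the statement fails for every $\Lambda$. Then we obtain sequences $\Lambda_k\to\infty$, $R_k\ge\Lambda_k$, backgrounds $\bar u_k$ and functions $w_k$ satisfying \eqref{eq:assumption linear key lemma 1}--\eqref{eq:assumption linear key lemma 3} with $\Lambda=\Lambda_k$, and yet
\[
\|(w_k)_{1/2}\|_{L^2(\Omega_{R_k}^\beta\cap\sC_1)}>L^{-1}.
\]

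\textbf{Compactness.} Rescaling \eqref{eq:assumption linear key lemma 1} by $2^m$ gives uniform $H^1$ bounds for $w_k$ on $\Omega^\beta_{2^{\gamma_\beta m}R_k}\cap\sC_{2^m}$ for every fixed $m$. The parabolic rescaling maps $\Omega^\beta_R$ into $\Omega^\beta_{2^{\gamma_\beta m}R}$, and since $R_k\to\infty$ these domains exhaust $\R^n\times(-\infty,0]$. So, up to a subsequence, $w_k\rightharpoonup w_\infty$ in $H^1_{\rm loc}(\R^n\times(-\infty,0))$ and $w_k\to w_\infty$ in $L^2_{\rm loc}$.

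\textbf{Growth of the limit.} The growth bound passes to the limit in the form
\[
\|w_\infty(2^m\cdot,4^m\cdot)\|_{L^2(\sC_1)}\le C_02^{3.9m}\qquad\text{for all }m\ge0.
\]

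\textbf{Equation of the limit.} Since $\bar u_k\to\frac12(x_n)_+^2$ in $L^2(B_\Lambda\times(-2,-1))$, and using the monotonicity in time together with the regularity estimates \eqref{eq:optimal regularity}, $\bar u_k$ converges locally to $\frac12(x_n)_+^2$ on compact subsets of $\{t<0\}$. By \eqref{eq:assumption linear key lemma 2}, the limit satisfies
\[
\Heat w_\infty=0\ \text{in }\{x_n>0,\ t<0\},\qquad w_\infty=0\ \text{in }\{x_n<0\}.
\]
Since $w_\infty\in H^1_{\rm loc}$, it also has zero trace on $\{x_n=0\}$.

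\textbf{Classification.} Odd reflection across $\{x_n=0\}$ produces an ancient caloric function in all of $\R^n\times(-\infty,0)$ with polynomial growth of degree $3.9<4$. The Liouville theorem for ancient caloric functions with polynomial growth then shows that $w_\infty$ is a caloric polynomial of parabolic degree at most $3$. Being odd in $x_n$, it has the form
\[
w_\infty=x_n\big(\alpha+x\cdot v+Q\big)\ \text{ on }\{x_n>0\},
\]
with $v\perp\bm e_n$ (the $x_n$-component can be absorbed into $Q$) and $\Heat(x_nQ)=0$. Here $Q$ is a parabolically $2$-homogeneous polynomial, and caloricity of $x_n\cdot(\text{degree-one part})$ is automatic. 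This is exactly $\upvarphi_{b}[\alpha,v,Q]$ for the limit background $b=\frac12(x_n)_+^2$, because $\partial_nb=x_n$ and $x_n\partial_vb=0$.

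\textbf{Orthogonality.} Passing \eqref{eq:assumption linear key lemma 3} to the limit requires that $\upvarphi_{\bar u_k}[\widetilde\alpha,\widetilde v,\widetilde Q]\to\upvarphi_b[\widetilde\alpha,\widetilde v,\widetilde Q]$ in $L^2(B_\Lambda\times(-2,-1),G)$, uniformly over the unit ball of parameters. This follows from convergence of $\nabla\bar u_k$, which holds by the $C^{1,1}$ bounds. Convergence of $w_k$ in the weighted space is needed as well. The Gaussian tail outside $B_{\Lambda_k}$ is not an issue, because the integral is already truncated to $B_{\Lambda_k}$. However, the local $L^2$ convergence must be made uniform in the truncation, and we control this using the growth bound \eqref{eq:assumption linear key lemma 1} against the Gaussian decay of $G$ on $t\in(-2,-1)$. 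The conclusion is
\[
\int_{\R^n\times(-2,-1)}w_\infty\,\upvarphi_b[\widetilde\alpha,\widetilde v,\widetilde Q]\,G=0
\]
for all admissible triples. Taking the triple to be the coefficients of $w_\infty$ itself, normalized, gives $\int w_\infty^2G=0$, hence $w_\infty\equiv0$.

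\textbf{Contradiction.} Strong $L^2$ convergence on $\Omega^\beta_{R}\cap\sC_{1/2}$ (which, after rescaling, lies in a fixed compact set) then gives $\|(w_k)_{1/2}\|_{L^2}\to0$, contradicting the lower bound $L^{-1}$.

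The main obstacle is the boundary behaviour in the limit. The hypothesis \eqref{eq:assumption linear key lemma 2} only gives information away from the strip $|x_n|<\Lambda^{-1}$, and we must show that $w_\infty$ has zero trace on $\{x_n=0\}$ with no singular contribution there. This follows from the weak-$H^1$ limit: $w_k\to w_\infty$ strongly in $L^2$, $w_\infty$ vanishes on $\{x_n<0\}$, and $H^1$ functions have matching traces. It must then be checked that the heat equation holds up to the boundary in the weak sense, so that the odd reflection is legitimate. Near $t=0$ the domains $\Omega^\beta_R$ are thin cusps, so local compactness there is obtained only on $t<0$. This suffices, since $\sC_{1/2}$-norms can be approximated by avoiding a small time layer, using the uniform $H^1$ bound.
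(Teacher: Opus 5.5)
Your proposal is correct and follows the paper's argument, which defers to \cite[Lemma 13.6]{FRS}: the energy bounds give a limit $w_\infty$ that is caloric in $\{x_n>0\}$ and zero in $\{x_n<0\}$; odd reflection and Liouville make it $x_n(\alpha+x\cdot v+Q)$; and passing the orthogonality condition to the limit with $\upvarphi_{\bar u_k}\to(x_n)_+(\widetilde\alpha+x\cdot\widetilde v+\widetilde Q)$ forces $w_\infty=0$. Two minor slips: $v\perp\bm e_n$ holds because $w_\infty/x_n$ is even in $x_n$ (an $x_n$-linear term could not be absorbed into the $2$-homogeneous $Q$ anyway), and the convergence of $\bar u_k$ is only available, and only needed, for $t\in[-2,-1]$, while the limit equation follows directly from \eqref{eq:assumption linear key lemma 2}, which does not involve $\bar u_k$.
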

\begin{proof}
    This is the compactness argument of \cite[Lemma 13.6]{FRS}: if the result were false, the energy bounds would give a limit $w_\infty$, caloric in $\{x_n>0\}$ and equal to zero in $\{x_n\le0\}$.
    Thus, the odd reflection of its restriction to $\{x_n>0\}$ is an ancient caloric function with growth strictly smaller than four, and hence a caloric polynomial of parabolic degree at most three.
    Moreover, $\bar u\to\tfrac12(x_n)_+^2$ locally in $\R^n\times(-2,-1)$ and
    \[
        \upvarphi_{\bar u}[\widetilde\alpha,\widetilde v,
          \widetilde Q]
        \longrightarrow
        (x_n)_+(\widetilde\alpha+x\cdot\widetilde v
          +\widetilde Q).
    \]
    Since the odd reflection is of the form $x_n(\widetilde\alpha+x\cdot\widetilde v+\widetilde Q)$ with $\widetilde v\perp\bm e_n$ and $\Heat(x_n\widetilde Q)=0$, \eqref{eq:assumption linear key lemma 3} implies $w_\infty=0$, contradicting the failure of the conclusion.
\end{proof}

We can now prove \Cref{prop:C3beta expansion linear}. We first introduce the notation
\begin{equation}\label{eq:parabolic scaling}
    S_r(x,t)\coloneqq(rx,r^2t).
\end{equation}

\begin{proof}[Proof of \Cref{prop:C3beta expansion linear}]
    Fix $N\ge1$. We first construct
    a triple $(\alpha_N,v_N,Q_N)$ and constants $C_N,r_N>0$ such that
    \begin{equation}\label{eq:linear expansion global}
        \|\bigl(\varphi-\upvarphi[\alpha_N,v_N,Q_N]\bigr)\circ S_r\|_
        {L^2(\Omega_N^\beta\cap\sC_1)}
        \le C_N\|\varphi\|_{L^2(\Omega_1^\beta\cap\sC_1)}r^{3.9}
        \qquad\forall\,r\in(0,r_N),
    \end{equation}
    and then prove that the triple is independent of $N$. We split the proof in four steps.

    \smallskip\noindent\textbf{Step 1.}
    We prove that approximations at finitely many consecutive dyadic
    scales yield an approximation at the next scale, together with
    an estimate for the change in the coefficients.

    Let $M$ and $R_*\ge N$ be large, to be fixed below.    We claim that there exists $\eta_*>0$ such that, for every $\eta\in(0,\eta_*)$, there is
    $r_\eta\in(0,1)$ with the following property. For $r\in(0,r_\eta)$,
    set $r_m\coloneqq2^{-m}r$, and let
    \[
        \upvarphi^{(m)}
          \coloneqq\upvarphi[\alpha^{(m)},v^{(m)},Q^{(m)}]
        \qquad\forall\,m\in\{0,\ldots,M\}.
    \]
    If
    \begin{equation}\label{eq:linear finite scale hypothesis}
        \|\bigl(\varphi-\upvarphi^{(m)}\bigr)\circ S_{r_m}\|_
        {L^2(\Omega_{R_*}^\beta\cap\sC_1)}
        \le\eta r_m^{3.9}
        \qquad\forall\,m\in\{0,\ldots,M\},
    \end{equation}
    and
    \[
        |\alpha^{(m)}|+|v^{(m)}|
          +\|Q^{(m)}\|_{L^2(\sC_1)}\le\tfrac12
        \qquad\forall\,m\in\{0,\ldots,M\},
    \]
    then there is an Ansatz
    $\upvarphi^{(M+1)}
      \coloneqq\upvarphi[\alpha^{(M+1)},v^{(M+1)},Q^{(M+1)}]$
    such that
    \[
        \|\bigl(\varphi-\upvarphi^{(M+1)}\bigr)\circ S_{r_{M+1}}\|_
        {L^2(\Omega_{R_*}^\beta\cap\sC_1)}
        \le\eta r_{M+1}^{3.9},
    \]
    and
    \begin{equation}\label{eq:linear parameter increment}
        |\alpha^{(M+1)}-\alpha^{(M)}|
          +r_M|v^{(M+1)}-v^{(M)}|
          +r_M^2\|Q^{(M+1)}-Q^{(M)}\|_{L^2(\sC_1)}
          \le C_*\eta r_M^{2.9},
    \end{equation}
    for some $C_*=C_*(n,\beta)>1$.
    
    To prove this fact we first note that, after decreasing $r_\eta$, we may assume that
    \[
        S_{r_M}\bigl(\Omega_{2^{\gamma_\beta M}R_*}^\beta
            \cap\sC_{2^M}\bigr)
        \subset S_r(\Omega_{R_*}^\beta\cap\sC_1)
        \subset\Omega_1^\beta\cap\sC_1.
    \]
    For $0\le m<M$, \eqref{eq:linear finite scale hypothesis} and
    \Cref{lem:ansatz linear vs parameters} give
    \begin{equation}\label{eq:parameter ansatz}
        |\alpha^{(m+1)}-\alpha^{(m)}|
          +r_m|v^{(m+1)}-v^{(m)}|
          +r_m^2\|Q^{(m+1)}-Q^{(m)}\|_{L^2(\sC_1)}
          \le C\eta r_m^{2.9},
    \end{equation}
    so summing \eqref{eq:parameter ansatz} and using
    \eqref{eq:linear background estimates} we obtain
    \[
        \|\bigl(\varphi-\upvarphi^{(M)}\bigr)\circ S_{r_m}\|_
        {L^2(\Omega_{R_*}^\beta\cap\sC_1)}
        \le C\eta r_m^{3.9}
        \qquad\forall\,m\in\{0,\ldots,M\}.
    \]
    Set $\Lambda\coloneqq M-1\ge8$, assume $R_*\ge\Lambda$, and write
    $D_\Lambda\coloneqq B_\Lambda\times(-2,-1)$.
    Decompose $D_\Lambda$ into its intersection with
    $B_2\times(-2,-1)$ and dyadic spatial annuli. On the first set
    use the preceding estimate at radius $2r_M$, and on the annulus
    $2^{j-1}<|x|\le2^j$ use it at radius $2^jr_M=r_{M-j}$.
    The rescaled sets lie in $\Omega_{R_*}^\beta\cap\sC_1$, since
    $R_*\ge\Lambda$. Thus, using the Gaussian decay,
    \[
        \|\bigl(\varphi-\upvarphi^{(M)}\bigr)\circ S_{r_M}
             \sqrt G\|_{L^2(D_\Lambda)}^2
       \le C\eta^2r_M^{7.8}
            \sum_{j\ge1}2^{j(n+2+7.8)}e^{-c4^j}\le C\eta^2r_M^{7.8}.
    \]
    Now choose the coefficients of $\upvarphi^{(M+1)}$ by minimizing
    \[
        \int_{B_\Lambda\times(-2,-1)}
        |(\varphi-\upvarphi[\alpha,v,Q])\circ S_{r_M}|^2G
    \]
    over all triples $(\alpha,v,Q)$. Since
    $S_{\sqrt2}(K)\subset D_\Lambda$ and $G$ is bounded below on
    $S_{\sqrt2}(K)$, the minimizing property gives
    \[
    \begin{split}
        \|\bigl(\upvarphi^{(M+1)}-\upvarphi^{(M)}\bigr)
             \circ S_{\sqrt2r_M}\|_{L^2(K)}
        &\le C\sum_{i=M,M+1}
             \|\bigl(\varphi-\upvarphi^{(i)}\bigr)
                  \circ S_{r_M}\sqrt G\|_{L^2(D_\Lambda)}\\
        &\le 2C\|\bigl(\varphi-\upvarphi^{(M)}\bigr)
                  \circ S_{r_M}\sqrt G\|_{L^2(D_\Lambda)}
             \le C\eta r_M^{3.9}.
    \end{split}
    \]
    Also, applying \Cref{lem:ansatz linear vs parameters} with
    $r=\sqrt2r_M$ gives \eqref{eq:linear parameter increment}, which
    combined with \eqref{eq:linear finite scale hypothesis} and
    \eqref{eq:parameter ansatz} yields
    \begin{equation}\label{eq:linear comparison new ansatz}
        \|\bigl(\varphi-\upvarphi^{(M+1)}\bigr)\circ S_{r_m}\|_
        {L^2(\Omega_{R_*}^\beta\cap\sC_1)}
        \le C_*\eta r_m^{3.9}
        \qquad\forall\,m\in\{0,\ldots,M\}
    \end{equation}
    and
    \[
        |\alpha^{(M+1)}|+|v^{(M+1)}|
          +\|Q^{(M+1)}\|_{L^2(\sC_1)}\le1.
    \]
    Now, set
    \[
        w\coloneqq\frac{(\varphi-\upvarphi^{(M+1)})\circ S_{r_M}}
                {C_*\eta r_M^{3.9}},
        \qquad \bar u\coloneqq u_{r_M}=r_M^{-2}u\circ S_{r_M}.
    \]
    By \eqref{eq:cubic flatness},
    \[
        \left\|\bar u-\tfrac12(x_n)_+^2\right\|_
          {L^2(B_\Lambda\times(-2,-1))}
        \le C_\Lambda r_M.
    \]
    Also, by the minimality of $\upvarphi^{(M+1)}$ and using the scaling identity
    \begin{equation}\label{eq:scaled linear ansatz}
        \upvarphi_u[\alpha,v,Q]\circ S_r
        =r\,\upvarphi_{u_r}[\alpha,rv,r^2Q],
        \qquad u_r\coloneqq r^{-2}u\circ S_r,
    \end{equation}
    we obtain
    \[
        \int_{B_\Lambda\times(-2,-1)}
        w\,\upvarphi_{\bar u}[\widetilde\alpha,
          \widetilde v,\widetilde Q]G=0
    \]
    for every $\widetilde\alpha\in\R$, $\widetilde v\perp\bm e_n$, and
    every parabolically $2$-homogeneous polynomial $\widetilde Q$ satisfying
    $\Heat(x_n\widetilde Q)=0$. Thus
    \eqref{eq:assumption linear key lemma 3} holds.
    
    Next, for every $m\in\{0,\ldots,M\}$, \Cref{lem:pde linear ansatz} and the bound
    $|S_{2^mr_M}(x,t)|_\pb\le \sqrt2\cdot2^mr_M$ in $\sC_1$ give
    \[
        |\Heat w_{2^m}|
        =\frac{(2^mr_M)^2}{C_*\eta r_M^{3.9}}
          |(\Heat\upvarphi^{(M+1)})\circ S_{2^mr_M}|
        \le \bar C\eta^{-1}r_M^{0.1}2^{4m}
        \quad\text{in }\{u_{2^mr_M}>0\}\cap\Omega_{R_*}^\beta\cap\sC_1,
    \]
    while \Cref{lem:smoothness close maximal stratum} gives
    \[
        \partial\{\bar u>0\}\cap\Omega_{R_*}^\beta
          \cap\bigl(B_\Lambda\times(-\Lambda,-\Lambda^{-1})\bigr)
        \subset\{|x_n|\le C_\Lambda r_M\}.
    \]
    Note that, after decreasing $r_\eta$, we can assume that
    \[
        C_\Lambda2^{-M}r_\eta\le(2\Lambda)^{-1},
        \qquad
        \bar C\eta^{-1}(2^{-M}r_\eta)^{0.1}2^{4M}
        \le\Lambda^{-1},
    \]
therefore
    \[
        \left\|\bar u-\tfrac12(x_n)_+^2\right\|_
          {L^2(B_\Lambda\times(-2,-1))}\le\Lambda^{-1},
        \qquad
        \partial\{\bar u>0\}\cap\Omega_{R_*}^\beta
          \cap\bigl(B_\Lambda\times(-\Lambda,-\Lambda^{-1})\bigr)
        \subset\{|x_n|<\Lambda^{-1}\}.
    \]
    Since $w=0$ in $\{\bar u=0\}$, rescaling the estimate for $m=M$ gives
    \eqref{eq:assumption linear key lemma 2}. Moreover, by construction,
    \[
        \|w_{2^m}\|_{L^2(\Omega_{R_*}^\beta\cap\sC_1)}
        \le(2^m)^{3.9}
        \qquad\forall\,m\in\{0,\ldots,M\}.
    \]
    Since $(w_{2^{m+1}})_{1/2}=w_{2^m}$ and
    $|\Heat w_{2^{m+1}}|\le\Lambda^{-1}\le1$,
    taking $R_*\ge R_0$ and applying \Cref{lem:caccioppoli linear}
    to $w_{2^{m+1}}$ (up to a change of variables) gives, for
    $m\in\{0,\ldots,\Lambda\}$,
    \[
        \left(\int_{\Omega_{R_*}^\beta\cap\sC_1}
        \bigl(|w_{2^m}|^2+|\nabla w_{2^m}|^2
          +|\partial_t w_{2^m}|^2\bigr)\right)^{1/2}
        \le C_0(2^m)^{3.9},
    \]
    with $C_0=C_0(n,\beta)>1$, which proves
    \eqref{eq:assumption linear key lemma 1}.
    We now choose $L\ge\max\{2,2^{3.9}C_*\}$ and fix $\Lambda$ as in
    \Cref{lem:improvement linear key lemma} for these $C_0$ and $L$,
    thereby fixing $M=\Lambda+1$.
    Taking $R_*\ge\max\{N,\Lambda,R_0\}$,
    \Cref{lem:improvement linear key lemma} gives
    \[
        \|\bigl(\varphi-\upvarphi^{(M+1)}\bigr)
              \circ S_{r_{M+1}}\|_{L^2(\Omega_{R_*}^\beta\cap\sC_1)}
        \le C_*L^{-1}\eta r_M^{3.9}
        \le\eta r_{M+1}^{3.9},
    \]
which proves the finite-scale implication.

    \smallskip\noindent\textbf{Step 2.}
    We normalize $\varphi$ and initialize the iteration.

    Choose $\eta\in(0,\eta_*)$ so that
    $3C_*\eta\sum_{k=M}^\infty(2^{-0.9})^k\le\tfrac12$, and fix
    $r_0\in(0,r_\eta)$ with
    $S_{r_0}(\Omega_{R_*}^\beta\cap\sC_1)\subset\Omega_1^\beta\cap\sC_1$.
    Set $r_m\coloneqq2^{-m}r_0$ for all $m\ge0$, fix $\sigma_1 \in (0,\eta r_M^{3.9+(n+2)/2})$, and 
define
    \[
        \widehat\varphi\coloneqq
        \frac{\sigma_1\varphi}
             {\|\varphi\|_{L^2(\Omega_1^\beta\cap\sC_1)}}.
    \]
    By linearity, $\widehat\varphi$ solves the same linearized problem
    with background $u$. Also, by the definition of $\sigma_1$ and a change of variables 
        \[
        \|\widehat\varphi\circ S_{r_m}\|_
        {L^2(\Omega_{R_*}^\beta\cap\sC_1)}
        \le\sigma_1r_m^{-(n+2)/2}\le\eta r_m^{3.9}
        \qquad\forall\,m\in\{0,\ldots,M\},
    \]
    (recall that $r_m\ge r_M$). Thus Step 1 applies to $\widehat\varphi$ with $\upvarphi^{(m)}=0$ for
    $0\le m\le M$.

    \smallskip\noindent\textbf{Step 3.}
    We now iterate Step 1 along consecutive sequences of length $M+1$ and find the desired expansion of $\varphi$.
    
    At the $j$-th application, since $r_j<r_\eta$ and $2^{-m}r_j=r_{j+m}$, we use the radius $r_j$ and the Ans\"atze
    indexed by $j,\ldots,j+M$, obtaining $\upvarphi^{(j+M+1)}$.
 Then, with $k=j+M$, \eqref{eq:linear parameter increment} gives
    \[
        |\alpha^{(k+1)}-\alpha^{(k)}|+
        r_k|v^{(k+1)}-v^{(k)}|+
        r_k^2 \|Q^{(k+1)}-Q^{(k)}\|_{L^2(\sC_1)}
            \leq  C_*\eta r_k^{2.9}.
    \]
    Since the initial triples vanish and $r_0<1$, summing yields
    \[
        |\alpha^{(\ell)}|+|v^{(\ell)}|
        +\|Q^{(\ell)}\|_{L^2(\sC_1)}
        \le3C_*\eta\sum_{k=M}^{\ell-1}(2^{-0.9})^k\le\tfrac12
        \qquad\forall\,\ell\ge M+1.
    \]
    Hence the coefficient bound required in Step 1 is preserved,
    so the iteration continues for every $j\ge0$ and 
the coefficients converge to a triple
    $(\widehat\alpha_N,\widehat v_N,\widehat Q_N)$. Also, summing the tails gives
    \[
        |\widehat\alpha_N-\alpha^{(j)}|
        +r_j|\widehat v_N-v^{(j)}|
        +r_j^2\|\widehat Q_N-Q^{(j)}\|_{L^2(\sC_1)}
        \le C\eta r_j^{2.9}.
    \]
    Combining this with the inductive approximation and
    \eqref{eq:linear background estimates} yields
    \[
        \|\bigl(\widehat\varphi
          -\upvarphi[\widehat\alpha_N,\widehat v_N,\widehat Q_N]\bigr)
          \circ S_{r_j}\|_{L^2(\Omega_{R_*}^\beta\cap\sC_1)}
        \le C\eta r_j^{3.9} \qquad \forall\,j \geq 0.
    \]
       Now set
    \[
        (\alpha_N,v_N,Q_N)\coloneqq
        \frac{\|\varphi\|_{L^2(\Omega_1^\beta\cap\sC_1)}}{\sigma_1}
        (\widehat\alpha_N,\widehat v_N,\widehat Q_N),
        \qquad r_N\coloneqq r_0.
    \]
    By linearity of the Ansatz in its parameters, multiplying the last
    estimate by $\|\varphi\|_{L^2(\Omega_1^\beta\cap\sC_1)}/\sigma_1$
    proves \eqref{eq:linear expansion global}. Also, the coefficient estimates above  give
    \[
        |\alpha_N|+|v_N|+\|Q_N\|_{L^2(\sC_1)}
        \le C_N\|\varphi\|_{L^2(\Omega_1^\beta\cap\sC_1)}.
    \]
    
    \smallskip\noindent\textbf{Step 4.}
    For each $N\ge1$, the preceding construction gives a triple
    $(\alpha_N,v_N,Q_N)$ satisfying \eqref{eq:linear expansion global},
    but which a priori depends on $N$. We now show that these triples coincide.
    Indeed, any two triples satisfying \eqref{eq:linear expansion global},
    possibly for different values of $N$, must coincide by
    \Cref{lem:ansatz linear vs parameters}, since both expansions hold
    on $\Omega_1^\beta\cap\sC_1$.
    Thus the triple is unique and independent of $N$; denoting it by
    $(\alpha_*,v_*,Q_*)$ and taking $N=1$ in the coefficient bound
    completes the proof.
\end{proof}

\subsection{Runge approximation}
Our goal now is to prove a Runge approximation result for the linearized problem
\eqref{eq:linearized problem}. For $t\in(0,T)$ set
\[
    O_t\coloneqq\{x\in\Omega:u(x,t)>0\}.
\]
Also, we fix a set $D\ssubset Q$ and assume that
\begin{equation}\label{assumption:a1}
    D=D'\times(a,b),\quad\text{where }0<a<b<T\text{ and }
    D'\ssubset\Omega\text{ is smooth}.
\end{equation}
Also, setting
\[
    K_t\coloneqq
    \begin{cases}
        \overline{D'},&t\in[a,b],\\
        \varnothing,&t\notin[a,b],
    \end{cases}
\]
we further assume that, for every $t\in(0,T)$ and every connected component
$\mathcal C$ of $O_t\setminus K_t$, 
\begin{equation}\label{assumption:a2}
        \text{the set }\partial\mathcal C\cap\partial\Omega
        \cap\{x\in\overline\Omega:u(x,t)>0\}
        \text{ has nonempty relative interior in }\partial\Omega.
\end{equation}

\begin{proposition}\label{prop:runge}
    Let $D\ssubset Q$ satisfy assumptions \eqref{assumption:a1} and
    \eqref{assumption:a2}. Given $\eps>0$ and $w\in H^1(D)$ solving
    \begin{equation}\label{eq:runge linearized}
        \begin{cases}
            \Heat w=0
              &\text{in }D\cap\{u>0\},\\
            w=0
              &\text{in }D\cap\{u=0\},
        \end{cases}
    \end{equation}
    there is
    $g\in C^\infty_c(\{u>0\}\cap
       (\partial\Omega\times(0,T)))$ such that
    \[
        \|\varphi_{g,u}-w\|_{L^2(D)}<\eps.
    \]
\end{proposition}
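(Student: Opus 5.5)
We will follow the standard duality (Hahn--Banach) argument for Runge approximation. Let $X\subset L^2(D)$ be the closure of $\{\varphi_{g,u}|_D : g\in C^\infty_c(\{u>0\}\cap(\partial\Omega\times(0,T)))\}$, which is a linear subspace by the linearity in \Cref{lem:existence linear}. We must show that every $w$ solving \eqref{eq:runge linearized} belongs to $X$. By Hahn--Banach, it suffices to show the following: if $h\in L^2(D)$ satisfies $\int_D h\,\varphi_{g,u}=0$ for all admissible $g$, then $\int_D h\,w=0$. We may extend $h$ by zero outside $D$, and we may also assume $h=0$ on $D\cap\{u=0\}$, since both $w$ and $\varphi_{g,u}$ vanish there.

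\textbf{Step 1: adjoint problem.} We solve the backward linearized problem
\[
-\Heat^*\psi=\partial_t\psi+\Delta\psi=-h \ \text{in }\{u>0\}\cap Q,\qquad \psi=0 \ \text{in }\{u=0\}\cap Q,
\]
with $\psi=0$ on $\partial\Omega\times(0,T)$ and $\psi(\cdot,T)=0$. For existence we use Galerkin or energy methods on the time-dependent domains $O_t$, which increase in $t$. Solving backward in time, the domain shrinks; this is the setting where the natural energy space is $\{\psi\in H^1:\psi=0\text{ on }\{u=0\}\}$. Testing with $\psi$ gives $L^\infty_tL^2\cap L^2_tH^1$ bounds. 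The monotonicity of $O_t$ is what makes these test functions admissible, as in \Cref{lem:caccioppoli linear}.

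\textbf{Step 2: Green identity.} We use orthogonality to show that the normal derivative $\partial_\nu\psi$ vanishes on $\{u>0\}\cap(\partial\Omega\times(0,T))$. Formally,
\[
\int_D h\varphi_{g,u}=\int_{\partial\Omega\times(0,T)} g\,\partial_\nu\psi,
\]
with no contribution from the free boundary, because both functions vanish there. Making this rigorous is the main obstacle, since the free boundary contains singular points. We plan to cut off near $\Sigma$ using \Cref{lem:zero capacity singular set}: there are $\eta_k\to 1$ in $W^{1,2}$, with $0\le\eta_k\le1$ and $\eta_k=0$ near $\Sigma$. Away from $\Sigma$ the free boundary is smooth by \Cref{lem:zero capacity singular set}, and both $\psi$ and $\varphi$ vanish there. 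The error terms $\int(\nabla\eta_k)\psi\nabla\varphi$ and similar tend to zero by the $H^1$ bounds. Near the lateral boundary, $\psi$ is caloric and smooth on $\{u>0\}$ away from $\supp h$, so the boundary integral is classical. Since $g$ is arbitrary, $\partial_\nu\psi=0$ on the open set $\{u>0\}\cap(\partial\Omega\times(0,T))$, and $\psi=0$ there as well.

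\textbf{Step 3: unique continuation.} In $\{u>0\}\cap Q\setminus\overline D$ the function $\psi$ solves the backward heat equation. It has zero Cauchy data on a relatively open piece of the lateral boundary, so the unique continuation principle for parabolic equations (applied in the spatial variables at each time slice) gives $\psi\equiv0$ on the components of $O_t\setminus K_t$ that touch that piece. Assumption \eqref{assumption:a2} guarantees that every component does. Hence $\psi=0$ in $(\{u>0\}\cap Q)\setminus \overline D$ for every $t$, and in particular $\psi$ and $\nabla\psi$ vanish on $\partial D'\times(a,b)$ (in the trace sense) and at $t=b$.

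\textbf{Step 4: conclusion.} For $w$ as in \eqref{eq:runge linearized}, we compute
\[
\int_D hw=-\int_D w\,(\partial_t+\Delta)\psi.
\]
We integrate by parts in $D$. The boundary terms on $\partial D'\times(a,b)$ and at $t=b$ vanish by Step 3. The term at $t=a$ needs a small care: we may slightly enlarge $D$ (replacing $a$ by $a-\delta$ and $D'$ by a neighbourhood), and Step 3 on that enlarged region kills the term. Free-boundary terms are again handled with the capacity cutoff. Since $\Heat w=0$ in $D\cap\{u>0\}$, we conclude $\int_D hw=0$, hence $w\in X$.

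The hardest point is Step 2 (and its analogue in Step 4), namely justifying the integration by parts across a free boundary with singularities, together with the needed regularity of $\psi$. We would try the capacity-zero cutoff first, combined with the bound $|\varphi|\le A\partial_t u$ and the fact that $\psi$ vanishes on $\{u=0\}$.
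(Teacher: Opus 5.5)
Your outline matches the paper's: duality, an adjoint problem with zero terminal data, zero Cauchy data on $\{u>0\}\cap(\partial\Omega\times(0,T))$, unique continuation through \eqref{assumption:a2}, then pairing with $w$ in $D$. Your Step~3 is essentially the paper's unique-continuation step. The gap is in the part you flag as hardest, and it starts in Step~1.

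You never fix in what sense $\psi$ solves the adjoint problem, and the energy method only gives $\psi\in L^\infty(0,T;L^2(\Omega))\cap L^2(0,T;H^1_0(\Omega))$. The argument of \Cref{lem:caccioppoli linear} uses $\varphi(t)-\varphi(t-h)\in H^1_0(O_t)$, i.e.\ that the domains grow in the direction of the evolution. For the adjoint problem, which runs backward, the quotient $\psi(t+h)-\psi(t)$ that would bound $\partial_t\psi$ is not in $H^1_0(O_t)$. So there is no $H^1$-in-time bound, and the identities you want must come from a weak formulation with the right test class, not from integrating by parts.

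In particular, the cutoff argument in Step~2 does not close as written. The error term $\int\psi\,\nabla\eta_k\cdot\nabla\varphi_{g,u}$ is not small ``by the $H^1$ bounds'': only $\nabla\eta_k\to0$ in $L^2$, while $\psi$ and $\nabla\varphi_{g,u}$ are merely $L^2$ near $\Sigma\cap\overline D$. The bound $|\varphi_{g,u}|\le A\partial_tu$ handles the terms where $\varphi_{g,u}$ itself multiplies $\nabla_{x,t}\eta_k$, but not this one.

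The paper never integrates by parts at the free boundary in this step. It constructs $\psi$ by backward Euler in $\mathbb V_u$, the $L^2(0,T;H^1_0(\Omega))$-closure of $C^\infty_c(\{u>0\}\cap Q)$. It is a transposition solution \eqref{eq:dual transposition} against every $\eta\in\mathbb W_{u,0}$, and monotonicity of $O_t$ is what makes the discrete solutions and the time-averaged test functions admissible. Passing from smooth tests to all of $\mathbb W_{u,0}$ is \Cref{lem:key density lemma}; this is where zero capacity of $\Sigma$ and one-sided mollification at the regular free boundary enter. Then:
\begin{itemize}
\item $\psi\in\mathbb V_u$ and $\varphi_{g,u}\in H^1(Q)$ give $\int_Q(\nabla\psi\cdot\nabla\varphi_{g,u}+\psi\,\partial_t\varphi_{g,u})=0$ just by approximating $\psi$.
\item Testing \eqref{eq:dual transposition} with $\eta_g=\varphi_{g,u}-G\in\mathbb W_{u,0}$, where $G$ is a smooth extension of $g$ supported in $\{u>0\}$ away from $\overline D$, leaves only $\int g\,\partial_\nu\psi=0$.
\end{itemize}

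Step~4 has a related flaw: a general $w\in H^1(D)$ has no normal derivative on $\partial D'$, so you cannot move both derivatives onto $w$. The paper performs a single Gauss--Green for $Z=(\nabla\psi,\psi)$, whose normal trace vanishes on $\partial D\cap\{u>0\}$ because $\psi=0$ outside $\overline D$. It uses capacity cutoffs near $(\Sigma\cap\overline D)\cup(\partial D'\times\{a,b\})$, and then takes $\psi(\cdot,t)\in H^1_0(D'\cap O_t)$ as a test function in the slice equation for $w$. Finally, no enlargement of $D$ is needed at $t=a$, and enlarging it would leave $w$ undefined. Since $\Omega\times(0,a)\subset Q\setminus\overline D$, your Step~3 already gives $\psi=0$ for $t<a$.
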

The proof adapts the classical duality argument for Runge approximation
(see \cite[Section~2]{runge}) to the set $\{u>0\}$.
However, this adaptation is nontrivial due to the singular part of the
free boundary, and we need to carefully exploit the monotonicity of $u$
and the zero capacity of the singular set.

For each $t$ we regard $H^1_0(O_t)$ as a closed subspace of
$H^1_0(\Omega)$ by zero extension. Since $\partial_tu\ge0$, these spaces are
increasing in $t$. We write
\[
    \mathbb V_u
       \coloneqq\overline{C^\infty_c(\{u>0\}\cap Q)}^{\,L^2(0,T;H^1_0(\Omega))},
\qquad
    \mathbb W_u
       \coloneqq\{\eta\in\mathbb V_u:
          \partial_t\eta\in L^2(0,T;H^{-1}(\Omega))\}.
\]
By \cite[Section~5.9.2]{Evans}, every
$\eta\in\mathbb W_u$ has a unique representative in $C([0,T];L^2(\Omega))$.
We always use this representative and set
\[
    \mathbb W_{u,0}
       \coloneqq\{\eta\in\mathbb W_u:\eta(0)=0\text{ in }L^2(\Omega)\}.
\]
Fix $\rho\in C_c^\infty((1,2))$ nonnegative and with integral one, and
define $\rho_h(s)\coloneqq h^{-1}\rho(s/h)$. If
$f\in L^2(0,T;X)$ where $X$ is either $H^1_0(\Omega)$ or $H^{-1}(\Omega)$, we
extend $f$ by zero to negative times without changing notation.
Then,
\begin{equation}\label{eq:delayed time convolution}
    f_h(t)\coloneqq\int_{\mathbb R}\rho_h(s)f(t-s)\,ds,
    \qquad
    \|f_h-f\|_{L^2(0,T;X)}
    \le\sup_{h<r<2h}
       \|f(\cdot-r)-f\|_{L^2(0,T;X)}
       \longrightarrow0.
\end{equation}
Also, if $f(t)\in H^1_0(O_t)$ for a.e. $t$, the monotonicity of these spaces gives
\begin{equation}\label{eq:delayed time convolution phase}
    f_h(t)\in H^1_0(O_{t-h})\subset H^1_0(O_t)\quad\text{for a.e. }t>h,
    \qquad f_h=0\quad\text{in }(0,h).
\end{equation}
We note that, thanks to \Cref{lem:zero capacity singular set},
\[
    O_t=\bigcup_{s<t}O_s,
    \qquad
    H^1_0(O_t)=\overline{\bigcup_{s<t}H^1_0(O_s)}^{H^1_0(\Omega)}.
\]
Consequently,\footnote{Indeed, for $v$ in the right-hand side, the nontrivial inclusion follows
by applying \eqref{eq:delayed time convolution} to $v$ and approximating
$v_h$ by finite sums $\sum_j\eta_j(t)w_j(x)$, where
$t_j\in(h,T)$, $\eta_j\in C_c^\infty(0,T)$,
$\supp\eta_j\Subset(t_j-h,T)$,
$v_h(t_j)\in H^1_0(O_{t_j-h})$, and each
$w_j\in C_c^\infty(O_{t_j-h})$ is chosen arbitrarily close to
$v_h(t_j)$ in $H^1_0(\Omega)$.
}
\begin{equation}\label{eq:slice characterization energy space}
    \mathbb V_u
    =\{v\in L^2(0,T;H^1_0(\Omega)):
         v(\cdot,t)\in H^1_0(O_t)\text{ for a.e. }t\}.
\end{equation}

We will need the following density result.

\begin{lemma}\label{lem:key density lemma}
    The vector space
    \[
        \mathcal V\coloneqq\{\eta\in\mathbb W_{u,0}
          \cap C^\infty((0,T);C^\infty_c(\Omega)):
          \eta=0\text{ in }\{u=0\},\
          \eta=0\text{ for }0<t<\delta_\eta
          \text{ for some }\delta_\eta>0\}
    \]
    is dense in $\mathbb W_{u,0}$ for the norm
    \[
        \|\eta\|_{\mathcal G}
        \coloneqq\|\eta\|_{L^2(0,T;H^1_0(\Omega))}
          +\|\partial_t\eta\|_{L^2(0,T;H^{-1}(\Omega))}.
    \]
\end{lemma}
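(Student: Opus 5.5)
Given $\eta\in\mathbb W_{u,0}$, I will build approximants by three successive regularizations: a delayed time convolution, a spatial cutoff away from the free boundary, and a spatial mollification. Each step must preserve the slice condition $\eta(t)\in H^1_0(O_t)$ and control the $\|\cdot\|_{\mathcal G}$ norm.

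\textbf{Step 1 (time delay).} Extend $\eta$ by zero for $t<0$; this is legitimate because $\eta(0)=0$, so $\partial_t\eta$ has no jump and stays in $L^2(H^{-1})$. Then set $\eta_h\coloneqq\rho_h*\eta$ as in \eqref{eq:delayed time convolution}. Applied to both $\eta$ and $\partial_t\eta$, and using $\partial_t(\eta_h)=(\partial_t\eta)_h$, this gives $\eta_h\to\eta$ in $\mathcal G$. By \eqref{eq:delayed time convolution phase}, $\eta_h$ vanishes on $(0,h)$ and $\eta_h(t)\in H^1_0(O_{t-h})$. Moreover $\eta_h$ is smooth in $t$ with values in $H^1_0(\Omega)$. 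This step thus produces the gap $\delta_\eta=h$ and, crucially, a uniform time lag.

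\textbf{Step 2 (spatial approximation on slices).} This is the main obstacle: for fixed $h$, I need to replace $\eta_h$ by functions that are smooth in $x$, compactly supported in $\{u>0\}$ in space-time, and still close in $\mathcal G$. The $H^{-1}$ norm of the time derivative is the delicate part, so the approximation has to be done uniformly in $t$ together with its $t$-derivatives.

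I will use the lag as follows. Since $u$ is continuous and $\partial_tu\ge0$, the family $O_t$ is nondecreasing. By \Cref{lem:zero capacity singular set} we have $O_t=\bigcup_{s<t}O_s$, and every compact subset of $O_{t-h}$ lies in $\{u(\cdot,s)>0\}$ for all $s\ge t-h$. For the approximation itself, I first discretize in time: choose a partition $t_j$ and a smooth partition of unity $\chi_j(t)$ with $\supp\chi_j\subset(t_j-h/2,t_j+h/2)$. Then approximate $\sum_j\chi_j(t)\eta_h(t)$ by $\sum_j\chi_j(t)\eta_h(t_j)$; this costs little in $\mathcal G$ because $\eta_h$ is $C^1$ in $t$ with values in $H^1_0$. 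Each $\eta_h(t_j)$ lies in $H^1_0(O_{t_j-h})$, so it can be approximated in $H^1_0$ by some $w_j\in C^\infty_c(O_{t_j-h})$, exactly as in the footnote to \eqref{eq:slice characterization energy space}.

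The resulting function $\sum_j\chi_j(t)w_j(x)$ is smooth and compactly supported in $\Omega$. Its support on $\supp\chi_j$ lies in $O_{t_j-h}\subset O_t$, so it vanishes in $\{u=0\}$. Its error in $\mathcal G$ is bounded by
\[
\textstyle\sum_j\|\chi_j\|_{W^{1,2}}\|w_j-\eta_h(t_j)\|_{H^1},
\]
which can be made small once the partition is fixed. The time-derivative part is controlled by $\|\chi_j'\|_{L^2}\|w_j-\eta_h(t_j)\|_{H^{-1}}$ together with the $C^1$ discretization error.

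\textbf{Step 3 (conclusion).} The approximant is $C^\infty$ in $(x,t)$, vanishes for $t<h/2$ and hence equals $0$ at $t=0$, and vanishes on $\{u=0\}$. It lies in $\mathbb V_u$ by \eqref{eq:slice characterization energy space}, with time derivative in $L^2(H^{-1})$, so it belongs to $\mathcal V$. Letting first the partition and the $w_j$ converge and then $h\to0$ gives density. The point I expect to be most delicate is the interaction between the time derivative in $H^{-1}$ and the partition. If the simple estimate above fails, I would instead mollify $\eta_h$ in time with a kernel of width less than $h/2$ before discretizing, so that $\partial_t$ of the approximant converges to $\partial_t\eta_h$ in $L^2(H^1_0)\subset L^2(H^{-1})$.
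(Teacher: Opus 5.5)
Your route differs from the paper's and is more elementary. Both proofs start with the delayed convolution $\eta_h$. The paper then keeps only $\eta_h(t)\in H^1_0(O_t)$, so it must approximate an $H^1(Q)$ function vanishing on $\{u=0\}$ by functions supported in the positive phase. To do this it:
\begin{enumerate}
\item truncates, and cuts off near $\partial\Omega$ via Hardy's inequality;
\item removes a neighbourhood of $\Sigma$ using its zero $W^{1,2}$-capacity (\Cref{lem:zero capacity singular set});
\item near the regular free boundary, which is locally a Lipschitz epigraph $\{x_n>\psi_i(x',t)\}$, mollifies with a kernel supported in the cone $\{s>0,\ z_n>L(|z'|+s)\}$.
\end{enumerate}
You instead keep the lag. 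For every $t$ one has $\eta_h(t)\in H^1_0(O_{t-h})$, since the Bochner integral of $\eta(t-s)\in H^1_0(O_{t-s})\subset H^1_0(O_{t-h})$, $s\in(h,2h)$, stays in this closed subspace. You then approximate by tensor sums $\sum_j\chi_j(t)w_j(x)$ with $w_j\in C^\infty_c(O_{t_j-h})$. This is the device of the footnote to \eqref{eq:slice characterization energy space}, now also controlling the time derivative. It uses only that $t\mapsto O_t$ is nondecreasing, so this lemma needs neither the capacity of $\Sigma$ nor any regularity of the regular free boundary. In exchange, the paper's local argument does not depend on the lag, gives $H^1(Q)$-convergence after the reduction, and uses the same cut-off and Lipschitz-graph tools that reappear in Step 3 of the proof of \Cref{prop:runge}.

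One step, however, is wrong as written. For a general smooth partition of unity, replacing $\eta_h(t)$ by $\eta_h(t_j)$ is not cheap in the time-derivative part of $\|\cdot\|_{\mathcal G}$, however regular $\eta_h$ is. The derivative $\partial_t\sum_j\chi_j(t)\eta_h(t_j)=\sum_j\chi_j'(t)\eta_h(t_j)$ vanishes wherever the $\chi_j$ are locally constant. For example, if $\eta_h(t)=tf$ on some interval, this derivative equals $\bigl(\sum_j\chi_j'(t)t_j\bigr)f$. That tends to $f$ only if $\frac{d}{dt}\sum_j\chi_j(t)t_j\to1$, i.e.\ only if the partition asymptotically reproduces affine functions. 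Your fallback of mollifying $\eta_h$ in time first does not address this, since $\eta_h$ is already smooth in $t$.

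There are two easy repairs within your framework.
\begin{enumerate}
\item Choose $\chi_j$ with $\sum_j\chi_j(t)t_j\equiv t$, for instance hat functions of mesh $\Delta t$ convolved with an even mollifier of width $\Delta t$. Then zero-order samples work.
\item Use first-order samples $\sum_j\chi_j(t)\bigl[w_j+(t-t_j)\tilde w_j\bigr]$. Here $\tilde w_j\in C^\infty_c(O_{t_j-h})$ approximates $\partial_t\eta_h(t_j)=\int\rho_h'(s)\eta(t_j-s)\,ds$, which also lies in $H^1_0(O_{t_j-h})$.
\end{enumerate}
In the second case, take $\Delta t<h/2$, $\supp\chi_j\subset(t_j-\Delta t,t_j+\Delta t)$, bounded overlap and $|\chi_j'|\le C/\Delta t$. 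Taylor's formula then bounds the error in $C^1([0,T];H^1_0(\Omega))$, which dominates $\|\cdot\|_{\mathcal G}$, by $C\Delta t\,\|\eta_h\|_{C^2([0,T];H^1_0(\Omega))}+C\Delta t^{-1}\max_j\|w_j-\eta_h(t_j)\|_{H^1}+C\max_j\|\tilde w_j-\partial_t\eta_h(t_j)\|_{H^1}$. This is small once you fix $\Delta t$ and then choose the $w_j,\tilde w_j$. With this correction your argument is complete.
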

\begin{proof}
    Fix $\eta\in\mathbb W_{u,0}$ and note that, by    \eqref{eq:slice characterization energy space},
    $\eta(t)\in H^1_0(O_t)$ for a.e. $t$.

    Extending $\eta$ by zero to negative times, since $\eta(0)=0$, its
    distributional time derivative is the zero extension of
    $\partial_t\eta$.
    Let $\eta_h$ be defined by \eqref{eq:delayed time convolution}. Then
    \[
        \partial_t\eta_h=\rho_h*\partial_t\eta
                   =\rho_h'*\eta.
    \]
    Applying \eqref{eq:delayed time convolution} to $\eta$ in $H^1_0(\Omega)$
    and to $\partial_t\eta$ in $H^{-1}(\Omega)$, and using
    \eqref{eq:delayed time convolution phase}, we obtain
    \[
        \eta_h\in H^1(0,T;H^1_0(\Omega)),\qquad
        \eta_h(t)\in H^1_0(O_t)\quad\text{for a.e. }t,\qquad
        \eta_h=0\quad\text{in }(0,h),\qquad
        \|\eta_h-\eta\|_{\mathcal G}\longrightarrow0.
    \]
    It is therefore enough to approximate $\eta\in H^1(Q)$ with $\eta = 0$ for $0<t<\delta$, for some $\delta>0$.
    
    Up to a truncation and a spatial cutoff,\footnote{The truncations
    $\eta_M:=\max\{-M,\min\{\eta,M\}\}$ converge to $\eta$ in $H^1(Q)$ as $M\to \infty$. Also, for $d(x)\coloneqq\operatorname{dist}(x,\partial\Omega)$,
    choose $\zeta_j\in C_c^\infty(\Omega)$ with $0\le\zeta_j\le1$,
    $\zeta_j=0$ for $d\le j^{-1}$, $\zeta_j=1$ for $d\ge2j^{-1}$,
    and $|\nabla\zeta_j|\le C/d$. Since
    $\eta\in L^2(0,T;H^1_0(\Omega))$, Hardy's inequality gives
    $\eta/d\in L^2(Q)$, hence
    \[
        \|\eta\nabla\zeta_j\|_{L^2(Q)}^2
        \le C\int_0^T\int_{\{d<2/j\}}\frac{|\eta|^2}{d^2}
        \longrightarrow0.
    \]
    Thus, $\zeta_j\eta\to\eta$ in $H^1(Q)$.}
    we may further suppose that
    \[
        \eta\in H^1(Q)\cap L^\infty(Q),\qquad \supp\eta\subset K\times[\delta,T]\quad\text{for some }K\ssubset\Omega\text{ and some }\delta>0,
    \]
    since both approximations converge in $H^1(Q)$ and therefore in
    $\|\cdot\|_{\mathcal G}$.
    Now, by \Cref{lem:zero capacity singular set} (more precisely, applied to an extension of $u$ to $(0,T+\delta)$) we can choose $c_j\in C^\infty_c(\mathbb R^{n+1})$, with $0\le c_j\le1$, equal to one near the singular free boundary meeting
    $\supp\eta$, such that
    \[
        \|c_j\|_{H^1(\mathbb R^{n+1})}\longrightarrow0,
        \qquad
        |\operatorname{supp}c_j|\longrightarrow0,
    \]
    and since $\eta\in H^1(Q)\cap L^\infty(Q)$ it follows that
        \[
        \|c_j\eta\|_{H^1(Q)}
        \le C\|\eta\|_\infty\|c_j\|_{H^1(\mathbb R^{n+1})}
          +C\|\eta\|_{H^1(Q\cap\supp c_j)}
        \longrightarrow0.
    \]
    Thus, up to replacing $\eta$ by $(1-c_j)\eta$, we may suppose that $\eta$ vanishes
    near the singular free boundary.

    Now, applying the first part of \Cref{lem:zero capacity singular set} (applied again to a time-extension of $u$), the part of the free boundary meeting $\supp\eta$ is covered by finitely many open sets $U_i$ in which, after a spatial rotation,
    \[
        \{u>0\}\cap Q\cap U_i
          =U_i\cap Q\cap\{x_n>\psi_i(x',t)\}.
    \]
    Adding finitely many open sets compactly contained inside $\{u>0\}$ to obtain a  finite covering of $\supp\eta$, and then choosing a partition of unity $\{\chi_\ell\}$ subordinate to this covering, it suffices to approximate
    $f_\ell\coloneqq\chi_\ell\eta$.

    If $\supp\chi_\ell$ is compactly contained in $\{u>0\}$, the existence of a smooth approximation follows by
    a standard space-time mollification with a kernel compactly supported inside
    $\mathbb R^n\times(0,\infty)$.
    
    If instead $\supp\chi_\ell\subset U_i$ for some $i=i(\ell)$, extend $f_\ell$ by zero outside $U_i\cap Q$, so that $f_\ell\in H^1(\mathbb R^n\times({-\infty},T))$.
    Write $z=(z',z_n)$, let $L$ be a Lipschitz constant for $\psi_i$
    in $(x',t)$, choose $0\le\varrho\in
    C_c^\infty(\mathbb R^{n+1})$ with integral one and
    \[
        \supp\varrho\Subset
        \{(z,s):s>0,\ z_n>L(|z'|+s)\},
    \]
    and define the smooth functions
    \[
        f_{\ell,\varepsilon}(x,t)
        \coloneqq\int_{\mathbb R^{n+1}}\varrho_\varepsilon(z,s)
             f_\ell(x-z,t-s)\,dz\,ds.
    \]
    By construction $f_{\ell,\varepsilon}\in\mathcal V$ for $\varepsilon$ small, and
    \[
        \|f_{\ell,\varepsilon}-f_\ell\|_{\mathcal G}
        \le C\|f_{\ell,\varepsilon}-f_\ell\|_
             {H^1(\mathbb R^n\times(0,T))}
        \longrightarrow0.
    \]
    This concludes the proof  of the lemma.
\end{proof}

Next, we discuss the well-posedness of the dual problem
\[
    \partial_t\psi+\Delta\psi=F\quad\text{in }\{u>0\}\cap Q,
    \qquad \psi(T)=0,
\]
with zero lateral boundary data and $\psi=0$ in $\{u=0\}\cap Q$.
For
$\psi\in L^\infty(0,T;L^2(\Omega))$, we interpret the terminal condition
$\psi(T)=0$ as
\begin{equation}\label{eq:dual terminal condition}
    \lim_{t\uparrow T}\operatorname*{ess\,sup}_{s\in(t,T)}
    \|\psi(s)\|_{L^2(\Omega)}=0.
\end{equation}

\begin{lemma}\label{lem:dual problem}
    Given $F\in L^2(0,T;H^{-1}(\Omega))$, there is a unique
    \[
        \psi\in L^\infty(0,T;L^2(\Omega))\cap\mathbb V_u
    \]
    satisfying \eqref{eq:dual terminal condition} and
    \begin{equation}\label{eq:dual transposition}
        \int_0^T\left(
          \int_\Omega\nabla\psi\cdot\nabla\eta
          +\langle\partial_t\eta,\psi\rangle\right)\,dt
        =-\int_0^T\langle F,\eta\rangle\,dt
        \qquad \forall\,\eta\in\mathbb W_{u,0}.
    \end{equation}
    Moreover, for every $t\in[0,T)$,
    \begin{equation}\label{eq:caccioppoli dual}
        \operatorname*{ess\,sup}_{s\in(t,T)}
          \|\psi(s)\|_{L^2(\Omega)}^2
        +\int_t^T\|\nabla\psi(s)\|_{L^2(\Omega)}^2\,ds
        \le C_\Omega\int_t^T\|F(s)\|_{H^{-1}(\Omega)}^2\,ds.    \end{equation}
\end{lemma}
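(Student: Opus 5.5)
Reversing time turns the dual problem into a forward problem, so I will construct $\psi$ by time-reversed Galerkin/penalization and prove uniqueness through the density result \Cref{lem:key density lemma}. Set $\tilde t\coloneqq T-t$. The backward problem $\partial_t\psi+\Delta\psi=F$ with $\psi(T)=0$ becomes a forward heat equation with zero initial data on the domain $\tilde O_{\tilde t}\coloneqq O_{T-\tilde t}$. This domain is now \emph{decreasing} in $\tilde t$, and that is the source of every difficulty. I will not solve on the moving domain directly. Instead I will approximate by a penalized problem on the fixed cylinder:
\[
    \partial_t\psi_k+\Delta\psi_k - k\,\chi_{\{u=0\}}\psi_k=F,\qquad \psi_k(T)=0,
\]
with zero lateral data. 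Since $\{u=0\}$ is closed, I may also use smoothed penalizations $k\chi_j$ with $\chi_j\downarrow\chi_{\{u=0\}}$ if needed. This is a standard backward parabolic problem with bounded coefficients, so Lions' theorem gives $\psi_k\in L^2(0,T;H^1_0)\cap H^1(0,T;H^{-1})$.

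The first step is the energy estimate. I test with $\psi_k$ on $(t,T)$, use the nonnegative penalization term, and write $\langle F,\psi_k\rangle\le \tfrac12\|F\|_{H^{-1}}^2+\tfrac12\|\psi_k\|_{H^1_0}^2$. This yields \eqref{eq:caccioppoli dual} uniformly in $k$, together with $k\int_t^T\!\int_{\{u=0\}}\psi_k^2\le C\int_t^T\|F\|^2_{H^{-1}}$. Passing to weak-$*$ limits gives $\psi\in L^\infty(0,T;L^2)\cap L^2(0,T;H^1_0)$ satisfying the estimate, and the penalty bound forces $\psi=0$ a.e.\ in $\{u=0\}$. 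To place $\psi$ in $\mathbb V_u$ I use \eqref{eq:slice characterization energy space}. For a.e.\ $t$, the slice $\psi(t)$ lies in $H^1_0(\Omega)$ and vanishes a.e.\ on $\Omega\setminus O_t$. What remains is to show this implies $\psi(t)\in H^1_0(O_t)$. Here I plan to use \Cref{lem:zero capacity singular set}: off the singular set, $\partial O_t$ is locally a smooth graph, so a function in $H^1$ vanishing a.e.\ outside $O_t$ lies in $H^1_0$ near such points. The spatial slice of $\Sigma$ I handle by a capacity cutoff. If the slice-wise capacity statement is delicate, I will instead run the cutoff argument in space-time, as in \Cref{lem:key density lemma}. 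The terminal condition \eqref{eq:dual terminal condition} follows by applying \eqref{eq:caccioppoli dual} with $t\uparrow T$.

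The second step is the transposition identity \eqref{eq:dual transposition}. For $\eta\in\mathcal V$, the function $\eta$ is smooth, vanishes near $t=0$, and vanishes in $\{u=0\}$. Hence the penalization term $\int k\chi_{\{u=0\}}\psi_k\eta$ is identically zero. Integrating by parts in time for $\psi_k$ produces the identity for $\psi_k$ with no boundary terms: $\eta(0)=0$ and $\psi_k(T)=0$. I then pass to the limit in $k$ using weak convergence. Finally, I extend from $\mathcal V$ to all of $\mathbb W_{u,0}$ by \Cref{lem:key density lemma}. This extension is legitimate because both sides of \eqref{eq:dual transposition} are continuous in $\|\eta\|_{\mathcal G}$ once $\psi\in L^2(0,T;H^1_0)$: the pairing $\langle\partial_t\eta,\psi\rangle$ is controlled by $\|\partial_t\eta\|_{L^2H^{-1}}\|\psi\|_{L^2H^1_0}$.

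The third step is uniqueness, which I expect to be the main obstacle. Let $\psi$ be the difference of two solutions, so $\psi\in\mathbb V_u\cap L^\infty L^2$, it satisfies \eqref{eq:dual terminal condition}, and \eqref{eq:dual transposition} holds with $F=0$. I cannot simply test with $\psi$, since $\partial_t\psi$ is not known to lie in $L^2H^{-1}$. Instead I test with the \emph{forward} Steklov average $\eta=\psi_h$ of \eqref{eq:delayed time convolution}, cut off so that it vanishes near $t=0$. It belongs to $\mathbb W_{u,0}$ precisely because of \eqref{eq:delayed time convolution phase}: the delayed averaging only uses past times, and $H^1_0(O_s)$ is increasing in $s$. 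Here monotonicity of $u$ is essential. Expanding $\int\langle\partial_t\psi_h,\psi\rangle$ as a difference quotient gives
\[
    \int_0^T\langle\partial_t\psi_h,\psi\rangle \approx \tfrac12\|\psi(\cdot)\|^2\Big|_0^T
\]
up to a nonnegative error of the form $\tfrac1{2h}\|\psi(t)-\psi(t-s)\|^2$. Combining this with the terminal condition \eqref{eq:dual terminal condition}, I obtain $\int\|\nabla\psi\|^2\le0$ in the limit $h\to0$, hence $\psi=0$. Making the sign of this error term and the boundary terms at $t=T$ rigorous is where I expect most of the work; if needed, I will first localize to $(t,T)$ to recover \eqref{eq:caccioppoli dual} directly for any solution.
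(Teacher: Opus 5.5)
Your argument is correct in substance, but it differs from the paper's at both ends, and one step needs a different kernel.

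\textbf{Existence.} The paper uses a backward Euler scheme, solving each step by Lax--Milgram directly in the increasing spaces $H^1_0(O_{t_i})$. The piecewise-constant interpolants then lie in $\mathbb V_u$ by \eqref{eq:slice characterization energy space}, so $\psi\in\mathbb V_u$ follows from weak closedness with no capacity argument. The identity \eqref{eq:dual transposition} is obtained first for $\eta\in\mathcal V$ by discrete summation by parts and then extended via \Cref{lem:key density lemma}.

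Your penalization makes the approximating problems completely standard. It even makes the density lemma unnecessary. Every $\eta\in\mathbb W_{u,0}$ vanishes a.e.\ in $\{u=0\}\cap Q$, being an $L^2$-limit of elements of $C^\infty_c(\{u>0\}\cap Q)$. So the penalty term drops out, and you can integrate by parts and pass to the limit for all $\eta\in\mathbb W_{u,0}$ at once.

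The price is the identification step: an $L^2(0,T;H^1_0(\Omega))$ function vanishing a.e.\ in $\{u=0\}$ must be shown to lie in $\mathbb V_u$. This is a genuine issue only at $\Sigma$, where the contact set has zero density, so a.e.\ vanishing does not control the quasi-continuous representative. Near $\partial\Omega$ the Hardy cutoff suffices. Both of your options close the gap.
\begin{itemize}
\item Slice-wise: if $c_j$ are the cutoffs behind \Cref{lem:zero capacity singular set}, then along a subsequence $\|c_j(\cdot,t)\|_{H^1(\R^n)}\to0$ for a.e.\ $t$. Hence a.e.\ time slice of $\Sigma$ has zero capacity.
\item Space-time: $\mathbb V_u$ is closed in a norm with no time derivatives, so the lack of a uniform bound on $\partial_t\psi_k$ is irrelevant. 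After truncation, $\|c_j\psi_M\|_{L^2(0,T;H^1)}\le CM\|c_j\|_{H^1(\R^{n+1})}+\|\nabla\psi_M\|_{L^2(\supp c_j)}\to0$. The cone mollification of \Cref{lem:key density lemma} then converges in $L^2(0,T;H^1)$ near the regular free boundary.
\end{itemize}

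\textbf{Uniqueness.} Here the kernel must be changed. With the smooth delayed $\rho_h$ of \eqref{eq:delayed time convolution} you get $\partial_t\psi_h=\rho_h'*\psi$. Since $\rho_h'$ changes sign, no signed error term appears. Use instead the plain Steklov average $\eta_h(t)\coloneqq\frac1h\int_{t-h}^t\psi(s)\,ds$, with $\psi=0$ for $t<0$. It belongs to $\mathbb W_{u,0}$ by the same monotonicity, since $\psi(s)\in H^1_0(O_s)\subset H^1_0(O_t)$ for $s<t$. It satisfies exactly
\[
\int_0^T\langle\partial_t\eta_h,\psi\rangle\,dt
=\frac1{2h}\int_{T-h}^T\|\psi\|_{L^2(\Omega)}^2\,dt
+\frac1{2h}\int_0^T\|\psi(t)-\psi(t-h)\|_{L^2(\Omega)}^2\,dt\ge0 .
\]
Hence $\int_0^T\!\int_\Omega\nabla\psi\cdot\nabla\eta_h\le0$, and letting $h\to0$ gives $\nabla\psi=0$. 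You do not need the terminal condition: the term at $t=T$ has the favourable sign, because $\psi(T)=0$ is already encoded in allowing test functions that do not vanish at $T$. The paper's choice $\eta(t)=\int_0^t\psi(s)\,ds$ is the same idea without a limit. It yields $\int_0^T\|\psi\|_{L^2(\Omega)}^2\,dt+\frac12\bigl\|\int_0^T\nabla\psi\,dt\bigr\|_{L^2(\Omega)}^2=0$ directly.
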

\begin{proof}
    The proof is classical and uses a backward Euler discretization. However, since the setup is not standard (as the spaces $H^1_0(O_t)$ vary with time), we prove the argument in detail. 
        
     For
    $h\coloneqq T/N$ and $t_i\coloneqq ih$, put $\psi_N\coloneqq0$ and, successively for
    $i=N-1,\ldots,0$, let $\psi_i\in H^1_0(O_{t_i})$ solve
    \begin{equation}\label{eq:dual backward Euler}
        (\psi_i-\psi_{i+1},v)_{L^2(\Omega)}
        +h\int_\Omega\nabla\psi_i\cdot\nabla v
        =-\int_{t_i}^{t_{i+1}}\langle F,v\rangle\,dt
        \qquad \forall\,v\in H^1_0(O_{t_i})
    \end{equation}
    (these functions exist uniquely by Lax--Milgram). Testing with
    $v=\psi_i$ and summing in $i$ gives, for every
    $k\in\{0,\ldots,N-1\}$,
    \begin{equation}\label{eq:dual discrete energy}
        \max_{k\le j<N}\|\psi_j\|_{L^2(\Omega)}^2
        +\sum_{i=k}^{N-1}\|\psi_i-\psi_{i+1}\|_{L^2(\Omega)}^2
        +h\sum_{i=k}^{N-1}\|\nabla\psi_i\|_{L^2(\Omega)}^2
       \le
        C\int_{t_k}^T\|F(t)\|_{H^{-1}(\Omega)}^2\,dt.
    \end{equation}
Now, for $t\in(t_i,t_{i+1})$, define
    \[
        \psi_h^-(t)\coloneqq\psi_i,\qquad \psi_h^+(t)\coloneqq\psi_{i+1}.
    \]
    Since $H^1_0(O_{t_i})\subset H^1_0(O_t)$ for $t>t_i$,
    \eqref{eq:slice characterization energy space} gives
    $\psi_h^-\in\mathbb V_u$. Thus, by \eqref{eq:dual discrete energy}, after
    passing to a subsequence we get
    \[
        \psi_h^-\rightharpoonup\psi,
        \qquad \psi_h^+\rightharpoonup\psi
        \quad\text{in }L^2(0,T;H^1_0(\Omega)),
        \qquad
        \psi_h^-\overset{*}{\rightharpoonup}\psi
        \quad\text{in }L^\infty(0,T;L^2(\Omega)),
    \]
    where the two weak limits agree because
    \begin{equation}\label{eq:dual mesh shift}
        \|\psi_h^+-\psi_h^-\|_{L^2(Q)}^2
        =h\sum_{i=0}^{N-1}\|\psi_i-\psi_{i+1}\|_{L^2(\Omega)}^2
        \longrightarrow0.
    \end{equation}
    Since $\mathbb V_u$ is weakly closed, this implies that
    $\psi\in\mathbb V_u$.

    For $\eta\in\mathcal V$, put
    \[
        \eta_0\coloneqq0,\qquad
        \eta_i\coloneqq h^{-1}\int_{t_{i-1}}^{t_i}\eta(s)\,ds
        \quad\text{for }i\in\{1,\ldots,N\},
    \]
    and set, on $(t_i,t_{i+1})$,
    \[
        \eta_h^-\coloneqq\eta_i,
        \qquad D_h\eta\coloneqq\frac{\eta_{i+1}-\eta_i}{h}.
    \]
    Since the spaces  
    $H^1_0(O_t)$ are increasing in time, it follows that
 $\eta_i\in H^1_0(O_{t_i})$, so these are
    admissible tests in \eqref{eq:dual backward Euler}. Thus, since
    $\eta_0=\psi_N=0$ we have
        \[
        \sum_{i=0}^{N-1}(\psi_i-\psi_{i+1},\eta_i)_{L^2(\Omega)}
        =\sum_{i=0}^{N-1}
          (\psi_{i+1},\eta_{i+1}-\eta_i)_{L^2(\Omega)},
    \]
    and therefore
    \[
        \int_0^T\langle D_h\eta,\psi_h^+\rangle\,dt
        +\int_0^T\int_\Omega
             \nabla\psi_h^-\cdot\nabla\eta_h^-\,dt
        =-\int_0^T\langle F,\eta_h^-\rangle\,dt.
    \]
    Thus, since \[
        \eta_h^-\longrightarrow\eta
          \quad\text{in }L^2(0,T;H^1_0(\Omega)),
        \qquad
        D_h\eta\longrightarrow\partial_t\eta
          \quad\text{in }L^2(0,T;H^{-1}(\Omega)),
    \]
    we may pass to the limit in \eqref{eq:dual backward Euler}, obtaining
    \eqref{eq:dual transposition} first for $\eta\in\mathcal V$ and then
    for $\eta\in\mathbb W_{u,0}$ by \Cref{lem:key density lemma}.

    Given $t<T$, choose $k=k(h)$ with $t_k\le t<t_{k+1}$; by weak
    lower semicontinuity, \eqref{eq:dual discrete energy} gives
    \eqref{eq:caccioppoli dual}, from which
    \eqref{eq:dual terminal condition} follows as $t\uparrow T$.

    To prove uniqueness, suppose that $F=0$ and define $\eta(t,\cdot)\coloneqq\int_0^t\psi(s,\cdot)\,ds$.
    The monotonicity of $H^1_0(O_t)$ and \eqref{eq:slice characterization energy space} give $\eta\in\mathbb W_{u,0}$, with $\partial_t\eta=\psi$.
    Testing \eqref{eq:dual transposition} with $\eta$ yields
    \[
        0 = \int_0^T\|\psi(t)\|_{L^2(\Omega)}^2\,dt + \frac12\left\|\int_0^T\nabla\psi(t)\,dt\right\|_{L^2(\Omega)}^2,
    \]
    thus $\psi=0$, as desired.
\end{proof}
We can now prove \Cref{prop:runge}.

\begin{proof}[Proof of \Cref{prop:runge}]
    Set
    \[
        \mathcal X\coloneqq C^\infty_c\bigl(\{u>0\}\cap
           (\partial\Omega\times(0,T))\bigr),
    \]
    and let $\mathcal Y$ be the class of functions in $H^1(D)$ satisfying \eqref{eq:runge linearized}. Define
    \[
        A\colon\mathcal X\to \mathcal Y\subset L^2(D),\qquad
        Ag\coloneqq\varphi_{g,u}|_D.
    \]
    We want to prove that $A\mathcal X$ is dense in $\mathcal Y$. To this aim, it suffices to prove that, for every $f\in L^2(D)$,
    \begin{equation}\label{eq:proof runge orthogonal to image}
        f\perp_{L^2(D)} A\mathcal X\quad\Longrightarrow\quad f\perp_{L^2(D)}\mathcal Y.
    \end{equation}
    So, fix $f\perp A\mathcal X$.
    Since the elements of $A\mathcal X$ and $\mathcal Y$
    vanish in the contact set, we may replace $f$ by
    $f\mathbf1_{D\cap\{u>0\}}$, and we denote by $F$ its extension by zero to
    $Q$. 
    
    \smallskip\noindent\textbf{Step 1.}
    Let $\psi$ be given by \Cref{lem:dual problem}, so that
        \begin{equation}\label{eq:runge transposition}
        \int_Q\nabla\psi\cdot\nabla\eta
        +\int_0^T\langle\partial_t\eta,\psi\rangle\,dt
        +\int_QF\eta=0
        \qquad\forall\,\eta\in\mathbb W_{u,0}.
    \end{equation}
    Taking compactly supported test functions in $\{u>0\}\cap Q$, this gives
    \begin{equation}\label{eq:runge adjoint equation}
        \partial_t\psi+\Delta\psi=F
        \quad\text{in }\{u>0\}\cap Q.
    \end{equation}
    Now, fix $g:\partial\Omega\times(0,T)\to \R$ with $g\in\mathcal X$, choose an extension $G\in C^\infty(\overline Q)$ such that
    \[
        G=g\quad\text{on }\partial\Omega\times(0,T),\qquad
        \supp G\Subset
        \bigl(\{u>0\}\cap(\overline\Omega\times(0,T))\bigr)
        \setminus\overline D,
    \]
    and define $\eta_g\coloneqq\varphi_{g,u}-G$. Since
    $\eta_g\in H^1(Q)$ has zero lateral trace and vanishes a.e. in
    $\{u=0\}$, the smoothness of the regular free boundary,
    \Cref{lem:zero capacity singular set},
    \eqref{eq:slice characterization energy space}, and $\eta_g(\cdot,0)=0$
  give 
    $\eta_g\in\mathbb W_{u,0}$.

    Next, approximating $\psi$ in $\mathbb V_u$ by functions in
    $C^\infty_c(\{u>0\}\cap Q)$ and using the equation for
    $\varphi_{g,u}$ gives
    \begin{equation}\label{eq:runge forward identity}
        \int_Q\bigl(\nabla\psi\cdot\nabla\varphi_{g,u}
                    +\psi\,\partial_t\varphi_{g,u}\bigr)=0.
    \end{equation}
    Also, since $\supp F\subset\overline D$ and
    $\supp G\cap\overline D=\varnothing$, we have $FG=0$. Thus, taking
    $\eta=\eta_g$ in
    \eqref{eq:runge transposition} and subtracting
    \eqref{eq:runge forward identity} gives
    \begin{equation}\label{eq:runge conormal identity}
        \int_Q\bigl(\nabla\psi\cdot\nabla G+\psi\partial_tG\bigr)
        =\int_QF\varphi_{g,u}
        =\int_Df\,Ag=0.
    \end{equation}
    Note now that, on a neighborhood of $\supp G$, we have $F=0$ and
    $(\partial_t+\Delta)\psi=0$ (by \eqref{eq:runge adjoint equation}).
    Thus $G,\psi$ are smooth there and an integration by parts gives
    \[
        \int_{\partial\Omega\times(0,T)}
            \partial_{\nu_\Omega}\psi\,g
        =\int_Q\bigl(\nabla\psi\cdot\nabla G+\psi\partial_tG\bigr)=0.
    \]
    Since $g\in\mathcal X$ is arbitrary, this proves that
    $\partial_{\nu_\Omega}\psi=0$ on
    $\{u>0\}\cap(\partial\Omega\times(0,T))$. Thus, recalling that
    $\psi\in\mathbb V_u$ has zero trace there, we obtain
    \begin{equation}\label{eq:runge lateral Cauchy data}
        \psi=\partial_{\nu_\Omega}\psi=0
        \quad\text{on }
        \{u>0\}\cap(\partial\Omega\times(0,T)).
    \end{equation}

    \smallskip\noindent\textbf{Step 2.}
    We claim that
    \begin{equation}\label{eq:runge exterior vanishing}
        \psi=0\quad\text{a.e. in }\{u>0\}\cap(Q\setminus\overline D).
    \end{equation}
    Indeed, fix $t\in(0,T)\setminus\{a,b\}$ and a connected component
    $\mathscr C$ of $O_t\setminus K_t$. Assumption
    \eqref{assumption:a2} and the continuity of $u$ give a relatively open
    set $\Gamma\subset\partial\Omega$ and an interval $I$ containing $t$
    such that $\Gamma\subset\partial\mathscr C$ and
    $\Gamma\times I\subset\{u>0\}$, and by
    \eqref{eq:runge lateral Cauchy data} we have
    $\psi=\partial_{\nu_\Omega}\psi=0$ on $\Gamma\times I$. Hence, after
    reversing time, boundary unique continuation
    \cite[Theorem~3.5.1]{VessellaParabolicUC} gives
    \[
        \psi(\cdot,t)=0\quad\text{in a nonempty open set }
        V\Subset\mathscr C.
    \]
    Now, given $x\in\mathscr C$, join $V$ to $x$ by a compact path in
    $\mathscr C$. Since $t\notin\{a,b\}$, a connected neighborhood of
    this path remains in $\{u>0\}\cap(Q\setminus\overline D)$ for all times
    sufficiently close to $t$.
    Thus, the spacelike unique-continuation theorem for the heat equation
    \cite{uniqueContinuationHeat} (again applied after reversing time) gives
    $\psi(x,t)=0$. Since $x$ and $\mathscr C$ are arbitrary,
    this proves \eqref{eq:runge exterior vanishing}.

    Finally, since $\psi\in\mathbb V_u$,
    \eqref{eq:slice characterization energy space},
    \eqref{eq:runge exterior vanishing}, and the smoothness of $D'$ give
    \begin{equation}\label{eq:runge slice test space}
        \psi(\cdot,t)\in H^1_0(D'\cap O_t)
        \qquad\text{for a.e. }t\in(a,b).
    \end{equation}
    
    \smallskip\noindent\textbf{Step 3.}
    The vector field $Z\coloneqq(\nabla_x\psi,\psi) \in \R^n\times \R$ satisfies $\operatorname{div}_{x,t}Z=F\in L^2(\{u>0\}\cap Q)$ by \eqref{eq:runge adjoint equation}, so its weak normal trace belongs to $H^{-1/2}$ on every Lipschitz boundary compactly contained in $\{u>0\}\cap Q$;
    using \eqref{eq:runge exterior vanishing}, we obtain (here $N_D$ denotes the outward unit normal of $D$)
    \begin{equation}\label{eq:runge product flux}
        Z\cdot N_D=0
        \quad\text{on }\partial D\cap\{u>0\}.
    \end{equation}
    Let
    \[
        E\coloneqq D\cap\{u>0\},\qquad
        \Gamma\coloneqq(\Sigma\cap\overline D)
          \cup\bigl(\partial D'\times\{a,b\}\bigr).
    \]
    By \Cref{lem:zero capacity singular set}, and since
    $\partial D'\times\{a,b\}$ is a smooth set of codimension two,
    $\Gamma$ has zero $W^{1,2}(\mathbb R^{n+1})$-capacity. Hence there are
    $\zeta_j\in C_c^\infty(\mathbb R^{n+1})$, equal to one near
    $\Gamma$, such that
    \[
        0\le\zeta_j\le1,\qquad
        \|\zeta_j\|_{H^1(\mathbb R^{n+1})}\longrightarrow0,
        \qquad |\supp\zeta_j|\longrightarrow0.
    \]
    Also, away from $\Gamma$, the free boundary is smooth and meets the product
    faces of $D$ transversely, so $E$ is locally Lipschitz. 
    
    Given $w\in H^1(D)$ satisfying \eqref{eq:runge linearized}, we aim to show that $f\perp w$.
    Using that $w$ vanishes on the regular boundary and \eqref{eq:runge product flux}, setting $w_M:=\max\{-M,\min\{w,M\}\}$,
    \[
        0=\int_E\left[
          (1-\zeta_j)
          \bigl(Fw_M+Z\cdot\nabla_{x,t}w_M\bigr)
          -w_MZ\cdot\nabla_{x,t}\zeta_j
        \right],
    \]
   so letting $j\to\infty$ yields
   \[
     \int_Efw_M+
        \int_E\bigl(\nabla\psi\cdot\nabla w_M+\psi\partial_tw_M\bigr)=0,
   \]
   and letting $M\to+\infty$ gives
   \begin{equation}\label{eq:runge gauss green}
        \int_Efw+
        \int_E\bigl(\nabla\psi\cdot\nabla w+\psi\partial_tw\bigr)=0.
    \end{equation}
On the other hand,
    \eqref{eq:runge linearized} implies that
    \[
        \Delta_xw(\cdot,t)=\partial_tw(\cdot,t)
        \quad\text{in }D'\cap O_t,\quad \text{for a.e. }t \in (a,b).
    \]
    Also, by \eqref{eq:runge slice test space}, $\psi(\cdot,t)$ is an
    admissible test function. Hence, integrating in time implies that
    \[
        \int_E\bigl(\nabla w\cdot\nabla\psi+\partial_tw\,\psi\bigr)=0,
    \]
    which combined with \eqref{eq:runge gauss green} gives
    \[
        \int_Dfw=\int_Efw=0.
    \]
    This proves \eqref{eq:proof runge orthogonal to image}, concluding the proof.
    \end{proof}

\section{\texorpdfstring{$C^1$}{} regularity of the cubic expansion at singular points under perturbation}
\label{section:improvement of flatness}

Given a singular point $(0,0)$ in the maximal stratum, as shown in
\cite{FRS,genericStefan4D} we can write
\[
    u(x,t) = p_2(x) + p_3(x,t) + o(|x,t|_\pb^3)\qquad \text{for $t\leq0$,}
\]
where $p_2$ and $p_3$ are parabolically $2$- and $3$-homogeneous,
respectively. The goal of this section is to show that  the variation of these coefficients along a family of solutions is given
by the cubic expansion of the solution of the linearized problem. The proof should be compared with the one of \cite[Theorem 13.5]{FRS}, where the main difference is that in our case we approximate any solution, not only $\tfrac12(x_n)_+^2$. This requires introducing nonlinear approximations as in \Cref{definition:ansatz perturbation}.

Fix $\beta\in(0,1/2)$, let $\gamma_\beta$ be as in
\eqref{eq:cusp scaling exponent}, and set
$\vartheta\coloneqq2^{-\gamma_\beta}$. We consider a solution
$u\colon\Omega^{2\beta}_1\cap\sC_1\to[0,+\infty)$ of
\eqref{eq:stefan}, with $(0,0)\in\partial\{u>0\}$, satisfying
\begin{equation}\label{eq:flatness assumption}
    \|u-\tfrac12(x_n)_+^2\|_{L^\infty(\Omega^{2\beta}_1\cap\sC_r)}
    \le r^{2.5}\qquad\forall\,r\in(0,r_0)
\end{equation}
for some $r_0>0$.
Then, we consider
$\varphi\in H^1(\Omega^\beta_1\cap\sC_1)$ satisfying
\begin{equation}\label{eq:linearized problem transversality}
    \begin{cases}
        \Heat \varphi = 0   &\text{in }\{u>0\}\cap\Omega^\beta_1\cap\sC_1,\\
        \varphi = 0         &\text{in }\{u=0\}\cap\Omega^\beta_1\cap\sC_1,
    \end{cases}
\end{equation}
and assume that 
\begin{equation}\label{eq:expansion perturbation}
    \varphi(rx,r^2t) = (\alpha +rx\cdot v+ r^2Q(x,t))\partial_nu(rx,r^2t) - rx_n(v\cdot\nabla u(rx,r^2t)) + \mathcal R(rx,r^2t)\quad \text{in }\Omega^\beta_1\cap\sC_1,
\end{equation}
where $\alpha\in\R$, $v\in\{x_n=0\}$, $Q$ is a parabolically
$2$-homogeneous polynomial satisfying $\Heat(x_nQ)\equiv0$, and the
remainder $\mathcal R$ satisfies the following estimate: for every fixed
$R\ge1$, there is $r_R>0$ such that
\begin{equation}\label{eq:apply expansion linear}
    \|\mathcal R(rx,r^2t)\|_{L^2(\Omega^\beta_R\cap\sC_1)}
    \le r^{3.8}\qquad\forall\,r\in(0,r_R).
\end{equation}

We first introduce nonlinear Ansätze that encode the normal translation,
rotation, and quadratic deformation corresponding to the coefficients
$\alpha$, $v$, and $Q$.
\begin{definition}[Ansatz for a perturbed solution]\label{definition:ansatz perturbation}
    Given a function $w$, $\alpha\in\R$, $v\in \bm{e_n}^\perp$, and 
     $Q$ a parabolically $2$-homogeneous polynomial satisfying
    $\Heat(x_nQ)\equiv0$, we define
    \[
        \sU[w,0,0,Q](x,t)\coloneqq w(x+Q(x,t)\bm{e_n},t)
    \]
    and
    \[
        \sU[w,\alpha,v,Q](x,t)
        \coloneqq\sU[w,0,0,Q]\left(\mathsf{R}_{\frac{\bm{e_n}+v}{|\bm{e_n}+v|}}x
        +\alpha\bm{e_n},t\right),
    \]
    where, for $e\in\bS^{n-1}$ with $e\cdot\bm e_n>0$, $\mathsf{R}_e\in SO(n)$ is the unique rotation
    satisfying $\mathsf{R}_e e=\bm e_n$ and acting as the identity on
    $\operatorname{span}\{e,\bm e_n\}^\perp$.
\end{definition}
The next theorem relates these nonlinear approximations to the cubic
expansion of $\varphi$, with estimates valid down to scales comparable
to the displacement of the base point.
\begin{theorem}\label{theorem:improvement of flatness}
    Let $u$ satisfy \eqref{eq:flatness assumption}, and let $u_k$ solve
    \eqref{eq:stefan} in $\Omega^{2\beta}_1(x_k,t_k)$, where
    $(x_k,t_k)\to(0,0)$. Suppose that, for
    some rotations $\mathsf{R}_k\to Id$, the functions
    \[
        u_k^\sharp(x,t)\coloneqq u_k(\mathsf{R}_kx+x_k,t+t_k)
    \]
    satisfy \eqref{eq:flatness assumption} with the same $r_0$. Suppose in
    addition that there are $\eps_k\to0^+$
    such that
    \begin{equation}\label{eq:hp thm flatness: lin}
        \left\|\frac{u_k-u}{\eps_k} - \varphi\right\|_
        {L^2(\Omega^\beta_1\cap\Omega^\beta_1(x_k,t_k)\cap\sC_1)}
        \to0
    \end{equation}
    where $\varphi$ solves \eqref{eq:linearized problem transversality} and satisfies \eqref{eq:expansion perturbation}--\eqref{eq:apply expansion linear} for suitable $\alpha,v,Q$.

    Then there is $C_\circ>0$ such that, for every $\delta>0$, there are
    $\rho_\delta,k_\delta>0$ and Ansätze
    $\sU_k\coloneqq\sU[u,\eps_k\alpha_k,\eps_kv_k,\eps_kQ_k]$,
    $k>k_\delta$, satisfying
    \begin{equation}\label{eq:thm flatness closeness conclusion}
        |\alpha_k-\alpha|+|v_k-v|+\|Q_k-Q\|_{L^2(\sC_1)} \leq C_\circ\delta
    \end{equation}
    and, for all $r\in\left(C_\circ|x_k,t_k|_\pb,\rho_\delta\right)$,     \begin{equation}\label{eq:thm flatness expansion conclusion}
        \|(u_k - \sU_k)(r\cdot,r^2\cdot)\|_{L^2(B_{3/2}\times(-3/2,-1/4))} \le C_\circ\delta|\eps_k| r^{3.5},
    \end{equation}
and
\begin{equation}\label{eq:thm flatness expansion conclusion2}
        \|(u_k-\sU_k)(x_k+r\mathsf{R}_k\cdot,t_k+r^2\cdot)\|_{L^2(B_1\times(-1,-1/2))}
        \le C_\circ\delta|\eps_k|r^{3.5}.
    \end{equation}
\end{theorem}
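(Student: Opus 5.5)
The plan is an improvement-of-flatness iteration for the difference $u_k-\sU_k$, run downward over dyadic scales $r_m=2^{-m}\rho$ and stopped at $r\sim C_\circ|x_k,t_k|_\pb$. It mirrors the proof of \Cref{prop:C3beta expansion linear} and \cite[Theorem 13.5]{FRS}, but the role of the linear Ansatz $\upvarphi[\alpha,v,Q]$ is now played by the nonlinear Ans\"atze $\sU[u,\eps\alpha,\eps v,\eps Q]$.

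\textbf{Step 1 (consistency of the Ans\"atze).} First we check that, for $|\eps|(|\alpha|+|v|+\|Q\|)\le 1$,
\[
\sU[u,\eps\alpha,\eps v,\eps Q]=u+\eps\,\upvarphi[\alpha,v,Q]+O(\eps^2)\quad\text{at scale } r .
\]
The $O(\eps^2)$ term is controlled by $\eps^2 r^{2}$, up to the factors $r^{-1}$ and $r^{-2}$ carried by $v$ and $Q$. This is a Taylor expansion in $\eps$, using \eqref{eq:linear background estimates}, the rotation formula, and $\partial_\eps u(x+\eps Q\bm e_n)=Q\partial_nu$.

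Next, $\sU$ is an exact solution of the obstacle equation up to an error. Translations and rotations preserve \eqref{eq:stefan}, so the only defect comes from the change of variables $x\mapsto x+\eps Q\bm e_n$. That defect equals $\eps(2\nabla Q\cdot\nabla\partial_nu+\partial_nu\,\Heat Q)+O(\eps^2)$. Using $\Heat(x_nQ)=0$ as in \Cref{lem:pde linear ansatz}, it is bounded by $C|\eps||x,t|_\pb^2$ in the positive phase. Moreover $\sU$ vanishes exactly on a deformed contact set.

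Finally, the coefficient comparison of \Cref{lem:ansatz linear vs parameters} transfers to $\sU$. Two Ans\"atze with parameters differing by $\eps\mathcal N_r$ differ at scale $r$ by $\simeq |\eps| r\mathcal N_r$, as long as $|\eps|\ll r$ in the relevant norms.

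\textbf{Step 2 (initialization).} Fix $\delta$, and choose $\rho_\delta$ so small that $\|\mathcal R\circ S_r\|\le \delta r^{3.8}$ on $\Omega^\beta_{R_*}\cap\sC_1$ for $r\le\rho_\delta$ (by \eqref{eq:apply expansion linear}). By \eqref{eq:hp thm flatness: lin}, for $k>k_\delta$ we have $\|u_k-u-\eps_k\varphi\|_{L^2}\le \delta\eps_k\rho_\delta^{N}$. Together with Step 1 and \eqref{eq:expansion perturbation}, this gives, with $(\alpha_k,v_k,Q_k)=(\alpha,v,Q)$,
\[
\|(u_k-\sU_k)\circ S_{r_m}\|\le C\delta\eps_k r_m^{3.5}
\]
for finitely many initial scales $m\le M$, where $M$ depends only on $n,\beta$.

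\textbf{Step 3 (iteration).} This is the heart of the proof. Suppose the bound holds at $M+1$ consecutive scales. We choose new parameters by Gaussian-weighted $L^2$ minimization of $(u_k-\sU[u,\eps_k\alpha,\eps_k v,\eps_k Q])\circ S_{r_M}$, and normalize
\[
w\coloneqq (u_k-\sU_k^{new})\circ S_{r_M}/(C\delta\eps_k r_M^{3.5}).
\]
We then verify the hypotheses of \Cref{lem:improvement linear key lemma} (with $3.5$ in place of $3.9$; the same compactness proof works since $3.5<4$):
\begin{itemize}
\item The energy bounds follow from \Cref{lem:caccioppoli} applied to $u_k$ and to $\sU_k$. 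The latter solves \eqref{eq:approx stefan} with $\bar\eps\lesssim\eps_k r^2$, after extending the error. Alternatively, one can use \Cref{lem:caccioppoli linear} in the cusp domains via \eqref{eq:comparison principle}.
\item In the positive phases, $|\Heat w|$ is small because $\Heat(u_k-\sU_k)=O(\eps_k r^2)$ there, which after rescaling is $\ll\delta\eps_k r^{3.5}$ divided by $r^{2}$ once $r\to0$.
\item $w$ is supported near $\{x_n\ge0\}$, since both free boundaries lie in $\{|x_n|\lesssim r\}$ after rescaling (\Cref{lem:smoothness close maximal stratum} applied to $u$ and to $u_k^\sharp$, using $\mathsf R_k\to Id$).
\item Orthogonality comes from minimality.
\end{itemize}
The lemma then gives decay $L^{-1}$, and the parameter increments obey a geometric bound $C\delta r_M^{2.5}$. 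Summing these increments yields \eqref{eq:thm flatness closeness conclusion}.

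\textbf{Main obstacle.} The main obstacle is that the base points differ. The free boundary of $u_k$ passes through $(x_k,t_k)$ and not through the origin, and the cusp domains $\Omega^{2\beta}$ around the two points differ. The iteration is therefore only valid as long as the cusp $\Omega^\beta_{R_*}$ around $0$ at scale $r$ is contained in $\Omega^{2\beta}_1(x_k,t_k)$ and flatness of $u_k$ holds there, which forces $r\ge C_\circ|x_k,t_k|_\pb$. For such $r$, the hypothesis that $u_k^\sharp$ satisfies \eqref{eq:flatness assumption} with a uniform $r_0$ gives the needed one-sided flatness of $u_k$ near the origin, with error $r^{2.5}$ plus displacement $\lesssim |x_k,t_k|_\pb r$, which is acceptable. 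One must also keep $\eps_k$ small relative to these scales so that the $O(\eps^2)$ terms are negligible; if $\eps_k\gg r$ this is handled by shrinking $\rho_\delta$ and increasing $k_\delta$.

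\textbf{Recentred estimate.} Finally, \eqref{eq:thm flatness expansion conclusion2} follows from \eqref{eq:thm flatness expansion conclusion} by covering $x_k+r\mathsf R_k(B_1)\times(t_k-r^2,t_k-r^2/2)$ by a cylinder of comparable size centred at $0$, which is possible since $r>C_\circ|x_k,t_k|_\pb$ and $\mathsf R_k\to Id$.
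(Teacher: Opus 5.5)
Your overall scheme is the one the paper uses: nonlinear Ans\"atze, Gaussian-weighted minimization over consecutive dyadic scales, a compactness lemma plus a Caccioppoli estimate, and stopping at $r\sim|x_k,t_k|_\pb$. However, your treatment of the domains of definition does not work, and this is where the real work lies.

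\textbf{The domain of $u_k$.} Your claim that, for $r\ge C_\circ|x_k,t_k|_\pb$, the rescaled cusp $S_r(\Omega^\beta_{R_*}\cap\sC_1)$ lies in the domain of $u_k$ is false in general. For example, whenever $t_k<0$, the cusp around the origin contains points with $t_k<t\le0$ at every scale, and these lie outside $\Omega^{2\beta}_1(x_k,t_k)$. No choice of $r$ removes them. The paper instead runs the whole iteration, including the initialization (where \eqref{eq:hp thm flatness: lin} is only available on $\Omega^\beta_1\cap\Omega^\beta_1(x_k,t_k)$), on the time-truncated sets $\Omega^\beta_{R_*}\cap(B_1\times(-1,-(s_k/r_m)^2))$ with $s_k=2|x_k,t_k|_\pb$. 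This uses the inclusion $\Omega^\beta_N\cap\{-1<t<-s_k^2\}\subset\Omega^\beta_{2N}(x_k,t_k)$. Consequently you cannot quote \Cref{lem:improvement linear key lemma}, whose hypotheses and conclusion live on the full cusp up to $t=0$. Two replacements are needed:
\begin{itemize}
\item a compactness lemma with variable terminal time $-\sigma^2$, uniform in $\sigma$, where the $L^2$ mass in the strip $-\eta<t<-\sigma^2$ is controlled by the energy bound and the monotonicity of the cusp's time slices (\Cref{lem:improvement key lemma});
\item a cusp Caccioppoli estimate for two nonlinear solutions with terminal time $-s^2$ (\Cref{lem:improvement caccioppoli}). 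Neither \Cref{lem:caccioppoli} (full cylinders) nor \Cref{lem:caccioppoli linear} (linearized problem) fits.
\end{itemize}
The condition $r\gtrsim|x_k,t_k|_\pb$ is then what makes the truncation time a small rescaled time, so the flatness and phase-separation hypotheses hold and $B_{3/2}\times(-3/2,-1/4)$ at scale $r$ lies before it.

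\textbf{The domain of $u$.} $\sU[u,\eps\alpha,\eps v,\eps Q]$ is not even defined on the iteration region. $u$ lives only on $\Omega^{2\beta}_1\cap\sC_1$, whose time slices shrink to a point as $t\uparrow0$. The normal shift $\eps_k\alpha$ need not be small compared with $(-t)^{1/(2+2\beta)}$ near the terminal time, for instance when $|x_k,t_k|_\pb\ll\eps_k$ or even $z_k=0$. That case is exactly the one needed in \Cref{lem:transversality vs rototranslations}. So you cannot simply say that translations and rotations preserve \eqref{eq:stefan}. The paper first builds an extension $\bar u$ on a full cylinder (\Cref{lem:extend smooth solution}) by gluing $u$ to a high-order polynomial Ansatz, and works with $\sU[\bar u,\ldots]$. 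This produces the extra residual $|\alpha|^2(|\alpha|+|\eps|)$ in \eqref{eq:pde ansatz}, absorbed using $|\eps\alpha|\le Cr_m$. Only at the end is $\bar u$ replaced by $u$.

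\textbf{Monotonicity of the Ansatz.} The Caccioppoli estimate you apply to $u_k$ and $\sU_k$ requires $\partial_t\sU_k\ge0$, which you never check. We have $\partial_t\sU=\partial_t\bar u+\eps\,\partial_tQ\,\partial_n\bar u$, where $\partial_tQ$ is a constant of either sign and both derivatives vanish on the free boundary. So you need the ratio bound $\partial_t\bar u\ge c\,\partial_n\bar u$, which the paper obtains from a barrier argument showing $\partial_t\sA_\sP(0,0)>0$.

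\textbf{The remark on $\eps_k\gg r$.} This is not the right mechanism. The first-order expansion in $\eps$ is needed only at the finitely many initial scales. During the iteration, the smallness $|\eps\alpha|/r+|\eps v|+r\|\eps Q\|_{L^2(\sC_1)}\le\eta$ of the rescaled parameters, which makes the parameter-comparison and tangent-plane lemmas applicable, comes by compactness from the flatness of $u_k$ at each scale.
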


To prove this theorem, we shall need several preliminary results presented in the next subsection.

\subsection{Properties of the Ansätze}
In the proof, we shall apply \Cref{definition:ansatz perturbation}
with $w=\bar u$, where $\bar u$ is a suitable extension of $u$.
Since $u$ is initially defined only in a cusp, we first extend it
to a full parabolic neighbourhood of $(0,0)$, so that the
translations in the Ansatz are well defined.
\begin{lemma}\label{lem:extend smooth solution}
Let $u$ solve \eqref{eq:stefan} in
    $\Omega^{2\beta}_2\cap\sC_1$ and satisfy
    \eqref{eq:flatness assumption}.
    For each integer $J\ge3$ there are $r_J>0$ and a function $\bar u\colon\sC_{r_J}\to[0,+\infty)$ such that
    \begin{equation}\label{eq:pde extended solution}
        \bar u\ge0,\qquad \partial_t\bar u\ge c\partial_n\bar u\ge0,\qquad
        \Heat\bar u
        =\bigl(1+O(r^{J-2})\chi_{(\Omega_1^\beta)^c}\bigr)
        \chi_{\{\bar u>0\}}\quad\text{in }\sC_r,
    \end{equation}
    and
    \begin{equation}\label{eq:pde derivative extended solution}
        \partial_n\bar u=x_n+O(r^2),\qquad
        \nabla\partial_n\bar u=\bm{e_n}+O(r)
        \quad\text{in }\{\bar u>0\}\cap\sC_r,
    \end{equation}
    for every $r\in(0,r_J)$. Moreover,
    \[
        \bar u\equiv u\quad\text{in }\Omega_1^\beta\cap\sC_{r_J}.
    \]
\end{lemma}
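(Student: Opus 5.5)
The plan is to build $\bar u$ from the smooth expansion of $u$ at the origin. Inside the cusp we keep $u$; outside, where $u$ is not defined, we use the Ansatz $\sP_J$ of \Cref{theorem:smooth expansion}; and we glue the two with a cutoff adapted to the cusp geometry.

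\textbf{Step 1 (the Ansatz).} Since \eqref{eq:flatness assumption} gives flatness in $\Omega^{2\beta}_2\cap\sC_1$ after a fixed rescaling, \Cref{theorem:smooth expansion} (with exponent $2\beta$ and order $k=J+1$, truncated at order $J$) gives an Ansatz $\sP_J=\tfrac12(\sA)_+^2$. It satisfies
\[
    |u-\sP_J|\le Cr^{J+1/2}\quad\text{on }B_r\times(-r^2,-r^2/100).
\]
Because the points of $\Omega^{2\beta}_1\cap\sC_r$ at parabolic distance comparable to $r$ from the origin lie in such annular cylinders, this is an estimate on the whole cusp. \Cref{lem:smoothness close maximal stratum} upgrades it to a $C^2$-type estimate in the positive phase: interior and boundary Schauder estimates, together with the fact that the two free boundaries $\{x_n=f\}$ and $\{\sA=0\}$ agree to order $J$, give derivative bounds of the same order with the appropriate loss of powers of $r$.

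\textbf{Step 2 (gluing).} Take $\chi\in C^\infty$ with $\chi=1$ on $\Omega^{3\beta/2}_{1}$ and $\chi=0$ outside $\Omega^{2\beta}_{1}$. Its derivatives are controlled in terms of the distance to the origin, namely $|\nabla\chi|\lesssim r^{-1-c\beta}$ and $|\partial_t\chi|\lesssim r^{-2-c\beta}$ at parabolic scale $r$. Set
\[
    \bar u\coloneqq\chi u+(1-\chi)\sP_J.
\]
Then $\bar u=u$ on $\Omega^\beta_1$, since $\Omega^\beta_1\subset\Omega^{3\beta/2}_1$ near the origin. We also have $\bar u\ge0$.

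\textbf{Step 3 (the equation).} Compute
\[
    \Heat\bar u=\chi\,\Heat u+(1-\chi)\Heat\sP_J+2\nabla\chi\cdot\nabla(u-\sP_J)+(u-\sP_J)\Heat\chi.
\]
The two positive phases must coincide up to a negligible set where the cutoff is nonconstant. I would argue this via the graph description: $\{u>0\}=\{x_n>f\}$, and $f$ agrees with the zero set of $\sA$ to order $J$. In this region both $u$ and $\sP_J$ vanish on the contact side, so the error terms are supported in $\{\bar u>0\}$ up to an $O(r^J)$ layer where both functions are tiny; I will regard that layer as harmless.

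The error terms are then of size
\[
    r^{J+1/2-1}\,r^{-1-c\beta}\lesssim r^{J-2},
\]
provided $J$ is taken a bit larger and the result is relabelled. Since $\Heat\sP_J=1+O(r^J)$ on $\{\sA>0\}$, this yields \eqref{eq:pde extended solution} for the Laplacian part, with the error supported off $\Omega^\beta_1$.

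\textbf{Step 4 (derivative bounds).} The estimates \eqref{eq:pde derivative extended solution} follow by the same computation: on the cusp they come from \eqref{eq:linear background estimates}, and off the cusp from the explicit form of $\sA=x_n+O(|x,t|_\pb^2)$.

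\textbf{Main obstacle.} The hardest part will be the monotonicity condition $\partial_t\bar u\ge c\,\partial_n\bar u$. For $\sP_J$ it should follow from the cubic term: $\partial_t\sA$ has a positive constant part, since the expansion of a solution with \eqref{eq:definition nondegeneracy u_t} contains $6a\,t$ with $a>0$ (equivalently, the free boundary moves with positive normal speed). This gives $\partial_t\sP_J=\sA_+\partial_t\sA\ge c\,\sA_+\partial_n\sA$ near the origin, because $\partial_n\sA\approx1$. I expect this to need nondegenerate speed at the origin. If \eqref{eq:flatness assumption} alone does not guarantee it, I would extract it from \eqref{eq:flatness assumption} combined with $\partial_tu\ge0$ and $\partial_tu>0$ in $\{u>0\}$.

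For the glued function, the cross term $\partial_t\chi\,(u-\sP_J)=O(r^{J-2})$ has to be absorbed by $c\,x_n\approx c\,\partial_n\bar u$. This works away from an $O(r^{J-2})$-neighbourhood of the free boundary. Near the free boundary I would compare $u$ and $\sP_J$ as functions of the distance to their nearly coincident boundaries, which makes the error quadratic in that distance, and again take $J$ large.
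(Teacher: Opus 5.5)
There is a genuine gap in Steps 2--3, and it is structural rather than technical. Gluing by the convex combination $\bar u=\chi u+(1-\chi)\sP_J$ does not preserve the obstacle structure of the equation. The free boundaries $\{x_n=f\}$ and $\{\sA=0\}$ are $O(r^L)$-close, but in general they do not coincide. Where $0<\chi<1$ one has $\{\bar u>0\}=\{u>0\}\cup\{\sP_J>0\}$.
\begin{itemize}
\item On the layer $\{u>0=\sP_J\}$, your own formula gives $\Heat\bar u=\chi+2\nabla\chi\cdot\nabla u+u\,\Heat\chi=\chi+o(1)$.
\item On the layer $\{\sP_J>0=u\}$, it gives $1-\chi+o(1)$.
\end{itemize}
So the error in \eqref{eq:pde extended solution} is of order one on a set of positive measure inside $\{\bar u>0\}$. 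That $u$ and $\sP_J$ are tiny there does not help, since $\Heat(\tfrac12 d^2)$ is of order one. Nor is the layer harmless downstream: \Cref{lem:improvement caccioppoli} needs $|e_i|<1/100$ pointwise.

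The same layer also defeats the monotonicity you flag as the main obstacle. On $\{u>0=\sP_J\}$ one has $\partial_t\bar u=\chi\,\partial_tu+(\partial_t\chi)\,u$, with $\partial_tu\sim d$ and $u\sim d^2$. Moreover $\partial_t\chi<0$, because the cusps shrink as $t\uparrow0$. For any smooth cutoff, $|\partial_t\chi|/\chi$ is unbounded as $\chi\downarrow0$. Hence making the error quadratic in $d$ cannot beat $\chi d$, and $\partial_t\bar u$ can become negative.

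The missing idea is to glue the \emph{domains} rather than the values. The paper proceeds as follows.
\begin{itemize}
\item It builds signed defining functions $\sA_u$ (of $\sqrt{2u}$, from the graph description and boundary Schauder estimates) and $\sA_\sP$, and shows by interpolation that they are $C^{2,1}$-close at order $r^J$.
\item It proves $\partial_t\sA_\sP(0,0)>0$ by a barrier for $\partial_tu$ in the cusp, adapted from \cite[Lemma~8.4]{FRS}. This is where $\beta>0$ enters, via $\int_{-T}^0(-t)^{-2/(2+\beta)}\,dt<\infty$; your Step~4 only hopes for this.
\item The implicit function theorem then gives $\Phi$ with $\sA_\sP(x+\Phi\bm e_n,t)=\sA_u(x,t)$ and $|\Phi|+r|\nabla\Phi|+r^2(|D^2\Phi|+|\partial_t\Phi|)\le C_Jr^J$.
\item One cuts off $\Psi=\eta\Phi$ between $\Omega^\beta_1$ and $\Omega^\beta_{3/2}$ and sets $\bar u\coloneqq\sP(x+\Psi\bm e_n,t)$.
\end{itemize}
Now the positive phase is a single graph domain. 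Every error term in
$\Heat\bar u=\Heat\sP+\partial_{nn}\sP|\nabla\Psi|^2+2\nabla\partial_n\sP\cdot\nabla\Psi+\partial_n\sP\,\Heat\Psi$
is automatically supported in $\{\bar u>0\}$ and is $O(r^{J-2})$. The worst bound, $|\partial_t\Psi|\le C_Jr^{J-2-\beta}$, is multiplied by $\partial_n\sP=O(r)$. Monotonicity also follows directly: $\partial_t\bar u=\partial_t\sP+\partial_n\sP\,\partial_t\Psi\ge(c-C_Jr^{J-2-\beta})\partial_n\sP$.

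Finally, your claim in Step~1 is false. The cylinders $B_r\times(-r^2,-r^2/100)$ do not cover the points of the cusp with $-t\ll|x|^2$, which is exactly where you glue. The paper obtains $|u-\sP|\le C_Lr^L$ in $\Omega^\beta_{3/2}$ differently. It uses the $L^2$ estimate on $\Omega^{2\beta}_N$ at the coarser scale $\rho=r^{1/4}$, followed by an $L^2$--$L^\infty$ bound.
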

\begin{proof}
    By the proof of \cite[Theorem~13.5]{FRS},
    for every $k$ and fixed $N$ there is a polynomial Ansatz
    $\sP=\tfrac12(\sA_\sP)_+^2$ of order $k$ such that
    \[
        \|(u-\sP)\circ S_\rho\|_{L^2(\Omega_N^{2\beta}\cap\sC_1)}
        \le C_k\rho^{k+\alpha}
    \]
    for some $\alpha>0$ and all sufficiently small $\rho>0$.
    To obtain a pointwise estimate in
    $\Omega_{3/2}^\beta\cap(\sC_{2r}\setminus\sC_{r/2})$, take
    $\rho=r^{1/4}$ and observe that backward cylinders of radius $cr$
    around its points lie in $S_\rho(\Omega_N^{2\beta}\cap\sC_1)$, since
    \[
        \frac{|x|^{2+2\beta}}{-t}\le Cr^\beta
        =o(\rho^{2\beta}).
    \]
    Since $\Heat\sP=(1+O(|(x,t)|_\pb^k))\chi_{\{\sP>0\}}$
    we have $\Heat|u-\sP|\ge-C_k|(x,t)|_\pb^k$, so the local
    $L^2$--$L^\infty$ estimate with bounded source yields
    \[
        |u-\sP|
        \le C_kr^{-(n+2)/2}\rho^{k+\alpha+(n+2)/2}
             +C_kr^{k+2}
        \le C_Lr^L
    \]
    for any prescribed $L$, upon choosing $k=k(L)$ sufficiently large.

    Next, by \Cref{lem:smoothness close maximal stratum} with $M=3$, applied
    after parabolic rescaling, we may decrease $r_J$ so that
    \[
        \{u>0\}=\{x_n>f(x',t)\}
        \quad\text{in }\Omega_{3/2}^\beta\cap\sC_{2r_J},
    \]
    where $f\in C^{6,3}$. Hence, boundary Schauder estimates and
    \[
        u(x',f+s,t)
        =s^2\int_0^1(1-\tau)\partial_{nn}u(x',f+\tau s,t)\,d\tau,
        \qquad
        \partial_{nn}u(x',f,t)=\frac1{1+|\nabla f|^2},
    \]
    give a signed extension $\sA_u$ of $\sqrt{2u}$ for which
    $r^{-1}\sA_u\circ S_r$ has uniformly bounded $C^{4,2}$ norms
    on these cylinders, with $\partial_n\sA_u\ge c>0$.
    Also, the estimate $|u-\sP|\le C_Lr^L$ and the quadratic nondegeneracy
    estimate in \eqref{eq:optimal regularity} give an
    $O(r^{L/2})$ bound for the distance between the two graphs and for
    $\sA_u-\sA_\sP$. Thus, after extending this difference across the graphs, we define
    $
        h_r\coloneqq r^{-1}(\sA_u-\sA_\sP)\circ S_r,
  $
    and on the rescaled cylinders introduced above, the preceding estimates give
    \[
        \|h_r\|_{L^\infty}\le Cr^{L/2-1},
        \qquad \|h_r\|_{C^{4,2}}\le C.
    \]
    By interpolation,    \[
        \|h_r\|_{C^{2,1}}
        \le C\|h_r\|_{L^\infty}^{1/2}\|h_r\|_{C^{4,2}}^{1/2}
        \le Cr^{L/4-1/2},
    \]
    and scaling back we obtain
    \[
        |\sA_u-\sA_\sP|
        +r|\nabla(\sA_u-\sA_\sP)|
        +r^2\bigl(|D^2(\sA_u-\sA_\sP)|
             +|\partial_t(\sA_u-\sA_\sP)|\bigr)
        \le Cr^{L/4+1/2}.
    \]
    Taking $L\ge4J-2$, we obtain
    \begin{equation}\label{eq:extension defining function derivatives}
        |\sA_u-\sA_\sP|+r|\nabla(\sA_u-\sA_\sP)|
        +r^2\bigl(|D^2(\sA_u-\sA_\sP)|
             +|\partial_t(\sA_u-\sA_\sP)|\bigr)
        \le C_Jr^J
    \end{equation}
    in $\Omega_{3/2}^\beta\cap(\sC_{2r}\setminus\sC_{r/2})$
    for every $r\in(0,r_J)$.
    In particular, in
    $(B_r\cap\{x_n>r/4\})\times(-r^2,-r^2/100)$ we have
    $\sA_\sP=x_n+O(r^2)$ and, by
    \eqref{eq:extension defining function derivatives},
    $\sA_u=\sA_\sP+O(r^J)$. Thus, both functions are bounded
    below by $r/8$ for $r$ sufficiently small and therefore
    \begin{equation}\label{eq:defining function difference of squares}
        \sA_u-\sA_\sP
        =\frac{2(u-\sP)}{\sA_u+\sA_\sP}=O(r^J) \qquad \text{in }(B_r\cap\{x_n>r/4\})\times(-r^2,-r^2/100).
    \end{equation}
    We now prove $\partial_t\sA_\sP(0,0)>0$ by adapting a barrier argument from \cite[Lemma~8.4]{FRS}.
    Since $f(0,0)=0$ and $\nabla_{x'}f(0,0)=0$, Taylor's formula gives $f(x',t)\le C(|x'|^2-t)$ for $t\le0$, therefore
    \[
        \{q\coloneqq x_n-C(|x'|^2-t)>0\}
        \cap\Omega_{3/2}^\beta\cap\sC_{r_J}\subset\{u>0\}
    \]
    for $C$ sufficiently large. Set $s\coloneqq|x'|^2+q^2$ and
    $R(t)\coloneqq c_0(-t)^{1/(2+\beta)}$. For $a_0>0$ to be chosen
    below, let $a(t)$ solve the ODE
    \[
        a'(t)=-\frac\kappa{R(t)^2}a(t)\quad\text{for }-T<t<0,
        \qquad a(-T)=a_0,
    \]
    and define
    \[
        v\coloneqq a(t)(e^{Aq}-1)\left(1-\frac{s}{R(t)^2}\right).
    \]
    Choosing $c_0$ small, $A,\kappa$ large, and $T>0$ small, we have
    \[
        \Heat v\ge
        \frac{a(t)(e^{Aq}-1)}{R(t)^2}
        \left[\kappa\left(1-\frac{s}{R(t)^2}\right)
             +\frac{2s}{(2+\beta)|t|}-C\right]\ge0
    \]
    in $\{q>0,\ s<R(t)^2,\ -T<t<0\}\subset\Omega_{3/2}^\beta$,
    where the last inequality follows by taking $\kappa$ sufficiently
    large when $s\le R(t)^2/2$ and $T$ sufficiently small when
    $s\ge R(t)^2/2$, since $R(t)^2/|t|\to\infty$ as $t\uparrow0$.
    Since $\partial_tu>0$ in $\{u>0\}$, we may choose $a_0>0$
    sufficiently small that $v\le\partial_tu$ on
    $\{q>0,\ s<R(-T)^2,\ t=-T\}$, and then, since $v=0$ on the
    lateral boundary, the comparison principle gives $\partial_tu\ge v$.
    Moreover, $\beta>0$ implies $\int_{-T}^0R(t)^{-2}\,dt<\infty$, so
    $a(t)$ has a positive limit $a(0)$ as $t\uparrow0$, and therefore,
    for all sufficiently small $r>0$, we obtain
    \[
        \partial_tu(r\bm e_n,-r^2)\ge cr.
    \]
    Note that $\partial_tu=\sA_u\partial_t\sA_u$ in the positive phase and $\sA_u(r\bm e_n,-r^2)=r+O(r^2)$, hence
    $\partial_t\sA_u(r\bm e_n,-r^2)\ge c$, while
    \eqref{eq:extension defining function derivatives} gives
    $\partial_t(\sA_u-\sA_\sP)=O(r^{J-2})\to0$ as $r\downarrow0$
    because $J\ge3$. Letting $r\downarrow0$ therefore yields
    $\partial_t\sA_\sP(0,0)>0$, and by continuity and the same derivative
    estimate we obtain, after decreasing $r_J$,
    \[
            \partial_t\sA_u\ge c\partial_n\sA_u>0\quad
                \text{in }\Omega_{3/2}^\beta\cap\sC_{2r_J},\qquad            \partial_t\sA_\sP\ge c\partial_n\sA_\sP>0\quad
                \text{in }\sC_{2r_J}.
    \]
    Thus, the implicit function theorem together with
    \eqref{eq:extension defining function derivatives} gives a
    function $\Phi\colon\Omega_{3/2}^\beta\cap\sC_{2r_J}\to\R$ such that
    \[
        \sA_\sP(x+\Phi(x,t)\bm e_n,t)=\sA_u(x,t),\qquad
        |\Phi|+r|\nabla\Phi|
        +r^2\bigl(|D^2\Phi|+|\partial_t\Phi|\bigr)\le C_Jr^J,
    \]
    where the estimate holds in
    $\Omega_{3/2}^\beta\cap(\sC_{2r}\setminus\sC_{r/2})$
    for every $r\in(0,r_J)$. Moreover,
    \begin{equation}\label{eq:monotonicity polynomial ansatz}
        \partial_t\sP\ge c\partial_n\sP\ge0
        \qquad\text{in }\sC_{2r_J}.
    \end{equation}
   Now, let $\eta(x,t)\coloneqq\xi\left(\frac{|x|}{(-t)^{1/(2+\beta)}}\right)$ where $\xi$ is smooth, equals one on $(0,1)$, and vanishes on $(3/2,+\infty)$, and set $\Psi\coloneqq\eta\Phi$ in $\Omega_{3/2}^\beta\cap\sC_{2r_J}$, extended by zero to $\sC_{2r_J}$.
    Then $\Psi=\Phi$ in $\Omega_1^\beta\cap\sC_{2r_J}$, and since
    $|x|\asymp r$ and $-t\asymp r^{2+\beta}$ on
    $\left(\Omega_{3/2}^\beta\setminus\Omega_1^\beta\right)
    \cap\left(\sC_{2r}\setminus\sC_{r/2}\right)$, we have
    \[
        |\nabla\eta|\le Cr^{-1},\qquad
        |D^2\eta|\le Cr^{-2},\qquad
        |\partial_t\eta|\le Cr^{-2-\beta},
    \]
    and therefore the bounds on $\Phi$ give, for every $r\in(0,r_J)$,
      \[
        |\Psi|+r|\nabla\Psi|+r^2|D^2\Psi|
        +r^{2+\beta}|\partial_t\Psi|\le C_Jr^J\qquad \text{in } \left(\Omega_{3/2}^\beta\setminus\Omega_1^\beta\right)
       \cap\left(\sC_{2r}\setminus\sC_{r/2}\right).
    \]
   Now, set
    \[
        \bar u\coloneqq\sP(x+\Psi(x,t)\bm{e_n},t)
        \qquad\text{in }\sC_{r_J}.
    \]
    Then $\bar u=u$ in $\Omega_1^\beta\cap\sC_{r_J}$ and $\bar u=\sP$ in
    $\sC_{r_J}\setminus\Omega_{3/2}^\beta$. Also, for every $r\in(0,r_J)$,
    inside $\{\bar u>0\}\cap\sC_r$ and with the derivatives of
    $\sP$ evaluated at $(x+\Psi\bm{e_n},t)$, we have
    \[
        \partial_t\bar u=\partial_t\sP+\partial_n\sP\,\partial_t\Psi,
        \qquad
        \partial_n\bar u=\partial_n\sP(1+\partial_n\Psi),
    \]
    and
    \[
        \Heat\bar u=\Heat\sP+\partial_{nn}\sP|\nabla\Psi|^2
        +2\nabla\partial_n\sP\cdot\nabla\Psi+\partial_n\sP\Heat\Psi.
    \]
    Since
    \[
        \partial_n\sP=x_n+O(r^2),\qquad
        \nabla\partial_n\sP=\bm e_n+O(r),
    \]
    the bounds on $\Psi$ give \eqref{eq:pde derivative extended solution}.
    Also, by \eqref{eq:monotonicity polynomial ansatz},
    \[
        \partial_t\bar u
        \ge\bigl(c-C_Jr^{J-2-\beta}\bigr)\partial_n\sP
        \ge c\partial_n\bar u\ge0.
    \]
    Finally, in $\{\bar u>0\}\cap(\sC_r\setminus\Omega_1^\beta)$ we have $\Heat\sP=1+O(r^{J-1})$.
    Hence, using $\partial_n\sP=O(r)$ and $\beta<1$,
    \[
        |\Heat\bar u-1|
        \le C_J\bigl(r^{J-1}+r^{2J-2}+r^{J-1-\beta}\bigr)
        \le C_Jr^{J-2}\qquad \text{in }\{\bar u>0\}\cap(\sC_r\setminus\Omega_1^\beta).
    \]
     Since $\bar u$ and its first
    derivatives vanish on its free boundary, this proves \eqref{eq:pde extended solution}.
    \end{proof}

\begin{definition}\label{definition:extended ansatz}
    Let $\bar u$ be given by \Cref{lem:extend smooth solution} with $J=5$, and recall \Cref{definition:ansatz perturbation}.
    We call $\sU[\bar u,\alpha,v,Q]$ an extended $\delta$-Ansatz if
    \[
        \max\{|\alpha|,|v|,\|Q\|_{L^2(\sC_1)}\} \le\delta.
    \]
\end{definition}

The next two lemmas show that the extended Ans\"atze satisfy the Stefan
equation up to a controlled error and that their first variation is the
linearized Ansatz in \eqref{eq:ansatz linearized problem}.

\begin{lemma}\label{lem:properties extended ansatz: pde}
    Let $\bar u,r_5$ be given by \Cref{lem:extend smooth solution} with $J=5$ and let
    $\sU\coloneqq\sU[\bar u,\alpha,v,Q]$ be an extended $|\eps|$-Ansatz for some $\eps\in \R$. If
    $|\eps|$ is sufficiently small, then
    \[
        \sU\ge0,\qquad \partial_t\sU\ge0
        \quad\text{in }\sC_{r_5/2},
    \]
    and
    \begin{equation}\label{eq:pde ansatz}
        |\Heat\sU-\chi_{\{\sU>0\}}|
        \le C\bigl(|\eps|\,|(x,t)|_\pb^2+|\alpha|^2(|\alpha|+|\eps|)\bigr)
        \chi_{\{\sU>0\}}
        \quad\text{in }\Omega^\beta_{1/2}\cap\sC_{r_5/2}.
    \end{equation}
\end{lemma}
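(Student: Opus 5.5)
The plan is to write $\sU$ as a composition of $\bar u$ with a space-time diffeomorphism and compute $\Heat\sU$ by the chain rule. Set $e\coloneqq(\bm e_n+v)/|\bm e_n+v|$ and $\mathsf R\coloneqq\mathsf R_e$, so that $|\mathsf R-\mathrm{Id}|\le C|v|\le C|\eps|$. Put $y\coloneqq\mathsf Rx+\alpha\bm e_n$ and $\Phi(x,t)\coloneqq(y+Q(y,t)\bm e_n,t)$. Then $\sU=\bar u\circ\Phi$, and $\Phi$ is $C|\eps|$-close to the identity in $C^2$ on $\sC_{r_5/2}$, because $Q$ is a fixed-degree polynomial with $\|Q\|\le|\eps|$. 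Consequently $\Phi$ maps $\sC_{r_5/2}$ into $\sC_{r_5}$ for $|\eps|$ small, and $\sU\ge0$ follows from $\bar u\ge0$.

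For monotonicity, I would compute
\[
\partial_t\sU=\partial_t\bar u\circ\Phi+\partial_n\bar u\circ\Phi\,\partial_tQ(y,t),
\]
where $|\partial_tQ|\le C|\eps|$. Since \eqref{eq:pde extended solution} gives $\partial_t\bar u\ge c\,\partial_n\bar u\ge0$, we get $\partial_t\sU\ge(c-C|\eps|)\,\partial_n\bar u\circ\Phi\ge0$ for $|\eps|$ small. Here it matters that the perturbation moves points only in the $\bm e_n$ direction before the rotation; the rotation itself does not affect the time derivative.

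For \eqref{eq:pde ansatz}, work in the positive phase $\{\sU>0\}=\Phi^{-1}\{\bar u>0\}$. Rotation invariance of $\Delta$ reduces the computation to $w(y,t)\coloneqq\bar u(y+Q\bm e_n,t)$. As in the proof of \Cref{lem:extend smooth solution},
\[
\Heat w=(\Heat\bar u)\circ\Phi+\partial_{nn}\bar u\,|\nabla Q|^2+2\nabla\partial_n\bar u\cdot\nabla Q+\partial_n\bar u\,\Heat Q.
\]
I would plug in \eqref{eq:pde derivative extended solution}, namely $\partial_n\bar u=y_n+O(r^2)$ and $\nabla\partial_n\bar u=\bm e_n+O(r)$, evaluated at the shifted point. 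This turns the last two terms into $y_n\Heat Q+2\partial_nQ+O(|\eps|r^2)$. The key cancellation is $\Heat(y_nQ)=y_n\Heat Q+2\partial_nQ=0$; the remaining errors are $|\eps|\cdot O(r)\cdot|\nabla Q|=O(|\eps|r^2)$ and $|\eps|\cdot O(r^2)\cdot|\Heat Q|=O(|\eps|r^2)$. The quadratic term $|\nabla Q|^2=O(\eps^2r^2)$ is also admissible.

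The main obstacle is the base-point shift. The expansions of $\bar u$ are centred at the origin, but they are evaluated at $y_n+Q\approx x_n+\alpha$. So the substitution actually produces $y_n=(\mathsf Rx)_n+\alpha$, and the scale parameter $r$ becomes $|(x,t)|_\pb+|\alpha|$. I would expand more carefully: write the error in $\partial_n\bar u$ as $O((|x,t|_\pb+|\alpha|)^2)$. Multiplying by $|\Heat Q|\le C|\eps|$ gives $O(|\eps||x,t|_\pb^2+|\eps||\alpha|^2)$. The $\nabla\partial_n\bar u$ error is $O(|x,t|_\pb+|\alpha|)$ times $|\nabla Q|\le C|\eps|(|x,t|_\pb+|\alpha|)$. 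Moreover $\Heat(y_nQ)=0$ holds in the shifted variable exactly, so the only residual terms are these. They are bounded by $C(|\eps||x,t|_\pb^2+|\eps||\alpha|^2)$, which, since $|\alpha|\le|\eps|$, fits the form $|\alpha|^2(|\alpha|+|\eps|)$ only up to a factor. I expect the precise bookkeeping to be subtle: the claimed bound needs the cross terms linear in $\alpha$ to cancel. I would check this using the translation invariance of the expansion, namely that $\alpha\partial_n\bar u$ is an exact first variation.

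Finally, $(\Heat\bar u)\circ\Phi=1+O(r^{3})\chi_{(\Omega_1^\beta)^c}$. For points $x\in\Omega^\beta_{1/2}$ with $|\eps|$ small, $\Phi(x,t)$ stays in $\Omega^\beta_1$ when $|x|$ dominates $|\alpha|$. When it does not, $r\lesssim|\alpha|$, and the $O(r^{J-2})=O(r^3)$ term is $O(|\alpha|^3)$, which is again absorbed by the right-hand side.
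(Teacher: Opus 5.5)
Your proposal is correct and is essentially the paper's proof: you reduce to $v=0$ by rotation invariance, you use the chain rule with $\partial_n\bar u\ge0$ and $|\partial_tQ|\le C|\eps|$ for monotonicity, the exact cancellation $y_n\Heat Q+2\partial_nQ=\Heat(y_nQ)=0$ leaves correction terms bounded by $C|\eps|(|(x,t)|_\pb^2+|\alpha|^2)$, and $\Phi(x,t)=(Z,t)\notin\Omega^\beta_1$ forces $|(Z,t)|_\pb\le C|\alpha|$. For that last step, pivot your dichotomy on $s=(-t)^{1/(2+\beta)}$ rather than on $|x|$, as the paper does: $s\le|Z|\le s/2+|\alpha|+|Q(y,t)|$ gives $s\le C|\alpha|$, which controls both $|x|<s/2$ and $|t|=s^{2+\beta}$.

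Your closing worry is unfounded. The mixed terms $|\eps||\alpha||(x,t)|_\pb$ are absorbed by $2ab\le a^2+b^2$, and $|\eps||\alpha|^2\le|\alpha|^2(|\alpha|+|\eps|)$ holds trivially, so no cancellation of terms linear in $\alpha$ is needed.
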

\begin{proof}
    Nonnegativity of $\sU$ follows from the definition. Also, since spatial rotations preserve the heat operator, we may assume that $v=0$. Set
    \[
        y\coloneqq x+\alpha\bm{e_n},\qquad Z\coloneqq y+Q(y,t)\bm{e_n}.
    \]
    Inside $\{\sU>0\}$, with all the derivatives of $\bar u$ evaluated at $(Z,t)$,
    \[
        \partial_t\sU=\partial_t\bar u+\partial_n\bar u\,\partial_tQ,
        \qquad
        \Heat\sU=\Heat\bar u+\partial_{nn}\bar u|\nabla Q|^2
        +\partial_n\bar u\Heat Q+2\nabla\partial_n\bar u\cdot\nabla Q.
    \]
    Since $Q$ is parabolically $2$-homogeneous,
    \[
        |\partial_tQ|\le C\|Q\|_{L^2(\sC_1)}\le C|\eps|,
    \]
   and therefore, by \eqref{eq:pde extended solution}, for $|\eps|$ sufficiently small we have
    \[
        \partial_t\sU\ge(c-C|\eps|)\partial_n\bar u\ge0.
    \]
    Also, by homogeneity, \eqref{eq:pde derivative extended solution}, and
    \[
        y_n\Heat Q+2\partial_nQ=\Heat(y_nQ)=0,
    \]
    the last three terms of $\Heat\sU$ are bounded by
    $
        C|\eps|\bigl(|(x,t)|_\pb^2+|\alpha|^2\bigr).
    $
    
    To estimate the error coming from \eqref{eq:pde extended solution}, set
    $s\coloneqq(-t)^{1/(2+\beta)}$. If $(x,t)\in\Omega^\beta_{1/2}$ and
    $(Z,t)\notin\Omega^\beta_1$, then, by homogeneity,
    \[
        s\le |Z|
        \le \frac{s}{2}+|\alpha|+|Q(y,t)|
        \le \frac{s}{2}+|\alpha|+C|\eps|(s+|\alpha|)^2.
    \]
    For $|\eps|$ and $r_5$ sufficiently small, this gives
    $s\le C|\alpha|$, and hence $|(Z,t)|_\pb\le C|\alpha|$.
    Since $J=5$, the error in \eqref{eq:pde extended solution} is therefore
    bounded by
    \[
        C|\alpha|^3
        \le C|\alpha|^2(|\alpha|+|\eps|).
    \]
    Since $\bar u$ and its first derivatives vanish on the free boundary, this proves 
\eqref{eq:pde ansatz}.
\end{proof}

\begin{lemma}\label{lem:linearized vs perturbed ansatz}
    Let $\bar u, r_5$ be given by \Cref{lem:extend smooth solution} with $J=5$.
    Given $\alpha\in\R$,
    $v\in \bm{e_n}^\perp$, and a parabolically 2-homogeneous polynomial
    $Q$ satisfying $\Heat(x_nQ)\equiv0$, for
    $\eps\neq 0$ set
    \[
        \sU_\eps \coloneqq \sU[\bar u,\eps\alpha,\eps v,\eps Q].
    \]
    Then
    \[
        \lim_{\eps\to0}\tfrac1\eps(\sU_\eps-u)
        = (\alpha+x\cdot v+Q)\partial_nu-x_n(v\cdot\nabla u)
        \qquad\text{in }L^2_{\rm loc}(\Omega_1^\beta\cap\sC_{r_5}).
    \]
\end{lemma}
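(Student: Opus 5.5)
The plan is to compute the derivative at $\eps=0$ of $\sU_\eps$ by the chain rule, using that $\bar u$ coincides with $u$ on $\Omega_1^\beta\cap\sC_{r_5}$. I would then show that the difference quotients converge pointwise a.e. with a uniform bound, and conclude by dominated convergence.

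Write $\sU_\eps(x,t)=\bar u(Z_\eps(x,t),t)$, where
\[
    y_\eps\coloneqq\mathsf R_{e_\eps}x+\eps\alpha\bm e_n,\qquad
    Z_\eps\coloneqq y_\eps+\eps Q(y_\eps,t)\bm e_n,\qquad
    e_\eps\coloneqq\frac{\bm e_n+\eps v}{|\bm e_n+\eps v|}.
\]
Since $\mathsf R_{e_\eps}$ is a smooth curve of rotations with $\mathsf R_{e_0}=\mathrm{Id}$, the identity $\mathsf R_{e_\eps}e_\eps=\bm e_n$ gives, at first order, $\dot{\mathsf R}\bm e_n=-v$. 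Moreover $\dot{\mathsf R}$ is antisymmetric and vanishes on $\operatorname{span}\{v,\bm e_n\}^\perp$. Hence $\dot{\mathsf R}x=x_n(-v)+(x\cdot v)\bm e_n$, because the only antisymmetric map on this plane sending $\bm e_n\mapsto -v$ sends $v\mapsto |v|^2\bm e_n$. It follows that
\[
    \partial_\eps Z_\eps\big|_{\eps=0}=(\alpha+x\cdot v+Q(x,t))\bm e_n-x_nv .
\]

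Fix a compact set $K\Subset\Omega_1^\beta\cap\sC_{r_5}$. Since $\bar u$ is $C^{1,1}$ in space (this is the optimal regularity \eqref{eq:optimal regularity}, inherited by the extension, which is built as a composition of $\sP$ with a smooth map), and since $|Z_\eps-x|\le C|\eps|$ on $K$, the mean value theorem gives a uniform bound $|(\sU_\eps-\bar u)/\eps|\le C\|\nabla\bar u\|_\infty$. At every point of $K$, $\bar u(\cdot,t)$ is $C^1$, so the difference quotient converges to $\nabla\bar u\cdot\partial_\eps Z_\eps|_{\eps=0}$. Since $\bar u=u$ on $K$ and $\nabla\bar u=\nabla u$ there, this limit equals
\[
    (\alpha+x\cdot v+Q)\partial_nu-x_n(v\cdot\nabla u).
\]
Dominated convergence then gives convergence in $L^2(K)$.

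Two details need checking. First, the composition point $Z_\eps(x,t)$ may exit $\Omega_1^\beta$, and there it sees $\bar u$ instead of $u$. The chain rule, however, only requires $\bar u(\cdot,t)\in C^1$ near $x$, with $\nabla\bar u(x,t)=\nabla u(x,t)$ because $x$ is an interior point of $\Omega_1^\beta$ for $t<0$; so this causes no problem. Second, I expect the main (mild) obstacle to be the first-order expansion of $\mathsf R_{e_\eps}$, which requires confirming the sign convention, namely $\mathsf R_e e=\bm e_n$, so that $x$ is rotated by $-\eps v$ in the normal component. The rest is routine.
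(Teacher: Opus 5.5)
Your proposal is correct and follows the paper's proof: compute $\partial_\eps\mathsf{R}_{e_\eps}x|_{\eps=0}=(x\cdot v)\bm e_n-x_nv$, apply the chain rule to $\bar u(Z_\eps,t)$, and conclude using $\bar u=u$ on $\Omega_1^\beta\cap\sC_{r_5}$. The only difference is cosmetic: the paper gets convergence of $\frac1\eps(\sU_\eps-\bar u)$ in $L^\infty_{\rm loc}(\sC_{r_5})$ directly from $\bar u\in C^1$, so it also reaches the cusp tip, whereas you use pointwise convergence plus domination, which works just as well there because your bounds are uniform on $\sC_\rho$ for $\rho<r_5$.
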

\begin{proof}
Noticing that
    \[
        \left.\frac d{d\eps}\mathsf{R}_{\frac{\bm{e_n}+\eps v}{|\bm{e_n}+\eps v|}} x\right|_{\eps=0}=(x\cdot v)\bm{e_n}-x_nv,
    \]
we get
    \[
        \lim_{\eps\to0}\tfrac1\eps(\sU_\eps-\bar u)
        =(\alpha+x\cdot v+Q)\partial_n\bar u
        -x_n(v\cdot\nabla\bar u)
        \qquad\text{in }L^\infty_{\rm loc}(\sC_{r_5}).
    \]
   The conclusion follows since $\bar u=u$ in $\Omega_1^\beta\cap\sC_{r_5}$.
\end{proof}
To compare Ans\"atze at different scales, we first compute their variation
with respect to the parameters and then estimate the parameter differences
in terms of the Ans\"atze.
\begin{lemma}\label{lem:tangent plane}
    Let $\bar u$ be given by \Cref{lem:extend smooth solution} with $J=5$.
    There exists $\eps_0\in(0,1)$ such that, for $\max\{|\alpha|,|v|,\|Q\|_{L^2(\sC_1)}\}<\eps_0$, the parameter map
    \[
       (\alpha,v,Q)\mapsto \sU_{\bar u}(\alpha,v,Q)\coloneqq\sU[\bar u,\alpha,v,Q] \in L^2(\sC_{r_5/2})
    \]
    is well defined, of class $C^1$, and   
    \[
        D\sU_{\bar u}(0,0,0)[\tilde\alpha,\tilde v,\tilde Q]
        =(\tilde\alpha+x\cdot\tilde v+\tilde Q)\partial_n\bar u
        -x_n(\tilde v\cdot\nabla\bar u).
    \]
    Moreover, let $r>0$ and suppose that
    $\sU_{\bar u}(r\alpha,v,r^{-1}Q)$ is an extended $\eps_0$-Ansatz.
    Given a parameter triple
    $(\tilde\alpha,\tilde v,\tilde Q)$ satisfying
    \begin{equation} \label{eq:norm tilde 1}
        |\tilde\alpha|+|\tilde v|+\|\tilde Q\|_{L^2(\sC_1)}\le1,
    \end{equation}
    set, for $|\tau|$ small,
    \[
        \widetilde{\sU}_\tau(x,t)\coloneqq r^{-2}\sU_{\bar u}\bigl(
        r(\alpha+\tau\tilde\alpha),
        v+\tau\tilde v,
        r^{-1}(Q+\tau\tilde Q)\bigr)(rx,r^2t).
    \]
  Then, as
$    r+|\alpha|+|v|+\|Q\|_{L^2(\sC_1)}\to 0$
we have
\[
    \left.\frac d{d\tau}\widetilde{\sU}_\tau\right|_{\tau=0}
    \longrightarrow
    (x_n)_+(\tilde\alpha+x\cdot\tilde v+\tilde Q)
    \qquad\text{in }L^2_{\rm loc}(\R^n\times(-\infty,0]),
\]
uniformly with respect to
$(\tilde\alpha,\tilde v,\tilde Q)$.
\end{lemma}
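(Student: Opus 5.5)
The plan is to handle the two assertions separately. Smoothness and the formula for the differential come from the chain rule applied to the explicit composition defining $\sU$. The rescaled convergence follows from the scaling identity together with the asymptotics \eqref{eq:pde derivative extended solution} of $\bar u$.

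\textbf{Step 1: well-definedness and $C^1$ regularity.} For $\max\{|\alpha|,|v|,\|Q\|_{L^2(\sC_1)}\}<\eps_0$, consider the map
\[
(x,t)\mapsto \bigl(Z,t\bigr),\qquad y=\mathsf R_{\frac{\bm e_n+v}{|\bm e_n+v|}}x+\alpha\bm e_n,\qquad Z=y+Q(y,t)\bm e_n .
\]
This map sends $\sC_{r_5/2}$ into $\sC_{r_5}$, where $\bar u$ is defined, provided $\eps_0$ is small. This uses only that $Q$ is a homogeneous polynomial with coefficients bounded by $C\|Q\|_{L^2(\sC_1)}$, and that $\mathsf R_e$ depends smoothly on $e$ near $\bm e_n$.

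Since $\bar u\in C^{1}$ with bounded gradient (as $\partial_t\bar u$ and $\nabla\bar u$ are bounded by \eqref{eq:pde extended solution}--\eqref{eq:pde derivative extended solution}), the composition is differentiable in the parameters pointwise. Its derivative is $\nabla_{x,t}\bar u(Z,t)\cdot\partial_{\rm par}Z$, which is continuous in the parameters in $L^2(\sC_{r_5/2})$ by dominated convergence. This gives the $C^1$ property.

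At the origin of parameter space, the derivative is computed exactly as in \Cref{lem:linearized vs perturbed ansatz}:
\[
\tfrac{d}{d\eps}\mathsf R_{(\bm e_n+\eps\tilde v)/|\cdot|}x\big|_{\eps=0}=(x\cdot\tilde v)\bm e_n-x_n\tilde v,
\]
while the $\tilde\alpha$ and $\tilde Q$ directions give $\tilde\alpha\,\partial_n\bar u$ and $\tilde Q\,\partial_n\bar u$. Together these yield the stated formula.

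\textbf{Step 2: the rescaled derivative.} First compute $\frac{d}{d\tau}\widetilde{\sU}_\tau|_{\tau=0}$ by the chain rule at the base point $(r\alpha,v,r^{-1}Q)$ rather than at $0$. With $y$ and $Z$ as above, built from the base parameters and evaluated at $(rx,r^2t)$, this derivative equals
\[
r^{-2}\Big[\big(r\tilde\alpha+ \partial_{\tilde v}(\mathsf R x')\cdot\bm e_n\text{-terms}+r^{-1}\tilde Q(y,r^2t)\big)\partial_n\bar u(Z,r^2t)+\big(D_v\mathsf R[\tilde v]\,rx\big)\cdot\nabla\bar u(Z,r^2t)\,\text{(corrected by }\nabla Q)\Big].
\]
Using homogeneity, $r^{-1}\tilde Q(rx',r^2t)=r\tilde Q(x',t)$. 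Then \eqref{eq:pde derivative extended solution} gives $\partial_n\bar u(Z,r^2t)=rZ'_n+O(r^2)$, where $Z'=r^{-1}Z\to x$ locally as $|\alpha|+|v|+\|Q\|\to0$, and gives $\nabla\bar u=r(x_n)_+\bm e_n+O(r^2)$ in the positive phase. Moreover, $D_v\mathsf R[\tilde v]x\to(x\cdot\tilde v)\bm e_n-x_n\tilde v$ as $v\to0$.

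Collecting terms, the $O(r^2)$ pieces contribute $r^{-2}O(r^2)\cdot r$-small errors, and the rotation term reduces to $(x\cdot\tilde v)(x_n)_+-x_n(\tilde v\cdot\bm e_n)(x_n)_+=(x\cdot\tilde v)(x_n)_+$, since $\tilde v\perp\bm e_n$. The contact set converges to $\{x_n\le0\}$ locally because $r^{-2}\bar u\circ S_r\to\tfrac12(x_n)_+^2$. The limit is therefore $(x_n)_+(\tilde\alpha+x\cdot\tilde v+\tilde Q)$.

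All errors are linear in $(\tilde\alpha,\tilde v,\tilde Q)$ with coefficients bounded independently of them under \eqref{eq:norm tilde 1}, so the convergence is uniform.

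\textbf{Main obstacle.} The main difficulty is bookkeeping the base-point dependence. The derivative is evaluated at nonzero parameters, so the factors $(1+\partial_nQ)$ and $\nabla Q$ appearing from $Z$, as well as $D_v\mathsf R$ at $v\neq0$, must be shown to be $1+o(1)$ and $o(1)$ respectively on compact sets after rescaling. This holds since $\|Q\|\to0$ and $r^{-1}Q(r\cdot,r^2\cdot)$ remains bounded by $\|Q\|$ times a polynomial in $(x,t)$.
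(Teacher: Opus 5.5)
Your proposal is correct and follows the paper's own argument. The $C^1$ property and the differential at zero come from the chain rule, as in \Cref{lem:linearized vs perturbed ansatz}. The rescaled derivative comes from differentiating the coordinate map, using the homogeneity of $Q$ and \eqref{eq:pde derivative extended solution}, to get $\nabla(r^{-2}\bar u\circ S_r)\to(x_n)_+\bm e_n$ and $\partial_\tau(\text{rescaled coordinate map})\to(\tilde\alpha+x\cdot\tilde v+\tilde Q)\bm e_n-x_n\tilde v$, uniformly in the tilde triple by linearity.

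Two small points are worth stating explicitly. First, \eqref{eq:pde derivative extended solution} controls the tangential components of $\nabla\bar u$ only after you integrate $\nabla\partial_n\bar u=\bm e_n+O(r)$ in the $\bm e_n$-direction from the free boundary, where $\nabla\bar u=0$. Second, no bound on $\partial_t\bar u$ is needed, since the parameters never move the time variable.
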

\begin{proof}
Since $\bar u\in C^1$, the parameter map is $C^1$ and its
    derivative at zero is given by
    \Cref{lem:linearized vs perturbed ansatz}.
    Differentiating the coordinate map in
    \Cref{definition:ansatz perturbation} and using the homogeneity of $Q$
    and \eqref{eq:pde derivative extended solution}, we obtain
    \[
        \left.\frac d{d\tau}\widetilde{\sU}_\tau\right|_{\tau=0}
        =\bigl((x_n)_+\bm{e_n}+o(1)\bigr)\cdot
        \bigl((\tilde\alpha+x\cdot\tilde v+\tilde Q)\bm{e_n}
        -x_n\tilde v+o(1)\bigr)
    \]
    in $L^2_{\rm loc}(\R^n\times(-\infty,0])$, where both $o(1)$-terms
    converge to zero as
    $r+|\alpha|+|v|+\|Q\|_{L^2(\sC_1)}\to0$, uniformly with respect to
    $(\tilde\alpha,\tilde v,\tilde Q)$.
\end{proof}

\begin{lemma}\label{lem:ansatz vs parameters}
    Let $K\subset\R^n\times(-\infty,0]$ be compact and suppose that
    $K\cap\{x_n>0\}$ has nonempty interior. There are
    $C,\eta,r_*>0$, depending on $K$, such that the following holds. Let
    \[
        \sU^i\coloneqq\sU[\bar u,\alpha^i,v^i,Q^i],\qquad i=1,2,
    \]
    be extended $\eta$-Ansätze, let $r\in(0,r_*)$, and suppose that
    \[
        \frac{|\alpha^i|}{r}+|v^i|
        +r\|Q^i\|_{L^2(\sC_1)}\le\eta,
        \qquad i=1,2.
    \]
    Set
    \[
        \mathcal N_r\coloneqq|\alpha^1-\alpha^2|+r|v^1-v^2|
        +r^2\|Q^1-Q^2\|_{L^2(\sC_1)}.
    \]
    Then
    \[
        C^{-1}r\mathcal N_r\le
        \|(\sU^1-\sU^2)(r\cdot,r^2\cdot)\|_{L^2(K)}
        \le Cr\mathcal N_r.
    \]
\end{lemma}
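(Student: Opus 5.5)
The plan is to write the difference $\sU^1-\sU^2$ as an integral of the derivative of the parameter map along the segment joining the two parameter triples, rescale, and then use \Cref{lem:tangent plane} to identify the rescaled derivative with an explicit polynomial direction. Set
\[
(\alpha^\tau,v^\tau,Q^\tau)\coloneqq(1-\tau)(\alpha^2,v^2,Q^2)+\tau(\alpha^1,v^1,Q^1),\qquad \tau\in[0,1].
\]
All intermediate triples satisfy the same smallness assumptions, by convexity. By the $C^1$ regularity of the parameter map in \Cref{lem:tangent plane},
\[
(\sU^1-\sU^2)(rx,r^2t)=\int_0^1 \frac{d}{d\tau}\sU_{\bar u}(\alpha^\tau,v^\tau,Q^\tau)(rx,r^2t)\,d\tau .
\]
To match the normalization of \Cref{lem:tangent plane}, write $\alpha^\tau=r\hat\alpha^\tau$ and $Q^\tau=r^{-1}\hat Q^\tau$, with $\hat Q^\tau\coloneqq rQ^\tau$. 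The hypothesis then says exactly that $|\hat\alpha^\tau|+|v^\tau|+\|\hat Q^\tau\|_{L^2(\sC_1)}\le 2\eta$. The direction of the segment in these rescaled coordinates is
\[
(\tilde\alpha,\tilde v,\tilde Q)=\Big(\frac{\alpha^1-\alpha^2}{r},\;v^1-v^2,\;r(Q^1-Q^2)\Big),
\]
whose norm is comparable to $\mathcal N_r/r$. Normalizing this direction to unit size and using the $2$-homogeneity factor $r^{-2}$ in the definition of $\widetilde\sU_\tau$, the integrand equals
\[
r^{2}\cdot\frac{\mathcal N_r}{r}\cdot\frac{d}{d\tau}\widetilde\sU_\tau
\]
for a unit direction. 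Let $P_{\tilde\alpha,\tilde v,\tilde Q}\coloneqq(x_n)_+(\tilde\alpha+x\cdot\tilde v+\tilde Q)$ be the limiting profile from \Cref{lem:tangent plane}. That lemma gives, as $r+\eta\to0$ and uniformly in unit directions and in $\tau$,
\[
r^{-1}\mathcal N_r^{-1}(\sU^1-\sU^2)\circ S_r=P_{\tilde\alpha,\tilde v,\tilde Q}+o(1)\quad\text{in }L^2(K).
\]

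It therefore remains to show that $\|P_{\tilde\alpha,\tilde v,\tilde Q}\|_{L^2(K)}$ is bounded above and below by constants times the norm of the unit triple. The upper bound is immediate from boundedness of $K$. For the lower bound, the triples range over a finite-dimensional space, and the map $(\tilde\alpha,\tilde v,\tilde Q)\mapsto P_{\tilde\alpha,\tilde v,\tilde Q}$ is injective on $L^2(K)$. Indeed, $\tilde\alpha+x\cdot\tilde v+\tilde Q$ is a polynomial, and $K\cap\{x_n>0\}$ has nonempty interior, so if $P$ vanishes on $K$ then this polynomial vanishes on an open set and hence identically. The three pieces $\tilde\alpha$, $x\cdot\tilde v$, $\tilde Q$ have distinct parabolic homogeneities $0$, $1$, $2$, so each vanishes separately. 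Compactness of the unit sphere then gives $\|P\|_{L^2(K)}\ge c_K>0$.

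Choosing $\eta$ and $r_*$ small so that the $o(1)$ error is at most $c_K/2$, we obtain both inequalities, since the norm of the unit triple is comparable to $1$ and the scaling factor is $r\mathcal N_r$.

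The main obstacle is the uniformity of the $o(1)$ in \Cref{lem:tangent plane}. That lemma is stated at the base point $(r\alpha,v,r^{-1}Q)$, and here it must hold along the whole segment, uniformly. This should follow because the lemma's convergence is uniform whenever $r+|\alpha|+|v|+\|Q\|\to0$, which holds on the segment with the bound $2\eta$. One also has to check that $\eta$ small makes every intermediate triple an extended $\eps_0$-Ansatz on the rescaled domain $S_r(K)\subset\sC_{r_5/2}$, which is guaranteed by taking $r_*$ small depending on $K$.
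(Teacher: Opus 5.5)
Your proposal is correct and follows essentially the same route as the paper: apply the fundamental theorem of calculus along the segment joining the rescaled triples $(r^{-1}\alpha^i,v^i,rQ^i)$, use \Cref{lem:tangent plane} (uniformly along the segment) to identify the rescaled derivative with $(x_n)_+(\tilde\alpha+x\cdot\tilde v+\tilde Q)$, and conclude by finite-dimensional norm equivalence, with the nonempty interior of $K\cap\{x_n>0\}$ giving injectivity for the lower bound. You simply spell out details the paper leaves implicit, namely the $r\mathcal N_r$ scaling factor, convexity of the smallness conditions along the segment, and the homogeneity argument for injectivity.
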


\begin{proof}
Apply the fundamental theorem of calculus along the segment joining the
rescaled parameter triples $(r^{-1}\alpha^i,v^i,rQ^i)$, $i=1,2$.
By \Cref{lem:tangent plane}, the estimates then follow from
finite-dimensional norm equivalence, as in the proof of
\Cref{lem:ansatz linear vs parameters}.
The condition on \(K\) is needed only for the lower bound, since it guarantees that the map
\[
(\alpha,v,Q)\longmapsto (x_n)_+(\alpha+x\cdot v+Q)\big|_K
\]
is injective.
\end{proof}

\subsection{Proof of \texorpdfstring{\Cref{theorem:improvement of flatness}}{}}
We will use two preliminary results, adapted from \cite[Lemmas 13.6 and 13.7]{FRS}.
We use the reverse heat kernel \(G\) defined in \eqref{eq:gaussian kernels}.

The first lemma is a key compactness result, while the second is a Caccioppoli-type estimate.
\begin{lemma}\label{lem:improvement key lemma}
    Given $\bar C,L>1$ and an integer $\Lambda_0\ge8$, there is an
    integer $\Lambda\ge\Lambda_0$ such that the following holds. Let
    $R\ge\Lambda$, $\sigma\in[0,\Lambda^{-1})$, and suppose that
    \[
        w\in H^1_{\rm loc}\left(
        \Omega^\beta_{R\vartheta^{-\Lambda}}
        \cap\sC_{2^\Lambda}\cap\{t<-\sigma^2\}\right).
    \]
    Set $w_r(x,t)\coloneqq w(rx,r^2t)$ for $r>0$, and assume that
    \begin{equation}\label{eq:assumption key lemma 1}
        \left(\int_{\Omega^\beta_{R}\cap\big(B_1\times(-1,-(2^{-m}\sigma)^2)\big)}
        |w_{2^m}|^2+|\nabla w_{2^m}|^2+|\partial_t w_{2^m}|^2\,dx\,dt\right)^{1/2}
        \le \bar C(2^m)^{3.5}
        \qquad\forall\,m\in\{0,\ldots,\Lambda\},
    \end{equation}
and
    \begin{equation}\label{eq:assumption key lemma 2}\begin{cases}
        |\Heat w|\le \frac1{\Lambda}    &\text{in }\Omega^\beta_{R}\cap(B_{\Lambda}\times(-\Lambda,-\frac1{\Lambda}))\cap\{x_n\ge \frac1{\Lambda}\},\\
        w=0                         &\text{in }\Omega^\beta_{R}\cap(B_{\Lambda}\times(-\Lambda,-\frac1{\Lambda}))\cap\{x_n\le -\frac1{\Lambda}\}.
    \end{cases}\end{equation}
    Suppose moreover that
    \begin{equation}\label{eq:assumption key lemma 3}
        \left|\int_{B_{\Lambda}\times(-1,-1/2)}
        w(x_n)_+(\tilde\alpha+x\cdot\tilde v+\tilde Q)G(x,t)\,dx\,dt\right|<\frac1{\Lambda}
    \end{equation}
    for every $\tilde\alpha\in\R$, $\tilde v\perp e_n$, and every parabolically 2-homogeneous polynomial $\tilde Q$ satisfying
 $
        \Heat(x_n\tilde Q)=0,$
with $        |\tilde\alpha|+|\tilde v|+\|\tilde Q\|_{L^2(\sC_1)}\le1.
$
    Then
    \[
        \left(\int_{\Omega^\beta_R\cap(B_1\times(-1,-(2\sigma)^2))}w^2(2^{-1}x,4^{-1}t)dx\,dt\right)^{1/2}\le\frac1L.
    \]
\end{lemma}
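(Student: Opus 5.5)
The plan is to argue by contradiction and compactness, in the same way as \Cref{lem:improvement linear key lemma}. Suppose the conclusion fails for some $\bar C,L,\Lambda_0$. Then for every $\Lambda=\Lambda_j\to\infty$ we get $R_j\ge\Lambda_j$, $\sigma_j\in[0,\Lambda_j^{-1})$ and functions $w_j$ that satisfy \eqref{eq:assumption key lemma 1}--\eqref{eq:assumption key lemma 3}, while the rescaled $L^2$ norm of $w_j$ on $\Omega^\beta_{R_j}\cap(B_1\times(-1,-(2\sigma_j)^2))$ stays larger than $1/L$.

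\textbf{Step 1: compactness.} Since $\Omega^\beta_R$ increases to $\R^n\times(-\infty,0)$ as $R\to\infty$, and $\sigma_j\to0$, the bounds \eqref{eq:assumption key lemma 1} give, for each fixed $m$, uniform $H^1$ bounds for $w_j$ on the sets $S_{2^m}(\Omega^\beta_{R_j}\cap B_1\times(-1,-(2^{-m}\sigma_j)^2))$. These sets exhaust every compact subset of $\R^n\times(-\infty,0)$. A diagonal argument then gives $w_j\to w_\infty$ weakly in $H^1_{\rm loc}$ and strongly in $L^2_{\rm loc}(\R^n\times(-\infty,0))$. The growth bound passes to the limit:
\[
\|w_\infty(2^m\cdot,4^m\cdot)\|_{L^2(B_1\times(-1,0))}\le\bar C 2^{3.5m}\qquad\forall\,m\ge0.
\]
From \eqref{eq:assumption key lemma 2}, $w_\infty$ is caloric in $\{x_n>0\}$ and vanishes in $\{x_n<0\}$. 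The $H^1_{\rm loc}$ bound gives a zero trace on $\{x_n=0\}$.

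\textbf{Step 2: Liouville.} The odd reflection of $w_\infty|_{\{x_n>0\}}$ is an ancient caloric function on $\R^n\times(-\infty,0)$ whose growth is at most $r^{3.5}$, which is below $4$. By the standard Liouville theorem for ancient caloric functions of polynomial growth, it is a caloric polynomial of parabolic degree at most $3$ that is odd in $x_n$. Such a polynomial can be written as $x_n(\tilde\alpha+x\cdot\tilde v+\tilde Q)$, where $\tilde v\perp\bm e_n$ and $\tilde Q$ is $2$-homogeneous with $\Heat(x_n\tilde Q)=0$. The reason is that every odd-in-$x_n$ term carries a factor $x_n$, and the term $x_n\cdot cx_n$ is absorbed into $\tilde Q$ since it is the only degree-one part of odd form. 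Caloricity of each homogeneous piece gives the constraint on $\tilde Q$.

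\textbf{Step 3: orthogonality kills the limit.} Next we pass to the limit in \eqref{eq:assumption key lemma 3}. For this we need strong $L^2$ convergence of $w_jG$ on $B_\Lambda\times(-1,-1/2)$ with $\Lambda\to\infty$, uniformly in the tails. The Gaussian weight together with the growth bound \eqref{eq:assumption key lemma 1}, applied on dyadic annuli as in Step 1 of the proof of \Cref{prop:C3beta expansion linear}, makes the contribution of $\{|x|>\rho\}$ small uniformly in $j$. The limit identity is
\[
\int w_\infty(x_n)_+(\tilde\alpha+x\cdot\tilde v+\tilde Q)G=0
\]
for every admissible triple. We choose the triple proportional to the one representing $w_\infty$. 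The integral then becomes the weighted $L^2$ norm of $w_\infty$, so $w_\infty\equiv0$.

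This contradicts the lower bound: strong $L^2$ convergence on $B_{1/2}\times(-1/4,-(\sigma_j)^2)$ passes the bound $\ge1/L$ to the limit. The only care needed is near $t=0$, where the region shrinks to $t<0$. The strip $\{-(2\sigma_j)^2<t<0\}$ is thin, and the uniform $L^2$ bound together with higher integrability from the energy estimate should make its contribution vanish.

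I expect the main obstacle to be this uniform control in the two degenerate regimes: near $t=0$, where $\sigma_j$ cuts out a thin strip, and at the cusp boundary $\partial\Omega^\beta_R$ at large $|x|$, where \eqref{eq:assumption key lemma 1} only controls the norm inside $\Omega^\beta_R$. To handle both, I would rely on $R_j\ge\Lambda_j\to\infty$ together with Gaussian decay, and would first try the dyadic-annulus bound used earlier.
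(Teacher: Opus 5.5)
Your contradiction, compactness, Liouville and orthogonality scheme is the paper's: its proof also runs the argument of \Cref{lem:improvement linear key lemma}, and your Steps 1--3 are fine. In Step 2, the term $c\,x_n^2$ is even in $x_n$, so it never occurs and nothing has to be absorbed into $\tilde Q$. The gap is at the one point where this lemma differs from the linear one, the moving terminal time $-\sigma^2$.

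Convergence in $L^2_{\rm loc}(\R^n\times(-\infty,0))$ says nothing about $w_j$ on $B_{1/2}\times(-1/4,-\sigma_j^2)$ up to its top. So your sentence that strong convergence on this set ``passes the bound $\ge 1/L$ to the limit'' is not justified as written. Moreover, the strip you then propose to control, $\{-(2\sigma_j)^2<t<0\}$, is exactly the one excluded from the conclusion, so it is irrelevant. What you need is that the mass of $w_j$ in $\Omega^\beta_{R_j}\cap(B_{1/2}\times(-\eta,-\sigma_j^2))$ is small uniformly in $j$ as $\eta\downarrow0$. Only then does the lower bound (up to the Jacobian of the rescaling) survive for $w_\infty$ on $B_{1/2}\times(-1/4,-\eta)$ and contradict $w_\infty\equiv0$.

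The paper obtains this from the $\partial_tw$ part of \eqref{eq:assumption key lemma 1} with $m=0$, together with the monotonicity of the time sections $\{|x|<R(-t)^{1/(2+\beta)}\}$ of $\Omega^\beta_R$, which shrink as $t\uparrow0$. If $x\in B_{1/2}$ and $(x,t)\in\Omega^\beta_R$ with $t\in(-1/8,-\sigma^2)$, the whole segment $\{x\}\times(-1,t]$ lies in $\Omega^\beta_R\cap(B_1\times(-1,-\sigma^2))$. Writing $w(x,t)=w(x,s)+\int_s^t\partial_tw(x,\tau)\,d\tau$ and averaging over $s\in(-1,-1/2)$ gives
\[
\sup_{t\in(-1/8,-\sigma^2)}\int_{\{x\in B_{1/2}:(x,t)\in\Omega^\beta_R\}}|w(x,t)|^2\,dx\le C\bar C^2 .
\]
Hence the mass in $(-\eta,-\sigma^2)$ is at most $C\bar C^2\eta$, uniformly in $\sigma$. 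This same argument also handles the cusp boundary, which cuts into $B_{1/2}$ only for $-t<(2R)^{-(2+\beta)}$.

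Your ``higher integrability'' idea can also be made rigorous. The set $\Omega^\beta_R\cap(B_1\times(-1,-\sigma^2))$ is the region over $B_1$ between $t=-1$ and the graph of $\min\{-\sigma^2,-(|x|/R)^{2+\beta}\}$, so it is uniformly Lipschitz. A space-time Sobolev embedding with uniform constant plus H\"older then gives the same smallness, but this has to be stated and checked. The dyadic-annulus Gaussian bound you point to only controls large $|x|$ in \eqref{eq:assumption key lemma 3} and plays no role near $t=0$.
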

\begin{proof}
    We follow the proof of \Cref{lem:improvement linear key lemma}, based on
    \cite[Lemma 13.6]{FRS}. To account for the terminal time
    $-\sigma^2$, we also need to control the $L^2$ mass near $t=0$.
    The estimate
    \eqref{eq:assumption key lemma 1} with $m=0$, together with the monotonicity
    of the time sections of $\Omega_R^\beta$, gives
    \[
        \sup_{t\in(-1/8,-\sigma^2)}
        \int_{\{x\in B_{1/2}:(x,t)\in\Omega_R^\beta\}}
        |w(x,t)|^2\,dx\le C.
    \]
    Hence the contribution of $(-\eta,-\sigma^2)$ to the squared $L^2$-norm
    is bounded by $C\eta$, uniformly in $\sigma$, whenever
    $\sigma^2<\eta<1/8$.
    This mass therefore tends to zero as $\eta\downarrow0$, uniformly in
    $\sigma$, which allows the $L^2$ convergence in the cited proof to
    extend to our current situation.
\end{proof}

\begin{lemma}\label{lem:improvement caccioppoli}
    Fix $\beta\in(0,1)$ and let $u_i\colon\Omega^\beta_R\cap\sC_2\cap\{t<-s^2\}\to[0,+\infty)$, $i=1,2$, be solutions of
    \[
    \begin{cases}
        \Heat u_i = (1+e_i)\chi_{\{u_i>0\}},\\
        \partial_tu_i\ge0
    \end{cases}
    \]
    where the functions $e_i$ satisfy $|e_i|\le\bar\eps<1/100$, and assume
    that $0\le s<1/100$. Set $w\coloneqq u_1-u_2$. Then there are constants
    $C,R_\circ$ depending only on $n,\beta$ such that, for $R\ge R_\circ$,
    \[
\int_{\Omega^\beta_{\vartheta R}\cap(B_1\times(-1,-s^2))}
        \bigl(|\nabla w|^2+|\partial_tw|^2\bigr)\,dx\,dt \le C\left(\int_{\Omega^\beta_R\cap(B_2\times(-4,-s^2))}w^2\,dx\,dt+\bar\eps^2\right).
    \]
\end{lemma}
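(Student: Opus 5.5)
The plan is to follow the proof of \Cref{lem:caccioppoli linear} and of \cite[Lemma 13.7]{FRS}. The only new feature is the terminal time $-s^2$, which has to be handled without producing boundary terms of the wrong sign. First I derive two weak differential inequalities for $w=u_1-u_2$. Arguing as for \eqref{eq:comparison principle}, but with the perturbed right-hand sides, gives
\[
w\,\Heat w=(1+e_1)w\chi_{\{u_1>0\}}-(1+e_2)w\chi_{\{u_2>0\}}\ge-2\bar\eps|w|.
\]
The key point is that on $\{u_1>0,u_2=0\}$ we have $w=u_1\ge0$, and symmetrically on the other set. Similarly, since $\partial_tu_i\ge0$ and $\partial_tu_i=0$ a.e. on $\{u_i=0\}$, I get
\[
\partial_tw\,\Heat w\ge-2\bar\eps|\partial_tw|,
\]
exactly as in the proof of \Cref{lem:existence linear}. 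These inequalities are justified by testing with cutoffs and with backward difference quotients in time, which is where $\partial_tu_i\ge0$ is used.

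Next I pick a cutoff $\eta(x,t)=\xi\bigl(|x|/(-t)^{1/(2+\beta)}\bigr)\chi(x,t)$. Here $\xi\equiv1$ on $[0,\vartheta R]$ and vanishes beyond $R$; this fits because $\vartheta=2^{-\gamma_\beta}<1$ and the cusp shrinks under parabolic scaling by factor $2$ exactly by $\vartheta$. The factor $\chi$ is a standard cutoff equal to one on $B_1\times(-1,0)$, vanishing near $\partial B_2$ and near $t=-4$. It carries \emph{no} cutoff at $t=-s^2$. Testing the first inequality with $\eta^2$ gives
\[
\tfrac12\sup_t\int\eta^2w^2+\int\eta^2|\nabla w|^2\le C\int w^2\bigl(|\nabla\eta|^2+|\Heat\eta^2|\bigr)+C\bar\eps\int|w|.
\]
The terminal slice term $\int\eta^2w^2(\cdot,-s^2)$ appears with the good sign and can be dropped. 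Then I test the second inequality with $\eta^2$, and afterwards with $\eta^2$ multiplied by a time cutoff vanishing near $t=-4$. This yields
\[
\int\eta^2|\partial_tw|^2+\tfrac12\int\eta^2|\nabla w|^2\Big|_{t=-s^2}\le C\int(|\nabla w|^2+w^2)\bigl(|\nabla\eta|^2+|\partial_t\eta^2|\bigr)+C\bar\eps^2.
\]
Again the terminal term has a good sign. Combining the two estimates, with Young's inequality on the $\bar\eps$ terms, gives the claim on the region where $\eta=1$.

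The main obstacle is the derivatives of the cusp cutoff. The bound $|\partial_t\eta|\lesssim|x|\,(-t)^{-1-1/(2+\beta)}$ blows up as $t\to0$ on the transition layer $\vartheta R(-t)^{1/(2+\beta)}<|x|<R(-t)^{1/(2+\beta)}$. On this layer $|\nabla\eta|\lesssim R^{-1}(-t)^{-1/(2+\beta)}$ and $|\partial_t\eta|\lesssim(-t)^{-1}$, so these weights are not bounded. To absorb them I will use that $w$ vanishes quadratically near the origin in the contact region only in a weak sense, which is not enough. Instead I would follow \cite[Lemma 13.7]{FRS} and make the layer estimate dyadic in time. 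On $\{-t\sim 4^{-j}\}$ the layer has width $\sim R\,4^{-j/(2+\beta)}\gg 2^{-j}$ once $R\ge R_\circ$. I therefore replace the self-similar profile by a cutoff adapted to each dyadic slab, with spatial transition width of order $R\,4^{-j/(2+\beta)}$ and time transition of order $4^{-j}$. This gives $|\nabla\eta|^2+|\partial_t\eta|\lesssim 4^{j}$ on the slab. I then use the $L^\infty$-in-time bound from the first test, applied on the slab itself, to absorb the right-hand side. This is the step where the choice $R\ge R_\circ(n,\beta)$ enters. The remaining slab contributions sum because they are controlled by $\int w^2$ over the larger domain $\Omega^\beta_R\cap(B_2\times(-4,-s^2))$. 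That slab-by-slab absorption near $t\uparrow-s^2$, uniform in $s$, is the delicate point I would check most carefully.
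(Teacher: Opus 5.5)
Your framework is the paper's: its proof simply invokes \cite[Lemma 13.7]{FRS}, noting that the slices at $t=-s^2$ enter with a nonnegative sign. Your two weak inequalities $w\Heat w\ge-2\bar\eps|w|$ and $\partial_tw\Heat w\ge-2\bar\eps|\partial_tw|$ are also correct. The gap is the step you flag as the main obstacle: it is misdiagnosed, and your dyadic remedy does not close. Cutting off in time on slabs $\{-t\sim4^{-j}\}$ with transition width $4^{-j}$ creates $\partial_t$-terms of size $4^j$ with the \emph{unfavourable} sign. So slab $j$ costs $4^j\int_{\mathrm{slab}_j}w^2$ in the gradient bound. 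In the second test it costs $4^j\int|\nabla w|^2$, i.e.\ effectively $16^j\int w^2$, in the $\partial_tw$ bound. The $L^\infty$-in-time bound only gives the trivial $\int_{\mathrm{slab}_j}w^2\lesssim4^{-j}\sup_t\int w^2(\cdot,t)$, so the absorption is circular. Even granting a uniform bound on that supremum, each slab contributes $O(1)$, and the number of slabs is unbounded as $s\to0$. The constant would therefore not be uniform in $s$, and the hypotheses give no pointwise decay of $w$ near the tip that could rescue the sum.

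The missing idea is the \emph{sign} of $\partial_t\eta$ for the self-similar cutoff, which you discard by writing $|\Heat\eta^2|$ and $|\partial_t\eta|$. With $\rho\coloneqq|x|(-t)^{-1/(2+\beta)}$ and $\eta=\xi(\rho)$, $\xi$ nonincreasing,
\[
\nabla\eta=\xi'(\rho)(-t)^{-1/(2+\beta)}\tfrac{x}{|x|},\qquad
\partial_t\eta=\xi'(\rho)\frac{\rho}{(2+\beta)(-t)}\le0,
\]
because $\Omega^\beta_R$ shrinks as $t\uparrow0$. Hence $\tfrac12\int w^2\partial_t(\eta^2)$ in the first test, and $\tfrac12\int|\nabla w|^2\partial_t(\eta^2)$ in the second, move to the left-hand side as $\int w^2\eta|\partial_t\eta|$ and $\int|\nabla w|^2\eta|\partial_t\eta|$. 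Moreover, the cusp shrinks faster than parabolically. On the part of the layer where $|\xi'|\le C\xi/R$,
\[
\frac{|\nabla\eta|^2}{\eta\,|\partial_t\eta|}
=\frac{|\xi'|}{\xi}\cdot\frac{(2+\beta)(-t)^{\beta/(2+\beta)}}{\rho}\le\frac{C_\beta}{R^2}.
\]
If $\xi$ vanishes to order $k\ge1+1/\beta$ at $\rho=R$, the thin edge region where this ratio exceeds $\tfrac12$ has $|\nabla\eta|^2\le C$. Thus $2|\nabla\eta|^2\le\eta|\partial_t\eta|+C$ for $R\ge R_\circ(n,\beta)$, and this is the actual role of $R_\circ$. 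The cross terms $\int w^2|\nabla\eta|^2$ and $\int|\nabla w|^2|\nabla\eta|^2$ are then absorbed. A single self-similar cutoff, multiplied by standard cutoffs in $B_2$ and near $t=-4$ and with none at $t=-s^2$, closes both estimates uniformly in $s$ without any dyadic decomposition.
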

\begin{proof}
    This follows from \cite[Lemma 13.7]{FRS}, applied with
    $\theta=\sqrt\vartheta$ (note that the same proof applies with terminal time
    $-s^2$, since the boundary terms are nonnegative).
\end{proof}

We now combine these two results with Gaussian projection to obtain the
inductive step for \Cref{theorem:improvement of flatness}. The relation between $\tilde u$ and the background in the lemma is expressed by
\eqref{eq:prop improvement of flatness base hp1}: $\tilde u$ is approximated by
extended Ans\"atze built from $\bar u$ at $M_1+1$ consecutive dyadic
scales, and the lemma constructs an approximation at the next scale while controlling the change in the parameters.

In the application
below, $u$ is the fixed background solution and $\tilde u=u_k$ is one of the
perturbed solutions, with $\frac{u_k-u}{\eps_k}\to\varphi$ (recall \eqref{eq:hp thm flatness: lin}).

\begin{lemma}\label{lem:inductive step improvement of flatness}
There are an integer $M_1\ge9$ and a constant $N>0$, depending only on
$n$ and $\beta$, with the following property. Let $u$ satisfy the hypotheses
of \Cref{lem:extend smooth solution}, let $\bar u$ be its
extension with $J=5$, and set $R_*\coloneqq2M_1$.
Then there are constants $\eta_1,C_1>0$, which may also depend on $u$,
such that, for every $\delta\in(0,\eta_1)$, there exists $r_\delta>0$
for which the following holds.

Suppose that $\tilde u$ solves \eqref{eq:stefan} in $\Omega^\beta_N\cap\big(B_{r/\eta_1}\times(-r^2/\eta_1,-s^2)\big)$, where $0<r<r_\delta$ and $0\le s<\eta_1r_{M_1}$, and set $r_m\coloneqq2^{-m}r$. Assume moreover that there is a family of extended $|\eps|$-Ansätze
\[
    \sU^{(m)}\coloneqq\sU[\bar u,\eps\alpha^{(m)},\eps v^{(m)},\eps Q^{(m)}],
    \qquad 0\le m\le M_1,
\]
with $0<|\eps|<\eta_1$, such that
\begin{equation}\label{eq:prop improvement of flatness base hp1}
\|(\tilde u-\sU^{(m)})(r_m\cdot,r_m^2\cdot)\|_{L^2(\Omega^\beta_{R_*}\cap(B_1\times(-1,-(s/r_m)^2)))}\le \delta|\eps| r_m^{3.5}.
\end{equation}
Suppose also that
\begin{equation}\label{eq:prop improvement of flatness base hp2}
    \begin{cases}
        \tilde u(r_{M_1}\cdot,r_{M_1}^2\cdot)=0    &\text{in }\Big(B_{1/\eta_1}\times(-1/\eta_1,-\eta_1)\Big) \cap\{x_n\le-\eta_1\},\\
        \tilde u(r_{M_1}\cdot,r_{M_1}^2\cdot)>0    &\text{in }\Big(B_{1/\eta_1}\times(-1/\eta_1,-\eta_1)\Big) \cap\{x_n\ge\eta_1\},
    \end{cases}
\end{equation}
and for $0\le m\le M_1$
\begin{equation}\label{eq:improvement of flatness flatness assumption}
    \|r_m^{-2}\tilde u(r_m\cdot,r_m^2\cdot)-\tfrac12(x_n)_+^2\|_{L^2(B_1\times(-1,-1/2))}\le\eta_1.
\end{equation}
Then there is an extended $2|\eps|$-Ansatz
\[
    \sU^{(M_1+1)}\coloneqq\sU[\bar u,\eps\alpha^{(M_1+1)},
        \eps v^{(M_1+1)},\eps Q^{(M_1+1)}]
\]
such that
\[
    \|(\tilde u-\sU^{(M_1+1)})(r_{M_1+1}\cdot,r_{M_1+1}^2\cdot)\|_{L^2(\Omega^\beta_{R_*}\cap(B_1\times(-1,-(s/r_{M_1+1})^2)))}\le \delta|\eps| r_{M_1+1}^{3.5}
\]
and
\begin{equation}\label{eq:proof improvement closeness new ansatz}
|\alpha^{(M_1+1)}-\alpha^{(M_1)}|+r_{M_1}|v^{(M_1+1)}-v^{(M_1)}|
+r_{M_1}^{2}\|Q^{(M_1+1)}-Q^{(M_1)}\|_{L^2(\sC_1)}
    \le C_1\delta(r_{M_1})^{2.5}.
\end{equation}
\end{lemma}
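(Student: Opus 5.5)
The argument mirrors Step~1 of the proof of \Cref{prop:C3beta expansion linear}, with the linear Ans\"atze $\upvarphi[\alpha,v,Q]$ replaced by the nonlinear extended Ans\"atze $\sU[\bar u,\cdot]$. Compactness is supplied by \Cref{lem:improvement key lemma} and energy control by \Cref{lem:improvement caccioppoli}.

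\textbf{Step 1: pass to a single Ansatz.} For $0\le m<M_1$, apply \Cref{lem:ansatz vs parameters} on a fixed compact $K$ with $K\cap\{x_n>0\}$ of nonempty interior. Comparing $\sU^{(m)}$ and $\sU^{(m+1)}$ via \eqref{eq:prop improvement of flatness base hp1} at scale $r_{m+1}$ gives a parameter increment bounded by $C\delta r_m^{2.5}$. Here the smallness condition $|\alpha|/r+|v|+r\|Q\|\le\eta$ holds because $|\eps|<\eta_1$, $r<r_\delta$, and the parameters are $O(|\eps|)$. Summing these increments over the fixed number $M_1$ of scales, and using the upper bound in \Cref{lem:ansatz vs parameters}, shows that $\sU^{(M_1)}$ alone approximates $\tilde u$ at every scale $r_m$, $0\le m\le M_1$, with error $C\delta|\eps|r_m^{3.5}$.

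\textbf{Step 2: define the new Ansatz.} Choose $\sU^{(M_1+1)}$ by minimizing the Gaussian-weighted $L^2$ distance
\[
\int_{B_{\Lambda}\times(-1,-1/2)}|(\tilde u-\sU[\bar u,\eps\alpha,\eps v,\eps Q])\circ S_{r_{M_1}}|^2G
\]
over triples in a small neighborhood of $(\alpha^{(M_1)},v^{(M_1)},Q^{(M_1)})$. Control the Gaussian tail on dyadic annuli exactly as in the linear proof. Since $S_{r_{M_1}}$ of these annuli sit inside the rescaled domains $\Omega^\beta_{R_*}$ at coarser scales, this requires $R_*=2M_1\ge\Lambda$. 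The minimality, together with the lower bound in \Cref{lem:ansatz vs parameters}, yields \eqref{eq:proof improvement closeness new ansatz} and keeps the new Ansatz an extended $2|\eps|$-Ansatz. Because the parameter map is nonlinear, the first-order condition is only an approximate orthogonality. By \Cref{lem:tangent plane}, after rescaling the derivative is $(x_n)_+(\tilde\alpha+x\cdot\tilde v+\tilde Q)+o(1)$. With $w:=(\tilde u-\sU^{(M_1+1)})\circ S_{r_{M_1}}/(C\delta|\eps|r_{M_1}^{3.5})$, this gives \eqref{eq:assumption key lemma 3} once $r_\delta$ and $\eta_1$ are small relative to $\Lambda^{-1}$.

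\textbf{Step 3: verify the key-lemma hypotheses and conclude.} For \eqref{eq:assumption key lemma 2}: \Cref{lem:properties extended ansatz: pde} gives $|\Heat\sU-\chi_{\{\sU>0\}}|\le C(|\eps||z|_\pb^2+|\eps|^3)$, so after rescaling $|\Heat w|\lesssim\delta^{-1}(r_{M_1}^{0.5}+|\eps|^2r_{M_1}^{-1.5})$. I expect this to be the main obstacle. The $|\alpha|^2$-error in \eqref{eq:pde ansatz} does not improve with scale, so the bound needs $|\eps|$ small relative to $r^{1.5}\delta$, or else a refinement tracking $|\alpha|\lesssim|\eps|$ more carefully. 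I would first try absorbing it by choosing $\eta_1$ and $r_\delta$ depending on $\delta$, possibly restricting to $|\eps|\le\eta_1 r^{2}$ as the application allows. The vanishing part of \eqref{eq:assumption key lemma 2} comes from \eqref{eq:prop improvement of flatness base hp2} and the corresponding property of $\sU$. For \eqref{eq:assumption key lemma 1}, apply \Cref{lem:improvement caccioppoli} to $\tilde u$ and $\sU^{(M_1+1)}$ with error $\bar\eps\approx$ the PDE defect, using \eqref{eq:improvement of flatness flatness assumption} to keep the flatness. The terminal time $-s^2$ is handled by the $\sigma$-version of the key lemma with $\sigma=s/r_{M_1}<\eta_1 r_{M_1}/r_{M_1}$, which is small. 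Finally, choose $L\ge 2^{3.5}C$ and fix $\Lambda$ (hence $M_1=\Lambda+1$) from \Cref{lem:improvement key lemma}. Its conclusion, rescaled, gives the approximation bound at scale $r_{M_1+1}$ with constant $\delta|\eps|r_{M_1+1}^{3.5}$.
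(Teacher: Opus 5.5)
Your overall architecture matches the paper's: collapse to $\sU^{(M_1)}$, Gaussian-weighted minimization, Caccioppoli, then \Cref{lem:improvement key lemma} with $M_1=\Lambda+1$. However, one estimate is missing, and its absence is what produces the obstacle you could not resolve in Step~3. The missing estimate is the \emph{centering} of the Ans\"atze at the current scale, $|\eps\alpha^{(i)}|\le\eta\, r_m$.

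In Step~1 you assert that the hypothesis $|\alpha|/r+|v|+r\|Q\|_{L^2(\sC_1)}\le\eta$ of \Cref{lem:ansatz vs parameters} follows from $|\eps|<\eta_1$, $r<r_\delta$ and the parameters being $O(|\eps|)$. That only gives $|\eps\alpha^{(m)}|/r_{m+1}\le 2^{M_1}|\eps|/r$, and the statement imposes no relation between $|\eps|$ and $r$. The paper gets centering from the flatness hypothesis \eqref{eq:improvement of flatness flatness assumption}. Combined with \eqref{eq:prop improvement of flatness base hp1}, it shows that $r_{m+1}^{-2}\sU^{(i)}(r_{m+1}\cdot,r_{m+1}^2\cdot)$ is $(\eta_1+C\delta|\eps|r_m^{1.5})$-close to $\tfrac12(x_n)_+^2$ on $B_1\times(-1,-1/2)$. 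A compactness argument then forces $|\eps\alpha^{(i)}|/r_{m+1}$ to be small; the $v$ and $Q$ components are trivially small. For the minimizer, the same argument applied to the Gaussian bound \eqref{eq:minimizing ansatz gaussian}, on a compact subset of $\{x_n>0\}$ where $G\ge c$, gives \eqref{eq:minimizing ansatz centered}.

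Without centering you cannot apply \Cref{lem:ansatz vs parameters} to any of the Ans\"atze, so neither your Step~1 nor your closeness bound in Step~2 is justified. Nor can you show that $U^*_{M_1}=r_{M_1}^{-2}\sU^{(M_1+1)}(r_{M_1}\cdot,r_{M_1}^2\cdot)$ vanishes for $x_n\le-\Lambda^{-1}$, which the vanishing part of \eqref{eq:assumption key lemma 2} requires.

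Centering is also exactly what disposes of the $|\alpha|^2$-term in \eqref{eq:pde ansatz}. Using both $|\eps\alpha^{(M_1+1)}|\le\eta r_{M_1}$ and $|\eps\alpha^{(M_1+1)}|\le2|\eps|$,
\[
    |\eps\alpha^{(M_1+1)}|^2\bigl(|\eps\alpha^{(M_1+1)}|+|\eps|\bigr)\le C|\eps|r_{M_1}^2\le C|\eps|r_m^2 ,
\]
so this error is of the same order as the main term $|\eps||(x,t)|_\pb^2$. After rescaling, the defect is at most $C_{M_1}|\eps|r_m^2\le\delta|\eps|r_m^{7/4}$ once $C_{M_1}r_\delta^{1/4}\le\delta$. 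This bound serves as $\bar\eps$ in \Cref{lem:improvement caccioppoli}, giving \eqref{eq:assumption key lemma 1}, and it yields $|\Heat w|\le C_0^{-1}r_{M_1}^{1/4}\le\Lambda^{-1}$.

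Your suggested workaround, restricting to $|\eps|\le\eta_1r^2$, is not a hypothesis of the lemma and is not available where the lemma is used. In \Cref{prop:cleaning with transversality} the expansion is needed at $r_k=2C_\circ d_k$ knowing only $\eps_k=o(d_k)$. In Step~2 of \Cref{lem:transversality vs rototranslations} it is used at $\rho_k=\eps_k^{1/2}$, i.e.\ $\eps_k=\rho_k^2$.

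A smaller point: to get an exact first-order condition you must check that the minimizer is interior to the constraint set. The paper does this with the radius-$2$ ball and \eqref{eq:proof improvement closeness new ansatz}.
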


\begin{proof}[Proof of \Cref{lem:inductive step improvement of flatness}]
    We carry out Steps~1--3 for an arbitrary $M_1\ge9$, and choose
    $M_1$ in Step~4.
        
    \smallskip\noindent\textbf{Step 1.}
    Set
    \[
        K\coloneqq\overline{B_{1/4}\times(-3/4,-1/2)
        \cap\{x_n>1/100\}},
    \]
    let $\eta,r_*>0$ be small enough for
    \Cref{lem:ansatz vs parameters} to apply both on $K$ and on
    $\overline{B_1\times(-1,0)}$, and consider constants 
    $\eta_1<\eta/2$ and $r_\delta<r_*$ to be fixed.
    For $m\in\{0,\ldots,M_1-1\}$, rescaling
    \eqref{eq:prop improvement of flatness base hp1} to $r_{m+1}=r_m/2$
    and using \eqref{eq:improvement of flatness flatness assumption} gives
    \[
        \left\|r_{m+1}^{-2}\sU^{(i)}
        (r_{m+1}\cdot,r_{m+1}^2\cdot)-\tfrac12(x_n)_+^2\right\|_
        {L^2(B_1\times(-1,-1/2))}
        \le\eta_1+C\delta|\eps|r_m^{1.5}
        \qquad\text{for }\,i\in\{m,m+1\}.
    \]
    Since the rescaled parameters are
    $(\eps\alpha^{(i)}/r_{m+1},\eps v^{(i)},r_{m+1}\eps Q^{(i)})$,
    a compactness argument shows that, after decreasing $\eta_1$ and $r_\delta$, 
    \[
        \frac{|\eps\alpha^{(i)}|}{r_{m+1}}+|\eps v^{(i)}|
        +r_{m+1}\|\eps Q^{(i)}\|_{L^2(\sC_1)}\le\eta,
        \qquad i\in\{m,m+1\}.
    \]
    The same rescaling of \eqref{eq:prop improvement of flatness base hp1}
    gives
    \[
        \|(\sU^{(m+1)}-\sU^{(m)})
        (r_{m+1}\cdot,r_{m+1}^2\cdot)\|_{L^2(K)}
        \le C\delta|\eps|r_m^{3.5}.
    \]
    This allows us to apply  \Cref{lem:ansatz vs parameters},
with $r=r_{m+1}$ to get 
    \[        |\alpha^{(m+1)}-\alpha^{(m)}|
        +r_m|v^{(m+1)}-v^{(m)}|\\
+r_m^2\|Q^{(m+1)}-Q^{(m)}\|_{L^2(\sC_1)}
        \le C\delta r_m^{2.5}
        \qquad\forall\,m\in\{0,\ldots,M_1-1\},
\]
    and summing this bound over subsequent indices we obtain
        \begin{equation}\label{eq:summed closeness sequence ansatz}
|\alpha^{(\ell)}-\alpha^{(m)}|
        +r_m|v^{(\ell)}-v^{(m)}|+r_m^2\|Q^{(\ell)}-Q^{(m)}\|_{L^2(\sC_1)}
        \le C\delta r_m^{2.5}
        \qquad\forall\,0\le m\le\ell\le M_1.
    \end{equation}
    Thanks to this bound, after further decreasing $\eta_1$ and $r_\delta$ if needed, we deduce that
    \begin{equation}\label{eq:normalized parameters sequence}
        \frac{|\eps\alpha^{(\ell)}|}{r_m}+|\eps v^{(\ell)}|
        +r_m\|\eps Q^{(\ell)}\|_{L^2(\sC_1)}\le\eta
        \qquad\forall\,0\le m\le\ell\le M_1.
    \end{equation}
    
    \smallskip\noindent\textbf{Step 2.}
    Let $\eps_0$ be as in \Cref{lem:tangent plane}, and further require $2\eta_1<\eps_0$. We now choose the parameters $(\alpha^{(M_1+1)},v^{(M_1+1)},Q^{(M_1+1)})$ as the minimizers of
    \[
       (a,\zeta,S)\mapsto  \left(\int_{B_{M_1}\times(-1,-1/2)}
        \left|\tilde u(r_{M_1}x,r_{M_1}^2t)
        -\sU[\bar u,\eps a,\eps\zeta,\eps S]
        (r_{M_1}x,r_{M_1}^2t)\right|^2G(x,t)\,dx\,dt\right)^{1/2}
    \]
    among $a\in\R$, $\zeta\perp e_n$, and parabolically
    $2$-homogeneous polynomials $S$ satisfying $\Heat(x_nS)=0$, with the constraint 
$ \max\{|a|,|\zeta|,\|S\|_{L^2(\sC_1)}\}\le2,$ and we define 
    \[
        \sU^{(M_1+1)}\coloneqq
        \sU[\bar u,\eps\alpha^{(M_1+1)},
        \eps v^{(M_1+1)},\eps Q^{(M_1+1)}].
    \]
    Then, by \eqref{eq:summed closeness sequence ansatz},
    \eqref{eq:normalized parameters sequence}, the upper estimate in
    \Cref{lem:ansatz vs parameters}, and
    \eqref{eq:prop improvement of flatness base hp1}, we have
    \begin{equation}\label{eq:proof improvement step 2a}
        \|(\tilde u-\sU^{(M_1)})(r_m\cdot,r_m^2\cdot)\|_
        {L^2(\Omega^\beta_{R_*}\cap
        (B_1\times(-1,-(s/r_m)^2)))}
        \le C\delta|\eps|r_m^{3.5}
        \qquad\forall\,m\in\{0,\ldots,M_1\}.
    \end{equation}
    Summing \eqref{eq:proof improvement step 2a} over the dyadic annuli and
    using the Gaussian decay gives
    \begin{equation}\label{eq:proof improvement step 2}
\left(\int_{B_{M_1}\times(-1,-1/2)}
        |(\tilde u-\sU^{(M_1)})(r_{M_1}x,r_{M_1}^2t)|^2
        G(x,t)\,dx\,dt\right)^{1/2}\le C_0\delta|\eps|r_{M_1}^{3.5},
    \end{equation}
    where $C_0\ge1$ depends only on $n$ and $\beta$, so by minimality
    \begin{equation}\label{eq:minimizing ansatz gaussian}
        \left(\int_{B_{M_1}\times(-1,-1/2)}
        |(\tilde u-\sU^{(M_1+1)})(r_{M_1}x,r_{M_1}^2t)|^2
        G(x,t)\,dx\,dt\right)^{1/2}
        \le C_0\delta|\eps|r_{M_1}^{3.5}.
    \end{equation}
    Since the Gaussian kernel $G$ is bounded from below on $K$,
    \eqref{eq:minimizing ansatz gaussian} and
    \eqref{eq:improvement of flatness flatness assumption} give
    \[
        \left\|r_{M_1}^{-2}\sU^{(M_1+1)}
        (r_{M_1}\cdot,r_{M_1}^2\cdot)-\tfrac12(x_n)_+^2
        \right\|_{L^2(K)}
        \le C\bigl(\eta_1+\delta|\eps|r_{M_1}^{1.5}\bigr).
    \]
    As in Step 1, after decreasing $\eta_1$ and
    $r_\delta$ if needed, this implies that
    \begin{equation}\label{eq:minimizing ansatz centered}
        \frac{|\eps\alpha^{(M_1+1)}|}{r_{M_1}}
        +|\eps v^{(M_1+1)}|
        +r_{M_1}\|\eps Q^{(M_1+1)}\|_{L^2(\sC_1)}
        \le\eta.
    \end{equation}
    Thus, by the triangle inequality,
    \eqref{eq:proof improvement step 2},
    \eqref{eq:minimizing ansatz gaussian}, and 
    \Cref{lem:ansatz vs parameters}, we obtain 
    \eqref{eq:proof improvement closeness new ansatz} with a constant $C_1$
    independent of $M_1$ and $\delta$. Since $\sU^{(M_1)}$ is an extended
    $|\eps|$-Ansatz,
    \eqref{eq:proof improvement closeness new ansatz} also shows that the
    minimizer lies in the interior of the ball of radius $2$.

    Finally,  \Cref{lem:ansatz vs parameters} applied
    on $\overline{B_1\times(-1,0)}$, together with
    \eqref{eq:proof improvement step 2a} and
    \eqref{eq:proof improvement closeness new ansatz}, gives (after
    increasing $C_0$, if needed)
    \begin{equation}\label{eq:proof improvement growth estimate}
\|(\tilde u-\sU^{(M_1+1)})(r_m\cdot,r_m^2\cdot)\|_
        {L^2(\Omega^\beta_{R_*}\cap
        (B_1\times(-1,-(s/r_m)^2)))}\le C_0\delta|\eps|r_m^{3.5}
        \qquad\forall\,m\in\{0,\ldots,M_1\}.
    \end{equation}

    \smallskip\noindent\textbf{Step 3.}
    For $m\in\{1,\ldots,M_1\}$, set
    \[
        \tilde u_m\coloneqq r_m^{-2}\tilde u(r_m\cdot,r_m^2\cdot),
        \qquad
        U_m^*\coloneqq r_m^{-2}\sU^{(M_1+1)}(r_m\cdot,r_m^2\cdot),
    \]
   and decrease $r_\delta$ so that
    \[
        S_{r_m}\left(\Omega^\beta_{R_*/\vartheta}
        \cap(B_{M_1}\times(-M_1,0))\right)
        \subset\Omega^\beta_{1/2}\cap\sC_{r_5/2}
        \qquad\forall\,m\in\{1,\ldots,M_1\}.
    \]
    By \eqref{eq:minimizing ansatz centered} and the fact that
    $\sU^{(M_1+1)}$ is an extended $2|\eps|$-Ansatz,
    \[
        |\eps\alpha^{(M_1+1)}|\le Cr_{M_1},
        \qquad |\eps\alpha^{(M_1+1)}|\le2|\eps|,
    \]
    therefore, since $r_{M_1}\le r_m$, 
    \[
        |\eps\alpha^{(M_1+1)}|^2
        \bigl(|\eps\alpha^{(M_1+1)}|+|\eps|\bigr)
        \le C|\eps|r_m^2.
    \]
    Hence \eqref{eq:pde ansatz} gives, for a constant $C_{M_1}>0$,
    \[
        |\Heat U_m^*-\chi_{\{U_m^*>0\}}|
        \le C_{M_1}|\eps|r_m^2\chi_{\{U_m^*>0\}}
        \quad\text{in }\Omega^\beta_{R_*/\vartheta}
        \cap(B_{M_1}\times(-M_1,0)).
    \]
    Thus, decreasing further $r_\delta$ so that
    $C_{M_1}r_\delta^{1/4}\le\delta$, it follows that
    \begin{equation}\label{eq:normalized ansatz residual improvement}
        |\Heat U_m^*-\chi_{\{U_m^*>0\}}|
        \le\delta|\eps|r_m^{7/4}\chi_{\{U_m^*>0\}}
        \quad\text{in }\Omega^\beta_{R_*/\vartheta}
        \cap(B_{M_1}\times(-M_1,0))
        \qquad\forall\,m\in\{1,\ldots,M_1\}.
    \end{equation}
    Moreover,
    \[
        S_{1/2}(\Omega^\beta_{R_*/\vartheta})=\Omega^\beta_{R_*}.
    \]
    Hence, if $R_*/\vartheta\ge R_\circ$ and $\eta_1<1/100$, then
    \eqref{eq:proof improvement growth estimate} at the scales $r_m$ and
    $r_{m-1}=2r_m$, \eqref{eq:normalized ansatz residual improvement}, and
    \Cref{lem:properties extended ansatz: pde,lem:improvement caccioppoli}
    give, for a constant $\bar C>1$ depending only on $n$ and $\beta$,
    \begin{multline}\label{eq:normalized improvement energy}
        \left(\int_{\Omega^\beta_{R_*}\cap(B_1\times(-1,-(s/r_m)^2))}
        \bigl(|\tilde u_m-U_m^*|^2+|\nabla(\tilde u_m-U_m^*)|^2
        +|\partial_t(\tilde u_m-U_m^*)|^2\bigr)\,dx\,dt\right)^{1/2}\\
\le \bar C\delta|\eps|r_m^{1.5}
        \qquad\forall\,m\in\{1,\ldots,M_1\}.
    \end{multline}
    Also, by \eqref{eq:pde derivative extended solution}, the definition of
    $U_{M_1}^*$, and \eqref{eq:minimizing ansatz centered},
    \[
        \partial\{U_{M_1}^*>0\}\cap\Omega^\beta_{R_*}
        \cap(B_{M_1}\times(-M_1,-M_1^{-1}))
        \subset\{|x_n|\le C_{M_1}(r_{M_1}+\eta)\}.
    \]
    Since $M_1$ is fixed, we can first decrease $\eta$, and then $\eta_1$ and
    $r_\delta$, so that
    \[
        C_{M_1}(2^{-M_1}r_\delta+\eta)<M_1^{-1}
    \]
 and (recall 
    \eqref{eq:prop improvement of flatness base hp2})
    \begin{equation}\label{eq:minimizing ansatz phase separation}
    \begin{cases}
        \tilde u_{M_1}=U_{M_1}^*=0
        &\text{if }x_n\le-M_1^{-1}\\
        \tilde u_{M_1}>0\text{ and }U_{M_1}^*>0
        &\text{if }x_n\ge M_1^{-1}
    \end{cases}
    \qquad \text{inside }\Omega^\beta_{R_*}\cap
    (B_{M_1}\times(-M_1,-M_1^{-1})).
    \end{equation}
    
    \smallskip\noindent\textbf{Step 4.}
    Differentiating the coordinate map in
    \Cref{definition:ansatz perturbation} and using
    \eqref{eq:pde derivative extended solution}, as in the proof of
    \Cref{lem:tangent plane}, gives a bound $C(1+|x|)^3$ for the first
    variation of a normalized Ansatz on $B_{M_1}\times(-1,-1/2)$. Since
    $G(x,t)\le Ce^{-c|x|^2}$ on this time interval, the Gaussian
    $L^2$ norm of this variation on
    $(B_{M_1}\setminus B_\ell)\times(-1,-1/2)$ is bounded by
    $C(1+\ell)^de^{-c\ell^2}$, for fixed constants $C,c>0$ and $d\ge0$
    depending only on $n$ and $\beta$.
    
    Now, choose an integer $\Lambda_0\ge8$ so large that
    \[
        \frac{2(\Lambda_0+1)}{\vartheta}\ge R_\circ,
        \qquad
        C(1+\ell)^d e^{-c\ell^2}<\frac1{2\ell}
        \qquad\forall\,\ell\ge\Lambda_0,
    \]
    and apply \Cref{lem:improvement key lemma} with $\bar C$,
    $L=2^{3.5}C_0$, and $\Lambda_0$. With $\Lambda$ being the resulting
    integer, set
    \[
        M_1\coloneqq\Lambda+1,\qquad R_*\coloneqq2M_1,
        \qquad N\coloneqq R_*\vartheta^{-(M_1+1)}.
    \]
    After decreasing $\eta$ if needed, choose $\eta_1$ so that the restrictions in Steps~1--3 hold and
    $\eta_1<\min\{\Lambda^{-1},\eps_0/2,1/100\}$. Next, for every
    $\delta\in(0,\eta_1)$, choose $r_\delta$ so that the restrictions in
    Steps~1--3 hold and
$C_0^{-1}r_\delta^{1/4}\le\Lambda^{-1}.$

    Now, set $\sigma\coloneqq s/r_{M_1}$ and
    \[
        w(x,t)\coloneqq\frac{(\tilde u-\sU^{(M_1+1)})(r_{M_1}x,r_{M_1}^2t)}{C_0\delta|\eps|r_{M_1}^{3.5}}.
    \]
    We have $0\le\sigma<\Lambda^{-1}$. Also, since
    $R_*=2(\Lambda+1)\ge\Lambda$ and $2^\Lambda r_{M_1}=r/2$,
    \eqref{eq:cusp scaling exponent} and the choice of $N$ give
    \[
        w\in H^1_{\rm loc}\left(
        \Omega^\beta_{R_*\vartheta^{-\Lambda}}
        \cap\sC_{2^\Lambda}\cap\{t<-\sigma^2\}\right).
    \]
 Note that, if
    $j\in\{0,\ldots,\Lambda\}$ and $m=M_1-j$, then
    \[
        w_{2^j}
        =\frac{r_m^2}{C_0\delta|\eps|r_{M_1}^{3.5}}
        (\tilde u_m-U_m^*).
    \]
    Thus \eqref{eq:normalized improvement energy} gives
    \eqref{eq:assumption key lemma 1}. Also, since $M_1=\Lambda+1$,
    \eqref{eq:normalized ansatz residual improvement} and
    \eqref{eq:minimizing ansatz phase separation} imply
    \[
    \begin{cases}
        |\Heat w|\le C_0^{-1}r_{M_1}^{1/4}\le\Lambda^{-1}
        &\text{in }\Omega^\beta_{R_*}\cap
        (B_\Lambda\times(-\Lambda,-\Lambda^{-1}))
        \cap\{x_n\ge\Lambda^{-1}\},\\
        w=0
        &\text{in }\Omega^\beta_{R_*}\cap
        (B_\Lambda\times(-\Lambda,-\Lambda^{-1}))
        \cap\{x_n\le-\Lambda^{-1}\},
    \end{cases}
    \]
    thus \eqref{eq:assumption key lemma 2} holds.

    Let $(\tilde\alpha,\tilde v,\tilde Q)$ satisfy
    \eqref{eq:norm tilde 1}, with $\tilde v\perp e_n$ and
    $\Heat(x_n\tilde Q)=0$, and set
    \[
        \widetilde{\sU}_\tau(x,t)
        \coloneqq r_{M_1}^{-2}\sU_{\bar u}\bigl(
        \eps\alpha^{(M_1+1)}+\tau r_{M_1}\tilde\alpha,
        \eps v^{(M_1+1)}+\tau\tilde v,
        \eps Q^{(M_1+1)}+\tau r_{M_1}^{-1}\tilde Q\bigr)
        (r_{M_1}x,r_{M_1}^2t),
    \]
    and $\psi\coloneqq
        \left.\frac d{d\tau}\widetilde{\sU}_\tau\right|_{\tau=0}.$
    Since the minimizing triple $(\alpha^{(M_1+1)},v^{(M_1+1)},Q^{(M_1+1)})$ is an interior point of the set $ \max\{|a|,|\zeta|,\|S\|_{L^2(\sC_1)}\}\le2$ (see Step 2), minimality gives
    \[
        0=\int_{B_{M_1}\times(-1,-1/2)}w\psi G\,dx\,dt,
        \qquad
        \left(\int_{B_{M_1}\times(-1,-1/2)}w^2G\,dx\,dt\right)^{1/2}
        \le1.
    \]
    Also, by \eqref{eq:minimizing ansatz centered} and
    \Cref{lem:tangent plane}, 
        \[
        \left(\int_{B_\Lambda\times(-1,-1/2)}
        \left|\psi-(x_n)_+(\tilde\alpha+x\cdot\tilde v+\tilde Q)\right|^2
        G\,dx\,dt\right)^{1/2}<\frac1{2\Lambda}.
    \]
  Furthermore, the cubic bound at the beginning of Step~4 and the Gaussian decay yield
  \[
        \left(\int_{(B_{M_1}\setminus B_\Lambda)\times(-1,-1/2)}
        |\psi|^2G\,dx\,dt\right)^{1/2}
        \le C(1+\Lambda)^de^{-c\Lambda^2}<\frac1{2\Lambda},
  \]
    so \eqref{eq:assumption key lemma 3} follows. Thus, we can apply 
    \Cref{lem:improvement key lemma} with $R=R_*$, and recalling that
    $r_{M_1+1}=r_{M_1}/2$ we obtain
    \[
        \|(\tilde u-\sU^{(M_1+1)})(r_{M_1+1}\cdot,r_{M_1+1}^2\cdot)\|_
        {L^2(\Omega^\beta_{R_*}\cap
        (B_1\times(-1,-(s/r_{M_1+1})^2)))}
        \le\delta|\eps|r_{M_1+1}^{3.5}.\qedhere
    \]
\end{proof}

\begin{proof}[Proof of \texorpdfstring{\Cref{theorem:improvement of flatness}}{}]
Let $M_1$ and $N$ be the constants from
\Cref{lem:inductive step improvement of flatness}. Recalling
\eqref{eq:cusp scaling exponent} and \eqref{eq:parabolic scaling}, we see that
$S_\lambda^{-1}(\Omega^\beta_1)
    =\Omega^\beta_{\lambda^{-\gamma_\beta}},$
so applying this identity also with $2\beta$ in place of $\beta$,
after a fixed common parabolic dilation we may assume that $u$ and $u_k$
are defined in $\Omega^{2\beta}_{2N}\cap\sC_1$ and
$\Omega^{2\beta}_{2N}(x_k,t_k)$, respectively. Also, after this dilation,
\eqref{eq:flatness assumption} holds for $u$ and $u_k^\sharp$, and
\eqref{eq:hp thm flatness: lin} holds on their common domain. 

Let $\bar u$ be given by \Cref{lem:extend smooth solution} with $J=5$, and
let $\eta,r_*>0$ be given by \Cref{lem:ansatz vs parameters} for
$\overline{B_1\times(-1,0)}$. Choose $\eta_1,C_1$ as in
\Cref{lem:inductive step improvement of flatness}, with $\eta_1<\eta/2$,
and set
$R_*\coloneqq2M_1$. By homogeneity in $(\eps_k,\varphi)$, we may assume
\[
    |\alpha|+|v|+\|Q\|_{L^2(\sC_1)}\le\frac14\qquad \text{and}\qquad \delta<\eta_1/2.
\]
To prove the theorem we shall first construct the Ansätze at the initial scales, then apply
\Cref{lem:inductive step improvement of flatness} down to scales comparable
to $|(x_k,t_k)|_\pb$, and finally take the limit of their coefficients.
We divide the proof into three steps.

\smallskip\noindent\textbf{Step 1.}
Let $r_\delta$ be given by
\Cref{lem:inductive step improvement of flatness}, set $d_k\coloneqq|(x_k,t_k)|_\pb$ and $s_k\coloneqq2d_k$,
and choose $\rho_0$ small, to be fixed later, with $\rho_m\coloneqq2^{-m}\rho_0$. For
$m\in\{0,\ldots,M_1\}$, set
\[
    \sU_k^m\coloneqq\sU[\bar u,\eps_k\alpha,\eps_kv,\eps_kQ],
    \qquad
    (\alpha_k^m,v_k^m,Q_k^m)\coloneqq(\alpha,v,Q).
\]
By \eqref{eq:hp thm flatness: lin},
\eqref{eq:apply expansion linear} with $R=R_*$, and
\Cref{lem:linearized vs perturbed ansatz}, we can decrease
$\rho_0$ and then choose $k_\delta$ so that, for every $k>k_\delta$,
\[
    |\eps_k|<\eta_1,
    \qquad s_k<\eta_1\rho_{M_1},
\]
\begin{equation}\label{eq:iteration ansatz estimates}
    \|(u_k-\sU_k^m)(\rho_m\cdot,\rho_m^2\cdot)\|_{L^2(\Omega^\beta_{R_*}\cap(B_1\times(-1,-(s_k/\rho_m)^2)))}
    \le\delta|\eps_k|\rho_m^{3.5}
    \qquad\forall\,m\in\{0,\ldots,M_1\}.
\end{equation}

\smallskip\noindent\textbf{Step 2.}
After further increasing $k_\delta$, for every $k>k_\delta$ we have
\begin{equation}
\label{eq:inclusion}
    \Omega_N^\beta\cap\{-1<t<-s_k^2\}
    \subset\Omega_{2N}^\beta(x_k,t_k).
\end{equation}
By \eqref{eq:flatness assumption} applied to $u_k^\sharp$, the convergence
$\mathsf{R}_k\to Id$, and \Cref{theorem:smooth expansion}, after decreasing $\rho_0$ if
necessary, we have
\begin{equation}\label{eq:fixed frame flatness iteration}
    \|r^{-2}u_k(r\cdot,r^2\cdot)-\tfrac12(x_n)_+^2\|_
    {L^2(B_1\times(-1,-1/2))}\le\eta_1
    \qquad\text{if }0<r\le\rho_0\text{ and }s_k<2\eta_1r,
\end{equation}
while the two conditions in
\eqref{eq:prop improvement of flatness base hp2} hold with $\tilde u=u_k$ and
$r_{M_1}=r$ whenever $s_k<\eta_1r$.

We now proceed by induction on $j$, constructing $(M_1+1)$-tuples
$(\sU_k^j,\ldots,\sU_k^{j+M_1})$ of extended $|\eps_k|$-Ansätze
satisfying the estimate in \eqref{eq:iteration ansatz estimates}
at the corresponding scales. 
Step~1 provides the initial $(M_1+1)$-tuple for $j=0$, and for the inductive
step we suppose that the $(M_1+1)$-tuple indexed by $j$ has been constructed
and that
\[
    s_k<\eta_1\rho_{j+M_1}.
\]
Then the induction hypothesis, \eqref{eq:inclusion}, and
\eqref{eq:fixed frame flatness iteration} prove 
\eqref{eq:prop improvement of flatness base hp1},
\eqref{eq:prop improvement of flatness base hp2}, and
\eqref{eq:improvement of flatness flatness assumption}. Hence, applying
\Cref{lem:inductive step improvement of flatness} with
$\tilde u=u_k$, $r=\rho_j$, $s=s_k$, and $\sU^{(m)}=\sU_k^{j+m}$, gives an
Ansatz $\sU_k^{j+M_1+1}$ satisfying the same approximation estimate at the
scale $\rho_{j+M_1+1}$ and
\begin{equation}\label{eq:iteration ansatz increment}
    |\alpha_k^{j+M_1+1}-\alpha_k^{j+M_1}|
    +\rho_{j+M_1}|v_k^{j+M_1+1}-v_k^{j+M_1}|
    +\rho_{j+M_1}^2\|Q_k^{j+M_1+1}-Q_k^{j+M_1}\|_{L^2(\sC_1)}
    \le C_1\delta\rho_{j+M_1}^{2.5}.
\end{equation}
After decreasing $\rho_0$ once more, summing
\eqref{eq:iteration ansatz increment} gives
\[
    |\alpha_k^{j+M_1+1}-\alpha|+|v_k^{j+M_1+1}-v|
    +\|Q_k^{j+M_1+1}-Q\|_{L^2(\sC_1)}
    \le C\delta\rho_0^{1/2}\le\frac34.
\]
Together with the initial coefficient bound, this shows that
$\sU_k^{j+M_1+1}$ is an extended $|\eps_k|$-Ansatz, and the induction
continues.

Set
\[
    J\coloneqq M_1+1+
    \sup\{j\in\mathbb N\cup\{0\}:s_k<\eta_1\rho_{j+M_1}\}
    \in\mathbb N\cup\{\infty\}.
\]
By construction, \eqref{eq:iteration ansatz estimates} holds for every
integer $i\le J$. Also, if $J<\infty$, the definition of $J$ implies
\begin{equation}\label{eq:terminal dyadic scale}
    \frac{s_k}{2\eta_1}<\rho_J\le\frac{s_k}{\eta_1}.
\end{equation}
When $J<\infty$, we extend the sequence of coefficient triples constantly for
$i\ge J$.

\smallskip\noindent\textbf{Step 3.}
Define
\[
    (\alpha_k,v_k,Q_k)
    \coloneqq\lim_{i\to\infty}(\alpha_k^i,v_k^i,Q_k^i),
    \qquad
    \overline{\sU}_k\coloneqq\sU[\bar u,\eps_k\alpha_k,\eps_kv_k,\eps_kQ_k].
\]
Note that the limit exists by \eqref{eq:iteration ansatz increment}, and summing that
estimate gives, for $j \leq J$,
\begin{equation}\label{eq:final ansatz coefficient comparison}
    |\alpha_k^j-\alpha_k|+\rho_j|v_k^j-v_k|
    +\rho_j^2\|Q_k^j-Q_k\|_{L^2(\sC_1)}
    \le C\delta\rho_j^{2.5}.
\end{equation}
In particular, taking $j=0$ gives \eqref{eq:thm flatness closeness conclusion}.

By \eqref{eq:iteration ansatz estimates} and
\eqref{eq:fixed frame flatness iteration}, arguing as in
\eqref{eq:minimizing ansatz centered} and then using
\eqref{eq:final ansatz coefficient comparison}, we obtain
\[
    \frac{|\eps_k\alpha_k^j|}{\rho_j}+|\eps_kv_k^j|
      +\rho_j\|\eps_kQ_k^j\|_{L^2(\sC_1)}\le\frac\eta4,\qquad
    \frac{|\eps_k\alpha_k|}{\rho_j}+|\eps_kv_k|
      +\rho_j\|\eps_kQ_k\|_{L^2(\sC_1)}\le\frac\eta2
    \quad\forall\,j\le J,
\]
after choosing $\eta_1$ small enough.
Hence, \Cref{lem:ansatz vs parameters} together with
\eqref{eq:iteration ansatz estimates} yields
\begin{equation}\label{eq:final ansatz dyadic estimate}
    \|(u_k-\overline{\sU}_k)(\rho_j\cdot,\rho_j^2\cdot)\|_{L^2(\Omega^\beta_{R_*}\cap(B_1\times(-1,-(s_k/\rho_j)^2)))}
    \le C\delta|\eps_k|\rho_j^{3.5}
    \qquad\forall\,j\le J.
\end{equation}
Now, choose $C_\circ\ge8/\eta_1$ sufficiently large and for $C_\circ d_k\le r<\rho_0/4$ choose $j\le J$ such that
$r/\rho_j\in(1/4,1/2]$ (the existence of such $j$ when $J<\infty$ follows from 
\eqref{eq:terminal dyadic scale}). Then
\[
    S_{r/\rho_j}\bigl(B_{3/2}\times(-3/2,-1/4)\bigr)
    \subset\Omega^\beta_{R_*}\cap
    \bigl(B_1\times(-1,-(s_k/\rho_j)^2)\bigr),
\]
so \eqref{eq:final ansatz dyadic estimate} and a change of variables give
\[
    \|(u_k-\overline{\sU}_k)(r\cdot,r^2\cdot)\|_
    {L^2(B_{3/2}\times(-3/2,-1/4))}
    \le C\delta|\eps_k|r^{3.5}.
\]
Also, since $r/\rho_j\in(1/4,1/2]$, these parameter bounds imply
\[
    \frac{|\eps_k\alpha_k|}{r}+|\eps_kv_k|
    +r\|\eps_kQ_k\|_{L^2(\sC_1)}\le C\eta.
\]
Recalling \Cref{definition:ansatz perturbation} we note that, in
$\overline{\sU}_k(rx,r^2t)$, the function $\bar u$ is evaluated at points
$(y,r^2t)$ with $|y|\le Cr$, where $C$ is independent of $k$ and $r$.
Thus, we can choose $\rho_\delta\in(0,\rho_0/4]$ so that these
points belong to $\Omega_1^\beta\cap\sC_{r_5}$ whenever
$C_\circ d_k\le r<\rho_\delta$ and
$(x,t)\in B_{3/2}\times(-3/2,-1/4)$. Since $\bar u=u$ on
$\Omega_1^\beta\cap\sC_{r_5}$, we obtain
\[
    \overline{\sU}_k(rx,r^2t)
    =\sU[u,\eps_k\alpha_k,\eps_kv_k,\eps_kQ_k](rx,r^2t)
    =\sU_k(rx,r^2t)\qquad \forall\,(x,t)\in B_{3/2}\times(-3/2,-1/4).
\]
This proves \eqref{eq:thm flatness expansion conclusion}. Finally, if $r\ge C_\circ|(x_k,t_k)|_\pb$ and $C_\circ$ is large enough,
\eqref{eq:thm flatness expansion conclusion2} follows from \eqref{eq:thm flatness expansion conclusion} and 
the change of variables
    \[
      (x,t)\mapsto  (y,\tau)\coloneqq\left(\mathsf{R}_kx+\frac{x_k}{r},t+\frac{t_k}{r^2}\right).\qedhere
    \]
\end{proof}

\section{Transversal perturbations: definition and cleaning}
\label{section:cleaning}

In this section we use the expansions of solutions and their linearized perturbations at the maximal stratum to define transversality. We then apply the results of \Cref{section:improvement of flatness} to prove the crucial cleaning estimate \eqref{eq:introduction explain cleaning}.

Fix an integer $m\in\{1,\dots,n-1\}$ and let $u$ solve \eqref{eq:stefan} in $B_1\times(-1,1)$ with $(0,0)\in\Sigma_{n-1}^m(u)$ and expansion
\begin{equation}\label{eq:expansion cubic points transversality}
    u(x,t)=\tfrac12x_n^2+|x_n|\bigg(6at+(a+b)x_n^2
    -3\sum_{i=m+1}^{n-1}b_ix_i^2\bigg)+p_3^\odd(x,t)
    +o(|x,t|_\pb^3)\qquad \text{for }t \leq 0,
\end{equation}
where $p_3^\odd=x_nP_2$, $P_2$ is parabolically $2$-homogeneous with
$\Heat(x_nP_2)=0$, and $a,b_i>0$, $b\coloneqq\sum_{i=m+1}^{n-1}b_i$.
Set
\[
    q_2^\even\coloneqq6at+(a+b)x_n^2
        -3\sum_{i=m+1}^{n-1}b_ix_i^2,\qquad
    q_2^+\coloneqq q_2^\even+P_2,\qquad
    q_2^-\coloneqq P_2-q_2^\even,
\]
so that
\begin{equation}\label{eq:u q2}
    u=\tfrac12x_n^2+(x_n)_+q_2^+-(x_n)_-q_2^-+o(|x,t|_\pb^3),
    \qquad q_2^\even=\tfrac12(q_2^+-q_2^-).
\end{equation}
By \Cref{theorem:splitting at singular points}, after decreasing
$\beta\in(0,1/2)$ and a parabolic rescaling, we may write
\[
    u=u^++u^-\qquad\text{in }\Omega^{2\beta}_1\cap\sC_1.
\]
With $I_-$ as in \eqref{eq:lower branch ansatz dictionary}, set
\begin{equation}\label{eq:reflected branches}
    I_+\coloneqq Id,\qquad
    u_\sharp^\sigma\coloneqq u^\sigma\circ I_\sigma,
    \quad \sigma\in\{+,-\}.
\end{equation}
Applying \Cref{lem:extend smooth solution} to $u_\sharp^\pm$ gives
\begin{equation}\label{eq:two-sided strict branch speed}
    \lambda_+\coloneqq6a+\partial_tP_2>0,\qquad
    \lambda_-\coloneqq6a-\partial_tP_2>0,\qquad
    \lambda_++\lambda_-=12a.
\end{equation}
Accordingly, given $\varphi$ solving \eqref{eq:linearized problem}, we write
$\varphi=\varphi^++\varphi^-$ with the same splitting and set
$\varphi_\sharp^+\coloneqq\varphi^+$, $\varphi_\sharp^-\coloneqq\varphi^-\circ I_-$.
By \Cref{prop:C3beta expansion linear}, there are unique triples
$(\widetilde\alpha^\pm,\widetilde v^\pm,\widetilde Q^\pm)$ such that, for
each fixed radius $N$,
\begin{equation}\label{eq:assumption perturbation}
\begin{split}
 &\big\|\big(\varphi_\sharp^\pm-
  [(\widetilde\alpha^\pm+x\cdot\widetilde v^\pm+\widetilde Q^\pm)
       \partial_nu_\sharp^\pm
       -x_n(\widetilde v^\pm\cdot\nabla u_\sharp^\pm)]\big)
       (r\cdot,r^2\cdot)\big\|_{L^2(\Omega_N^\beta\cap\sC_1)}
 \le C_Nr^{3.9},
\end{split}
\end{equation}
where $\widetilde v^\pm\perp\bm e_n$ and
$\Heat(x_n\widetilde Q^\pm)=0$.  We write the lower triple in the original
coordinates as
\[
 (\alpha^+,v^+,Q^+)\coloneqq(\widetilde\alpha^+,\widetilde v^+,\widetilde Q^+),
 \qquad
 (\alpha^-,v^-,Q^-)\coloneqq
 (-\widetilde\alpha^-,-\widetilde v^-,-\widetilde Q^-\circ I_-).
\]
The constant and linear terms in \eqref{eq:assumption perturbation}
determine the terms of orders one and two in $\varphi^\pm$, so we define
\[
    J_{\le2}[\varphi^\pm]\coloneqq\alpha^\pm+v^\pm\cdot x,
\]
and
\begin{equation}\label{eq:jets of perturbation}
    J_{\le2}[\varphi]^\even
       \coloneqq\tfrac12\big(J_{\le2}[\varphi^+]-J_{\le2}[\varphi^-]\big)
       =\alpha_\varphi^\even+x\cdot v_\varphi^\even.
\end{equation}
We denote by $J_1[\varphi^\pm]$ and $J_2[\varphi^\pm]$ the constant
and linear parts of $J_{\le2}[\varphi^\pm]$.

Since infinitesimal time and spatial translations $\partial_tu$ and $v\cdot\nabla u$ (for $v\in\R^n$) solve
\eqref{eq:linearized problem}, the
preceding construction applies to them.
The following definition compares the even jet of a perturbation $\varphi$ with
those induced by infinitesimal time translations and translations in the
nondegenerate tangential directions.

\begin{definition}\label{definition:transversality}
    Let $u$ satisfy \eqref{eq:expansion cubic points transversality}, and
    let $\varphi$ solve \eqref{eq:linearized problem} and satisfy
    \eqref{eq:assumption perturbation}. Put
    \[
        \mathsf H\coloneqq D_x^2q_2^\even|_{\bm e_n^\perp},\qquad
        E\coloneqq\Ker\mathsf H,\qquad
        F\coloneqq E^\perp\cap\bm e_n^\perp,
    \]
    so that $\dim E=m$. We say that $\varphi$ is \emph{transversal}
    at $(0,0)$ if there are no $\theta\in\R$ and $v\in F$ such that
    \[
        J_{\le2}[\varphi]^\even
        =J_{\le2}[\theta\partial_tu+v\cdot\nabla u]^\even.
    \]
\end{definition}

For later use, set
\begin{equation}\label{eq:quantitative transversality gaps}
    \mu(\varphi)\coloneqq
    \inf_{\substack{\theta\in\R\\v\in F}}
    \big\|J_{\le2}[\varphi-\theta\partial_tu-v\cdot\nabla u]^\even\big\|,
\end{equation}
so that $\varphi$ is transversal if and only if $\mu(\varphi)>0$.
Note that, for $c,\theta\in\R$ and $v\in F$,
\[
    \mu(c\varphi)=|c|\mu(\varphi),\qquad
    \mu(\varphi+\theta\partial_tu+v\cdot\nabla u)=\mu(\varphi).
\]
If $m=n-1$, then $F=\{0\}$ and there is no
$v$-variable.

A recurrent idea is that we can always find unique $\theta\in\R, v\in F$ such that
\[
  J_{\le2}[\varphi+\theta\partial_tu+v\cdot\nabla u]^\even = x\cdot w\quad\text{for some }w\in E,
\]
so transversality is equivalent to $w\neq0$.

We first show that transversality is an open condition.
\begin{lemma}\label{lem:transversality open conditions}
    Fix an integer $1\le m\le n-1$. Let $u_k\to u$ in
    $L^2(B_1\times(-1,1))$ be solutions of \eqref{eq:stefan}, and let
    $z_k\to(0,0)$ with
    \[
        z_k\in\Sigma_{n-1}^m(u_k),\qquad
        (0,0)\in\Sigma_{n-1}^m(u).
    \]
    Let $\varphi_k\to\varphi$ in $L^2(B_1\times(-1,1))$, where
    $\varphi_k$ and $\varphi$ solve \eqref{eq:linearized problem} for
    $u_k$ and $u$, respectively. If $\varphi$ is transversal to $u$ at $(0,0)$, then, for all sufficiently large
    $k$, $\varphi_k$ is transversal to
    $u_k$ at $z_k$.
\end{lemma}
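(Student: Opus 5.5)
The plan is a continuity argument: every object in the definition of $\mu$ depends continuously on the data $(u_k,z_k,\varphi_k)$, and $\mu$ is a lower semicontinuous (indeed continuous) function of these objects. Concretely, let $\mathsf R_k$ denote the rotations bringing the expansion of $u_k$ at $z_k$ into the normal form \eqref{eq:expansion cubic points transversality}. Let $\mathsf H_k$, $E_k$, $F_k$ be the corresponding Hessians and subspaces, and let $J_{\le2}[\varphi_k]^\even$ be the even jets computed at $z_k$ in these coordinates.

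\textbf{Step 1: convergence of the background data.} By \Cref{corollary:uniform smooth expansion maximal stratum}, the two-sided Ansätze of $u_k$ at $z_k$ converge to those of $u$ at the origin. Hence $\mathsf R_k\to Id$ (up to a choice within the symmetry of the normal form), $a_k\to a$, $P_{2,k}\to P_2$, and $\mathsf H_k\to\mathsf H$. Since $z_k\in\Sigma^m_{n-1}(u_k)$, $\dim E_k=m=\dim E$, so the rank of $\mathsf H_k$ is constant. The nonzero eigenvalues $b_{i,k}\to b_i>0$ stay bounded away from zero, so the spectral projections converge and $E_k\to E$, $F_k\to F$ as subspaces. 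Moreover, the splitting $u_k=u_k^++u_k^-$ of \Cref{theorem:splitting at singular points} holds with uniform constants $\rho,\beta,C$, since the nondegeneracy constant $c$ in \eqref{eq:definition nondegeneracy u_t} is uniform, as noted in that corollary.

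\textbf{Step 2: convergence of the jets.} This is the main obstacle. We must show that the triples $(\alpha^\pm_k,v^\pm_k,Q^\pm_k)$ given by \Cref{prop:C3beta expansion linear} for $\varphi_k^\pm$ (with background $u_k^\pm$, recentered and rotated) converge to those of $\varphi$. The same applies to $\partial_tu_k$ and $e\cdot\nabla u_k$, whose jets must converge to those of $\partial_tu$ and $e\cdot\nabla u$.

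I would use the uniformity in \Cref{prop:C3beta expansion linear}: the constants $C$, $C_N$, $r_N$ depend only on the background through the estimates \eqref{eq:cubic flatness} and \eqref{eq:linear background estimates}, which are uniform in $k$ by Step 1. The triples are bounded by $C\|\varphi_k\|_{L^2}$, so they are precompact. Passing to the limit in the expansion inequality at each fixed $r$, using $\varphi_k\to\varphi$ and $u_k\to u$ in $L^2$ together with the local $C^1$ convergence of $\partial_n u_k$ away from the free boundary, shows that any limit triple satisfies the expansion for $\varphi$. Uniqueness, via \Cref{lem:ansatz linear vs parameters}, then identifies it.

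For $\partial_tu_k$ and $\nabla u_k$, I would use that these are determined explicitly by the Ansatz coefficients of $u_k$. For instance, \Cref{lem:jets rototranslations}-type formulas give $J_{\le2}[\partial_tu]^\even$ in terms of $\lambda_\pm$ and $J_{\le2}[e\cdot\nabla u]^\even$ in terms of $\mathsf H e$. These converge by Step 1. If the Runge/linear setting in $\Omega^\beta_1\cap\sC_1$ is an issue, the uniform $H^1$ bounds of \Cref{lem:caccioppoli linear} supply the needed compactness.

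\textbf{Step 3: conclusion.} The even jet lives in the finite-dimensional space $\R\times\bm e_n^\perp$, and the infimum defining $\mu$ is a distance to the subspace $V_k=\{J_{\le2}[\theta\partial_tu_k+v\cdot\nabla u_k]^\even:\theta\in\R,\,v\in F_k\}$. The map $(\theta,v)\mapsto J^\even$ is injective with a uniform lower bound: the $\theta$ component has constant coefficient $\propto\lambda_++\lambda_-=12a_k$, and the $v$ component is $\propto\mathsf H_kv$, which is invertible on $F_k$ with bounds from $b_{i,k}$. Hence $V_k\to V$ in the Grassmannian. Since the distance to a subspace is jointly continuous in the vector and the subspace, $\mu_k(\varphi_k)\to\mu(\varphi)>0$. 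Therefore $\varphi_k$ is transversal at $z_k$ for all large $k$.
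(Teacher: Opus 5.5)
Your proposal is correct and follows essentially the same route as the paper. The uniform two-sided expansions make the constants in \Cref{prop:C3beta expansion linear} uniform, so compactness plus uniqueness gives convergence of the jets at $z_k$ and of $E_k,F_k$. The uniform injectivity you isolate in Step~3 (from $a_k\to a>0$ and $\mathsf H_k|_{F_k}\to\mathsf H|_F$ invertible) is exactly what the paper uses to keep bounded the parameters $(\theta_k,v_k)$ of a putative failure of transversality, whose limit then contradicts $\mu(\varphi)>0$. Your phrasing via continuity of the distance to the converging subspaces $V_k$ is just the direct form of that contradiction. One small correction: \Cref{lem:jets rototranslations} gives only the constant part of $J_{\le2}[\partial_tu_k]^\even$, not its linear part, but the linear part still converges by the same compactness argument you apply to $\varphi_k$.
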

\begin{proof}
    Choose the normal at $z_k$ with positive scalar product with $\bm e_n$.
    The uniform bounds from \Cref{corollary:uniform smooth expansion maximal stratum}
    make the estimates in \Cref{prop:C3beta expansion linear} uniform in $k$.
    Hence, by compactness and uniqueness of the expansions, the jets of $u_k$
    and $\varphi_k$ at $z_k$ converge to those of $u$ and $\varphi$ at the origin.
    In particular, denoting by
    $\mathsf H_k$ the Hessian in \Cref{definition:transversality} for $u_k$
    at $z_k$, its kernel $E_k$ and orthogonal complement $F_k$ converge
    to $E$ and $F$.

    Suppose that transversality fails along a subsequence.
    Then there are $\theta_k\in\R$ and $v_k\in F_k$ such that
    \[
        J_{\le2}[\varphi_k]^\even
        =J_{\le2}[\theta_k\partial_tu_k+v_k\cdot\nabla u_k]^\even.
    \]
    Comparing the constant coefficients and then the $F_k$ components of
    the linear coefficients shows that $(\theta_k,v_k)$ is bounded, since
    $a_k\to a>0$ and $\mathsf H_k|_{F_k}$ converges to the invertible
    map $\mathsf H|_F$. Passing to a subsequence and then to the limit
    contradicts the transversality of $\varphi$.
\end{proof}

We next look at the variations induced by a time translation, a
spatial translation, and a rotation of the normal: for $\theta\in\R$, $V\in\R^n$, and
$w\in\bm e_n^\perp$, set
\begin{equation}\label{eq:infinitesimal rototranslation}
    \Psi_{\theta,V,w}
    \coloneqq\theta\partial_tu+V\cdot\nabla u
      +(w\cdot x)\partial_nu-x_n(w\cdot\nabla u).
\end{equation}
As the next lemma shows, the rotational term has zero even jet,
but it contributes to the full first variation and is therefore
needed when comparing solutions in different coordinate systems.
\begin{lemma}\label{lem:jets rototranslations}
    Let $u$ satisfy \eqref{eq:expansion cubic points transversality},
    consider $\eps_k\to0^+$, $\theta_k\to\theta\in\R$,
    $V_k\to V\in\R^n$,
    $w_k\to w\in\bm e_n^\perp$, and set
    \[
        \mathsf{R}_k\coloneqq \mathsf{R}_{\frac{\bm e_n+\eps_kw_k}
                              {|\bm e_n+\eps_kw_k|}} \in SO(n).
    \]
    Then
    \[
        \frac{u(\mathsf{R}_k\cdot+\eps_kV_k,\cdot+\eps_k\theta_k)-u}{\eps_k}
        \longrightarrow\Psi_{\theta,V,w}
    \]
    in $L^2_{\rm loc}(B_1\times(-1,1))$. Moreover, recalling \eqref{eq:u q2},
    \[
        J_1[\partial_tu^\pm]=\partial_tq_2^\pm,
        \qquad
        J_{\le2}[V\cdot\nabla u^\pm]
        =V\cdot\bm e_n+x\cdot D_x^2q_2^\pm
          \bigl(V-(V\cdot\bm e_n)\bm e_n\bigr),
    \]
    and
    \[
        J_{\le2}[(w\cdot x)\partial_nu-x_n(w\cdot\nabla u)]^\even=0.
    \]
\end{lemma}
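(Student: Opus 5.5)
The proof has two parts. The first is the $L^2_{\rm loc}$ convergence of the difference quotients. The second is the computation of the jets, which rests on the uniqueness of the cubic expansion in \Cref{prop:C3beta expansion linear}.

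\textbf{The convergence.} I would write
\[
u(\mathsf R_kx+\eps_kV_k,t+\eps_k\theta_k)-u(x,t)=\int_0^1\frac{d}{ds}\,u\bigl(\mathsf R_{k,s}x+s\eps_kV_k,\,t+s\eps_k\theta_k\bigr)\,ds,
\]
where $\mathsf R_{k,s}$ is the rotation attached to $\tfrac{\bm e_n+s\eps_kw_k}{|\bm e_n+s\eps_kw_k|}$. By \eqref{eq:optimal regularity}, $u\in C^{1,1}_x\cap C^{0,1}_t$ locally, so the difference quotient is bounded in $L^\infty_{\rm loc}$. Since $\nabla u$ is continuous and $\partial_tu$ is continuous off the free boundary, dominated convergence gives the limit
\[
\theta\partial_tu+V\cdot\nabla u+\nabla u\cdot\bigl((w\cdot x)\bm e_n-x_nw\bigr),
\]
which is exactly $\Psi_{\theta,V,w}$. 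Here I use the derivative of the rotation computed in the proof of \Cref{lem:linearized vs perturbed ansatz}. Only $L^2_{\rm loc}$ convergence is claimed, and the free boundary has measure zero, so this suffices.

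\textbf{The jets.} Each of $\partial_tu$, $V\cdot\nabla u$ and the rotational field restricts to $\{u^\pm>0\}$ as a solution of the linearized problem, so each has unique triples as in \eqref{eq:assumption perturbation}. To identify those triples it is enough to exhibit an Ansatz of the form \eqref{eq:ansatz linearized problem} that matches the function up to an error $o(r^3)$ at scale $r$; \Cref{lem:ansatz linear vs parameters} then forces the constant and linear coefficients.

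For the rotational term, $\upvarphi[0,w,0]=(w\cdot x)\partial_nu^\pm-x_n\partial_wu^\pm$ holds exactly on each branch, so the triple is $(0,w,0)$ on both sides. After the reflection conventions defining $(\alpha^-,v^-,Q^-)$, this gives $J_{\le2}[\varphi^+]=J_{\le2}[\varphi^-]=w\cdot x$, and hence the even part vanishes.

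For translations, I would use the two-sided expansion \eqref{eq:u q2} together with the derivative bounds of \Cref{lem:smoothness close maximal stratum}, valid up to $C^3$ on each branch, to write on $\{u^+>0\}$
\[
\partial_tu^+=(x_n)_+\,\partial_tq_2^++O(r^3)=\partial_tq_2^+\,\partial_nu^++O(r^3),
\]
using $\partial_nu^+=x_n+O(r^2)$. This identifies $\alpha^+=\partial_tq_2^+$, a constant because $q_2$ is $2$-homogeneous, with no linear term. In the same way,
\[
V\cdot\nabla u^+=(V\cdot\bm e_n)\partial_nu^++x_n\,V'\cdot\nabla q_2^++O(r^3),\qquad V'\coloneqq V-(V\cdot\bm e_n)\bm e_n .
\]
Writing $x_n\nabla_{V'}q_2^+=\bigl(x\cdot D^2q_2^+V'\bigr)x_n$, this matches $\upvarphi$ with $\alpha=V\cdot\bm e_n$ and linear part $x\cdot D_x^2q_2^+V'$. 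The $x_n$-component of the linear part must be reconciled with the constraint $v\perp\bm e_n$, and I would absorb it into the $Q$-term or the remainder. The $-$ branch is handled by reflection via \eqref{eq:lower branch ansatz dictionary}, with the sign conventions producing $q_2^-$.

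\textbf{Main obstacle.} The delicate step is making these matchings rigorous at order $o(r^3)$ in the cusp domains $\Omega^\beta_N$. This requires the cubic-order control of $\nabla u^\pm$ and $\partial_tu^\pm$ obtained from the Ansatz expansions (\Cref{corollary:uniform smooth expansion maximal stratum}). It also requires careful bookkeeping of the $x_n$-components and of the reflection signs in the lower branch.
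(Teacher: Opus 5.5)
Your route is the paper's: compare $\partial_tu^\pm$ and $\nabla u^\pm$ with the derivatives of $\sP^\pm=\tfrac12(x_n+q_2^\pm)_\pm^2$ and read off the constant and linear coefficients by uniqueness of the expansion. The rotational field is exactly $\upvarphi_{u^\pm}[0,w,0]$, so both original-coordinate triples are $(0,w,0)$ and the even jet cancels. Your dominated-convergence argument for the first assertion is also fine.

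\textbf{The gap.} The step that fails as written is your treatment of the $\bm e_n$-component of $D_x^2q_2^\pm V'$. You cannot absorb it into the $Q$-term or the remainder. A nonzero component $c$ leaves a residual $c\,x_n^2$ of order $r^2$, while $Q\,\partial_nu^\pm$ is of order $r^3$. Identifying $v$ through \Cref{lem:ansatz linear vs parameters} requires the remainder to be $o(r^2)$ at scale $r$, so neither absorption works. What you need is that this component vanishes:
\[
\bm e_n\cdot D_x^2q_2^\pm V'=\sum_{i<n}V_i\,\partial_{in}q_2^\pm=0 .
\]
This holds because $q_2^\even$ has no mixed monomials and $P_2$ is even in $x_n$. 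Equivalently, $\Heat(x_nq_2^\pm)=0$ forces $\partial_nq_2^\pm$ to be a multiple of $x_n$. The paper invokes this identity, $\partial_{in}q_2^\pm=0$ for $i<n$, later in the cleaning argument. The asserted formula for $J_{\le2}[V\cdot\nabla u^\pm]$ is consistent with $v\perp\bm e_n$ only because of it. With it, $D_x^2q_2^\pm V'$ is an admissible $v$, the order-two residual vanishes identically, and $V\cdot\nabla u^\pm$ matches $\upvarphi_{u^\pm}[V\cdot\bm e_n,D_x^2q_2^\pm V',0]$ up to $O(r^3)$, which is enough.

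\textbf{A harmless slip in the orders.} The claim $\partial_tu^+=(x_n)_+\partial_tq_2^++O(r^3)$ is false. In the positive phase, $\partial_t\sP^+=(x_n+q_2^+)\partial_tq_2^+$ contains the order-two term $q_2^+\partial_tq_2^+$. The substitution $\partial_nu^+=x_n+O(r^2)$ also costs $O(r^2)$, so your chain only gives an $O(r^2)$ error. That suffices, since the statement concerns only $J_1[\partial_tu^\pm]$, but your remark that there is no linear term is then unjustified (and unnecessary). Likewise, matching up to $o(r^3)$ is more than required. By the lower bound $cr\mathcal N_r$ in \Cref{lem:ansatz linear vs parameters}, an error $o(r)$ at scale $r$ determines $\alpha$, and an error $o(r^2)$ determines $(\alpha,v)$. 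These are exactly the precisions the paper uses: it obtains $o(r)$ for $\partial_tu^\pm-\partial_t\sP^\pm$ and $o(r^2)$ for $\nabla u^\pm-\nabla\sP^\pm$. It gets them in $L^2$ from \Cref{corollary:uniform smooth expansion maximal stratum} and \Cref{lem:caccioppoli}, rather than from pointwise $C^3$ bounds.
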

\begin{proof}
    The first assertion follows from the regularity of $u$ and the identity
    \[
        \left.\frac d{d\eps}
        \mathsf{R}_{\frac{\bm e_n+\eps w}{|\bm e_n+\eps w|}}x
        \right|_{\eps=0}
        =(w\cdot x)\bm e_n-x_nw.
    \]
    Next, set $\sP^\pm\coloneqq\tfrac12(x_n+q_2^\pm)_\pm^2$ and note that, by
    \Cref{corollary:uniform smooth expansion maximal stratum} and \Cref{lem:caccioppoli},
    \[
    \begin{split}
        \|\bigl(\partial_tu^\pm-\partial_t\sP^\pm\bigr)
            (r\cdot,r^2\cdot)\|_{L^2(B_1\times(-1,-1/2))}&=o(r),\\
        \|\bigl(\nabla u^\pm-\nabla\sP^\pm\bigr)
            (r\cdot,r^2\cdot)\|_{L^2(B_1\times(-1,-1/2))}&=o(r^2).
    \end{split}
    \]
    Differentiating $\sP^\pm$ gives the formulas for $J_1$ and $J_{\leq 2}$.
    Finally, recalling \eqref{eq:ansatz linearized problem}, we have
\[
    (w\cdot x)\partial_nu^\pm-x_n(w\cdot\nabla u^\pm)
    =\upvarphi_{u^\pm}[0,w,0],
\]
so the coefficient triples in the original coordinates are
$(0,w,0)$ for both sides of the free boundary, and therefore their jets are both
$x\cdot w$ and cancel in the even jet.
\end{proof}

We now study how rotations and translations affect transversality.
\begin{lemma}\label{lem:transversality vs rototranslations}
    With the notation of \Cref{definition:transversality} and \eqref{eq:infinitesimal rototranslation},
    \[
        \mu(\varphi+\Psi_{\theta,V,w})=\mu(\varphi).
    \]
    Suppose in addition that $u_k\to u$ in
    $L^2(B_1\times(-1,1))$ are solutions of \eqref{eq:stefan} and that,
    for some $\eps_k\to0^+$,
    \begin{equation}\label{eq:transversality full quotient}
        \frac{u_k-u}{\eps_k}\to\varphi
        \qquad\text{strongly in }L^2(B_1\times(-1,1)),
    \end{equation}
    and let $z_k=(x_k,t_k)\in\Sigma_{n-1}^m(u_k)$ with $z_k\to(0,0).$
    If $\varphi$ is transversal, then
    \[
        z_k\ne0\quad\text{for all sufficiently large }k,\qquad
        \eps_k=o(|z_k|_\pb).
    \]
\end{lemma}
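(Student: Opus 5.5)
The first assertion follows by linearity of the even jet. For $\Psi_{\theta,V,w}$, \Cref{lem:jets rototranslations} gives $J_{\le2}[(w\cdot x)\partial_nu-x_n(w\cdot\nabla u)]^\even=0$. Write $V=V_F+V_E+(V\cdot\bm e_n)\bm e_n$ with $V_F\in F$, $V_E\in E$. By the formula for $J_{\le2}[V\cdot\nabla u^\pm]$, the even linear part of $V\cdot\nabla u$ is $x\cdot\mathsf H(V-(V\cdot\bm e_n)\bm e_n)=x\cdot\mathsf H V_F$, since $\mathsf H V_E=0$. The even constant part of $V\cdot\nabla u$ vanishes, because the constant $V\cdot\bm e_n$ is the same on both sides. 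Hence $J_{\le2}[V\cdot\nabla u]^\even=J_{\le2}[V_F\cdot\nabla u]^\even$. The even jet of $\Psi_{\theta,V,w}$ therefore equals that of $\theta\partial_tu+V_F\cdot\nabla u$, and the invariance of $\mu$ under such additions, noted after \eqref{eq:quantitative transversality gaps}, gives $\mu(\varphi+\Psi_{\theta,V,w})=\mu(\varphi)$.

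For the second assertion I argue by contradiction. Suppose that along a subsequence $|z_k|_\pb\le C\eps_k$; this covers $z_k=0$ as well as the failure of $\eps_k=o(|z_k|_\pb)$. The plan is to show that the jets of $u_k$ at $z_k$, compared with those of $u$ at $0$, differ to first order in $\eps_k$ by the jets of some $\Psi_{\theta,V,w}$, and that this forces $\mu(\varphi)=0$.

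\textbf{Step 1 (normalizing).} Pass to a subsequence with $x_k/\eps_k\to V$ and $t_k/\eps_k\to\theta$; this is possible because $|x_k|\le C\eps_k$ and $|t_k|\le C\eps_k^2$. The rotation $\mathsf R_k$ aligning the blow-up normal of $u_k$ at $z_k$ with $\bm e_n$ satisfies $\mathsf R_k\to Id$, by \Cref{corollary:uniform smooth expansion maximal stratum}. I would like the rotation angle to be $O(\eps_k)$, so that $\mathsf R_k=\mathsf R_{(\bm e_n+\eps_kw_k)/|\bm e_n+\eps_kw_k|}$ with $w_k$ bounded. A priori this angle is only $o(1)$, so I would first run the argument with $w_k\coloneqq 0$ and then control the rotation via the odd jet of $\varphi$ (see Step 3). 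Set $u_k^\sharp(x,t)\coloneqq u_k(\mathsf R_kx+x_k,t+t_k)$.

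\textbf{Step 2 (first variation of the recentred solutions).} Write $u_k^\sharp-u=[u_k^\sharp-u(\mathsf R_k\cdot+x_k,\cdot+t_k)]+[u(\mathsf R_k\cdot+x_k,\cdot+t_k)-u]$. The first bracket, divided by $\eps_k$, converges in $L^2_{\rm loc}$ to $\varphi$: this uses \eqref{eq:transversality full quotient} together with the $H^1$ bounds of \Cref{lem:caccioppoli}, which control the effect of composing with a near-identity motion. The second bracket, divided by $\eps_k$, converges to $\Psi_{\theta,V,w}$ by \Cref{lem:jets rototranslations}. Hence
\[
\frac{u_k^\sharp-u}{\eps_k}\to\varphi+\Psi_{\theta,V,w},
\]
and both $u$ and $u_k^\sharp$ have a singular point of $\Sigma_{n-1}^m$ at the origin.

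\textbf{Step 3 (jets of $\varphi+\Psi$ must vanish in the transversal directions).} Apply \Cref{theorem:improvement of flatness} separately to each branch $u^\pm_\sharp$, with base points $(0,0)$ for both $u$ and $u_k^\sharp$; this is the case $d_k=0$, so the estimate holds down to all scales. It yields Ans\"atze $\sU[u^\pm,\eps_k\alpha_k^\pm,\eps_kv_k^\pm,\eps_kQ_k^\pm]$ approximating $u_k^{\sharp,\pm}$ at every scale $r<\rho_\delta$ with error $\delta\eps_k r^{3.5}$, and with coefficients $\delta$-close to the jets of $\varphi+\Psi$. Since $(0,0)$ is a free boundary point of both branches of $u_k^\sharp$, with a common tangent plane $\{x_n=0\}$, letting $r\to0$ forces $\eps_k\alpha_k^\pm=0$ and $\eps_kv_k^\pm$ to match the common normal. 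It follows that the limiting even jet vanishes up to $O(\delta)$; also $v^+-v^-=O(\delta)$, and the common part of $v$ can be absorbed into $w$, which justifies Step 1. Letting $\delta\to0$ gives $J_{\le2}[\varphi+\Psi_{\theta,V,w}]^\even=0$, hence $\mu(\varphi)=\mu(\varphi+\Psi_{\theta,V,w})=0$, contradicting transversality.

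The main obstacle is Step 3. The issue is matching the normalization of $u_k$ at $z_k$, namely its exact two-sided blow-up direction, with the coefficients produced by the improvement of flatness, and in particular justifying that $\mathsf R_k$ rotates by only $O(\eps_k)$. I would first try to derive this from the $\alpha,v$ convergence in \eqref{eq:thm flatness closeness conclusion}, applied to the unrotated $u_k(\cdot+x_k,\cdot+t_k)$. In that setting the normal of $u_k$ at $z_k$ is read off from $\eps_k(v_k^++v_k^-)/2$, which is $O(\eps_k)$.
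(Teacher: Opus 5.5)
Your proof of the invariance of $\mu$ matches the paper's. For the second assertion, your argument works in the unrotated form you sketch in your last paragraph, and it takes a different route from the paper.

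The paper never recentres $u_k$. It applies \Cref{theorem:improvement of flatness} with the moving base point $z_k$ and uses \eqref{eq:thm flatness expansion conclusion2} at the intermediate scale $\rho_k=\eps_k^{1/2}$, so that $d_k\ll\rho_k$ and $\eps_k\ll\rho_k$. It compares the order-zero and order-one Taylor coefficients of the defining functions up to $o(\eps_k\rho_k^j)$ and first extracts $|O_k-Id|\le C\eps_k$. Only then does it show $\widetilde\alpha^\sigma_\Phi=\widetilde v^\sigma_\Phi=0$ for $\Phi=\varphi+\Psi_{0,V,w}$.

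You instead use $d_k=O(\eps_k)$ to absorb the displacement into the first variation through \Cref{lem:jets rototranslations}. The theorem is then applied to $\tilde u_k\coloneqq u_k(\cdot+x_k,\cdot+t_k)$ with zero displacement, so a single Ansatz works at every $r\in(0,\rho_\delta)$. Letting $r\to0$, the exact location and normal of the free boundary at the singular point force $\alpha_k^\pm=0$ and $v_k^+=v_k^-$ in the original coordinates. Hence $J_{\le2}[\varphi+\Psi_{\theta,V,0}]^\even=O(\delta)$, which gives the contradiction.

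In this unrotated form the theorem absorbs the rotations $\mathsf R_k\to Id$ by itself. So you never need $|\mathsf R_k-Id|=O(\eps_k)$; it is only a byproduct. Nor does $\Psi$ need a rotational part, since that part has zero even jet anyway. The rotated version of your Steps 1--3 is circular as written and should simply be dropped.

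Your route buys a shorter endgame with no intermediate scale. Its cost is that you must re-verify the hypotheses of the one-sided theorem for the recentred family. Your Step 2 only gives convergence of the full quotient, while the theorem needs $(\tilde u_k^\sigma-u^\sigma)/\eps_k\to(\varphi+\Psi)^\sigma$ branchwise on the whole cusp at the origin. Two pieces are missing here. The first is the mismatch-set argument of Step 1 in the paper. The second concerns $u^\sigma(\cdot+z_k)$, which lives only on a shifted cusp. You can handle it, for example, with the extension $\bar u$ of \Cref{lem:extend smooth solution} together with uniform $O(\eps_k)$ bounds on a set of vanishing measure near the tip.

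The paper's moving-base-point scheme, by contrast, carries over to \Cref{prop:cleaning with transversality}. There $\eps_k=o(d_k)$, so recentring can no longer be absorbed into the first variation.
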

\begin{proof}
    Let $V_F$ be the orthogonal projection of $V$ onto $F$; by
    \Cref{lem:jets rototranslations},
    \[
        J_{\le2}[V\cdot\nabla u]^\even
        =x\cdot\mathsf HV_F
        =J_{\le2}[V_F\cdot\nabla u]^\even,\qquad
        J_{\le2}[(w\cdot x)\partial_nu-x_n(w\cdot\nabla u)]^\even=0,
    \]
    so the invariance follows from \eqref{eq:infinitesimal rototranslation}
    and \eqref{eq:quantitative transversality gaps}.
    Before proving the remaining assertions, we introduce some notation.
    After a fixed parabolic rescaling by $r_*>0$ and decreasing
    $\beta$, we may assume that the functions are defined in
    $B_2\times(-2,2)$ and that
    \Cref{corollary:uniform smooth expansion maximal stratum}, together
    with \eqref{eq:cubic flatness}, gives
    \eqref{eq:flatness assumption} for $u^\pm$ and $u_k^\pm$ in their
    inward coordinates, with one common $r_0>0$.
    
    By \Cref{theorem:splitting at singular points}, choose the splitting of
    $u_k$ so that the inward normal of $u_k^+$ at $z_k$ has positive scalar
    product with $\bm e_n$, and with $I_\sigma$ as in
    \eqref{eq:reflected branches} define
    \begin{equation}\label{eq:transversality branch domains}
        u_k=u_k^++u_k^-,\qquad
        u_{k,\sharp}^\sigma\coloneqq u_k^\sigma\circ I_\sigma,
        \qquad x_k^\sigma\coloneqq I_\sigma x_k,\qquad \mathscr D_k^\sigma\coloneqq
        \Omega_1^\beta\cap\Omega_1^\beta(x_k^\sigma,t_k)
        \cap\sC_1,\qquad \sigma\in\{+,-\}.
    \end{equation}

    \smallskip\noindent\textbf{Step 1.}
    We prove that
    \begin{equation}\label{eq:transversality branchwise quotient}
        \left\|\frac{u_{k,\sharp}^\sigma-u_\sharp^\sigma}{\eps_k}
        -\varphi_\sharp^\sigma\right\|_{L^2(\mathscr D_k^\sigma)}
        \longrightarrow0,
        \qquad \sigma\in\{+,-\}.
    \end{equation}
    By \Cref{lem:caccioppoli} with $\bar\eps=0$ and \eqref{eq:transversality full quotient}, we obtain
    \begin{equation}\label{eq:transversality local Linfty}
        \|u_k-u\|_{L^\infty(B_{3/2}\times(-3/2,3/2))}
        \le C\|u_k-u\|_{L^2(B_2\times(-2,2))}
        \le C\eps_k.
    \end{equation}
    By \Cref{lem:smoothness close maximal stratum} the free boundaries
    are smooth graphs, and the construction in the proof of
    \Cref{lem:extend smooth solution} gives smooth 
    functions $\sA^\sigma$ and $\sA_k^\sigma$ such that
    \begin{equation}\label{eq:transversality defining functions}
    \begin{gathered}
        u_\sharp^\sigma=\tfrac12(\sA^\sigma)_+^2,
        \qquad
        u_{k,\sharp}^\sigma=\tfrac12(\sA_k^\sigma)_+^2,\\
        \{u_\sharp^\sigma>0\}=\{\sA^\sigma>0\},
        \qquad
        \{u_{k,\sharp}^\sigma>0\}=\{\sA_k^\sigma>0\},\\
        \tfrac12\le\partial_n\sA^\sigma\le\tfrac32,
        \qquad
        \tfrac12\le\partial_n\sA_k^\sigma\le\tfrac32,
        \qquad
        |\nabla\sA^\sigma|+|\nabla\sA_k^\sigma|\le C
        \quad\text{in }\mathscr D_k^\sigma.
    \end{gathered}
    \end{equation}
    Define
    \[
        \mathcal E_k^\sigma\coloneqq\mathscr D_k^\sigma\cap
        \bigl(\{u_{k,\sharp}^\sigma>0\}
        \mathbin\triangle\{u_\sharp^\sigma>0\}\bigr).
    \]
    Equations \eqref{eq:transversality local Linfty} and
    \eqref{eq:transversality defining functions}, applied on each line
    parallel to $\bm e_n$, give
    \begin{equation}\label{eq:transversality mismatch bounds}
            \mathcal E_k^\sigma\subset
        \{|\sA^\sigma|\le C\eps_k^{1/2}\},
        \qquad |\mathcal E_k^\sigma|\le C\eps_k^{1/2},\qquad
        u_{k,\sharp}^\sigma+u_\sharp^\sigma\le C\eps_k
        \qquad\text{on }\mathcal E_k^\sigma,
    \end{equation}
    therefore
    \begin{equation}\label{eq:transversality mismatch vanishing}
        \eps_k^{-2}\int_{\mathcal E_k^\sigma}
        (u_{k,\sharp}^\sigma+u_\sharp^\sigma)^2\,dx\,dt
        \le C\eps_k^{1/2}\longrightarrow0.
    \end{equation}
    Fix $\eta>0$. By \eqref{eq:transversality mismatch bounds}, for all
    sufficiently large $k$,
    \[
    \begin{aligned}
        \frac{u_{k,\sharp}^\sigma-u_\sharp^\sigma}{\eps_k}
        &=\left(\frac{u_k-u}{\eps_k}\right)\circ I_\sigma,
        &\varphi_\sharp^\sigma&=\varphi\circ I_\sigma
        &&\text{ in }\mathscr D_k^\sigma\cap\{\sA^\sigma>\eta\},\\
        u_{k,\sharp}^\sigma&=u_\sharp^\sigma=0,
        &\varphi_\sharp^\sigma&=0
        &&\text{ in }\mathscr D_k^\sigma\cap\{\sA^\sigma<-\eta\},
    \end{aligned}
    \]
    and combining these identities with \eqref{eq:transversality full quotient}
    and \eqref{eq:transversality mismatch vanishing} gives
    \[
    \begin{split}
        \limsup_{k\to\infty}
        \left\|\frac{u_{k,\sharp}^\sigma-u_\sharp^\sigma}{\eps_k}
        -\varphi_\sharp^\sigma\right\|_{L^2(\mathscr D_k^\sigma)}
        \le C\|\varphi_\sharp^\sigma\|_
        {L^2((\Omega_1^\beta\cap\sC_1)
        \cap\{|\sA^\sigma|<2\eta\})}.
    \end{split}
    \]
    Since $\varphi_\sharp^\sigma=0$ in
    $\{u_\sharp^\sigma=0\}$, the right-hand side tends to zero as
    $\eta\downarrow0$, proving \eqref{eq:transversality branchwise quotient}.

    \smallskip\noindent\textbf{Step 2.}
    Suppose by contradiction that, along a subsequence,
    \begin{equation}\label{eq:transversality bad scale}
        d_k\coloneqq|z_k|_\pb\le C\eps_k,
    \end{equation}
    and set $\rho_k\coloneqq\eps_k^{1/2}$. Since
    $|t_k|\le d_k^2$, \eqref{eq:transversality bad scale} gives
    \begin{equation}\label{eq:transversality intermediate scale}
        \frac{d_k}{\rho_k}\longrightarrow0,
        \qquad \frac{\eps_k}{\rho_k}\longrightarrow0,
        \qquad \frac{t_k}{\eps_k}\longrightarrow0.
    \end{equation}
    Let $\nu_k$ be the inward unit normal of $u_k^+$ at $z_k$, and
    define the following matrices in
    $SO(n)$:
    \[
        O_k\coloneqq \mathsf{R}_{\nu_k}^{\top},\qquad
        O_k^+\coloneqq O_k,\qquad O_k^-\coloneqq I_-O_kI_-.
    \]
    For each $\delta>0$, let $k_\delta$ and $\rho_\delta$ be given by
    \Cref{theorem:improvement of flatness}. After passing to a subsequence,
    choose $\delta_k\downarrow0$ slow enough so that
    \begin{equation}\label{eq:transversality diagonal scales}
        k\ge k_{\delta_k},\qquad
        C_\circ d_k<\rho_k<\rho_{\delta_k}.
    \end{equation}
    By the choice of $r_*$ in Step~1 and
    \eqref{eq:transversality branchwise quotient},
    \Cref{theorem:improvement of flatness} yields triples
    $(\widetilde\alpha_k^\sigma,\widetilde v_k^\sigma,
      \widetilde Q_k^\sigma)$ such that
    \begin{equation}\label{eq:transversality parameters}
    \begin{split}
        &|\widetilde\alpha_k^\sigma-\widetilde\alpha^\sigma|
        +|\widetilde v_k^\sigma-\widetilde v^\sigma|
        +\|\widetilde Q_k^\sigma-\widetilde Q^\sigma\|_{L^2(\sC_1)}
        \le C\delta_k,
    \end{split}
    \end{equation}
    and, setting $\sU_k^\sigma\coloneqq
        \sU[u_\sharp^\sigma,\eps_k\widetilde\alpha_k^\sigma,
              \eps_k\widetilde v_k^\sigma,
              \eps_k\widetilde Q_k^\sigma],$
    \begin{equation}\label{eq:transversality moving estimates}
        \big\|(u_{k,\sharp}^\sigma-\sU_k^\sigma)
        (x_k^\sigma+\rho_kO_k^\sigma\,\cdot,
         t_k+\rho_k^2\,\cdot)\big\|_{L^2(B_1\times(-1,-1/2))}
        \le C\delta_k\eps_k\rho_k^{7/2}.
    \end{equation}
    Let
$K\coloneqq B_{1/8}(\tfrac12\bm e_n)
        \times(-\tfrac34,-\tfrac23)
        \Subset B_1\times(-1,-1/2)$ and define
    \begin{equation}\label{eq:transversality ansatz defining function}
        \widehat{\sA}_k^\sigma(x,t)
        \coloneqq\sA^\sigma\big(S_k^\sigma x
        +\eps_k\widetilde\alpha_k^\sigma\bm e_n
        +\eps_k\widetilde Q_k^\sigma
          (S_k^\sigma x+\eps_k\widetilde\alpha_k^\sigma\bm e_n,t)
          \bm e_n,t\big),\qquad S_k^\sigma\coloneqq
        \mathsf{R}_{\frac{\bm e_n+\eps_k\tilde v_k^\sigma}
        {|\bm e_n+\eps_k\tilde v_k^\sigma|}}.
    \end{equation}
    Then $\sU_k^\sigma=\tfrac12(\widehat{\sA}_k^\sigma)_+^2$. Also, \eqref{eq:transversality intermediate scale} and
    \eqref{eq:transversality parameters} together with
    \Cref{corollary:uniform smooth expansion maximal stratum} give
    \begin{equation}\label{eq:transversality defining functions intermediate scale}
    \begin{split}
        \rho_k^{-1}\sA_k^\sigma
        (x_k^\sigma+\rho_kO_k^\sigma y,t_k+\rho_k^2s)
        &=y_n+O(\rho_k),\\
        \rho_k^{-1}\widehat{\sA}_k^\sigma
        (x_k^\sigma+\rho_kO_k^\sigma y,t_k+\rho_k^2s)
        &=y_n+o(1),
     \end{split}
    \end{equation}
    uniformly for $(y,s)\in K$.
    Thus both functions $\sA_k^\sigma$ and $\widehat{\sA}_k^\sigma$
    are bounded below by $c\rho_k$ on $K$, so
    \eqref{eq:defining function difference of squares} and
    \eqref{eq:transversality moving estimates} give
    \begin{equation}\label{eq:transversality defining function comparison}
        \big\|(\sA_k^\sigma-\widehat{\sA}_k^\sigma)
        (x_k^\sigma+\rho_kO_k^\sigma\,\cdot,
         t_k+\rho_k^2\,\cdot)\big\|_{L^2(K)}
        \le C\delta_k\eps_k\rho_k^{5/2}.
    \end{equation}
    Next, write the order-five expansions on $K$ as
    \[
    \begin{split}
        \sA_k^\sigma
        (x_k^\sigma+\rho_kO_k^\sigma y,t_k+\rho_k^2s)
        &=\sum_{j=0}^4\rho_k^jA_{j,k}^\sigma(y,s)+R_k^\sigma,\\
        \widehat{\sA}_k^\sigma
        (x_k^\sigma+\rho_kO_k^\sigma y,t_k+\rho_k^2s)
        &=\sum_{j=0}^4\rho_k^j\widehat A_{j,k}^\sigma(y,s)
          +\widehat R_k^\sigma,
    \end{split}
    \]
    where the coefficients are parabolically homogeneous and
    \[
        \|R_k^\sigma\|_{L^2(K)}
        +\|\widehat R_k^\sigma\|_{L^2(K)}\le C\rho_k^{9/2},
        \qquad
        A_{0,k}^\sigma=0,\qquad A_{1,k}^\sigma=y_n.
    \]
   Note that polynomial norm equivalence and
    \eqref{eq:transversality defining function comparison} imply
    \begin{equation}\label{eq:transversality coefficient comparison}
        \rho_k^j\|A_{j,k}^\sigma-\widehat A_{j,k}^\sigma\|
        \le C\bigl(\delta_k\eps_k\rho_k^{5/2}+\rho_k^{9/2}\bigr),
        \qquad j\in\{0,1\}.
    \end{equation}
   In particular, since $\eps_k=\rho_k^2$, the right-hand side divided by
    $\eps_k\rho_k^j$ tends to zero for $j\in\{0,1\}$, and
the linear part of $\widehat{\sA}_k^\sigma$ in these coordinates is
    $
        \nabla\widehat{\sA}_k^\sigma(x_k^\sigma,t_k)
        \cdot O_k^\sigma y.
    $
    By \eqref{eq:transversality bad scale},
    \eqref{eq:transversality parameters}, and the uniform $C^2$ bounds for
    $\sA^\sigma$, its gradient at $(x_k^\sigma,t_k)$ differs from
    $\bm e_n$ by $O(\eps_k)$. Hence, taking $j=1$ in
    \eqref{eq:transversality coefficient comparison} and recalling the
    definition of $\mathsf{R}_e$, we obtain $|O_k-Id|\le C\eps_k$. Thus, passing to
    a subsequence,
    \begin{equation}\label{eq:transversality geometric limits}
        \frac{x_k}{\eps_k}\longrightarrow V,
        \qquad
        \frac{O_k-Id}{\eps_k}\longrightarrow
        \bm e_n\otimes w-w\otimes\bm e_n
    \end{equation}
    for some $V\in\R^n$ and $w\in\bm e_n^\perp$. 
    
    Set $\Phi\coloneqq\varphi+\Psi_{0,V,w}$, and write
    \[
        \sA^\sigma(y,s)=y_n+\widehat q_2^\sigma(y,s)
        +O(|(y,s)|_\pb^3).
    \]
    Let $\widetilde\alpha_\Phi^\sigma,\widetilde v_\Phi^\sigma$
    be the first two coefficients of $\Phi_\sharp^\sigma$ in
    \eqref{eq:assumption perturbation}. Differentiating
    \eqref{eq:transversality ansatz defining function} and using
    \eqref{eq:transversality parameters},
    \eqref{eq:transversality geometric limits}, and
    \Cref{lem:jets rototranslations}, we obtain
    \[
        \widehat A_{0,k}^\sigma
            =\eps_k\widetilde\alpha_\Phi^\sigma+o(\eps_k),\qquad
        \widehat A_{1,k}^\sigma
            =y_n+\eps_k\bigl(y'\cdot\widetilde v_\Phi^\sigma
              +\widetilde\alpha_\Phi^\sigma\partial_n\widehat q_2^\sigma\bigr)
              +o(\eps_k).
    \]
    Since $A_{0,k}^\sigma=0$ and $A_{1,k}^\sigma=y_n$,
    \eqref{eq:transversality coefficient comparison} gives
    $\widetilde\alpha_\Phi^\sigma=0$ and
    $\widetilde v_\Phi^\sigma=0$. Thus $J_{\le2}[\Phi]^\even=0$,
    hence $\mu(\Phi)=0$, contradicting $\mu(\Phi)=\mu(\varphi)>0$ (recall that, by assumption, 
    $\varphi$ is transversal).
\end{proof}

We now show that, under a transversal perturbation, a singular point can persist in the same stratum only if the perturbation size is bounded by any prescribed power of the parabolic displacement of the point. This is the ``cleaning estimate'' \eqref{eq:introduction explain cleaning}.

We use the coordinates of \eqref{eq:expansion cubic points transversality} on both sides of $\{y_n=0\}$, with the convention in \eqref{eq:lower branch ansatz dictionary}, and set
\[
   K^+\coloneqq\left \{|y'|<\tfrac18,\ \tfrac14<y_n<\tfrac38,
                   -\tfrac78<s<-\tfrac58\right\},\qquad
   K^-\coloneqq I_-K^+.
\]
For later use, we define
\begin{equation}\label{eq:ansatz change of variables}
    \mathcal T_{\alpha,v,Q}(x,t)\coloneqq\bigg(
    \mathsf{R}_{\frac{\bm e_n+v}{|\bm e_n+v|}}x+\alpha\bm e_n
    +Q\left(\mathsf{R}_{\frac{\bm e_n+v}{|\bm e_n+v|}}x
             +\alpha\bm e_n,t\right)\bm e_n,t\bigg),
\end{equation}
so that \Cref{definition:ansatz perturbation} reads
$\sU[w,\alpha,v,Q]=w\circ\mathcal T_{\alpha,v,Q}$.

\begin{proposition}\label{prop:cleaning with transversality}
    Let $u$ and $u_k$ solve \eqref{eq:stefan} in
    $B_1\times(-1,1)$, suppose that
    \[
        (0,0)\in\Sigma_{n-1}^m(u),\qquad
        z_k=(x_k,t_k)\in\Sigma_{n-1}^m(u_k),\qquad z_k\to0,
    \]
    and assume that $u$ satisfies
    \eqref{eq:expansion cubic points transversality}. Suppose moreover that,
    for some $\eps_k\to0^+$,
    \[
        \frac{u_k-u}{\eps_k}\longrightarrow\varphi
        \qquad\text{strongly in }L^2(B_1\times(-1,1)),
    \]
    where $\varphi$ solves \eqref{eq:linearized problem} and satisfies
    \eqref{eq:assumption perturbation}. Set
    $d_k\coloneqq|z_k|_\pb$ and assume that $\varphi$ is transversal
    at $(0,0)$.
    Then, for every $N\ge1$, there exists $C_N>0$ such that,
    for all sufficiently large $k$,
    \begin{equation}\label{eq:cleaning epsilon distance}
        \eps_k\le C_Nd_k^N.
    \end{equation}
\end{proposition}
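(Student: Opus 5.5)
We argue by contradiction: fix $N$ and suppose that, along a subsequence, $\eps_k\ge k\,d_k^N$, i.e.\ $d_k^N=o(\eps_k)$. By \Cref{lem:transversality vs rototranslations} we already know $z_k\neq0$ and $\eps_k=o(d_k)$, so the displacement is much larger than the perturbation, yet polynomially comparable to it. The plan has three steps. First, apply \Cref{theorem:improvement of flatness} on each branch, exactly as in Step~2 of the proof of \Cref{lem:transversality vs rototranslations}. We now work at scales $\rho$ with $C_\circ d_k<\rho<\rho_{\delta_k}$, centered at $z_k$ and rotated by $O_k$. This gives defining functions $\sA_k^\sigma$ for $u_k$ and Ans\"atze $\widehat\sA_k^\sigma$, built from $\sA^\sigma\circ\mathcal T_{\eps_k\widetilde\alpha_k^\sigma,\eps_k\widetilde v_k^\sigma,\eps_k\widetilde Q_k^\sigma}$, which agree to $O(\delta_k\eps_k\rho^{5/2})$ in $L^2$ on the rescaled set $K^\pm$. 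Taking $\rho\simeq C_\circ d_k$ and using polynomial norm equivalence, we compare the parabolic Taylor coefficients of orders $0,1,2$ of $\sA_k^\sigma$ and $\widehat\sA_k^\sigma$ at $z_k$. This yields error $O(\delta_k\eps_k\rho^{3/2-j}+\rho^{9/2-j})$ at order $j$. The second term is not small relative to $\eps_k$, so we instead use higher-order Ans\"atze: replace the order-five truncation by an order-$J$ one with $J\gg N$, via \Cref{corollary:uniform smooth expansion maximal stratum}, so that the remainder $\rho^{J-1/2-j}$ is $o(\eps_k\rho^{\,\cdot})$ because $d_k^N=o(\eps_k)$.

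Second, extract geometric information from the matching. At orders $0$ and $1$ the matching forces $\mathsf{R}$- and translation data: $O_k$ and the base point are determined up to $O(\eps_k)$ by the perturbation and by the jet of $u$ at $x_k$. Note that $u$ is evaluated at the displaced point $z_k$, not at $0$, which is the key difference from the previous lemma. Writing the even part $\mathscr G^\even$ from the proof of \Cref{lem:stratification maximal stratum}, the conditions $z_k\in\Sigma^m_{n-1}(u_k)$ read
\[
\mathscr G^\even_k(z_k)=0,\qquad \nabla_{x'}\mathscr G^\even_k(z_k)=0,\qquad \operatorname{rank} D^2_{x'}\mathscr G^\even_k(z_k)=n-1-m .
\]
Up to $o(\eps_k)$, with the uniform $\delta_k$-errors, we have $\mathscr G^\even_k=\mathscr G^\even\circ(\text{rigid motion})+\eps_k h+o(\eps_k)$, where $h$ is the even graph variation determined by $\alpha^\even_\varphi+x\cdot v^\even_\varphi$ (transported along the motion). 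This is the Hessian comparison \eqref{eq:cleaning full Hessian comparison} promised in the introduction. Subtracting the effect of a common time translation $\theta_k$ and translation $V_k\in F$, chosen so that the $\mathscr G^\even$- and $\nabla_{x''}$-equations are absorbed via the implicit function theorem from \Cref{lem:stratification maximal stratum}, the remaining conditions concern the restriction to the $m$-dimensional manifold $\bm M^m$ parametrized by $x'\in E$.

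Third, we reach the contradiction by \L ojasiewicz. The rank condition (the $E$-block of the Hessian must vanish to persist in $\Sigma^m_{n-1}$) together with $\nabla_{x'}\mathscr G^\even_k(z_k)=0$ gives
\[
\eps_k\,w+\nabla\Phi_t\text{-type terms}=o(\eps_k),
\]
where $w\in E$ is the transversal component, with $w\neq0$ by $\mu(\varphi)>0$. As in Step~2 of \Cref{lem:stratification maximal stratum}, the term coming from $u$ itself at $x_k'$ is $\nabla P_L(x_k')$, and \Cref{lem:lojasiewicz} together with \eqref{eq:cleaning fixed solution} bound it by $C|x_k'|^{L}\le Cd_k^L$ for arbitrary $L$; the time offset is likewise $o(d_k^L)$. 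Choosing $L\ge N$, this is $o(\eps_k)$, so dividing by $\eps_k$ gives $|w|\to0$, contradicting transversality. The main obstacle is the first step: making the improvement-of-flatness matching accurate to $o(\eps_k)$ in the gradient and Hessian at $z_k$ despite the polynomially small ratio $\eps_k/d_k^N$. I would first try to run \Cref{theorem:improvement of flatness} with the extended solution $\bar u$ of high order $J$, and to absorb the $\delta_k$ loss by working exactly at scale $\rho\sim d_k$.
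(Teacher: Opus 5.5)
\textbf{There is a genuine gap in your third step; the first two steps are sound.}

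Your first two steps match the paper's Steps 1--3 in spirit. These are the contradiction hypothesis $d_k^N=o(\eps_k)$ together with $\eps_k=o(d_k)$, the matching via \Cref{theorem:improvement of flatness} at scale $\sim d_k$ using expansions of order $\gg N$ so that Taylor remainders become negligible, and a common gauge (time translation, translation in $F$, rotation) fixed by the implicit function theorem.

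In the third step you claim that the contribution of $u$ to the tangential gradient at (the projection of) $z_k$ is $\nabla P_L(x_k')=O(|x_k'|^L)$, by \eqref{eq:cleaning fixed solution} and Step~2 of \Cref{lem:stratification maximal stratum}. That bound comes from $\nabla\Phi_t(x')=0$, which holds only when $(x',\Phi(x'))$ is a point of $\Sigma_{n-1}(u)$. But $z_k$ is a singular point of $u_k$, not of $u$. At the projection of $z_k$, the only information available is the perturbed gradient equation. It says precisely that $\nabla\Phi_t$ there is $O(\eps_k)$ and equals a multiple of $\eps_k w$ up to $o(\eps_k)$. So asserting that it is $O(d_k^L)$ is equivalent to what you want to prove. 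The same objection applies to your claim that the time offset is $o(d_k^L)$. The hypothesis $d_k^N=o(\eps_k)$ cannot be spent here; in the paper it only makes the high-order remainders negligible in the matching.

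What is missing is a quantitative use of the rank condition, combined with a gradient-type \L ojasiewicz inequality. The argument runs as follows.
\begin{itemize}
\item The Hessian comparison \eqref{eq:cleaning full Hessian comparison} holds with error $O(\eps_k)$.
\item Since $\widehat z_k\in\Sigma^m_{n-1}(\widehat u_k)$ and the $F$-block is invertible, block elimination shows that the $E$-block of the perturbed Hessian is $O(\eps_k^2)$.
\item Reduce to the function $G(y,t)=\sA^\even(y+\Xi(y,t)+f(y,t)\bm e_n,t)$ with $y\in E$. It is analytic because $\sA^\pm$ are the polynomial expansions and $\Xi,f$ come from the analytic implicit function theorem. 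One obtains
\[
G(\widehat y_k,\widehat t_k)=0,\qquad \nabla_yG(\widehat y_k,\widehat t_k)+\eps_kv^\even_{\varphi_*}=o(\eps_k),\qquad D^2_yG(\widehat y_k,\widehat t_k)=O(\eps_k).
\]
\item Apply \Cref{lem:lojasiewicz} to $G(\cdot,0)$, using $\partial_tG(0,0)=6a\neq0$, to get $\widehat t_k=o(\eps_k)$.
\item Apply \Cref{lem:lojasiewicz} to $|\nabla_yG(\cdot,0)|^2$, with an exponent $\theta\in(1/2,1)$ (which one may assume). This gives $|\nabla_yG|^{2\theta-1}\le C\|D^2_yG\|$, hence $|\nabla_yG(\widehat y_k,0)|=o(\eps_k)$.
\item Therefore $v^\even_{\varphi_*}=0$. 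Since the gauge was chosen so that $v^\even_{\varphi_*}\in E$ carries the whole even jet, this contradicts transversality.
\end{itemize}
Without this Hessian-to-gradient mechanism, your argument does not close.
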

\begin{proof}
    Since $d_k\to0$, it suffices to prove the estimate for every
    integer $N\ge3$, so we fix such an $N$ and argue by contradiction.
    We divide the proof into four steps, comparing the defining
    polynomials after one common rototranslation and applying the
    \L ojasiewicz inequality.

    \smallskip\noindent\textbf{Step 1.}
    Suppose by contradiction that
    \begin{equation}\label{eq:cleaning contradiction scale}
        d_k^N=o(\eps_k)
    \end{equation}
   and recall that, thanks to \Cref{lem:transversality vs rototranslations},
    $\eps_k=o(d_k)$.

    Let $C_\circ$ be given by \Cref{theorem:improvement of flatness}, chosen
    sufficiently large. Set $r_k\coloneqq2C_\circ d_k$ and let
    $O_k\coloneqq \mathsf{R}_{\nu_k}^{\top}$, where $\nu_k\to\bm e_n$ is the
    common normal at $z_k$. By
    \eqref{eq:transversality branchwise quotient} and
    \Cref{theorem:improvement of flatness}, a diagonal choice gives
    coefficients $(\alpha_k^\sigma,v_k^\sigma,Q_k^\sigma)$ converging
    to those of $\varphi^\sigma$, with the convention
    \eqref{eq:lower branch ansatz dictionary}.
    Let $\sA^\sigma$ and $\sA_{k,z}^\sigma$ define the order-$(N+3)$
    expansions of $u$ at $0$ and $u_k$ at $z_k$, respectively,
    the latter in the coordinates determined by $O_k$.
    Since
    $r_k^{N+7/2}=o(\eps_kr_k^3)$, arguing as in
    \eqref{eq:defining function difference of squares} and using
    \eqref{eq:thm flatness expansion conclusion2} and
    \Cref{corollary:uniform smooth expansion maximal stratum}, we obtain, for $\sigma=\pm$,
    \begin{equation}\label{eq:cleaning first defining function comparison}
        \Big\|
        (\sA^\sigma\circ
        \mathcal T_{\eps_k\alpha_k^\sigma,\eps_kv_k^\sigma,\eps_kQ_k^\sigma})
        (x_k+r_kO_k\,\cdot,t_k+r_k^2\,\cdot)
        -\sA_{k,z}^\sigma(r_k\,\cdot,r_k^2\,\cdot)
        \Big\|_{L^2(K^\sigma)}
        =o(\eps_kr_k^2)
    \end{equation}
    (to prove the bound above, we used that the two polynomials in
    \eqref{eq:cleaning first defining function comparison} have the
    same sign and absolute value comparable to $r_k$ on $K^\sigma$).
    Hence,
    by polynomial norm equivalence in
    \eqref{eq:cleaning first defining function comparison} and a Taylor
    expansion of \eqref{eq:ansatz change of variables}, we obtain
    \[
        \sA^\sigma(z_k)+\eps_k\alpha_k^\sigma=O(\eps_kr_k),
        \qquad
        \nabla_x\sA^\sigma(z_k)-\nu_k=O(\eps_k).
    \]
    Now, set $\nu_{\sA}\coloneqq\nabla_x\sA^\odd/|\nabla_x\sA^\odd|$.
Adding and subtracting the gradient estimates,
    and using that $\alpha_k^\sigma\to\alpha^\sigma$, yields
    \begin{equation}\label{eq:cleaning low coefficients}
        |\sA^+(z_k)|+|\sA^-(z_k)|
        +|\nu_k-\nu_{\sA}(z_k)|
        +|\pi_F\nabla_x\sA^\even(z_k)|\le C\eps_k,\qquad \eps_k^{-1}\sA^\sigma(z_k)\longrightarrow-\alpha^\sigma.
    \end{equation}

    \smallskip\noindent\textbf{Step 2.}
    We want to find a bounded sequence of parameters
    $\zeta_k,\theta_k\in\R$, $V_{F,k}\in F$, and
    $w_k\in\bm e_n^\perp$ such that, writing
    \[
        \mathsf{R}_k\coloneqq
        \mathsf{R}_{\frac{\bm e_n+\eps_kw_k}{|\bm e_n+\eps_kw_k|}},
        \qquad
        \widehat z_k\coloneqq
        (\mathsf{R}_kx_k+\eps_k(\zeta_k\bm e_n+V_{F,k}),t_k+\eps_k\theta_k),
        \qquad \widehat\nu_k\coloneqq \mathsf{R}_k\nu_k,
    \]
    we have
    \begin{equation}\label{eq:cleaning common gauge}
        \sA^\pm(\widehat z_k)=0,\qquad
        \pi_F\nabla_x\sA^\even(\widehat z_k)=0,\qquad
        \widehat\nu_k=\nu_{\sA}(\widehat z_k).
    \end{equation}
    Note that, for large $k$, the unit vectors $\widehat\nu_k$ and
    $\nu_{\sA}(\widehat z_k)$ have positive $\bm e_n$-component, so the last
    equation in \eqref{eq:cleaning common gauge} is
    equivalent to
    $\pi_{\bm e_n^\perp}(\widehat\nu_k-\nu_{\sA}(\widehat z_k))=0$.
    After writing it in this form, divide all the left-hand sides in
    \eqref{eq:cleaning common gauge} by $\eps_k$ and note that, if we choose $(\zeta_k,\theta_k,V_{F,k},w_k)=(0,0,0,0)$, then we obtain the vector
    \[
        \frac1{\eps_k}
        \begin{pmatrix}
            \sA^+(z_k)\\
            \sA^-(z_k)\\
            \pi_F\nabla_x\sA^\even(z_k)\\
            \pi_{\bm e_n^\perp}(\nu_k-\nu_{\sA}(z_k))
        \end{pmatrix}.
    \]
    Thanks to \eqref{eq:cleaning low coefficients}, this vector is bounded and, after passing to a further subsequence, it converges to
    $-(\alpha^+,\alpha^-,\eta_F,\eta_\nu)^T$, where
    $\eta_F\in F$ and $\eta_\nu\in\bm e_n^\perp$.

    To compute the limit as $\eps_k\to0$ of \eqref{eq:cleaning common gauge}, we observe that
    \[
        \mathsf{R}_k=Id+\eps_k(\bm e_n\otimes w_k-w_k\otimes\bm e_n)
        +O(\eps_k^2).
    \]
    Thus, since $x_k\to0$ and $\nu_k\to\bm e_n$,
    \[
        \widehat x_k=x_k+\eps_k(\zeta_k\bm e_n+V_{F,k})+o(\eps_k),
        \qquad
        \widehat\nu_k=\nu_k-\eps_kw_k+o(\eps_k).
    \]
  Moreover, since $\partial_{in}q_2^\pm=0$ for $i<n$,
    \[
        \nabla_x\sA^\pm(0,0)=\bm e_n,\qquad
        \partial_t\sA^\pm(0,0)=\pm\lambda_\pm,
    \qquad
        \nabla_F\partial_n\sA^\even(0,0)=0,\qquad
        \partial_n\nu_{\sA}(0,0)=0.
    \]
    Hence, by a Taylor expansion, the rescaled equations converge in $C^1$ to the affine system
    \begin{equation}\label{eq:cleaning limiting affine system}
        \begin{pmatrix}
            1&\lambda_+&0&0\\
            1&-\lambda_-&0&0\\
            0&\partial_t\nabla_F\sA^\even(0,0)&\mathsf H|_F&0\\
            0&-\partial_t\nu_{\sA}(0,0)&-D_x\nu_{\sA}(0,0)|_F
                &-Id_{\bm e_n^\perp}
        \end{pmatrix}
        \begin{pmatrix}\zeta\\\theta\\V_F\\w\end{pmatrix}
        =\begin{pmatrix}
            \alpha^+\\
            \alpha^-\\
            \eta_F\\
            \eta_\nu
        \end{pmatrix}.
    \end{equation}
    The first diagonal block has determinant
    $-(\lambda_++\lambda_-)=-12a<0$ by
    \eqref{eq:expansion cubic points transversality}.
    Also, $\mathsf H|_F$ is invertible by
    the definition of $F$. Hence the Jacobian is invertible, and
    \eqref{eq:cleaning limiting affine system} has a unique solution
    $(\zeta,\theta,V_F,w)$.
    Hence, by the implicit function theorem, there exist parameters
    $(\zeta_k,\theta_k,V_{F,k},w_k)\to(\zeta,\theta,V_F,w)$
    satisfying \eqref{eq:cleaning common gauge}.

    \smallskip\noindent\textbf{Step 3.}
    With the notation from Step 2, define
    \[
        \widehat u_k(x,t)\coloneqq
        u_k\bigl(\mathsf{R}_k^T[x-\eps_k(\zeta_k\bm e_n+V_{F,k})],
        t-\eps_k\theta_k\bigr).
    \]
    Then $\widehat z_k\in\Sigma_{n-1}^m(\widehat u_k)$ and, by
    \Cref{lem:jets rototranslations,lem:transversality vs rototranslations},
    \[
        \frac{\widehat u_k-u}{\eps_k}\longrightarrow
        \varphi_*\coloneqq
        \varphi+\Psi_{-\theta,-\zeta\bm e_n-V_F,-w}
        \qquad\text{in }L^2_{\rm loc},
    \]
    and $\varphi_*$ is transversal.

    Set $\hat r_k\coloneqq2C_\circ\max\{d_k,|\widehat z_k|_\pb\}$. Then, since $\eps_k=o(d_k)$, it follows by \eqref{eq:cleaning contradiction scale}, the boundedness of $\theta_k$, and $N>2$, that
    \[
        \hat r_k^N\le
        C\bigl(d_k^N+(\eps_k|\theta_k|)^{N/2}\bigr)
        =o(\eps_k),
    \]
    thus
    \begin{equation}\label{eq:cleaning post gauge scale}
        \eps_k=o(\hat r_k),\qquad \hat r_k^N=o(\eps_k).
    \end{equation}
    Let $\widehat O_k\coloneqq \mathsf{R}_{\hat\nu_k}^{\top}$ and let
    $\widehat\sA_{k,z}^\sigma$ define the order-$(N+3)$ expansion of
    $\widehat u_k$ at $\widehat z_k$ in these coordinates.
    Thanks to \eqref{eq:cleaning post gauge scale}, we can argue as in
    \eqref{eq:cleaning first defining function comparison} and obtain
    coefficients $(\bar\alpha_k^\sigma,\bar v_k^\sigma,\bar Q_k^\sigma)$
    converging to those of $\varphi_*^\sigma$ and satisfying, for $\sigma=\pm$,
    \begin{equation}\label{eq:cleaning defining function comparison}
        \Big\|
        (\sA^\sigma\circ
        \mathcal T_{\eps_k\bar\alpha_k^\sigma,\eps_k\bar v_k^\sigma,
        \eps_k\bar Q_k^\sigma})
        (\widehat x_k+\hat r_k\widehat O_k\,\cdot,\widehat t_k+\hat r_k^2\,\cdot)\\
        -\widehat\sA_{k,z}^\sigma(\hat r_k\,\cdot,\hat r_k^2\,\cdot)
        \Big\|_{L^2(K^\sigma)}
        =o(\eps_k\hat r_k^2).
    \end{equation}
    By polynomial norm equivalence in
    \eqref{eq:cleaning defining function comparison}, a Taylor expansion
    of \eqref{eq:ansatz change of variables}, and the fact that
    $\sA^\sigma(\widehat z_k)=0$ (see \eqref{eq:cleaning common gauge}),
    the constant coefficients give
    \[
        \bar\alpha_k^\sigma+\bar v_k^\sigma\cdot\widehat x_k
        =O(\hat r_k|\bar\alpha_k^\sigma|+\hat r_k^2+\eps_k).
    \]
   In particular, since $\eps_k=o(\hat r_k)$ and $\bar v_k^\sigma\cdot\widehat x_k = O(\hat r_k)$, we obtain
    $|\bar\alpha_k^\sigma|\le C\hat r_k$.
    
   Next, expanding the linear coefficients gives
    \[
        \widehat O_k^T
        \bigl(\nabla_x\sA^\sigma(\widehat z_k)+\eps_k\bar v_k^\sigma\bigr)
        =\bm e_n+o(\eps_k).
    \]
    Subtracting the identities for $\sigma=+$ and $\sigma=-$,
    dividing by $2$, and using that
    $\tfrac12(\bar v_k^+-\bar v_k^-)\to v_{\varphi_*}^\even$, we obtain
    \begin{equation}\label{eq:cleaning low jet}
        |\bar\alpha_k^\sigma|\le C\hat r_k,\qquad
        \nabla_x\sA^\even(\widehat z_k)
        +\eps_kv_{\varphi_*}^\even=o(\eps_k),
    \end{equation}
 so by \eqref{eq:cleaning common gauge} and \eqref{eq:cleaning low jet} and recalling that $\bar\alpha_k^\sigma\to\alpha_{\varphi_*}^\sigma$,
    \begin{equation}\label{eq:cleaning limiting low jet}
        J_{\le2}[\varphi_*]^\even=x\cdot v_{\varphi_*}^\even,
        \qquad v_{\varphi_*}^\even\in E.
    \end{equation}
    
 Comparing now the quadratic spatial coefficients in
    \eqref{eq:cleaning defining function comparison} gives
    \begin{equation}\label{eq:cleaning full Hessian comparison}
        \widehat O_k^TD_x^2\sA^\even(\widehat z_k)\widehat O_k
        -D_x^2\widehat\sA_{k,z}^\even(0,0)=O(\eps_k).
    \end{equation}
    Set $\mathsf H_k\coloneqq D_{y'}^2\widehat\sA_{k,z}^\even(0,0)$.
    We now look for functions $(y,t)\mapsto \Xi(y,t)\in F$ and $(y,t)\mapsto f(y,t)\in\R$, with $y\in E$,
    satisfying
    \begin{equation}\label{eq:cleaning equations for Xi and f}
        (\nabla_F\sA^\even,\sA^\odd)
        (y+\Xi(y,t)+f(y,t)\bm e_n,t)=0.
    \end{equation}
    The Jacobian with respect to $(\Xi,f)$ at
    $(y,t,\Xi,f)=(0,0,0,0)$ is
    \[
        \begin{pmatrix}\mathsf H|_F&0\\0&1\end{pmatrix},
    \]
    since $\nabla_F\partial_n\sA^\even(0,0)=0$ and
    $\nabla_F\sA^\odd(0,0)=0$.
    As $\mathsf H|_F$ is invertible, the analytic implicit function
    theorem gives unique analytic functions $\Xi$ and $f$ near $(0,0)$
    satisfying \eqref{eq:cleaning equations for Xi and f}, with
    $\Xi(0,0)=f(0,0)=0$. Differentiating
    \eqref{eq:cleaning equations for Xi and f} at $(0,0)$ also gives
    $D_y\Xi(0,0)=D_yf(0,0)=0$, since $E=\Ker\mathsf H$.
    Set
    \begin{equation}\label{eq:cleaning definition of G}
        G(y,t)\coloneqq
        \sA^\even(y+\Xi(y,t)+f(y,t)\bm e_n,t),
        \qquad \widehat y_k\coloneqq\pi_E\widehat x_k.
    \end{equation}
    We claim that
    \begin{equation}\label{eq:cleaning projected identities}
        G(\widehat y_k,\widehat t_k)=0,\qquad
        \nabla_yG(\widehat y_k,\widehat t_k)
        +\eps_kv_{\varphi_*}^\even=o(\eps_k),\qquad
        D_y^2G(\widehat y_k,\widehat t_k)=O(\eps_k).
    \end{equation}
    Indeed, by \eqref{eq:cleaning common gauge} and uniqueness in
    \eqref{eq:cleaning equations for Xi and f},
    \[
        \widehat x_k=\widehat y_k+\Xi(\widehat y_k,\widehat t_k)
        +f(\widehat y_k,\widehat t_k)\bm e_n,
    \]
    thus $\sA^\even(\widehat z_k)=0$ gives
    $G(\widehat y_k,\widehat t_k)=0$. 
    
Next,
    $\pi_F\nabla_x\sA^\even(\widehat z_k)=0$ gives
    \[
        \nabla_yG(\widehat y_k,\widehat t_k)
        =\pi_E\nabla_x\sA^\even(\widehat z_k)
        +\partial_n\sA^\even(\widehat z_k)
            \nabla_yf(\widehat y_k,\widehat t_k).
    \]
    Using \eqref{eq:cleaning low jet} (which in particular gives $\partial_n\sA^\even(\widehat z_k)=o(\eps_k)$), we obtain the second identity in
    \eqref{eq:cleaning projected identities}.

    Finally, to prove the Hessian estimate in \eqref{eq:cleaning projected identities},
    use the basis $(X_j)$ of $\widehat\nu_k^\perp$ consisting of
    \[
        \bm e_i+\partial_{y_i}\Xi(\widehat y_k,\widehat t_k)
        +\partial_{y_i}f(\widehat y_k,\widehat t_k)\bm e_n,\quad 1\le i\le m,
        \qquad
        \pi_{\widehat\nu_k^\perp}\bm e_j,\quad m<j<n.
    \]
    If $X$ is one of the first $m$ vectors and $m<j<n$,
    differentiating \eqref{eq:cleaning equations for Xi and f} gives
    $
        X\cdot D_x^2\sA^\even(\widehat z_k)\bm e_j=0,
    $
    and since $D_{y'}\partial_n\widehat\sA_{k,z}^\even(0,0)=0$,
    \eqref{eq:cleaning full Hessian comparison} yields
    \[
        X\cdot\widehat O_kD_x^2\widehat\sA_{k,z}^\even(0,0)
        \widehat O_k^T\pi_{\widehat\nu_k^\perp}\bm e_j=O(\eps_k).
    \]
    Differentiating \eqref{eq:cleaning definition of G} for $1\le i, j\le m$ gives
    \[
      \partial_{y_iy_j}G(\widehat y_k,\widehat t_k) = X_i^T\cdot D^2_x\sA^\even(\widehat z_k) X_j +\nabla_F\sA^\even(\widehat z_k)\cdot\partial_{y_i y_j}\Xi(\widehat y_k,\widehat t_k) + \partial_n\sA^\even(\widehat z_k) \partial_{y_iy_j}f(\widehat y_k,\widehat t_k).
    \]
    The second term vanishes due to \eqref{eq:cleaning common gauge}, while the third one is $o(\eps_k)$ thanks to analyticity of $f$ and \eqref{eq:cleaning low jet}.
    To bound the first term, in the basis obtained by applying $\widehat O_k^T$ to the vectors $(X_i)$,
    the matrix of the quadratic form $\mathsf H_k$ is
    \[
        \begin{pmatrix}A_k&B_k\\B_k^T&D_k\end{pmatrix},
        \qquad\text{where}\quad B_k=O(\eps_k),\quad D_k\longrightarrow\mathsf H|_F.
    \]
    Since $\widehat z_k\in\Sigma_{n-1}^m(\widehat u_k)$ and $D_k$ is
    invertible for  $k \gg 1$, block elimination gives
    \[
        \operatorname{rank}(A_k-B_kD_k^{-1}B_k^T)
        =\operatorname{rank}\mathsf H_k-\operatorname{rank}D_k
        =(n-1-m)-(n-1-m)=0,
    \]
    hence
    \[
        A_k=B_kD_k^{-1}B_k^T=O(\eps_k^2).
    \]
    Combining this with \eqref{eq:cleaning full Hessian comparison} gives $X_i^T\cdot D^2_x\sA^\even(\widehat z_k) X_j = O(\eps_k)$, thus proving \eqref{eq:cleaning projected identities}.

    \smallskip\noindent\textbf{Step 4.}
    By \eqref{eq:cleaning definition of G} and
    \eqref{eq:expansion cubic points transversality}, we have
    \[
        G(0,0)=0,\qquad \nabla_yG(0,0)=D_y^2G(0,0)=0,\qquad
        \partial_tG(0,0)=\partial_t\sA^\even(0,0)=6a>0.
    \]
   Also, by a Taylor expansion of 
    \eqref{eq:cleaning projected identities}  at $t=0$ we get
        \[
        |\nabla_yG(\widehat y_k,0)|\le C(\eps_k+|\widehat t_k|),
        \qquad
        0=G(\widehat y_k,0)+6a\widehat t_k+o(|\widehat t_k|).
    \]
    Thus, since 
    $G(y,0)=o(|\nabla_yG(y,0)|)$ (by \Cref{lem:lojasiewicz}), we get $|\widehat t_k|=o(\eps_k+|\widehat t_k|)$, and therefore
    $\widehat t_k=o(\eps_k)$. Thus, a Taylor expansion of \eqref{eq:cleaning projected identities} gives
    \begin{equation}\label{eq:cleaning time-zero identities}
        \nabla_yG(\widehat y_k,0)
        +\eps_kv_{\varphi_*}^\even=o(\eps_k),\qquad
        D_y^2G(\widehat y_k,0)=O(\eps_k).
    \end{equation}
    Now applying \Cref{lem:lojasiewicz} to $|\nabla_yG(\cdot,0)|^2$ and using the second equation in \eqref{eq:cleaning time-zero identities} gives
    \[
      |\nabla_yG(\hat y_k,0)| = o\left(\|D_y^2G(\widehat y_k,0)\|\right) = o(\eps_k).
    \]
   Thus the first equation in \eqref{eq:cleaning time-zero identities} yields $v_{\varphi_*}^\even = 0$ and therefore $J_{\le2}[\varphi_*]^\even=0$ by \eqref{eq:cleaning limiting low jet}. This contradicts the transversality of $\varphi_*$ and proves \eqref{eq:cleaning epsilon distance}.
\end{proof}

\section{Construction of transversal perturbations}
\label{section:construction transverse pertubations}

In this section, we fix a bounded smooth domain $\Omega\subset\R^n$ and a solution $u$ of \eqref{eq:stefan}--\eqref{eq:stefan-initial} satisfying \eqref{eq:boundary conditions u}.
Fix $m\in\{1,\dots,n-1\}$. We use the terminology of \Cref{definition:substratification} and write
\[
    \Gamma_+(u)\coloneqq\{(x,t)\in\partial\Omega\times(0,T):u(x,t)>0\}.
\]

We show that finitely many smooth boundary perturbations give transversality
near every point of a compact set, and that the same perturbations work
for nearby solutions.
\begin{theorem}\label{theorem:construction transversal perturbation}
    Let $K\ssubset\Omega\times(0,T)$ be compact, and let $u$ solve \eqref{eq:stefan}--\eqref{eq:stefan-initial} and satisfy \eqref{eq:boundary conditions u}. Suppose further that $u$ is
    $m$-singular in a neighborhood of $K$.
    Then there are $\delta=\delta(u,K)>0$ and finitely many boundary data
    \[
        g_1,\dots,g_N\in C_c^\infty(\Gamma_+(u))
    \]
    such that, for every $(x_0,t_0)\in K$, one of the solutions
    $\varphi_{g_i,u}$ of \eqref{eq:linearized problem} is transversal
    at every point of $\Sigma_{n-1}^m(u)$ inside
    $
        B_\delta(x_0)\times(t_0-\delta,t_0+\delta).
    $
    Moreover, suppose that $u_k,u$ and each $g_i$ satisfy the
    hypotheses of
    \Cref{lem:controlled background linearized compactness}.
    Then the same $\delta$ and $g_1,\dots,g_N$ satisfy the conclusion
    with $u$ replaced by $u_k$, for all sufficiently large $k$.
\end{theorem}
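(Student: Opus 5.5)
The plan has three parts: construct transversal perturbations locally at each point of $\Sigma_{n-1}^m(u)\cap K$, approximate them by boundary-driven solutions via \Cref{prop:runge}, and conclude with a compactness/covering argument, using \Cref{lem:transversality open conditions} for openness.

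\textbf{Local construction.} Fix $z_0\in\Sigma_{n-1}^m(u)\cap K$; if this set is empty there is nothing to prove. Work in the coordinates of \eqref{eq:expansion cubic points transversality}. Choose a unit vector $e\in E$, which is possible since $m\ge1$. We want a local solution $w$ of \eqref{eq:linearized problem} in a small box $D=D'\times(a,b)\ni z_0$ whose even jet is $x\cdot e$. The natural candidate is
\[
w\coloneqq\upvarphi_{u^+}[0,e,0]\,\chi_{\{x_n>f^+\}}-\upvarphi_{u^-}[0,e,0]\,\chi_{\{x_n<f^-\}} .
\]
This is a ``rotation of the upper branch minus rotation of the lower branch'', i.e.\ rotating the two sheets of the free boundary in opposite directions about the singular point. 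Each piece solves the linearized equation in its own phase: $(w\cdot x)\partial_nu^\pm-x_n(w\cdot\nabla u^\pm)$ is the derivative of a rotation of $u^\pm$, as in the proof of \Cref{lem:pde linear ansatz}. Both pieces vanish on the contact set. Since $u^+u^-=0$ and the positive phases of $u^\pm$ are disjoint and separated by the contact set near $z_0$, $w$ solves \eqref{eq:linearized problem} in $D\cap\Omega^\beta$.

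By \eqref{eq:lower branch ansatz dictionary} and the sign convention for $(\alpha^-,v^-,Q^-)$, the triples are $(0,e,0)$ for $+$ and $(0,-e,0)$ for $-$, so $J_{\le2}[w]^\even=x\cdot e$. Since $e\in E\setminus\{0\}$, $e$ is not of the form $\mathsf H V$ with $V\in F$, and the constant term is zero. Hence \Cref{definition:transversality} gives $\mu(w)>0$.

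The main obstacle is that $w$ must be defined on a full product box $D$ satisfying \eqref{assumption:a1}, not only in the cusp $\Omega^\beta$, and must lie in $H^1(D)$. I would handle this by using \Cref{lem:stratification maximal stratum}: near $z_0$ the positive phase splits into two components bounded by smooth graphs $\{x_n=f^\pm\}$, touching only on the maximal stratum. On each component I would replace $\upvarphi_{u^\pm}[0,e,0]$ by the solution of \eqref{eq:linearized problem} with background $u$, obtained from \Cref{remark:existence linear variant}. Its initial datum should be chosen so that its expansion at $z_0$ (via \Cref{prop:C3beta expansion linear}) has the same triple, which can be arranged because the map from data to triples is linear and the rotation fields provide the needed values. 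Alternatively, one can simply use the rotation of $u$ restricted to one branch, cut off in time. I would also shrink $D$ so that assumption \eqref{assumption:a2} holds. This uses that each component of $O_t\setminus \overline{D'}$ reaches $\Gamma_+(u)$, which follows from the time monotonicity of $O_t$ together with $u(\cdot,0)=0$: positivity propagates inward from the boundary.

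\textbf{Runge approximation.} Apply \Cref{prop:runge} to get $g\in C^\infty_c(\Gamma_+(u))$ with $\|\varphi_{g,u}-w\|_{L^2(D)}$ small. By the uniform estimates of \Cref{prop:C3beta expansion linear}, the jets depend continuously in $L^2$, so $\mu(\varphi_{g,u})>0$ at $z_0$.

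\textbf{Covering.} Apply \Cref{lem:transversality open conditions} with $u_k=u$ and $\varphi_k=\varphi$: $\varphi_{g,u}$ stays transversal at every point of $\Sigma_{n-1}^m(u)$ in a neighborhood $U_{z_0}$. Points of $K$ not in the closure of $\Sigma_{n-1}^m(u)$ have a neighborhood free of it, and $\Sigma_{n-1}^m(u)\cap K$ is closed by \Cref{lem:stratification closed conditions}, using $m$-singularity. Cover $K$ by finitely many such neighborhoods, take $g_1,\dots,g_N$, and let $\delta$ be a Lebesgue number of the cover. For the stability statement, argue by contradiction: suppose there are points $z_k\in\Sigma_{n-1}^m(u_k)$, inside the $\delta$-box around some $x_0^k$, at which no $\varphi_{g_i,u_k}$ is transversal. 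Extract a limit $z\in\Sigma_{n-1}(u)$. By $m$-singularity and the convergence of expansions, $z\in\Sigma_{n-1}^m(u)$. \Cref{lem:controlled background linearized compactness} gives $\varphi_{g_i,u_k}\to\varphi_{g_i,u}$, and \Cref{lem:transversality open conditions} then yields a contradiction, after shrinking $\delta$ slightly with a margin so that limit points remain in the closed covering sets.
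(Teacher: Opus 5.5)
Your overall architecture matches the paper's: a transversal local solution, then \Cref{prop:runge}, then openness via \Cref{lem:transversality open conditions} and a finite cover, with stability from \Cref{lem:controlled background linearized compactness}. Your computation $J_{\le2}[W]^\even=x\cdot e$, $\mu(W)>0$ is correct, and so are the Runge and covering parts. The gap is exactly at the step you flag as the main obstacle, and none of your proposed fixes closes it.

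The field $W=W^++W^-$ exists only where \Cref{theorem:splitting at singular points} gives $u=u^++u^-$, i.e.\ in the backward cusp $\Omega^\beta$ at $z_0$. Nothing in the paper splits $\{u>0\}$ into two graph-bounded components on a product box around $z_0$, and in general no such splitting exists. Since $\tau(x_0)=t_0$, for $t>t_0$ the point $x_0$ lies in $\{u(\cdot,t)>0\}$ and connects the two sides. For $t<t_0$, \eqref{eq:cleaning fixed solution} only forces $|t-t_0|=o(|x-x_0|^N)$, so it does not exclude nearby points of $\Sigma_{n-1}(u)$ at earlier times. Such points would lie in every backward cylinder at $z_0$, and the phases are merged there afterwards. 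So ``on each component'' has no meaning on $D$.

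Your two fallbacks fail as well. Choosing the initial datum ``so that the triple is the same, by linearity of the data-to-triple map'' is circular. Linearity does not show that a transversal even jet is attained by a genuine solution on a box, which is precisely what must be proved. Restricting a branch rotation to one side and cutting it off in time or space creates a source term, so the result no longer solves \eqref{eq:linearized problem}.

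The missing idea is the localization estimate in Step 2 of \Cref{lem:local construction}. Put $z_0=(0,0)$, take $\tau$ small and $R=\tau^\theta$ with $\frac1{2+\beta}<\theta<\frac12$. Let $\Psi$ solve \eqref{eq:linearized problem} in $B_R\times(-\tau,0)$ with zero lateral data and initial datum a cut-off of $W(\cdot,-\tau)$ (\Cref{remark:existence linear variant}).
\begin{itemize}
\item The bound $\theta>\frac1{2+\beta}$ puts $B_R\times\{-\tau\}$ inside the cusp, so the datum really agrees with $W$ there.
\item The bound $\theta<\frac12$ controls the discrepancy. After flattening each branch and cutting $\Psi-W^\pm$ off at $|y|\sim(-t)^\theta$, the resulting sources sit at distance $\gg\sqrt{-t}$ from the origin.
\item Gaussian bounds for the Dirichlet kernel of the flattened operator (\Cref{lem:mixed Dirichlet kernel estimate,corollary:annular source estimate}) then show that the coefficients $\alpha,v$ of $\Psi$ differ from those of $W$ by $O(e^{-c\tau^{-(1-2\theta)}})$. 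Hence $\mu(\Psi)\ge\frac12\mu(W)>0$.
\end{itemize}

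There is also a secondary gap in your justification of \eqref{assumption:a2}. ``Positivity propagates inward from the boundary'' only shows that components of $O_t$ reach $\Gamma_+(u)$. Since $D'$ must cut through the thin contact slab at $z_0$, removing $\overline{D'}$ can create pockets of $O_t\setminus\overline{D'}$ that do not reach $\Gamma_+(u)$, and shrinking $D'$ does not prevent this. The paper rules such pockets out using:
\begin{itemize}
\item $\partial_{\nu\nu}u\ge0$ near $z_0$ (\Cref{lem:convexity at singular points}), so that vertical lines meet the contact set in intervals;
\item a flat-sided choice of $D'$;
\item paths to $\pm4r\bm e_n$ at time $-\tau$;
\item \Cref{lem:causal accessibility}.
\end{itemize}
The paper also takes $D=D'\times(-\tau,0)$, ending at the time of $z_0$, rather than a box containing $z_0$ in its interior.
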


To prove the theorem, we construct a transversal solution of
\eqref{eq:linearized problem} on a space-time cylinder satisfying
the hypotheses of \Cref{prop:runge}, and then apply that proposition
to approximate it by solutions induced by boundary data.

Let $(0,0)$ be a singular point in the maximal stratum of $u$ with blow-up $\tfrac12(x\cdot\nu)^2$. It follows from \Cref{theorem:splitting at singular points} that, for some $r_\circ>0$, we can write
\[
    u = u^+ + u^-\quad\text{in }\Omega^\beta_1\cap\sC_{r_\circ},
\]
and the functions $u^\pm$ solve \eqref{eq:stefan} in their positive phases and satisfy
\[
    |u^\pm-\tfrac12(x\cdot\nu)_\pm^2|
        \le C|(x,t)|_\pb^3.
\]
By \Cref{corollary:uniform smooth expansion maximal stratum}, the polynomials
$\tfrac12(x\cdot\nu)^2$ depend continuously on the singular point, so
$\nu$, and consequently the labeling of $u^\pm$, can be chosen
continuously in a neighborhood of that point.

We first show that near the maximal stratum the solution is convex in
the direction normal to its quadratic blow-up, uniformly on compact
subsets of that stratum.
\begin{lemma}\label{lem:convexity at singular points}
    Let $K_0\ssubset\Sigma_{n-1}(u)$ be compact. There is
    $r_0=r_0(u,K_0)>0$ such that, if $z\in K_0$ has quadratic
    blow-up $\tfrac12(x\cdot\nu)^2$, then
    \[
        \partial_{\nu\nu}u\ge0\quad\text{in }\sC_{r_0}(z).
    \]
\end{lemma}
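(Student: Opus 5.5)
The plan is a minimum principle for the caloric function $\partial_{\nu\nu}u$ in the positive phase, with the boundary data on the parabolic boundary controlled by the uniform expansions near $K_0$. The one step I cannot yet close is the lateral boundary outside the cusp (third paragraph).

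\emph{Structure.} Fix $z\in K_0$ with blow-up $\tfrac12(x\cdot\nu)^2$ and set $h\coloneqq\partial_{\nu\nu}u$. By \eqref{eq:optimal regularity}, $h$ is bounded in a fixed neighbourhood of $K_0$. Three facts about $h$ drive the argument.
\begin{itemize}
\item[(a)] $h$ is caloric in $\{u>0\}$.
\item[(b)] $h=0$ a.e.\ in $\{u=0\}$.
\item[(c)] At every regular free boundary point with inward normal $n$, the smooth free-boundary theory (see \Cref{lem:zero capacity singular set}) gives $D^2u\to n\otimes n$ from the positive phase. Hence $h\to(n\cdot\nu)^2\ge0$ there, whatever the direction of $n$.
\end{itemize}
The singular free boundary has zero $W^{1,2}$-capacity by \Cref{lem:zero capacity singular set}, and $h$ is bounded, so it is a removable set for the minimum principle. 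Concretely, I would test $(h)_-$ against the capacitary cutoffs $\zeta_j$ used in the proof of \Cref{prop:runge}. Consequently, for any backward cylinder $\sC_r(z)$, $h\ge0$ in $\sC_r(z)\cap\{u>0\}$ follows once we know $h\ge0$ on $\partial_\pb\sC_r(z)\cap\{u>0\}$. So everything reduces to a uniform choice of $r_0$ making $h\ge0$ on the parabolic boundary.

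\emph{Bottom of the cylinder.} The bottom is $B_r(x_z)\times\{t_z-r^2\}$. It lies inside the cusp $\Omega^\beta_1(z)$, because $r<r^{2/(2+\beta)}$ for $r$ small. There I would use \Cref{theorem:splitting at singular points}, \Cref{corollary:uniform smooth expansion maximal stratum} and \Cref{lem:smoothness close maximal stratum}, applied to $u^+$ and to the reflection of $u^-$. All constants are uniform over the compact set $K_0$, since the nondegeneracy \eqref{eq:definition nondegeneracy u_t} holds with a common constant there. By \eqref{eq:linear background estimates}, $|D^2u^\pm-\nu\otimes\nu|\le C|(x-x_z,t-t_z)|_\pb$ in the positive phases inside the cusp. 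Hence $h\ge1-Cr>0$ on the bottom once $r\le r_0(u,K_0)$.

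\emph{Lateral boundary: the main obstacle.} The lateral boundary is $\partial B_r(x_z)\times(t_z-r^2,t_z]$, and near $t=t_z$ it leaves the cusp. I would argue by contradiction and compactness. Suppose there are $z_k\in K_0$, radii $r_k\to0$, and points $p_k=(y_k,s_k)\in\partial_\pb\sC_{r_k}(z_k)\cap\{u>0\}$ with $h(p_k)<0$. Rescale parabolically by $r_k$ around $z_k$. The rescalings converge to $\tfrac12(x\cdot\nu_\infty)^2$, and uniformly so thanks to \Cref{corollary:uniform smooth expansion maximal stratum}. Two cases then arise for the rescaled points.
\begin{itemize}
\item \emph{At rescaled distance $\ge c$ from $\{x\cdot\nu_\infty=0\}$.} Interior $C^{2}$ estimates for $\Heat u=1$ give $h\to1$, a contradiction.
\item \emph{Close to the free boundary.} The rescaled solution is $\eps$-close to the two-sided quadratic profile on a unit neighbourhood of the point. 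I would apply the minimum principle once more, on a small cylinder centred at $p_k$ and contained in a slightly larger backward cylinder where flatness holds. On the part of its parabolic boundary away from the free boundary, $h$ is close to $1$; on the free boundary, (c) and the capacity argument give $h\ge0$. Iterating this at dyadic scales toward the free boundary should give $h\ge0$ at $p_k$.
\end{itemize}
What I cannot yet close in this second case is the following. Outside the cusp, the nearby free boundary points need not belong to $K_0$. They could even be lower-stratum singular points, where \Cref{lem:smoothness close maximal stratum} does not apply, so there is no smooth graph structure to lean on. The minimum principle only needs (c) and zero capacity, but the dyadic iteration still needs a lower bound on $h$ along the part of each small cylinder's boundary that meets the positive phase near the free boundary. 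My first attempt there would be to exploit $\partial_tu\ge0$, that is, $\{u(\cdot,t)>0\}\subset\{u(\cdot,t_z)>0\}$ for $t\le t_z$, to transport the cusp information at time $t_z$ to earlier times.
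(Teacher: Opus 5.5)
Your proposal does not prove the lemma. The parts you do establish are correct: the minimum principle for $h=\partial_{\nu\nu}u$ modulo the zero-capacity singular set, the bottom of the cylinder, and lateral points far from $\{x\cdot\nu=0\}$. But the step you leave open is exactly where the content lies, and the iteration you suggest cannot close it.

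\textbf{Why the minimum principle cannot give the sign.} It only transports a lower bound for $h$ from $\partial_\pb\sC_r(z)\cap\{u>0\}$ to the interior; it never upgrades a negative lower bound to $h\ge0$. Consider the portion $\partial B_r(x_z)\times(t_z-r^{2+\beta},t_z]$ of the lateral boundary. There you are outside the cusp $\Omega^\beta_1(z)$, whose time slices $B_{(t_z-t)^{1/(2+\beta)}}(x_z)$ shrink to the vertex as $t\uparrow t_z$. So neither \Cref{theorem:splitting at singular points} nor \Cref{lem:smoothness close maximal stratum} says anything there. Positive-phase points may lie arbitrarily close to the free boundary, and all you know is $|h|\le C$ from \eqref{eq:optimal regularity}. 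Re-applying the minimum principle on smaller cylinders centred near the free boundary reproduces the same uncontrolled boundary portion at every scale, so the dyadic iteration is circular.

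\textbf{Why compactness cannot supply it either.} Near the free boundary the rescalings converge to $\tfrac12(x\cdot\nu_\infty)^2$ only in $C^{1,\alpha}$, with second derivatives converging merely in $L^q$; indeed they vanish a.e.\ on the thin contact set. The most one extracts is almost convexity, $h\ge-o(1)$. One way to see this is the $L^1$--$L^\infty$ estimate for the subcaloric function $\max\{-\delta_s^2u,0\}$, where $\delta_s^2u=s^{-2}(u(\cdot+s\nu)+u(\cdot-s\nu)-2u)$. Almost convexity is perfectly compatible with $h(p_k)<0$, so Case 2 yields no contradiction.

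Your proposed remedy via $\partial_tu\ge0$ does not help. The cusp carries no information at time $t_z$ away from $x_z$, and monotonicity of the positivity sets gives no lower bound on second derivatives.

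\textbf{What the paper does instead.} It never controls boundary values of $h$. It uses the uniform quadratic expansion at points of $K_0$ (\Cref{corollary:uniform smooth expansion maximal stratum}) together with \cite[Proposition 3.3]{FRS}. These show that, after a parabolic rescaling at any $z\in K_0$ by a radius independent of $z$, $u$ satisfies the unit-scale hypotheses of the directional-convexity lemma \cite[Lemma 3.8]{FRS}. Applying that lemma gives $\partial_{\nu\nu}u\ge0$ in $\sC_{r_0}(z)$, with $r_0$ uniform because $K_0$ is compact.

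The ingredient you are missing is precisely such an estimate on the full parabolic cylinder, including the region outside the cusp near the top time $t_z$. A minimum principle fed by boundary data known only inside the cusp cannot produce it.
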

\begin{proof}
    The uniform quadratic expansion at singular points and
    \cite[Proposition 3.3]{FRS} imply that, after rescaling at $z\in K_0$,
    the hypotheses of \cite[Lemma 3.8]{FRS} hold at a radius independent
    of $z$. Applying that lemma proves the assertion.
\end{proof}

We next show that the closure of each component of $\{u(\cdot,t)>0\}$
meets $\partial\Omega$ where $u>0$, and that positivity of nonnegative
caloric functions propagates through the component from earlier times.
\begin{lemma}\label{lem:causal accessibility}
    Let $t\in(0,T)$ and let $\mathcal C$ be a connected component
    of $\{u(\cdot,t)>0\}$. Then
    $\overline{\mathcal C}\cap\partial\Omega$ contains a relatively
    open set on which $u(\cdot,t)>0$.
    Moreover, if $w\ge0$ is caloric in $\{u>0\}$ and
    $w(x_*,s)>0$ for some $x_*\in\mathcal C$ and $0<s<t$ with
    $u(x_*,s)>0$, then $w(x,t)>0$ for every $x\in\mathcal C$.
\end{lemma}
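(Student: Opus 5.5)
Both claims rest on the space--time monotonicity of the positive phase: $\partial_tu\ge0$ gives $\{u(\cdot,s)>0\}\subset\{u(\cdot,t)>0\}$ for $s<t$, and $u(\cdot,0)\equiv0$ forces every point of $\{u>0\}$ to have become positive at some positive time.

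\emph{First claim.} We argue by contradiction and assume that $u(\cdot,t)=0$ on $\overline{\mathcal C}\cap\partial\Omega$, up to a set with empty relative interior. Since $u(\cdot,t)$ is continuous up to $\partial\Omega$ by \eqref{eq:boundary conditions u}, the set $\{u(\cdot,t)>0\}\cap\partial\Omega$ is relatively open. Hence, if it met $\overline{\mathcal C}$ at a point that is not a limit of zeros, we would be done. So we may assume $u(\cdot,t)=0$ on $\overline{\mathcal C}\cap\partial\Omega$. Then $u(\cdot,s)$ vanishes on $\partial\mathcal C$ for every $s\le t$: on the interior part of $\partial\mathcal C$ because $u(\cdot,t)=0$ there and $u$ is nondecreasing in time, and on the boundary part by the assumption. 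Moreover $\{u(\cdot,s)>0\}\cap\mathcal C$ is a union of components of $\{u(\cdot,s)>0\}$.

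Set $w(x,s)\coloneqq u(x,s)$ on $\mathcal C\times(0,t]$. It satisfies $\Heat w=\chi_{\{w>0\}}\ge0$, so $w$ is subcaloric, with zero lateral data on $\partial\mathcal C\times(0,t]$ and zero initial data. The maximum principle for subcaloric functions on the cylinder $\mathcal C\times(0,t)$ then gives $w\le0$, hence $u(\cdot,t)=0$ in $\mathcal C$. This contradicts $\mathcal C\subset\{u(\cdot,t)>0\}$.

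The main technical point is that $\mathcal C$ need not be smooth and the free boundary may contain singular points. To handle this, we apply the weak maximum principle in $H^1$ form: test the inequality with $(w-\varepsilon)_+$, which has compact support in $\overline{\mathcal C}$ away from the region where $w=0$, and use that $w$ is continuous up to $\partial\mathcal C$.

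\emph{Second claim.} Let $\mathcal C_s$ be the connected component of $\{u(\cdot,s)>0\}$ containing $x_*$. By monotonicity, $\mathcal C_s\subset\mathcal C$. Since $w(x_*,s)>0$ and $w\ge0$, the strong maximum principle for nonnegative caloric functions forces $w>0$ at all points that are reachable from $(x_*,s)$ by forward-in-time paths inside the open set $\{u>0\}$.

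Now fix $x\in\mathcal C$ and join $x_*$ to $x$ by a compact path $\gamma\subset\mathcal C$. By continuity of $u$ there is $\eta>0$ such that a tubular neighborhood $U$ of $\gamma$ satisfies $U\times[t-\eta,t]\subset\{u>0\}$. We may also assume $s<t-\eta$; otherwise we shrink $\eta$.

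Next, connect $(x_*,s)$ to $(x_*,t-\eta)$ along the vertical segment. This segment lies in $\{u>0\}$ because $u(x_*,s)>0$ and $u$ is nondecreasing in time. Along this segment Harnack's inequality, applied in small backward cylinders, propagates positivity: $w(x_*,\tau)>0$ for $\tau\in[s,t-\eta]$.

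Finally, inside $U\times[t-\eta,t]$ we apply the parabolic Harnack chain along $\gamma$, moving forward in time by small increments, which gives $w(x,t)>0$. Because positivity must always propagate forward in time, every step of the chain has to respect the time orientation.
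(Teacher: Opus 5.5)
Your proof is correct. The second assertion follows the paper's argument; the first takes a different route.

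\textbf{First assertion.} The paper works at the single time slice $t$. If the claim failed, $u(\cdot,t)$ would vanish on all of $\partial\mathcal C$. On the other hand, $\mathcal C\subset\{u(\cdot,t)>0\}$ gives $\Delta u(\cdot,t)=1+\partial_tu(\cdot,t)\ge1$ in $\mathcal C$, so the elliptic maximum principle yields $u(\cdot,t)\le0$ in $\mathcal C$, a contradiction. Because $\mathcal C$ lies in the positive phase, $u(\cdot,t)$ is smooth in $\mathcal C$ and continuous on $\overline{\mathcal C}$. So this is the classical maximum principle on an arbitrary bounded open set, and it needs neither the initial condition $u(\cdot,0)\equiv0$ nor a weak formulation.

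You instead work on the space--time cylinder $\mathcal C\times(0,t)$. You use $\partial_tu\ge0$ only to propagate the zero boundary values backward in time, and $u(\cdot,0)\equiv0$ for the initial data. Since this cylinder contains contact points from earlier times, $u$ is only $C^{1,1}_x$ there. You therefore rightly rely on the global weak subcaloricity $\Heat u=\chi_{\{u>0\}}\ge0$ and the $H^1$ test with $(w-\varepsilon)_+$. That test is legitimate: $w$ is continuous on $\overline{\mathcal C}\times[0,t]$ with zero parabolic boundary values, so $\{w\ge\varepsilon\}$ is a compact subset of $\mathcal C\times(0,t]$. Your version avoids using smoothness of $u$ in the positive phase at time $t$. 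The cost is the initial condition and a weak parabolic maximum principle, whereas the paper's argument is two lines.

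\textbf{A point to make explicit.} To pass from one point $p\in\overline{\mathcal C}\cap\partial\Omega$ with $u(p,t)>0$ to a relatively open subset of $\partial\Omega$ contained in $\overline{\mathcal C}$, you need $B_\rho(p)\cap\Omega$ to be connected for small $\rho$. This follows from the smoothness of $\partial\Omega$. Then $B_\rho(p)\cap\Omega$ lies in $\{u(\cdot,t)>0\}$ and meets $\mathcal C$, hence lies in $\mathcal C$. The paper invokes exactly this (``continuity and the smoothness of $\partial\Omega$''). Your phrase ``not a limit of zeros'' is unnecessary, since continuity already gives it.

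\textbf{Second assertion.} Your argument matches the paper's: the vertical segment stays in $\{u>0\}$ by monotonicity, and a tube around a path $\gamma\subset\mathcal C$ stays in the positive phase on $[t-\eta,t]$. The strong maximum principle, or a Harnack chain run forward in time, then gives $w(x,t)>0$.
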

\begin{proof}
    If the first assertion were false, then $u(\cdot,t)=0$ on $\partial\mathcal C$, whereas
    \[
        \Delta u(\cdot,t)=1+\partial_tu(\cdot,t)\ge1
        \quad\text{in }\mathcal C,
    \]
    contradicting the maximum principle. Continuity and the smoothness
    of $\partial\Omega$ give the relatively open set.

    Next, since $\partial_tu\ge0$, the vertical segment from $(x_*,s)$
    to $(x_*,t)$ lies in $\{u>0\}$, and the parabolic strong maximum
    principle propagates positivity of $w$ along this segment.
    Moreover, by continuity of $u$, a compact path in $\mathcal C$ joining
    $x_*$ to $x$ remains in the positive phase for times sufficiently close
    to $t$ from below, and the parabolic strong maximum principle then gives
    $w(x,t)>0$.
\end{proof}

We will need to control some errors introduced when comparing solutions defined only on cusps $\Omega^\beta_R$ as in \eqref{eq:supercritical paraboloids} to a whole space-time cylinder. This is the purpose of the following estimates.

We suppose that $(0,0)$ is a singular point for $u$, which satisfies
\[
  u(rx,r^2t) = \frac{r^2}2x_n^2 + o(r^2).
\]
Following the notation at the beginning of \Cref{section:cleaning}, fix $\sigma\in\{+,-\}$ and consider $u_\sharp^\sigma$ defined in \eqref{eq:reflected branches}.
Write the free boundary of $u_\sharp^\sigma$ as $x_n=\gamma(x',t)$, and set
\begin{equation}\label{eq:Ft}
    F_t(y)\coloneqq(y',y_n+\gamma(y',t)),\qquad H_+\coloneqq\{y_n>0\}.
\end{equation}
Given a smooth function $\varphi$ in the positive phase of
$u_\sharp^\sigma$, set $q(y,t)\coloneqq\varphi(F_t(y),t)$. Then a direct computation gives 
\[
    (\Delta_x\varphi)\circ F_t
       =\nabla_y\cdot(DF_t^{-1}DF_t^{-T}\nabla_yq),
    \qquad
    (\partial_t\varphi)\circ F_t
       =\partial_tq-\partial_t\gamma\,\partial_nq,
\]
so $\Heat\varphi=0$ is equivalent to $\mathcal Lq=0$, where
\begin{equation}\label{eq:flattened linearized operator}
    \mathcal Lq\coloneqq \partial_tq-\nabla\cdot(A\nabla q)+b\cdot\nabla q,
    \qquad
    A\coloneqq DF_t^{-1}DF_t^{-T},\qquad b\coloneqq-\partial_t\gamma\,\bm e_n.
\end{equation}
We also note that the adjoint operator is given by (recall that $A$ is symmetric)
\[
    \mathcal L^*v=-\partial_tv-\nabla\cdot(A\nabla v)-\nabla\cdot(bv).
\]
By \Cref{lem:smoothness close maximal stratum} with $M=7$,
after a fixed parabolic rescaling the local graph function
admits a $C^{14,7}$ extension to $\R^{n-1}\times\R$.
Multiplying this extension by a smooth cutoff equal to one
near the origin, we obtain a compactly supported extension,
which we still denote by $\gamma$.
The resulting coefficients satisfy the following bounds on
$H_+\times I_0$, where $I_0$ is a fixed bounded open interval
containing $0$ and all $L^\infty$ norms are taken over
$H_+\times I_0$:
\begin{equation}\label{eq:flattened coefficient class}
    \lambda|\zeta|^2\le \zeta^TA\zeta\le\lambda^{-1}|\zeta|^2,
    \qquad
    \sum_{|a|\le6,\ 0\le j\le3}
       \bigl(\|D^a\partial_t^jA\|_\infty
             +\|D^a\partial_t^jb\|_\infty\bigr)\le\Lambda,
\end{equation}
where $\lambda,\Lambda>0$ depend on the chosen extension of $\gamma$.

We denote by $\mathcal K(z,t;\xi,s)$ the Dirichlet kernel for
$\mathcal L$ in $H_+$: for $s\in I_0$ and $h\in C_c^\infty(H_+)$,
the solution of $\mathcal Lq=0$ with $q(\cdot,s)=h$ and
$q=0$ on $\partial H_+$ is given by
\[
    q(z,t)=\int_{H_+}\mathcal K(z,t;\xi,s)h(\xi)\,d\xi,
    \qquad z\in H_+,\quad s<t,\quad t\in I_0.
\]
The next lemma bounds its derivatives when the two spatial points
are separated by at least $8\sqrt{t-s}$.
\begin{lemma}\label{lem:mixed Dirichlet kernel estimate}
   Let $s,t \in I_0$ with $s<t$, set $d=t-s$, and consider
    $z,\xi\in\overline{H_+}$ with $|z-\xi|\ge8\sqrt d$. Then
    \[
        |D_z^a\partial_t^jD_\xi^\mu\mathcal K(z,t;\xi,s)|
        \le C d^{-(n+|a|+2j+|\mu|)/2}
            \exp\!\left(-c\frac{|z-\xi|^2}{d}\right)
    \]
    for $|a|+2j\le3$ and $|\mu|\le1$, with $c,C>0$ depending
    only on $n,\lambda,\Lambda$, and $|I_0|$.
\end{lemma}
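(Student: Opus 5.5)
We plan to deduce these off-diagonal bounds from the classical Gaussian theory for Dirichlet kernels of uniformly parabolic operators with smooth coefficients on a half-space, combined with a localization argument that exploits the separation $|z-\xi|\ge 8\sqrt d$.

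\textbf{Step 1: the undifferentiated Gaussian bound.} Since $\mathcal L$ is in divergence form with coefficients satisfying \eqref{eq:flattened coefficient class}, Aronson's estimate, together with comparison with the whole-space kernel (the Dirichlet kernel is dominated by it via the maximum principle), gives
\[
0\le\mathcal K(z,t;\xi,s)\le C d^{-n/2}\exp\!\Big(-c\frac{|z-\xi|^2}{d}\Big)
\qquad\text{on }\overline{H_+}\times\overline{H_+}.
\]
The drift $b$ is bounded, so it only contributes a factor $e^{Cd}$, and this factor is harmless because $d\le|I_0|$.

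\textbf{Step 2: derivatives in $(z,t)$.} Fix $z$, and rescale parabolically by $\sqrt d$ around $(z,t)$. On the cylinder $Q_z=\big(B_{2\sqrt d}(z)\cap \overline{H_+}\big)\times(t-d/2,t]$ the function $\mathcal K(\cdot,\cdot;\xi,s)$ solves $\mathcal L q=0$ and vanishes on $\partial H_+$. Because $|z-\xi|\ge8\sqrt d$, every point of $Q_z$ stays at distance at least $\tfrac34|z-\xi|$ from $\xi$, so Step 1 bounds $\sup_{Q_z}|\mathcal K|$ by $Cd^{-n/2}e^{-c|z-\xi|^2/d}$. Rescaling keeps the coefficient bounds uniform (indeed it improves them, since $d\lesssim1$). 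Interior and flat-boundary Schauder estimates up to order three then apply, using the $C^{6,3}$ bounds on $A,b$, and they give
\[
|D^a_z\partial_t^j\mathcal K|\le C d^{-(|a|+2j)/2}\sup_{Q_z}|\mathcal K|\qquad\text{for }|a|+2j\le3.
\]
Note that $b$ enters only the first-order term, so the estimate stays uniform.

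\textbf{Step 3: derivative in $\xi$ (the main obstacle).} Here we use the adjoint: as a function of $(\xi,s)$, $\mathcal K(z,t;\xi,s)$ solves $\mathcal L^*_{\xi,s}\mathcal K=0$ backward in time for $s<t$, with zero Dirichlet data on $\partial H_+$. This identity holds because the kernel of $\mathcal L^*$ is the transpose of $\mathcal K$. The operator $\mathcal L^*$ has the same structure, and its lower-order term $\nabla\cdot(bv)$ involves $\nabla b$, which is bounded by \eqref{eq:flattened coefficient class}. We therefore repeat Step 2 on the backward cylinder $\big(B_{2\sqrt d}(\xi)\cap\overline{H_+}\big)\times[s,s+d/2)$ in the $(\xi,s)$ variables, using the Gaussian bound of Step 1 (valid uniformly in both variables). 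This gives the $|\mu|\le1$ bound.

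For the mixed derivatives $D_z^a\partial_t^jD_\xi^\mu$, we differentiate the adjoint equation: $D_z^a\partial_t^j\mathcal K$ still solves $\mathcal L^*_{\xi,s}=0$ in $(\xi,s)$ with zero boundary data. Applying the Step-3 estimate to it, with the sup-bound supplied by Step 2 on a slightly enlarged set of $\xi$'s, yields $C d^{-(n+|a|+2j+1)/2}e^{-c|z-\xi|^2/d}$. Two points need care. First, the constant $c$ degrades by a fixed factor each time the cylinders are enlarged, which is acceptable. Second, the boundary regularity of the Dirichlet kernel near $\partial H_+$ for the adjoint needs justification; we would obtain it by approximating the initial datum $\delta_\xi$ and invoking uniqueness of the Dirichlet kernel.
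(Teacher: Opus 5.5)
Your proposal is correct and follows essentially the same route as the paper. Both arguments start from a Gaussian bound for the Dirichlet kernel; you get it from Aronson's estimate plus domination by the whole-space kernel, while the paper cites Daners' theorem, and both carry a harmless $e^{C(t-s)}$ factor. Both then apply parabolically rescaled interior and flat-boundary Schauder estimates on cylinders that stay away from the singularity because $|z-\xi|\ge8\sqrt d$, and obtain the $\xi$-derivative, including the mixed ones, from the time-reversed adjoint equation applied to the differentiated kernel.
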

\begin{proof}
    By \cite[Theorem~6.1]{DanersHeatKernel}, $\mathcal K$ satisfies
    a Gaussian bound with an additional factor $e^{\omega(t-s)}$,
    which is bounded on $I_0$. Interior and boundary Schauder estimates,
    applied also to the differentiated equations on cylinders of
    radius $\sqrt d/64$, give the asserted derivatives in $(z,t)$,
    with a factor $d^{-1/2}$ for each parabolic order
    (see \cite[Theorem~2.2 and Remark~6.3]{DongZhangHigherOrderSchauder}).
    Applying the same estimates to the time-reversed adjoint equation
    gives the spatial derivative in $\xi$, while derivatives in the
    normal direction are recovered from the equation.
    Since $|z-\xi|\ge8\sqrt d$, the cylinders used in these estimates
    stay away from the singularity of the kernel, and rescaling
    therefore gives the stated bound.
\end{proof}

We use this estimate to control the terms of orders one and two at
the origin for solutions whose source is supported between two
shrinking spheres.
\begin{corollary}\label{corollary:annular source estimate}
    Given $u$ as in \Cref{theorem:construction transversal perturbation}, let $\beta$ denote the parameter $\beta$ in \Cref{theorem:splitting at singular points}, and fix
    $\theta\in(1/(2+\beta),1/2)$.
    For $\tau>0$ sufficiently small, let
    $Z\in L^2(-\tau,t;H^1_0(H_+))$ for every $t\in(-\tau,0)$
    solve
    \[
        \mathcal LZ=F_0+\nabla\cdot F_1
        \quad\text{in }H_+\times(-\tau,0),
        \qquad Z(\cdot,-\tau)=0.
    \]
    Suppose that
    \[
        \supp(F_0,F_1)\subset
        \{(\xi,s):-\tau<s<0,\ \xi_n>0,
          \ (-s)^\theta/8\le|\xi|\le(-s)^\theta/4\},
    \]
    with
    \[
        \|F_0\|_{L^2(H_+\times(-r,-r/2))}
        +\|F_1\|_{L^2(H_+\times(-r,-r/2))}
        \le r^{-1}\qquad\forall\,r\in(0,\tau].
    \]
    Then there is a unique pair $\alpha_Z\in\R$,
    $v_Z\in\bm e_n^\perp$ such that
    \[
        |\alpha_Z|+|v_Z|
        \le Ce^{-c\tau^{-(1-2\theta)}},
    \]
    and, with the notation of \eqref{eq:ansatz linearized problem},
    \[
        |Z(y,t)-\upvarphi_{u_\sharp^\sigma}[\alpha_Z,v_Z,0](F_t(y),t)|
        \le Ce^{-c\tau^{-(1-2\theta)}}|(y,t)|_\pb^3
    \]
    for $y_n>0$, $-\tau/4<t<0$, and $|y|<(-t)^\theta/16$.
\end{corollary}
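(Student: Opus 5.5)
Since $Z(\cdot,-\tau)=0$ and the equation is linear, I will write $Z$ through Duhamel's formula with the Dirichlet kernel $\mathcal K$:
\[
    Z(z,t)=\int_{-\tau}^{t}\!\int_{H_+}\bigl(\mathcal K(z,t;\xi,s)F_0(\xi,s)-\nabla_\xi\mathcal K(z,t;\xi,s)\cdot F_1(\xi,s)\bigr)\,d\xi\,ds .
\]
Uniqueness of $Z$ in $L^2(-\tau,t;H^1_0(H_+))$ is standard energy uniqueness for the uniformly parabolic operator $\mathcal L$, whose coefficients are controlled by \eqref{eq:flattened coefficient class}. The whole proof rests on one separation property. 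The target region is $|y|<(-t)^\theta/16$ with $-\tau/4<t<0$, while the source lives on the annulus $(-s)^\theta/8\le|\xi|\le(-s)^\theta/4$ with $s<t$. Since $-s>-t$, every source point satisfies $|\xi|\ge(-s)^\theta/8\ge2|y|$, hence $|z-\xi|\ge|\xi|/2\ge(-s)^\theta/16$. The time gap is $d=t-s\le -s$. Because $\theta<1/2$, for $\tau$ small we get $(-s)^\theta/16\ge8\sqrt{-s}\ge8\sqrt d$. So \Cref{lem:mixed Dirichlet kernel estimate} applies throughout, with exponent
\[
    \frac{|z-\xi|^2}{d}\ge c(-s)^{2\theta-1}\ge c\,\tau^{-(1-2\theta)} .
\]
This gives the factor $e^{-c\tau^{-(1-2\theta)}}$, and it also absorbs every negative power of $d$ and of $-s$ that appears.

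Next I bound the derivatives of $Z$ up to parabolic order three on the region $\mathcal R=\{y_n>0,\ |y|<(-t)^\theta/16,\ -\tau/4<t<0\}$. I differentiate the Duhamel integral under the sign, which is allowed by the separation above. I split the $s$-integral dyadically into $s\in(-r,-r/2)$. On each piece I use Cauchy--Schwarz with the kernel bound, the assumption $\|F_0\|_{L^2}+\|F_1\|_{L^2}\le r^{-1}$, and the fact that the annulus has volume $\lesssim r^{n\theta}$ in $\xi$. The factor $e^{-c r^{2\theta-1}}$ dominates all polynomial losses, and summing over dyadic $r\le\tau$ gives
\[
    \sup_{\mathcal R}|D_y^a\partial_t^j Z|\le Ce^{-c\tau^{-(1-2\theta)}}, \qquad |a|+2j\le3 .
\]
Boundary regularity up to $\{y_n=0\}$ comes from the boundary Schauder estimates already built into \Cref{lem:mixed Dirichlet kernel estimate}. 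It is valid because the kernel vanishes on $\partial H_+$ in $z$ and because $\mathcal LZ=0$ in a neighbourhood of $\mathcal R$ (the sources are supported away from it).

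Then I do a Taylor expansion at the origin and match it to the Ansatz. The function $\tilde Z(x,t):=Z(F_t^{-1}(x),t)$ is caloric in the positive phase of $u^\sigma_\sharp$ near $(0,0)$, vanishes on the free boundary, and has $C^{3}$ norm bounded by $\epsilon_\tau:=Ce^{-c\tau^{-(1-2\theta)}}$ there; this uses the bounds on $\gamma$. I define $\alpha_Z$ as the coefficient of $x_n$ in $\tilde Z$ at the origin and $v_Z$ as the $\bm e_n^\perp$-part of the $x_nx'$ coefficients, so that both are bounded by $\epsilon_\tau$. By \Cref{lem:pde linear ansatz}, $\upvarphi[\alpha_Z,v_Z,0]$ solves \eqref{eq:linearized problem} exactly, because its $Q$-part is zero. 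Its parabolic jet of order $\le2$ is $x_n(\alpha_Z+x\cdot v_Z)$ plus terms of size $\epsilon_\tau|(x,t)|_\pb^2$ coming from the deviation of $\partial_n u$ from $x_n$ in \eqref{eq:linear background estimates}.

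The difference $W=\tilde Z-\upvarphi[\alpha_Z,v_Z,0]$ is caloric, vanishes on the smooth free boundary, and has $C^3$ norm $\lesssim\epsilon_\tau$. I expect the main obstacle to be showing that its second-order parabolic jet vanishes identically, and not just its $x_n$, $x_nx'$ and constant parts, so that Taylor's theorem yields $|W|\le C\epsilon_\tau|(x,t)|_\pb^3$. My plan is to argue via the Dirichlet condition and the equation. The Dirichlet condition on the graph kills all pure tangential terms $1,x',t,x'x'$. The equation $\Heat W=0$ at the origin forces $\partial_{nn}W(0)=\partial_tW(0)-\Delta_{x'}W(0)=0$ once those terms vanish. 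This leaves exactly the $x_n$ and $x_nx'$ coefficients, and those were matched by the choice of $(\alpha_Z,v_Z)$. The same dependence on the curvature of $\gamma$ enters $\tilde Z$ and the Ansatz, and I expect it to enter only through the error term of size $\epsilon_\tau$. Uniqueness of $(\alpha_Z,v_Z)$ follows because any two admissible pairs differ by a pair whose Ansatz is $O(|(x,t)|_\pb^3)$, which is impossible unless the pair vanishes, by the argument of \Cref{lem:ansatz linear vs parameters}. Finally, undoing the flattening $F_t$ gives the stated estimate.
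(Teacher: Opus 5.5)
Your proposal is correct in substance, but it differs from the paper's route in two places.

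\textbf{Expansion of $Z$.} The paper never Taylor-expands $Z$ itself. It writes $Z=\sum_jU_j$, where $U_j$ is the contribution of the source slab $(-2\sigma_j,-\sigma_j)$, $\sigma_j=2^{-j-1}\tau$. Since these sources sit at $|\xi|\gtrsim\sigma_j^\theta$, each $U_j$ obeys the kernel bounds on a \emph{full} half-cylinder $(B_\rho\cap H_+)\times[-\rho^2,0]$ whenever $\sigma_j\ge4\rho^2$, and it is expanded there. Slabs with $\sigma_j<4\rho^2$ are bounded crudely using $\rho^{-3}\le8\sigma_j^{-3/2}$, and everything is summed.

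You use the dyadic splitting only to get uniform parabolic $C^3$ bounds for $Z$ on $\mathcal R$, and then expand $Z$ directly. This works, but you must say what the jet at $(0,0)$ is, since the time slices of $\mathcal R$ shrink to nothing as $t\uparrow0$. The fix is as follows.
\begin{itemize}
\item The slice at time $t$ contains the half-ball of radius $(-t)^\theta/16\gg\sqrt{-t}$.
\item So the bounds on $D^3Z$ and $\partial_t\nabla Z$ give $|D^2Z(0,t)-D^2Z(0,t')|\lesssim\epsilon_\tau|t-t'|^{1/2}$ for $t<t'<0$ with $t'-t\le|t'|$.
\item The bound on $\partial_t\nabla Z$ along the axis, together with dyadic chaining for $D^2Z$, defines $\nabla Z(0,0)$ and $D^2Z(0,0)$ with errors $O(\epsilon_\tau|t|)$ and $O(\epsilon_\tau|t|^{1/2})$.
\item A spatial Taylor expansion at time $t$ then yields the $\epsilon_\tau\rho^3$ remainder.
\end{itemize}
This must be done in the flattened variables. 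In the $x$ variables the segment $\{x=0,\ t<0\}$ lies outside the positive phase, because $\partial_t\gamma(0,0)\neq0$, so the expansion of $\tilde Z$ cannot be based along it.

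\textbf{Matching.} The paper stays in $y$. There $Z=0$ on the flat, static boundary removes the constant, $y'$, $t$ and $y'y'$ terms at once. After subtracting $\alpha_Z(\partial_nu_\sharp^\sigma)\circ F_t$, the leftover quadratic is harmonic because $A(0,0)=Id$, hence equal to $y_n(y'\cdot v_Z)$.

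Your jet argument for $W$ in the $x$ variables is equivalent, with one caveat. On the moving curved graph, the Dirichlet condition only gives $\partial_tW(0,0)=-\partial_nW(0,0)\,\partial_t\gamma(0,0)$ and $D_{x'}^2W(0,0)=-\partial_nW(0,0)\,D_{x'}^2\gamma(0,0)$. So the tangential terms vanish only once $(\alpha_Z,v_Z)$ has been chosen with $\partial_nW(0,0)=0$ and $\partial_{in}W(0,0)=0$ for $i<n$. At that point the curvature contributions of $\tilde Z$ and of the Ansatz cancel exactly. They do not merely agree up to an $\epsilon_\tau$-sized error; at quadratic order such an error would give $\epsilon_\tau\rho^2$, which is too large.

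\textbf{Trade-off.} The slab decomposition buys full cylinders and avoids any analysis at the tip of a degenerating region. Your route gives the cleaner intermediate statement (uniform $C^3$ bounds on the whole target region) at the price of the chaining step.
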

\begin{proof}
    The equation and the source bounds give
    $\partial_tZ\in L^2(-\tau,t;H^{-1}(H_+))$ for every $t\in(-\tau,0)$,
    and hence $Z\in C([-\tau,t];L^2(H_+))$ by
    \cite[Section~5.9.2]{Evans}.
    Set $\sigma_j\coloneqq2^{-j-1}\tau$ and write
    $Z=\sum_{j\ge0}U_j$ by Duhamel's formula, where
    \begin{equation}\label{eq:annular Duhamel formula}
        U_j(y,t)\coloneqq
        \int_{(-2\sigma_j,-\sigma_j)\cap(-\infty,t)}\!\int_{H_+}
        \bigl[\mathcal K(y,t;\xi,s)F_0(\xi,s)
        -D_\xi\mathcal K(y,t;\xi,s)\cdot F_1(\xi,s)\bigr]
        \,d\xi\,ds.
    \end{equation}
    For $y_n>0$, $-\tau/4<t<0$, and $|y|<(-t)^\theta/16$,
    on the support of the integrands in
    \eqref{eq:annular Duhamel formula} we have
    \[
        |y-\xi|\ge c\sigma_j^\theta,\qquad 0<t-s\le2\sigma_j.
    \]
    Since $\theta<1/2$, these inequalities imply
    $|y-\xi|\ge8\sqrt{t-s}$ when $\tau$ is small enough, so
    \Cref{lem:mixed Dirichlet kernel estimate} gives, after decreasing $c$ to absorb the polynomial factors,
    \begin{equation}\label{eq:annular one slab estimate}
        |D_y^a\partial_t^\ell U_j(y,t)|
        \le Ce^{-c\sigma_j^{-(1-2\theta)}}
        \qquad\text{for }|a|+2\ell\le3.
    \end{equation}
    Let $P_j$ be the spatial Taylor polynomial of degree two of
    $U_j(\cdot,0)$ at the origin, and put $\rho\coloneqq|(y,t)|_\pb$.
    If $\sigma_j\ge4\rho^2$, the same derivative bounds hold on
    $(B_\rho\cap H_+)\times[-\rho^2,0]$, which is separated from
    the sources, so Taylor's formula and the zero boundary trace give
    \[
        |U_j(y,t)-P_j(y)|
        \le C\rho^3e^{-c\sigma_j^{-(1-2\theta)}}.
    \]
    If $\sigma_j<4\rho^2$, we bound $U_j$ and $P_j$ separately by
    \eqref{eq:annular one slab estimate} and use
    $\rho^{-3}\le8\sigma_j^{-3/2}$ to obtain the same estimate
    after decreasing $c$. Since the estimates above are summable in $j$,
    we can sum the Taylor polynomials and their remainders to obtain
    \[
        Z(y,t)=D_yZ(0,0)\cdot y+\tfrac12y\cdot D_y^2Z(0,0)y
        +O\!\left(e^{-c\tau^{-(1-2\theta)}}\rho^3\right),
    \]
    with both coefficients bounded by
    $Ce^{-c\tau^{-(1-2\theta)}}$.

    Since $Z=0$ on $\{y_n=0\}$, its linear term is
    $\alpha_Zy_n$, where $\alpha_Z=\partial_nZ(0,0)$;
    after subtracting $\alpha_Z(\partial_nu_\sharp^\sigma)\circ F_t$,
    the remaining quadratic term is harmonic, since both functions solve
    $\mathcal Lq=0$ in
    $\{y_n>0,\ |y|<(-t)^\theta/16,\ -\tau/4<t<0\}$ and
    $A(0,0)=Id$. Because it also vanishes on $\{y_n=0\}$, it must be
    equal to $y_n(y'\cdot v_Z)$ for some vector $v_Z\in\bm e_n^\perp$.
    By \eqref{eq:ansatz linearized problem}, this is the quadratic
    term of $\upvarphi_{u_\sharp^\sigma}[0,v_Z,0]\circ F_t$,
    which proves both the estimate and uniqueness.
\end{proof}

We can now locally construct transversal perturbations on domains to which \Cref{prop:runge} applies.
\begin{lemma}\label{lem:local construction}
    Let $z_0=(x_0,t_0)\in\Sigma_{n-1}^m(u)$ and let
    $\mathcal Q\ssubset\Omega\times(0,T)$ be a cylinder containing $z_0$.
    There are a smooth domain $D'\ni x_0$ and $t_-<t_0$, with
    $\overline D\ssubset\mathcal Q$ for $D\coloneqq D'\times(t_-,t_0)$,
    and a function $w\in H^1(D)$ solving \eqref{eq:linearized problem}
    in $D$ and transversal at $z_0$, such that $D$ satisfies
    assumptions \eqref{assumption:a1} and \eqref{assumption:a2} of
    \Cref{prop:runge}.
\end{lemma}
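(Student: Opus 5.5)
The plan is to take $D'$ a small ball around $x_0$, $t_-=t_0-\tau$ with $\tau$ small, and to build $w$ from the infinitesimal rotation of one branch only, cut off and then corrected. Translate so that $z_0=(0,0)$ and rotate so that the blow-up is $\tfrac12 x_n^2$; write $u=u^++u^-$ in $\Omega^\beta_1\cap\sC_{r_\circ}$ as in \Cref{theorem:splitting at singular points}. Fix a unit vector $e\in E=\Ker\mathsf H$ (possible since $m\ge1$).

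\textbf{Step 1: the uncorrected field.} The function $\upvarphi_{u^+}[0,e,0]=(x\cdot e)\partial_nu^+-x_n\partial_eu^+$ solves the linearized problem exactly in the positive phase of $u^+$, since it is the first variation under rotations. Its coefficient triples are $(0,e,0)$ on the $+$ side and $(0,0,0)$ on the $-$ side. So its even jet is $J_{\le2}^\even=\tfrac12 x\cdot e$, with $e\in E\setminus\{0\}$, which is exactly the transversal situation noted after \Cref{definition:transversality}. The problem is that this field lives only in the cusp. I will therefore set
\[
w_0\coloneqq\eta\,\upvarphi_{u^+}[0,e,0],\qquad \eta=\xi\bigl(|x|/(-t)^\theta\bigr),
\]
with $\theta\in(1/(2+\beta),1/2)$ and $\xi$ a cutoff equal to $1$ on $[0,1/8]$ and to $0$ on $[1/4,\infty)$. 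Then $\supp\eta$ lies in the cusp where the splitting holds, and $w_0$ is extended by zero to the rest of $D$. Passing to the flattened coordinates \eqref{eq:Ft} for $u_\sharp^+$, the error $\Heat w_0$ becomes $F_0+\nabla\cdot F_1$, supported in the annuli $(-s)^\theta/8\le|\xi|\le(-s)^\theta/4$. The bounds $|\nabla\eta|\lesssim(-s)^{-\theta}$, $|\upvarphi|\lesssim|x|^2$, together with the growth of the annuli, give $\|F_0\|+\|F_1\|\lesssim r^{-1}$ on $(-r,-r/2)$ after normalization.

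\textbf{Step 2: the correction.} I let $Z$ solve $\mathcal LZ=-(F_0+\nabla\cdot F_1)$ with zero data at $t=-\tau$, on the flattened half-space of the $+$ branch. By \Cref{corollary:annular source estimate}, $Z$ has coefficients $|\alpha_Z|+|v_Z|\le Ce^{-c\tau^{-(1-2\theta)}}$ and a cubic remainder. I then set $w\coloneqq w_0+Z\circ F_t^{-1}$ in the $+$ phase.

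The point needing care is that $w$ must solve \eqref{eq:linearized problem} in the actual set $D\cap\{u>0\}$, whereas the half-space correction sees a fictitious extension of $\gamma$ outside the cusp. I would handle this by solving instead in $D\cap\{u>0\}$ with zero lateral data, via \Cref{remark:existence linear variant} with a source. The localized kernel estimate then controls the jet at $z_0$: the source is supported at distance $\gtrsim(-s)^\theta\gg\sqrt{-s}$ from the origin, and near the origin $\{u>0\}$ coincides with the $+$ phase of the cusp. A comparison or Duhamel argument, analogous to the proof of \Cref{corollary:annular source estimate}, should give the same exponential smallness.

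Since $\mu(\cdot)$ is continuous in the jet, taking $\tau$ small gives $\mu(w)\ge\tfrac14\mu\bigl(\upvarphi_{u^+}[0,e,0]\bigr)>0$. Hence $w$ is transversal at $z_0$. I expect this localization of the correction, with its exponentially small effect on the jet, to be the main obstacle.

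\textbf{Step 3: the Runge assumptions.} Assumption \eqref{assumption:a1} holds by construction for a small ball $D'=B_r(x_0)$, with $\overline D\ssubset\mathcal Q$. For \eqref{assumption:a2}, I fix a time $t$ and a component $\mathcal C$ of $O_t\setminus K_t$, and argue as follows.
\begin{enumerate}[label=(\roman*)]
\item By \Cref{lem:causal accessibility}, every component of $O_t$ reaches $\partial\Omega$ on a relatively open set where $u(\cdot,t)>0$.
\item It therefore suffices to show that removing $\overline{B_r}$ does not trap a piece of $O_t$: each component of $O_t\cap(B_{2r}\setminus\overline{B_r})$ must connect within $O_t$ to $\partial B_{2r}$.
\item This follows from \Cref{lem:convexity at singular points}. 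The convexity $\partial_{\nu\nu}u\ge0$ in $\sC_{r_0}(z_0)$ means that on every line parallel to $\nu$ the zero set $\{u(\cdot,t)=0\}$ is an interval.
\item Since the contact set near $z_0$ is a thin slab around $\{x_n\approx0\}$, the positive set in $B_{2r}$ consists of the two sides $\{x_n>f^+\}$ and $\{x_n<f^-\}$, possibly joined. In either case each piece can be joined, moving along the $\nu$-lines, to $\partial B_{2r}$ while avoiding $\overline{B_r}$, for $r$ small.
\end{enumerate}
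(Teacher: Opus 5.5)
Your Step 1 is fine: the one-branch field has even jet $\tfrac12x\cdot e$ with $e\in E$, so it is transversal just like the paper's $W=W^++W^-$. Your outline is also the paper's. The problem is that the step you flag as the main obstacle is left open, and the route you suggest through it is not available.

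\textbf{The jet estimate in Step 2.} Once you solve in $D\cap\{u>0\}$, there is no Dirichlet kernel with which to run a Duhamel argument: that set contains singular points, and its two sides may be joined outside the cusp. Moreover, near the origin $\{u>0\}$ contains both branches, so your corrected $w$ is in general nonzero on the $-$ branch, and $\mu$ involves the $-$ jet. Both jets must therefore be controlled.

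The missing idea (the paper's Step 2) is to localize the \emph{difference} between $w$ and the exact branch fields, not the correction. Note that $w=w_0+Z$ is simply the solution of the homogeneous problem with initial datum $w_0(\cdot,-\tau)$, exactly as for the paper's $\Psi$. Take a cusp cutoff $\tilde\eta$ supported where this datum coincides with the model field. Then $\tilde\eta\,\bigl((w-\upvarphi_{u^+}[0,e,0])\circ F_t^+\bigr)$ and $\tilde\eta\,(w\circ F_t^-)$ have three properties:
\begin{itemize}
\item they are supported where the flattening is exact;
\item they have zero initial data;
\item they solve half-space problems $\mathcal L^\sigma Z^\sigma=F_0^\sigma+\nabla\cdot F_1^\sigma$, with annular sources controlled by the energy estimates of \Cref{remark:existence linear variant}.
\end{itemize}
Applying \Cref{corollary:annular source estimate} on each branch then gives exponentially small coefficients on both sides. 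The fictitious extension of $\gamma$ never enters.

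\textbf{Reduction (ii) in Step 3.} This reduction does not suffice. Reaching $\partial B_{2r}$ inside $O_t\setminus\overline{B_r}$ does not give a path to $\partial\Omega$ avoiding $\overline{B_r}$, because removing the ball can split a component of $O_t$. For $t$ close to $0$, the contact layer near $z_0$ has thickness $\approx-12at+6\sum_ib_ix_i^2$. It can open inside $B_r$ along $E$, for instance after points of $\Sigma_{n-1}(u)$ near $z_0$ with negative time, so the two sides may be connected only there.

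The paper instead shows that each side reaches $\partial\Omega$ outside $\overline{D'}$ at the single time $-\tau$. At that time the splitting holds on $B_{5r}$, so any excursion of a path into $\overline{D'}$ stays in one phase and can be rerouted outside. Then $\partial_tu\ge0$ keeps these paths in the positive phase for all $t\in[-\tau,0]$. This requires two constraints:
\begin{itemize}
\item $r\lesssim\tau^{1/(2+\beta)}$, so that the splitting holds on $B_{5r}$ at time $-\tau$;
\item $\tau\lesssim r^2$, so that the contact set stays in a thin slab on the whole time interval.
\end{itemize}
Together they force the coupling $r=c\tau^\theta$, which you leave unspecified.

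\textbf{Step (iv) fails for a ball.} Consider a positive point just outside $\partial B_r$ with $x_n<0$ and $|x'|<r$, lying above the contact interval on its vertical line. Convexity only lets you move it upward, and that line re-enters $B_r$. This is why the paper takes for $D'$ a thin smoothed cylinder whose lateral boundary is vertical on the whole slab $\{|x_n|\le hr\}$ containing the contact set. With that choice, vertical lines through exterior points of the slab never meet $\overline{D'}$.
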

\begin{proof}
    After translation and rotation, assume that $z_0=(0,0)$ and
    its quadratic blow-up is $\tfrac12x_n^2$, and let $E,F$ be as in
    \Cref{definition:transversality}.
    In the next three steps we first construct a transversal solution $W$ in the region
    where \Cref{theorem:splitting at singular points} gives $u=u^++u^-$, we then use $W$ to construct a solution $\Psi$ on a
    cylinder and prove its transversality by comparing its expansion
    coefficients with those of $W$, and finally we choose a
    smaller cylinder $D$ satisfying the assumptions of \Cref{prop:runge}
    and set $w=\Psi|_D$.

    \smallskip\noindent\textbf{Step 1.}
    Choose $e\in E$ with $|e|=1$ and set
    \[
        W^\pm\coloneqq\pm\bigl[(x\cdot e)\partial_nu^\pm
                     -x_n\partial_eu^\pm\bigr],\qquad W=W^++W^-.
    \]
    Since spatial rotations commute with $\Heat$,
    $\Heat W^\pm=0$ in $\{u^\pm>0\}$, while
    \Cref{corollary:uniform smooth expansion maximal stratum}
    and local parabolic estimates give
    \[
        |W^\pm|\le C|(x,t)|_\pb^2,\qquad
        |\nabla_xW^\pm|+|\partial_tW^\pm|\le C|(x,t)|_\pb,
    \]
    and since $W^\pm=0$ on $\partial\{u^\pm>0\}$, their zero extensions
    belong to $H^1$ on smaller cusps.
    
    Note now that, by \eqref{eq:ansatz linearized problem} and
    \eqref{eq:lower branch ansatz dictionary}, their triples in the
    original coordinates are $(0,e,0)$ and $(0,-e,0)$, so
    $
        J_{\le2}[W]^\even=x\cdot e.
    $
    Also, by \eqref{eq:expansion cubic points transversality} and
    \Cref{lem:jets rototranslations},
    \[
        J_1[\partial_tu]^\even=6a>0,\qquad
        \nabla_xJ_2[v\cdot\nabla u]^\even=\mathsf Hv\in F
        \quad\text{for }v\in F,
    \]
    and therefore \Cref{definition:transversality} gives $\mu(W)>0$.

    \smallskip\noindent\textbf{Step 2.}
    Let $\beta$ be given by \Cref{theorem:splitting at singular points}
    and fix
    \[
        \frac1{2+\beta}<\theta<\frac12,
        \qquad R\coloneqq\tau^\theta.
    \]
    Take $\tau>0$ sufficiently small that
    $B_R\times[-\tau,0]\Subset\mathcal Q$ and lies in the cylinder
    supplied by \Cref{lem:convexity at singular points}, and let $f\in H^1_0(B_R\cap\{u(\cdot,-\tau)>0\})$
    be the product of $W(\cdot,-\tau)$ with a smooth cutoff
    equal to one on $B_{R/2}$ and supported in $B_{3R/4}$.
    
    Let $\Psi\in H^1(B_R\times(-\tau,0))$ satisfy
    \[
        \Heat\Psi=0\quad\text{in }\{u>0\}\cap(B_R\times(-\tau,0)),
        \qquad \Psi(\cdot,-\tau)=f,\qquad
        \Psi=0\quad\text{on }\partial B_R\times(-\tau,0),
    \]
    given by \Cref{remark:existence linear variant}.

    Let $F_t^\pm$ be the flattening maps for $u^+$ and the inward
    reflection of $u^-$, with the reflection included in $F_t^-$ (see \eqref{eq:Ft}),
    and denote the corresponding operators and coefficients in
    \eqref{eq:flattened linearized operator} by
    $\mathcal L^\sigma,A^\sigma,b^\sigma$, for $\sigma=\pm$.
    We then set
    \[
        q^\pm(y,t)\coloneqq(\Psi-W^\pm)(F_t^\pm(y),t),
        \qquad Z^\pm\coloneqq\eta q^\pm,
    \]
    where $\eta$ is a smooth cutoff equal to $1$ for
    $|y|\le(-t)^\theta/8$ and vanishing for $|y|\ge(-t)^\theta/4$,
    chosen so that
    \[
        |\nabla_y\eta|\le C(-t)^{-\theta},\qquad
        |\partial_t\eta|\le C(-t)^{-1}.
    \]
    Since $\theta>1/(2+\beta)$, for $\tau$ small the points
    $(F_t^\pm(y),t)$ with $y_n>0$ and $\eta(y,t)\ne0$
    lie in the cusp where $u^\pm$ are defined by
    \Cref{theorem:splitting at singular points}, and at $t=-\tau$
    their spatial coordinates lie in $B_{R/2}$, where $\Psi=W$;
    hence, after extension by zero to
    $H_+\times(-\tau,0)$, the functions $Z^\pm$ have zero initial
    and Dirichlet data and satisfy
    \[
        \mathcal L^\sigma Z^\sigma=F_0^\sigma+\nabla\cdot F_1^\sigma,
        \qquad \sigma\in\{+,-\},
    \]
    where
    \[
        F_0^\sigma=(\partial_t\eta+b^\sigma\cdot\nabla\eta)q^\sigma
                    -(A^\sigma\nabla q^\sigma)\cdot\nabla\eta,\qquad
        F_1^\sigma=-q^\sigma A^\sigma\nabla\eta.
    \]
    The sources $F_0^\sigma,F_1^\sigma$ are supported where
    $(-t)^\theta/8\le|y|\le(-t)^\theta/4$, and the energy estimates
    for $\Psi$ (see \Cref{remark:existence linear variant}) and the bounds for $W^\pm$ give
    \[
        \|F_0^\sigma\|_{L^2(H_+\times(-s,-s/2))}
        +\|F_1^\sigma\|_{L^2(H_+\times(-s,-s/2))}
        \le Cs^{-1/2} \leq s^{-1}\qquad\forall\,s\in(0,\tau].
    \]
    Let $\alpha_\Psi^\pm,v_\Psi^\pm$ be the coefficients of $\Psi$
    in \eqref{eq:assumption perturbation}, expressed in the original
    coordinates using \eqref{eq:lower branch ansatz dictionary};
    since $\eta=1$ for $|y|<(-t)^\theta/16$, comparison of the
    terms of orders one and two in
    \Cref{corollary:annular source estimate} and \Cref{prop:C3beta expansion linear}
    gives
    \[
        \sum_{\sigma=\pm}
        \left(|\alpha_\Psi^\sigma|
             +|v_\Psi^\sigma-\sigma e|\right)
        \le Ce^{-c\tau^{-(1-2\theta)}},
    \]
    and therefore \eqref{eq:quantitative transversality gaps} implies,
    for $\tau$ sufficiently small,
    \[
        \mu(\Psi)\ge\tfrac12\mu(W)>0.
    \]

    \smallskip\noindent\textbf{Step 3.}
    Fix $h\in(0,1/4)$ and set $r\coloneqq cR$, with $c>0$ small.
    Since $\tau/r^2\to0$ as $\tau\to0$, the quadratic expansion at
    $(0,0)$ gives, for $\tau$ sufficiently small,
    \[
        \{u=0\}\cap(B_{5r}\times[-\tau,0])
            \subset\{|x_n|<hr/4\},\qquad
        u>0\quad\text{on }\{|x_n|\ge hr\}
            \cap(B_{5r}\times[-\tau,0]).
    \]
    Let $D'$ be a smooth convex domain obtained by smoothing the edges
    of the cylinder $\{|x'|<r,\ |x_n|<2hr\}$ inwards, while leaving it
    unchanged where $|x_n|\le hr$, and set $q_\pm\coloneqq\pm4r\bm e_n$.
    For each $t\in[-\tau,0]$, \Cref{lem:convexity at singular points}
    implies that $\{u(\cdot,t)=0\}\cap B_{5r}$ intersects every
    vertical line $(x',0)+\R\bm e_n$ in an interval. Hence, by the choice of $D'$, any
    $p\in\partial D'\cap\{u(\cdot,t)>0\}$ can be joined to
    $q_+$ or $q_-$ by a path in $\{u(\cdot,t)>0\}$ outside
    $\overline{D'}$, apart from its starting point $p$.

    At $t=-\tau$, the splitting holds in $B_{5r}$, since
    $\theta>1/(2+\beta)$, and each path in
    $\overline{D'}\cap\{u(\cdot,-\tau)>0\}$ with endpoints on
    $\partial D'$ lies entirely in one of the two positivity sets $\{u^\pm(\cdot,-\tau)>0\}$. By the preceding
    observation, both endpoints can be joined to the same point
    $q_+$ or $q_-$ by paths in $\{u(\cdot,-\tau)>0\}$ outside
    $\overline{D'}$, apart from their starting points.
    Thus \Cref{lem:causal accessibility} gives paths $\sigma_\pm$
    joining $q_\pm$ to $\partial\Omega$, with
    \[
        \sigma_\pm\subset
        \{x\in\overline\Omega:u(x,-\tau)>0\}\setminus\overline{D'}.
    \]
    Thus, since $\partial_tu\ge0$, $u(\cdot,t)>0$ on $\sigma_\pm$ for every
    $t\in[-\tau,0]$.

    Fix $t\in[-\tau,0]$ and let $\mathcal C$ be a connected component
    of $\{u(\cdot,t)>0\}\setminus\overline{D'}$. If $\mathcal C$ is
    also a component of $\{u(\cdot,t)>0\}$, then
    \Cref{lem:causal accessibility} applies directly. Otherwise,
    choose $p\in\partial\mathcal C\cap\partial D'$ with $u(p,t)>0$.
    Then the path from $p$ to one of $q_\pm$ lies in $\mathcal C$
    (apart from $p$), so the corresponding path $\sigma_\pm$ also lies
    in $\mathcal C$ (apart from its endpoint on $\partial\Omega$)
    where $u(\cdot,t)>0$. Therefore, in either case,
    $\overline{\mathcal C}\cap\partial\Omega$ contains
    a relatively open set on which $u(\cdot,t)>0$.

    Set $D\coloneqq D'\times(-\tau,0)$ and $w\coloneqq\Psi|_D$.
    Then \eqref{assumption:a1} holds since $D'$ is smooth and
    $\overline D\ssubset\mathcal Q$, while \eqref{assumption:a2}
    follows from the preceding argument for $t\in[-\tau,0]$ and
    from \Cref{lem:causal accessibility} at all other times.
    Finally, by Step~2, $w\in H^1(D)$ solves
    \eqref{eq:linearized problem} and satisfies
    $\mu(w)=\mu(\Psi)>0$, which concludes the proof.
\end{proof}

We can now prove \Cref{theorem:construction transversal perturbation}.
\begin{proof}[Proof of \Cref{theorem:construction transversal perturbation}]
    Choose $K\ssubset U\ssubset\Omega\times(0,T)$ so that $u$ is
    $m$-singular in a neighborhood of $\overline U$.
    At each $z\in K\cap\Sigma_{n-1}^m(u)$, let $D_z,w_z$ be given
    by \Cref{lem:local construction}. By \Cref{prop:runge}, there are
    $g_{z,k}\in C_c^\infty(\Gamma_+(u))$ such that
    \[
        \varphi_{g_{z,k},u}\longrightarrow w_z
        \quad\text{in }L^2(D_z).
    \]
    The coefficient estimate in \Cref{prop:C3beta expansion linear},
    applied to $\varphi_{g_{z,k},u}-w_z$ on smaller cusps in $D_z$, gives
    \[
        \sum_{\sigma=\pm}
        \big\|J_{\le2}[\varphi_{g_{z,k},u}^\sigma-w_z^\sigma]\big\|
        \le C_z\|\varphi_{g_{z,k},u}-w_z\|_{L^2(D_z)}
        \longrightarrow0.
    \]
    Choose $k$ large and write $g_z=g_{z,k}$, so that
    $\varphi_{g_z,u}$ is transversal at $z$.
    By \Cref{lem:transversality open conditions}, we can choose open neighborhoods
    \[
        z\in V_z\ssubset U_z\ssubset U
    \]
    such that $\varphi_{g_z,u}$ is transversal on
    $\Sigma_{n-1}^m(u)\cap\overline{U_z}$.

    By \Cref{lem:stratification closed conditions} and \Cref{corollary:uniform smooth expansion maximal stratum},
    $\Sigma_{n-1}^m(u)$ is closed in $U$, so for each
    $z\in K\setminus\Sigma_{n-1}^m(u)$ we can choose
    $z\in V_z\ssubset U_z\ssubset U$ with
    $\overline{U_z}\cap\Sigma_{n-1}^m(u)=\emptyset$, and define
    $g_z=0$.
    By compactness, finitely many of the sets $V_z$ cover $K$.
    Denote the corresponding neighborhoods and boundary data by
    $V_i,U_i,g_i$, $1\le i\le N$, and choose $\delta>0$ so that
    \[
        B_\delta(x)\times(t-\delta,t+\delta)\subset U_i
        \qquad\forall\,(x,t)\in V_i,\quad 1\le i\le N.
    \]
    This proves the assertion for $u$.

    Finally, by \Cref{lem:controlled background linearized compactness},
    \[
        \varphi_{g_i,u_k}\longrightarrow\varphi_{g_i,u}
        \quad\text{in }L^2(\Omega\times(0,T)),\qquad 1\le i\le N.
    \]
    By \Cref{lem:stratification closed conditions}
     and \Cref{corollary:uniform smooth expansion maximal stratum}, any limit in
    $\overline{U_i}$ of points of $\Sigma_{n-1}^m(u_k)$
    belongs to $\Sigma_{n-1}^m(u)$. Compactness and
    \Cref{lem:transversality open conditions} therefore imply that
    $\varphi_{g_i,u_k}$ is transversal on
    $\Sigma_{n-1}^m(u_k)\cap\overline{U_i}$ for every $i$
    and all sufficiently large $k$; if $g_i=0$, this intersection
    is empty. Thus the same $\delta$ and $g_i$ work for $u_k$.
\end{proof}

\begin{remark}\label{remark:finite boundary perturbations}
    Let $u$ and $g_1,\dots,g_N$ be as in
    \Cref{theorem:construction transversal perturbation}, and write
    $b=u|_{\partial\Omega\times(0,T)}$.
    By \eqref{eq:boundary conditions u}, $b,\partial_tb>0$ on the
    compact set $\bigcup_i\supp g_i\subset\Gamma_+(u)$.
    Hence, for some $\eps_*>0$, the boundary data
    \begin{equation}\label{eq:finite boundary perturbations}
        b+\sum_{i=1}^N a_i g_i,
        \qquad \sum_i|a_i|<\eps_*,
    \end{equation}
    are nonnegative and nondecreasing in time, and agree with $b$
    near $t=0$ and $t=T$, so their solutions can all be extended to a common time
    interval. Moreover, by \eqref{eq:comparison principle}, after
    reducing $\eps_*$, these solutions remain positive in fixed
    neighborhoods of $\bigcup_i\supp g_i$ in
    $\overline\Omega\times[0,T]$.
    These properties will be used in the proof of
    \Cref{theorem:generic regularity}.
\end{remark}

\section{Proof of \texorpdfstring{\Cref{theorem:generic regularity}}{}}
\label{section:main result}

Set $Q\coloneqq\Omega\times(0,T)$ and let
\[
    \mathscr B\coloneqq
    \{b\in C(\partial\Omega\times\R):b=0\text{ for }t\le0,\
       b(x,\cdot)\text{ is nondecreasing}\},
\]
endowed with the topology of locally uniform convergence. For
$b\in\mathscr B$, denote by $\mathcal S(b)$ the unique solution of
\eqref{eq:stefan} on $\Omega\times\R$ with lateral datum $b$ and satisfying
$\mathcal S(b)=0$ for $t\le0$. Then
\[
    \mathscr S\coloneqq
    \{\mathcal S(b)|_{\overline Q}:b\in\mathscr B\},
\]
endowed with the topology inherited from $C^0(\overline Q)$, is precisely
the solution space in \Cref{theorem:generic regularity}; throughout this
section, global solutions are understood to be restricted to $\overline Q$
when regarded as elements of $\mathscr S$.
By the comparison principle,
\[
    \|\mathcal S(b_1)-\mathcal S(b_2)\|_{C^0(\overline Q)}
    =\|b_1-b_2\|_{C^0(\partial\Omega\times[0,T])}.
\]
The restrictions of elements of $\mathscr B$ to
$\partial\Omega\times[0,T]$ are precisely the nonnegative continuous
data that vanish at $t=0$ and are nondecreasing in time, so they form
a closed subset of $C(\partial\Omega\times[0,T])$ and the preceding
identity shows that $\mathscr S$ is complete and therefore Baire.

We use the strata $\Sigma_{n-1}^m$ introduced after
\eqref{eq:expansion cubic points} and the notion of $m$-singular solution
in \Cref{definition:substratification}. To prove the theorem, we shall successively approximate
$m$-singular solutions by $(m-1)$-singular ones and then apply Baire's
theorem. For this purpose, we introduce the class
\[
    \mathscr B_{\mathrm{sm}}\coloneqq
    \left\{b\in\mathscr B\cap C^\infty(\partial\Omega\times\R):
    \begin{array}{l}
        \partial_tb>0\text{ on }\{b>0\},\\
        b=0\text{ for }t\le\tau_b\text{ for some }\tau_b>0
    \end{array}\right\}.
\]
The following lemma collects the properties of these boundary data needed below.

\begin{lemma}\label{lem:approximation extension}
    The class $\mathscr B_{\mathrm{sm}}$ is dense in $\mathscr B$.
    Moreover, for every $b\in\mathscr B_{\mathrm{sm}}$ the solution
    $\mathcal S(b)$ satisfies \eqref{eq:boundary conditions u}.
\end{lemma}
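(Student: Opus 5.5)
For density, fix $b\in\mathscr B$ and $\eps>0$; it suffices to produce $\tilde b\in\mathscr B_{\mathrm{sm}}$ with $\|\tilde b-b\|_{C^0(\partial\Omega\times[0,T])}<\eps$, since locally uniform convergence on $\partial\Omega\times\R$ can be arranged on any $[-L,L]$ in the same way. I will proceed in three steps, each of which preserves nonnegativity and monotonicity in time:
\begin{enumerate}[label=(\roman*)]
    \item \emph{Delay and subtract.} Replace $b$ by $b_1(x,t)\coloneqq(b(x,t-\tau)-\eps/4)_+$ with $\tau>0$ small. By uniform continuity on compacts, $\|b_1-b\|<\eps/2$, and $b_1$ vanishes for $t\le\tau$. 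Moreover, $\{b_1>0\}$ is contained in the set where $b(\cdot,\cdot-\tau)\ge\eps/4$.
    \item \emph{Mollify.} Let $b_2\coloneqq b_1*\rho_\delta$, using a space--time mollifier on $\partial\Omega\times\R$ (via charts and a partition of unity, or via an extension to a tubular neighborhood). The mollifier is supported in $\{s\in(\delta,2\delta)\}$, i.e.\ it is delayed in time. Then $b_2$ is smooth, nonnegative, nondecreasing in $t$, vanishes for $t\le\tau$, and is $\eps/4$-close to $b_1$.
    \item \emph{Restore strict monotonicity.} The mollified function may still have $\partial_tb_2=0$ on parts of $\{b_2>0\}$. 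To fix this, set $\tilde b\coloneqq b_2+\kappa\,h(t)$ with $h$ smooth, $h=0$ for $t\le\tau/2$, $h'>0$ for $t>\tau/2$, and $h\le1$; e.g.\ $h(t)=\chi(t)\,(1-e^{-t})$ with $\chi$ a smooth step. Then $\partial_t\tilde b\ge\kappa h'>0$ wherever $\tilde b>0$, since $\tilde b>0$ forces $t>\tau/2$. Taking $\kappa<\eps/4$ gives $\tilde b\in\mathscr B_{\mathrm{sm}}$ with $\|\tilde b-b\|<\eps$.
\end{enumerate}
Step (iii) in fact makes step (i) unnecessary for monotonicity; its only role is to guarantee vanishing near $t=0$.

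For the second claim, let $u=\mathcal S(b)$ with $b\in\mathscr B_{\mathrm{sm}}$. I first note that \eqref{eq:stefan} holds, in particular $\partial_tu>0$ in $\{u>0\}$. Indeed, $\partial_tu\ge0$ by comparison with time translates, since the data are monotone. Moreover, $\partial_tu$ is caloric in $\{u>0\}$, with boundary values $\partial_tb$ on the lateral boundary, and by the strong maximum principle it is either positive or identically zero on each component. It cannot vanish identically: as in \Cref{lem:causal accessibility}, every component reaches $\partial\Omega$ where $b>0$, and there $\partial_tb>0$. Since $b=0$ for $t\le\tau_b$, uniqueness gives $u\equiv0$ for $t\le\tau_b$, so \eqref{eq:stefan-initial} holds together with $u\in C^\infty$ near $\{t=0\}$.

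It remains to prove $u\in C^1_{x,t}(\overline Q)$ and the lower bound $\partial_tu\ge c_K$ on compact subsets $K$ of $\{u>0\}\cap\partial_\pb Q$. In the interior this follows from the optimal regularity \eqref{eq:optimal regularity}, which gives $C^{1,1}_x$ and Lipschitz in $t$; continuity of $\partial_tu$ up to the free boundary comes from the regular/singular theory, as $\partial_tu\to0$ at free boundary points. Near $\partial\Omega\times(0,T)$ there are two cases. Where $b>0$, the solution is positive in a neighborhood of the boundary and solves $\Heat u=1$ with smooth data, so boundary Schauder estimates give smoothness up to $\partial\Omega$. Where $b=0$ near a boundary point, I would use comparison with barriers, together with the fact that $\{u>0\}$ is separated from $\partial\Omega$ or approaches it only in a controlled way; this yields $u\in C^{1,1}$ there. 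This boundary $C^1$ regularity near the set $\partial\{b>0\}$ is the main obstacle. I would handle it by viewing $u$ as the solution of a parabolic obstacle problem with smooth boundary data and citing boundary regularity for the parabolic obstacle problem (for instance from \cite{parabolicObstacle}), which gives $C^{1,1}_x\cap C^{0,1}_t$ up to the smooth lateral boundary.

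Finally, the bound $\partial_tu\ge c_K$ on $K\Subset\{u>0\}\cap\partial_\pb Q$ follows because $\partial_tu=\partial_tb>0$ there and is continuous, while $K$ avoids $t=0$, where $u=0$.
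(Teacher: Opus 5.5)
Your density argument is essentially the paper's. You delay in time, smooth with a positive operator that acts in time by convolution against a kernel supported in positive times, and add a small multiple of a function of $t$ that vanishes near $t=0$ and is strictly increasing afterwards. The paper smooths in space with the heat semigroup $e^{\delta\Delta_{\partial\Omega}}$, which is positive and commutes with time, so nonnegativity and monotonicity come for free. Your chart or tubular-neighbourhood mollifier works equally well if it is built as such a positive, time-translation-invariant operator, and the truncation $(\cdot-\eps/4)_+$ is superfluous.

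For the second claim the paper only invokes standard regularity. Your checks of $\partial_tu\ge0$, $u\equiv0$ for $t\le\tau_b$, and $\partial_tu\ge c_K$ are fine. In the proof of $\partial_tu>0$ in $\{u>0\}$, recall that the parabolic strong maximum principle propagates zeros only backward in time. So argue on a slice $\mathcal C\times\{t_0-\eta\}$, as in \Cref{lem:causal accessibility}, rather than on whole components.

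The step that does not work as written is the lateral boundary near $\{b=0\}$. The reference \cite{parabolicObstacle} gives interior estimates. Even granting $C^{1,1}_x\cap C^{0,1}_t$ up to $\partial\Omega$, this only bounds $\partial_tu$. It does not make $\partial_tu$ continuous at points of $\partial\Omega\times(0,T)$ where $b=0$, which is what $u\in C^1_{x,t}(\overline Q)$ in \eqref{eq:boundary conditions u} requires.

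No fixed-boundary obstacle theory is needed:
\begin{enumerate}[label=(\alph*)]
\item Continuity of $\nabla u$ up to $\partial\Omega$ follows from $W^{2,1}_p$ estimates, since $\Heat u\in L^\infty$ and $b$ is smooth.
\item For $\partial_tu$, take the difference quotients $w_h\coloneqq h^{-1}\bigl(u(\cdot,\cdot+h)-u\bigr)\ge0$. They satisfy $\Heat w_h=h^{-1}\bigl(\chi_{\{u(\cdot,\cdot+h)>0\}}-\chi_{\{u>0\}}\bigr)\ge0$, vanish at $t=0$ for $h<\tau_b$, and have lateral data $h^{-1}\bigl(b(\cdot,\cdot+h)-b\bigr)$.
\item Compare $w_h$ with the caloric functions having these data and let $h\to0$. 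This gives $0\le\partial_tu\le H$, where $H$ is caloric with lateral datum $\partial_tb$ and zero initial datum, hence continuous up to $\partial\Omega$.
\item Since $b\ge0$ is smooth, $\partial_tb=0$ wherever $b=0$. Hence $\partial_tu\to0=\partial_tb$ at such boundary points.
\end{enumerate}
Together with interior continuity of $\partial_tu$ and boundary Schauder estimates where $b>0$, this yields $\partial_tu\in C(\overline Q)$.
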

\begin{proof}
    Given $b\in\mathscr B$, let
    $e^{s\Delta_{\partial\Omega}}$ denote the heat semigroup on
    $\partial\Omega$ generated by its Laplace--Beltrami operator.
    Choose $\delta_j\downarrow0$, a nonnegative mollifier
    $\rho_{\delta_j}$ supported in $(0,\delta_j)$, and a smooth
    function $h$ with $h=0$ on $(-\infty,0]$ and $h'>0$ on $(0,\infty)$,
    and set
    \[
        b_j(t)\coloneqq
        \int_0^{\delta_j}\rho_{\delta_j}(s)
          e^{\delta_j\Delta_{\partial\Omega}}
          b(t-3\delta_j-s)\,ds
        +\delta_jh(t-2\delta_j).
    \]
    This formula defines smooth, nonnegative, nondecreasing data for all
    $t\in\R$, with $b_j=0$ for $t\le2\delta_j$ and
    $\partial_tb_j>0$ on $\{b_j>0\}$, so $b_j\in\mathscr B_{\mathrm{sm}}$.
    Since $b_j\to b$ locally uniformly on $\partial\Omega\times\R$,
    this proves density. Finally, for any $b\in\mathscr B_{\mathrm{sm}}$, standard
    regularity for \eqref{eq:stefan}, together with the vanishing of $b$
    near $t=0$ and $\partial_tb>0$ on $\{b>0\}$, gives
    \eqref{eq:boundary conditions u} for $\mathcal S(b)$.
\end{proof}

By \Cref{lem:approximation extension}, solutions with boundary data in $\mathscr B_{\mathrm{sm}}$ are dense in $\mathscr S$. Since they satisfy \eqref{eq:boundary conditions u}, all results from \Cref{section:linearized problem,section:construction transverse pertubations} apply.

For $b\in\mathscr B_{\mathrm{sm}}$ fixed, let $u=\mathcal S(b)$ and recall the notation
\[
    \Gamma_+(u)=\{(x,t)\in\partial\Omega\times(0,T):u(x,t)>0\}.
\]
We fix finitely many $g_i\in C_c^\infty(\Gamma_+(u))$ and extend each $g_i$ by zero outside $(0,T)$.
As in \Cref{remark:finite boundary perturbations}, for sufficiently small coefficients the data $b+\sum_i a_i g_i$ remain in $\mathscr B_{\mathrm{sm}}$, and their solutions remain positive in fixed neighborhoods of $\bigcup_i\supp g_i$.
For each such solution $v$, we also extend by zero to negative times the linearized solutions $\varphi_{g_i,v}$ of \eqref{eq:linearized problem} with boundary data $g_i$, given by \Cref{lem:existence linear}.
On every bounded time interval, comparison gives uniform convergence when the coefficients converge, while comparison and boundary and interior estimates give uniform $C^1_{x,t}(\overline Q)$ bounds and a uniform positive lower bound for the time derivatives on these supports, so \Cref{lem:controlled background linearized compactness} gives $L^2(Q)$-continuity of $\varphi_{g_i,v}$ with respect to the coefficients.

The following proposition will allow us to show that the parameters for which
$\Sigma_{n-1}^m$ persists form a set of measure zero.
\begin{proposition}\label{prop:gmt}
    Let $(X,d)$ be a complete separable metric space, let
    $E\subset X\times(-1,1)$ be Borel, and denote by
    \[
        \pi_1\colon X\times(-1,1)\to X,\qquad
        \pi_2\colon X\times(-1,1)\to(-1,1)
    \]
    the coordinate projections. Assume that $0<\beta\le s$, that
    \[
        \cH_d^\beta(\pi_1(E))<+\infty,
    \]
    and that, for every $(z_0,\eps_0)\in E$, there is a modulus of continuity $\omega_{z_0,\eps_0}$ such that
    \[
        \{(z,\eps):\eps-\eps_0>
        \omega_{z_0,\eps_0}(d(z,z_0))d(z,z_0)^s\}\cap E=\emptyset.
    \]
    Then $\cH^{\beta/s}(\pi_2(E))=0$.
\end{proposition}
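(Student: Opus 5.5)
The plan is a covering argument in the spirit of the Frostman/Federer projection lemmas used in \cite{FRSObst,genericStefan4D}. First reduce to a uniform statement: for $j\in\mathbb N$ let
\[
    E_j\coloneqq\bigl\{(z_0,\eps_0)\in E:\ \omega_{z_0,\eps_0}(r)\le j^{-1}\ \text{for } r\le r_j\bigr\},
\]
with $r_j\downarrow0$ chosen suitably. More precisely, for fixed $\eta>0$ let
\[
    E^{\eta,\rho}\coloneqq\bigl\{(z_0,\eps_0)\in E:\ \eps-\eps_0\le\eta\,d(z,z_0)^s\text{ for all }(z,\eps)\in E\text{ with }d(z,z_0)<\rho\bigr\}.
\]
By the hypothesis, $E=\bigcup_{\rho\in\{1/k\}}E^{\eta,\rho}$ for each $\eta$. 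Since $\cH^{\beta/s}$ is countably subadditive, it suffices to show $\cH^{\beta/s}(\pi_2(E^{\eta,\rho}))\le C\eta^{\beta/s}$ for every $\eta$ and $\rho$. Then, for each $\eta=1/\ell$ one has $\cH^{\beta/s}(\pi_2(E))\le\sum_k C\ell^{-\beta/s}$, which is not small. To avoid this, I would instead fix $\rho$ and let $\eta\to0$ inside a nested family: set $A_{k}\coloneqq\bigcap_\ell E^{1/\ell,\rho_\ell(k)}$, or more simply use Egorov-type exhaustion. Alternatively, and more cleanly, partition $\pi_1(E)$ into countably many pieces of diameter $<\rho$ after restricting to $E^{\eta,\rho}$ and use the bound. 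I expect the bookkeeping to work out as: for a.e. purposes $E=\bigcup_k E^{\eta,1/k}$ for each fixed $\eta$, and the bound on each piece scales with $\eta$ times $\cH^\beta$ of its projection.

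The key estimate is the following. Fix a set $S\subset E^{\eta,\rho}$ with $\operatorname{diam}\pi_1(S)<\rho$. For $(z_0,\eps_0),(z,\eps)\in S$, apply the defining inequality in both directions, so that $|\eps-\eps_0|\le\eta\,d(z,z_0)^s$. Thus the map $\pi_1(S)\ni z\mapsto\eps$ is well defined, and it is $(\eta,s)$-Hölder: $z=z_0$ forces $\eps=\eps_0$. A Hölder map of exponent $s$ sends $\cH^\beta$ to $\cH^{\beta/s}$ with
\[
    \cH^{\beta/s}(\pi_2(S))\le C\eta^{\beta/s}\cH^\beta_d(\pi_1(S)),
\]
by covering $\pi_1(S)$ with sets $U_i$ of small diameter and $\sum(\operatorname{diam}U_i)^\beta$ close to $\cH^\beta$; their images have diameter at most $\eta(\operatorname{diam}U_i)^s$. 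Separability lets me cover $\pi_1(E^{\eta,\rho})$ by countably many balls of radius $\rho/2$, giving pieces $S_i$. Since $\pi_1(S_i)\subset\pi_1(E)$ and the $\cH^\beta$ measure is finite, after refining the cover to be disjoint Borel pieces I get $\cH^{\beta/s}(\pi_2(E^{\eta,\rho}))\le C\eta^{\beta/s}\cH^\beta_d(\pi_1(E))$. Here I use that $\cH^\beta$ restricted to Borel (or analytic) sets is additive on disjoint pieces, and measurability of projections of Borel sets, which are analytic and hence $\cH$-measurable.

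To finish, note that $E^{\eta,\rho}$ increases as $\rho\downarrow0$ with union $E$, and the bound above is independent of $\rho$. Therefore $\cH^{\beta/s}(\pi_2(E))=\lim_\rho\cH^{\beta/s}(\pi_2(E^{\eta,\rho}))\le C\eta^{\beta/s}\cH^\beta(\pi_1(E))$. This uses continuity from below of the outer measure $\cH^{\beta/s}$ along increasing sequences, which holds for Borel regular outer measures, together with Hölder images of analytic sets. Letting $\eta\to0$ gives the claim. The main obstacle is this measure-theoretic step, namely making the disjoint decomposition and the increasing union rigorous for the projected sets, and ensuring the sets $E^{\eta,\rho}$ are measurable enough. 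I would handle the latter by working with outer measures only: use subadditivity for the decomposition and regularity of $\cH^{\beta/s}$ for the increasing limit, so that Borel measurability of $E^{\eta,\rho}$ is never needed.
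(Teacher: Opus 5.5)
Your argument is correct. Its mechanism is the one behind the proof the paper invokes (\cite[Proposition 7.7(a)]{FRSObst}):
\begin{itemize}
\item uniformize the modulus at level $\eta$ below a radius $\rho$;
\item apply the one-sided hypothesis at both points of a pair, which gives $|\eps-\eps_0|\le\eta\,d(z,z_0)^s$ on pieces of $\pi_1$-diameter less than $\rho$ (in particular $\pi_1$ is injective on $E$);
\item push $\cH^\beta_d$-covers of the first projection forward to $\cH^{\beta/s}$-covers of the second.
\end{itemize}

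Where you differ is in how the countable decomposition is assembled. The paper uses, for each $\delta>0$, a Borel partition of $E$ into pieces on which the uniform estimate holds below a fixed radius, and sums the bounds over the pieces. The Borel structure is what makes the projections of the pieces measurable (they are disjoint because $\pi_1$ is injective on $E$), so that their $\cH^\beta_d$ measures add up to at most $\cH^\beta_d(\pi_1(E))$.

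You instead exhaust $E$ by the increasing family $E^{\eta,1/k}$. You then use that the Borel regular outer measure $\cH^{\beta/s}$ is continuous along increasing sequences of arbitrary sets. This sidesteps any measurability question for $E^{\eta,\rho}$, which a priori are only relatively co-analytic in $E$. In fact your argument never uses that $E$ is Borel or that $X$ is complete, so it proves a slightly more general statement.

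Two clean-ups are needed. First, drop the appeal to analyticity of the projected pieces: $\pi_1(S_i)$ need not be analytic. All you need is that the disjoint $B_i\subset X$ are Borel, hence Carath\'eodory measurable for $\cH^\beta_d$. This gives $\sum_i\cH^\beta_d(A\cap B_i)\le\cH^\beta_d(A)$ for the arbitrary set $A=\pi_1(E^{\eta,\rho})$. Second, discard the tentative bookkeeping of your first paragraph (summing over $k$ for fixed $\eta$) in favour of the limit argument in your last paragraph.
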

\begin{proof}
    The proof of \cite[Proposition 7.7(a)]{FRSObst} applies with metric
    balls in $(X,d)$, using, for each $\delta>0$, a Borel partition of
    $E$ on which the one-sided estimate holds with constant $\delta$
    below a fixed radius.
\end{proof}
We shall apply this proposition with $X=\R^{n+1}$ equipped with the
parabolic metric $\dist_\pb$ introduced at the beginning of
\Cref{section:preliminaries}, writing $\cH_\pb^\beta$ and $\dim_\pb$ for the
corresponding Hausdorff measure and dimension. 

We next use time translations to obtain strictly monotone families of
solutions while preserving transversality in the sense of
\Cref{definition:transversality} (recall also
\eqref{eq:quantitative transversality gaps}).

We shall call a continuous
family $(u_\eps)_{\eps\in I}\subset\mathscr S$, with $I\subset\R$ an
interval, \emph{strictly monotone} if, writing
$b_\eps=u_\eps|_{\partial\Omega\times(0,T)}$, for $\eps_1<\eps_2$ one has
\[
    b_{\eps_1}\le b_{\eps_2},\qquad\text{and}\qquad
    b_{\eps_1}<b_{\eps_2}\quad\text{on }\{b_{\eps_1}>0\}.
\]
We shall use the time translations
\[
    T_\sigma f(x,t)\coloneqq f(x,t+\sigma),\qquad \sigma\in\R.
\]

\begin{proposition}\label{prop:delayed ordered family}
    Let $b\in\mathscr B_{\mathrm{sm}}$ and $u=\mathcal S(b)$. Given
    $g\in C_c^\infty(\Gamma_+(u))$, there are $C,s_0>0$ such that,
    setting $v_s\coloneqq\mathcal S(b+sg)$, the family
    \[
        u_s\coloneqq T_{Cs}v_s,\qquad -s_0\le s\le0,
    \]
    is strictly monotone and vanishes at negative times. Moreover,
    $s\mapsto u_s$ is differentiable in $L^2(Q)$, with
    \[
        \partial_su_s=
        T_{Cs}\bigl(\varphi_{g,v_s}+C\partial_tv_s\bigr).
    \]
    Finally, if $z\in Q$ and $y\coloneqq z+(0,Cs)\in Q\cap\Sigma_{n-1}^m(v_s)$
    for some $1\le m\le n-1$, then $\partial_su_s$ is transversal to
    $u_s$ at $z$ if and only if $\varphi_{g,v_s}$ is transversal to
    $v_s$ at $y$.
\end{proposition}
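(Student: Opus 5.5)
The proof splits into three parts: monotonicity of the family, its differentiability, and the comparison of jets.

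\textbf{Monotonicity.} The lateral datum of $u_s$ is $b_s(x,t)=b(x,t+Cs)+sg(x,t+Cs)$. For $-s_0\le s_1<s_2\le0$ we have
\[
b_{s_2}-b_{s_1}=\bigl[b(\cdot+Cs_2)-b(\cdot+Cs_1)\bigr]+\bigl[s_2g(\cdot+Cs_2)-s_1g(\cdot+Cs_1)\bigr].
\]
Since $b\in\mathscr B_{\mathrm{sm}}$, the first bracket is at least $\int_{Cs_1}^{Cs_2}\partial_tb(\cdot+\tau)\,d\tau$. By \eqref{eq:boundary conditions u}, $\partial_tb\ge c_0>0$ on a neighborhood of $\supp g$, which is compact in $\Gamma_+(u)$. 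The second bracket is bounded in absolute value by $(s_2-s_1)(\|g\|_\infty+C s_0\|\partial_tg\|_\infty)$ and is supported in that same neighborhood for small $s_0$. Choosing $C>2\|g\|_\infty/c_0+1$ and then $s_0$ small makes the difference nonnegative, and strictly positive wherever $b_{s_1}>0$: off $\supp g$ (shifted) one uses $\partial_tb>0$ on $\{b>0\}$ together with $b$ nondecreasing. Because $s\le0$ and $b=0$ for $t\le\tau_b$, taking $Cs_0<\tau_b$ gives that $b_s$ vanishes for $t\le0$. Hence $u_s=\mathcal S(b_s)$, by uniqueness and the time-translation invariance of \eqref{eq:stefan}, and $u_s$ vanishes at negative times.

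\textbf{Differentiability.} Write $u_{s+h}-u_s=T_{C(s+h)}(v_{s+h}-v_s)+(T_{C(s+h)}-T_{Cs})v_s$. For the first term, apply \Cref{lem:existence linear} with background $v_s$: the traces satisfy $\operatorname{Tr}(v_{s+h}-v_s)=hg$ exactly, and $v_s$ satisfies \eqref{eq:boundary conditions u} by \Cref{lem:approximation extension}. This gives $h^{-1}(v_{s+h}-v_s)\to\varphi_{g,v_s}$ in $L^2$, and translating by $C(s+h)$ is continuous in $L^2$ after extension by zero. For the second term, $v_s\in C^1_{x,t}$ gives $h^{-1}(T_{C(s+h)}-T_{Cs})v_s\to C\,T_{Cs}\partial_tv_s$. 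The one point needing care is that the ambient domain $Q$ moves under translation. This is handled by using the common extension to a larger time interval from \Cref{remark:finite boundary perturbations}.

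\textbf{Transversality.} Since $u_s=T_{Cs}v_s$, a point $z$ lies in $\Sigma_{n-1}^m(u_s)$ exactly when $y=z+(0,Cs)$ lies in $\Sigma_{n-1}^m(v_s)$, and the expansions, the splittings $u_s^\pm$, the Hessian $\mathsf H$ and the spaces $E,F$ all correspond under the translation. The jets $J_{\le2}$ of \eqref{eq:jets of perturbation} commute with $T_{Cs}$ by uniqueness in \Cref{prop:C3beta expansion linear}. Hence
\[
\mu_{u_s,z}(\partial_su_s)=\mu_{v_s,y}(\varphi_{g,v_s}+C\partial_tv_s)=\mu_{v_s,y}(\varphi_{g,v_s}),
\]
where the last equality is the invariance $\mu(\varphi+\theta\partial_tu)=\mu(\varphi)$ noted after \eqref{eq:quantitative transversality gaps}. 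This gives the equivalence of transversality.

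The main obstacle is the strict-monotonicity bookkeeping near the boundary of $\{b>0\}$ and near $\supp g$. It requires a uniform lower bound on $\partial_t b$ on a neighborhood of $\supp g$, which \eqref{eq:boundary conditions u} provides.
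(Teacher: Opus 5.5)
Your proposal is correct and follows essentially the same route as the paper's proof: choose $C$ so that $C\partial_t b$ dominates $|g|$ near $\supp g$ to get monotonicity of the lateral data in $s$, obtain the derivative from \Cref{lem:existence linear} combined with the time translation, and conclude via the invariance $\mu(\varphi+\theta\partial_tu)=\mu(\varphi)$.

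The one point you leave implicit is that $s_0$ must also be small enough that $b+sg\in\mathscr B_{\mathrm{sm}}$. This is what makes $v_s$ well defined, gives \eqref{eq:boundary conditions u} for $v_s$, and ensures $\supp g\subset\Gamma_+(v_s)$ as \Cref{lem:existence linear} requires. The conditions $Cs_0<\tau_b$ and the extension past $T$ are harmless but unnecessary, since for $s\le0$ the shift $T_{Cs}$ only looks backward in time.
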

\begin{proof}
    Choose $C$ sufficiently large so that $C\partial_tb>2|g|$ on $\supp g$, and then
    choose $s_0>0$ small enough that $b+sg\in\mathscr B_{\mathrm{sm}}$
    for $-s_0\le s\le0$ and
    \[
        \partial_s\bigl[T_{Cs}(b+sg)\bigr]
        =T_{Cs}\bigl[g+C(\partial_tb+s\partial_tg)\bigr]>0
        \quad\text{where }T_{Cs}(b+sg)>0.
    \]
    Since the derivative vanishes where the trace is zero, integrating
    in $s$ and applying comparison proves strict monotonicity; also, since
    $s\le0$, the time translation preserves the zero initial
    and negative-time values.
    Next, the derivative formula follows from \Cref{lem:existence linear}
    and the chain rule in $L^2$, while
    \eqref{eq:quantitative transversality gaps} shows that
    $\varphi_{g,v_s}+C\partial_tv_s$ and $\varphi_{g,v_s}$ are
    transversal simultaneously at $y$.
    Translating both the background solution and the perturbation by
    $T_{Cs}$ then gives the last assertion at $z=y-(0,Cs)$.
\end{proof}

We next show that the maximal strata of solutions in a strictly
monotone family are disjoint for distinct parameter values, so that
the parameter is a well-defined function on their union.
\begin{lemma}\label{lem:property monotonicity}
    Let $u_\eps$ be a strictly monotone family of solutions on $Q$.
    If $(x_0,t_0)\in Q\cap\Sigma_{n-1}(u_{\eps_1})\cap
    \Sigma_{n-1}(u_{\eps_2})$, then $\eps_1=\eps_2$.
\end{lemma}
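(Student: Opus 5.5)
Assume by contradiction that $\eps_1<\eps_2$ and that $z_0=(x_0,t_0)$ lies in both maximal strata. The plan is a strong maximum principle argument: we will show that $u_{\eps_2}-u_{\eps_1}$ is a nonnegative supersolution that is strictly positive near $z_0$ inside the positivity set. This is incompatible with both solutions having the same rank-one blow-up $\tfrac12(x\cdot\nu)^2$ at $z_0$.

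\textbf{Step 1: ordering.} Strict monotonicity gives $b_{\eps_1}\le b_{\eps_2}$ on the lateral boundary. Both solutions vanish at $t=0$, so the comparison principle yields $u_{\eps_1}\le u_{\eps_2}$ in $Q$. Set $w\coloneqq u_{\eps_2}-u_{\eps_1}\ge0$. In $\{u_{\eps_1}>0\}\subset\{u_{\eps_2}>0\}$ we have $\Heat w=0$.

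\textbf{Step 2: $w>0$ in the component of the positive phase near $z_0$.} Since $b_{\eps_1}<b_{\eps_2}$ on $\{b_{\eps_1}>0\}$, $w$ is positive at lateral boundary points where $u_{\eps_1}>0$. Take $x$ near $x_0$ with $u_{\eps_1}(x,t_0)>0$, and let $\mathcal C$ be its connected component of $\{u_{\eps_1}(\cdot,t_0)>0\}$. The first part of \Cref{lem:causal accessibility} says that $\overline{\mathcal C}\cap\partial\Omega$ contains a relatively open set where $u_{\eps_1}(\cdot,t_0)>0$. By continuity and $\partial_tu_{\eps_1}\ge0$, points slightly inside $\Omega$ near that set, at slightly earlier times, lie in $\{u_{\eps_1}>0\}$, and there $w>0$ by boundary continuity. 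The second part of \Cref{lem:causal accessibility}, applied to the nonnegative caloric function $w$, then gives $w(\cdot,t_0)>0$ on $\mathcal C$. The same holds at times slightly before $t_0$.

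\textbf{Step 3: contradiction at $z_0$.} Work in the splitting of \Cref{theorem:splitting at singular points} for $u_{\eps_1}$ at $z_0$. The function $w$ is caloric and positive in the upper branch $\{u^+_{\eps_1}>0\}$ inside the cusp $\Omega^\beta_1(z_0)\cap\sC_\rho(z_0)$, and it vanishes at $z_0$ together with its spatial gradient, since both solutions have the same blow-up. A Hopf-type lower bound for positive caloric functions in the smooth domain $\{x_n>f^+\}$ from \Cref{lem:smoothness close maximal stratum} gives $w\ge c\,\partial_n u^+_{\eps_1}$ near $z_0$ for some $c>0$. Iterated backward in time, it gives $w(x,t_0-r^2)\ge c r\,(x_n-f^+)_+$ on the cusp.

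The contradiction then comes from comparing free boundaries. Because $u_{\eps_2}\ge u_{\eps_1}+c\,\partial_nu^+_{\eps_1}$ on the branch, the upper free boundary of $u_{\eps_2}$ at time $t_0$ must lie strictly below that of $u_{\eps_1}$ by a linear shift in $c$; the symmetric argument on the lower branch gives the same for the lower free boundary. This can also be phrased through the linear coefficients $\alpha^\pm$ of \Cref{prop:C3beta expansion linear} for the caloric function $w$, which are bounded below by $c$. But two singular points of the maximal stratum at the same $z_0$ force both free boundaries to pass through $x_0$ tangentially to $\{x\cdot\nu=0\}$, with matching constant terms in \eqref{eq:expansion cubic points}. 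Equivalently, $u_{\eps_2}(x_0+rx,t_0+r^2t)-u_{\eps_1}(x_0+rx,t_0+r^2t)=o(r^2)$, whereas the Hopf bound gives $w\ge c r^2$ on a fixed fraction of $\sC_r(z_0)$ in the positive phase. This contradicts the equality of blow-ups, so $\eps_1=\eps_2$.

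\textbf{Main obstacle.} The delicate step is the Hopf-type bound $w\gtrsim r^2$ at scale $r$. The singular point is not a smooth boundary point of $\{u_{\eps_1}>0\}$, only the vertex of a cusp. A first attempt is to use the uniform $C^{2M,M}$ graph of \Cref{lem:smoothness close maximal stratum} within $\Omega^\beta_1(z_0)$. Alternatively, one can compare $w$ with a multiple of $\partial_tu^+_{\eps_1}$, which is positive and caloric with $\fint_{\sC_r}\partial_tu\ge cr$ by \eqref{eq:definition nondegeneracy u_t}, via a boundary Harnack in the cusp.
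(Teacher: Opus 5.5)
Your Steps 1--2 match the paper's argument. So does the shape of Step 3: a lower bound for the nonnegative caloric function $w$ at points $(x_0+r\nu,t_0-r^2)$ that is incompatible with the quadratic vanishing of $w$. The gap is that this lower bound, which is the whole content of the lemma, is only asserted, and the mechanism you sketch does not deliver it as stated. The classical parabolic Hopf lemma needs $w$ caloric in $D\cap\sC_\rho(z_0)$ for a full parabolic cylinder and a smooth domain $D$ through $z_0$. You only use the graph of \Cref{lem:smoothness close maximal stratum} inside the cusp $\Omega^\beta_1(z_0)$, which collapses to the single point $z_0$ as $t\uparrow t_0$. So $z_0$ is the vertex of the region where you control the geometry, not a smooth boundary point of it. Applying Hopf scale by scale and ``iterating backward in time'' loses a multiplicative constant at each dyadic scale. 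That gives only $w\ge c^k$ at scale $2^{-k}$, i.e.\ $w\gtrsim r^{\log_2(1/c)}$, and nothing in your argument forces this exponent below $2$; without that there is no contradiction. Your two formulations, $w\ge c\,\partial_nu^+_{\eps_1}$ and $w\ge cr(x_n-f^+)_+$, also differ by a factor $r$. Drop the paragraph comparing free boundaries and invoking \Cref{prop:C3beta expansion linear}. On $\{u_{\eps_1}=0\}$ the function $w$ equals $u_{\eps_2}$, so it is not a solution of the linearized problem. Also, at time $t_0$ the cusp contains no free boundary point other than $z_0$.

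You can close the step with a barrier adapted to the cusp, which is already in the paper. After the splitting of \Cref{theorem:splitting at singular points} for $u_{\eps_1}$ at $z_0$, the upper branch satisfies the hypotheses of \Cref{lem:extend smooth solution}. Take the subsolution $v=a(t)(e^{Aq}-1)(1-s/R(t)^2)$ from its proof, with $R(t)=c_0(-t)^{1/(2+\beta)}$ and $a'=-\kappa a/R^2$. Its coefficient $a$ stays bounded below precisely because $\int R^{-2}\,dt<\infty$, i.e.\ the cusp is wider than parabolic. The comparison there only uses that the function is nonnegative, caloric in the positive phase, and positive on a compact initial slice, and your Step 2 provides exactly this for $w$. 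It yields $w(x_0+r\nu,t_0-r^2)\ge cr$. On the other hand, $0\le w\le u_{\eps_2}\le Cr^2$ in $\sC_r(z_0)$ by \eqref{eq:optimal regularity} and $\partial_tu_{\eps_2}\ge0$, which is a contradiction. With this linear bound you no longer need the two blow-ups to coincide. You assert that they do without proof; it does follow from $w\ge0$, which forces $\nu_2=\pm\nu_1$, but it becomes unnecessary. The paper instead quotes the barrier of \cite[Lemma 5.8(b)]{FRS}. That gives only $w(r\bm e_n,-r^2)\ge cr^\alpha$ with some $\alpha<2$, which already suffices against $w=O(r^2)$.
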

\begin{proof}
    Suppose $\eps_1<\eps_2$ and set
    $w\coloneqq u_{\eps_2}-u_{\eps_1}\ge0$, so that
    $\Heat w=0$ in $\{u_{\eps_1}>0\}$.
    Since every positive-phase component meets the positive lateral
    boundary by \Cref{lem:causal accessibility}, strict boundary
    ordering and the parabolic strong maximum principle give
    $w>0$ in $\{u_{\eps_1}>0\}$.
    
    After a translation and a rotation, assume that $(x_0,t_0)=(0,0)$
    and the blow-up of $u_{\eps_1}$ is $x_n^2/2$.
    The caloric barrier constructed in the proof of
    \cite[Lemma 5.8(b)]{FRS}, applied to $w$, then gives
    \[
        w(r\bm e_n,-r^2)\ge cr^\alpha
        \qquad\text{for some }\alpha<2,
    \]
    contradicting $w=O(r^2)$ in $\sC_{2r}$ (which follows from the
    quadratic expansions of both solutions at the common singular point).
\end{proof}

\begin{proof}[Proof of \Cref{theorem:generic regularity}]
    We shall prove that, for every compact $K\ssubset Q$, the
    $0$-singular solutions on $K$ form an open dense subset of
    $\mathscr S$. Indeed, applying Baire's theorem along a compact
    exhaustion of $Q$ then gives a residual set of solutions satisfying
    \[
        \Sigma_{n-1}(u)=\Sigma_{n-1}^0(u),
    \]
    and this maximal stratum is discrete by
    \Cref{lem:stratification maximal stratum}.

    By \Cref{lem:stratification closed conditions}, $m$-singularity
    on $K$ extends to a neighborhood of $K$ and is an open condition
    in $\mathscr S$, so only density remains to be proved.
    Since every solution is $(n-1)$-singular, we can establish density
    by successively approximating $m$-singular solutions on $K$ by
    $(m-1)$-singular ones, for $m=n-1,\ldots,1$; the last step,
    $m=1$, removes $\Sigma_{n-1}^1$ and leaves only
    $\Sigma_{n-1}^0$ in the maximal stratum on $K$.

  So, we fix $K\ssubset Q$ and $1\le m\le n-1$, and let $u$ be
    $m$-singular on $K$. By \Cref{lem:approximation extension} and
    openness, we may assume that $u=\mathcal S(b)$ with
    $b\in\mathscr B_{\mathrm{sm}}$.
    Then \Cref{theorem:construction transversal perturbation} gives compact sets $K_1,\ldots,K_N\subset K$ covering
    $K$ and data $g_i\in C_c^\infty(\Gamma_+(u))$ such that, for every
    solution $v$ obtained by a sufficiently small boundary perturbation of the form
    \eqref{eq:finite boundary perturbations}, $v$ remains $m$-singular
    near $K$ and $\varphi_{g_i,v}$ is transversal on
    $K_i\cap\Sigma_{n-1}^m(v)$ for every $i$.

    We perturb successively in these directions. At the $i$-th step,
    write $\widehat b$ for the current datum and set
    \[
        v_s\coloneqq\mathcal S(\widehat b+sg_i),\qquad
        u_s\coloneqq T_{As}v_s,\qquad -s_0\le s\le0,
    \]
    as in \Cref{prop:delayed ordered family}, with $s_0<1$ small
    enough that $K_i-(0,As)\Subset Q$ for every $s\in[-s_0,0]$.
    Consider
    \[
        E\coloneqq
        \{(z,s):s\in[-s_0,0],\
          z+(0,As)\in K_i,\
          z\in\Sigma_{n-1}^m(u_s)\}.
    \]
    By \Cref{lem:stratification closed conditions}, the continuity
    of the cubic coefficients in
    \Cref{corollary:uniform smooth expansion maximal stratum}, and the fact that $u_s$ are $m$-singular, $E$ is
    compact, while \Cref{lem:property monotonicity} makes $\pi_1$
    injective on $E$. Thus $\pi_1$ is a homeomorphism onto its image,
    so $s$ is a continuous function of $z$ on $\pi_1(E)$.

    Fix $(z_*,s_*)\in E$. For $(z,s)\in E$ with $s\ne s_*$ and
    $z\to z_*$, we have $s\to s_*$, and the differentiability in
    \Cref{prop:delayed ordered family} gives
    \[
        \frac{u_s-u_{s_*}}{s-s_*}\longrightarrow\partial_su_{s_*}
        \qquad\text{strongly in }L^2(Q).
    \]
    Also, this limit is transversal at $z_*$ by the same proposition and satisfies
    \eqref{eq:assumption perturbation} by
    \Cref{prop:C3beta expansion linear}.
    Since changing the sign of the perturbation preserves transversality,
    \Cref{prop:cleaning with transversality} applied with perturbation
    size $|s-s_*|$ and exponent $n+4$ gives $C_{z_*,s_*}>0$ such that
    \[
        |s-s_*|\le C_{z_*,s_*}\dist_\pb(z,z_*)^{n+4}
        \qquad\text{for }(z,s)\in E\text{ with }z\text{ close to }z_*.
    \]
    Since $\pi_1(E)$ is bounded, it has finite $\cH_\pb^{n+2}$ measure,
    so \Cref{prop:gmt}, with both exponents equal to $n+2$ and a
    modulus of continuity comparable to $r^2$, yields
    \[
        \cH^1(\pi_2(E))=0.
    \]
    We can therefore choose $s<0$ arbitrarily close to zero outside
    $\pi_2(E)$, which, by the definition of $E$ and the time
    translation, gives $\Sigma_{n-1}^m(v_s)\cap K_i=\emptyset$.
    Since $(m-1)$-singularity on compact sets is open, we can also
    preserve this property on $K_1,\ldots,K_{i-1}$; hence, after
    finitely many such choices, we obtain an $(m-1)$-singular solution
    on $K$, proving the required approximation.
\end{proof}

We now immediately get the validity of \Cref{corollary:rectifiability}.

\begin{proof}[Proof of \Cref{corollary:rectifiability}]
This follows from \Cref{theorem:generic regularity} and
\cite[Lemma 6.3]{genericStefan4D}.
\end{proof}

\bibliographystyle{aomalpha}
\bibliography{biblio.bib}

\providecommand{\bysame}{\leavevmode\hbox to3em{\hrulefill}\thinspace}
\providecommand{\noopsort}[1]{}
\providecommand{\mr}[1]{\href{http://www.ams.org/mathscinet-getitem?mr=#1}{MR~#1}}
\providecommand{\zbl}[1]{\href{http://www.zentralblatt-math.org/zmath/en/search/?q=an:#1}{Zbl~#1}}
\providecommand{\jfm}[1]{\href{http://www.emis.de/cgi-bin/JFM-item?#1}{JFM~#1}}
\providecommand{\arxiv}[1]{\href{http://www.arxiv.org/abs/#1}{arXiv~#1}}
\providecommand{\doi}[1]{\url{https://doi.org/#1}}
\providecommand{\MR}{\relax\ifhmode\unskip\space\fi MR }
\providecommand{\MRhref}[2]{%
  \href{http://www.ams.org/mathscinet-getitem?mr=#1}{#2}
}
\providecommand{\href}[2]{#2}
\begin{thebibliography}{CMSW25}

\bibitem[BK24]{multeplicity1MCF}
\bgroup\scshape{}R.~H. Bamler\egroup{} and
  \bgroup\scshape{}B.~Kleiner\egroup{}, On the multiplicity one conjecture for
  mean curvature flows of surfaces, 2024. Available at
  \url{https://arxiv.org/abs/2312.02106}.

\bibitem[BM88]{CiteLojasiewicz}
\bgroup\scshape{}E.~Bierstone\egroup{} and \bgroup\scshape{}P.~D.
  Milman\egroup{}, Semianalytic and subanalytic sets,  \emph{Inst. Hautes
  \'Etudes Sci. Publ. Math.} no.~67 (1988), 5--42.

\bibitem[Bla06]{parabolicMonneau2}
\bgroup\scshape{}A.~Blanchet\egroup{}, On the singular set of the parabolic
  obstacle problem,  \emph{J. Differential Equations} \textbf{231} no.~2
  (2006), 656--672.

\bibitem[CPS04]{parabolicObstacle}
\bgroup\scshape{}L.~Caffarelli\egroup{},
  \bgroup\scshape{}A.~Petrosyan\egroup{}, and
  \bgroup\scshape{}H.~Shahgholian\egroup{}, Regularity of a free boundary in
  parabolic potential theory,  \emph{J. Amer. Math. Soc.} \textbf{17} no.~4
  (2004), 827--869.

\bibitem[Caf77]{Caffarelli77}
\bgroup\scshape{}L.~A. Caffarelli\egroup{}, The regularity of free boundaries
  in higher dimensions,  \emph{Acta Math.} \textbf{139} no.~3-4 (1977),
  155--184.

\bibitem[CC24]{carducci2024genericregularityfreeboundaries}
\bgroup\scshape{}M.~Carducci\egroup{} and \bgroup\scshape{}R.~Colombo\egroup{},
  Generic regularity of free boundaries in the obstacle problem for the
  fractional {Laplacian}, 2024. Available at
  \url{https://arxiv.org/abs/2412.16066}.

\bibitem[CCTL26]{carducci2026singularsetstefanproblem}
\bgroup\scshape{}M.~Carducci\egroup{}, \bgroup\scshape{}R.~Colombo\egroup{},
  and \bgroup\scshape{}C.~Torres-Latorre\egroup{}, The singular set in the thin
  {Stefan} problem, 2026. Available at \url{https://arxiv.org/abs/2610.10751}.

\bibitem[CCMS24]{huiskenConjectureMCF}
\bgroup\scshape{}O.~Chodosh\egroup{}, \bgroup\scshape{}K.~Choi\egroup{},
  \bgroup\scshape{}C.~Mantoulidis\egroup{}, and
  \bgroup\scshape{}F.~Schulze\egroup{}, Mean curvature flow with generic
  initial data,  \emph{Invent. Math.} \textbf{237} no.~1 (2024), 121--220.

\bibitem[CMS24]{CMSImproved}
\bgroup\scshape{}O.~Chodosh\egroup{}, \bgroup\scshape{}C.~Mantoulidis\egroup{},
  and \bgroup\scshape{}F.~Schulze\egroup{}, Improved generic regularity of
  codimension-1 minimizing integral currents,  \emph{Ars Inveniendi Analytica}
  (2024), Paper No. 3, 16.

\bibitem[CMS26]{CMS}
\bgroup\scshape{}O.~Chodosh\egroup{}, \bgroup\scshape{}C.~Mantoulidis\egroup{},
  and \bgroup\scshape{}F.~Schulze\egroup{}, Generic regularity for minimizing
  hypersurfaces in dimensions 9 and 10,  \emph{Publ. Math. Inst. Hautes
  \'Etudes Sci.} \textbf{143} (2026), 143--188.

\bibitem[CMSW25]{chodosh2025genericregularityminimizinghypersurfaces}
\bgroup\scshape{}O.~Chodosh\egroup{}, \bgroup\scshape{}C.~Mantoulidis\egroup{},
  \bgroup\scshape{}F.~Schulze\egroup{}, and \bgroup\scshape{}Z.~Wang\egroup{},
  Generic regularity for minimizing hypersurfaces in dimension 11, 2025.
  Available at \url{https://arxiv.org/abs/2506.12852}.

\bibitem[Col25]{genericStefan4D}
\bgroup\scshape{}G.~Colombo\egroup{}, Generic regularity in time for solutions
  of the {S}tefan problem in 4+1 dimensions, 2025. Available at
  \url{https://arxiv.org/abs/2504.11876}.

\bibitem[Dan00]{DanersHeatKernel}
\bgroup\scshape{}D.~Daners\egroup{}, Heat kernel estimates for operators with
  boundary conditions,  \emph{Math. Nachr.} \textbf{217} (2000), 13--41.

\bibitem[DGPT17]{DanielliPTOP}
\bgroup\scshape{}D.~Danielli\egroup{}, \bgroup\scshape{}N.~Garofalo\egroup{},
  \bgroup\scshape{}A.~Petrosyan\egroup{}, and \bgroup\scshape{}T.~To\egroup{},
  Optimal regularity and the free boundary in the parabolic {S}ignorini
  problem,  \emph{Mem. Amer. Math. Soc.} \textbf{249} no.~1181 (2017), v+103.

\bibitem[DPSV21]{branchingTwoPhaseBernoulli}
\bgroup\scshape{}G.~De~Philippis\egroup{},
  \bgroup\scshape{}L.~Spolaor\egroup{}, and
  \bgroup\scshape{}B.~Velichkov\egroup{}, Regularity of the free boundary for
  the two-phase {B}ernoulli problem,  \emph{Invent. Math.} \textbf{225} no.~2
  (2021), 347--394.

\bibitem[DZ15]{DongZhangHigherOrderSchauder}
\bgroup\scshape{}H.~Dong\egroup{} and \bgroup\scshape{}H.~Zhang\egroup{},
  {Schauder} estimates for higher-order parabolic systems with time irregular
  coefficients,  \emph{Calc. Var. Partial Differential Equations} \textbf{54}
  no.~1 (2015), 47--74.

\bibitem[Duv73]{Duvaut}
\bgroup\scshape{}G.~Duvaut\egroup{}, R\'esolution d'un probl\`eme de {S}tefan
  (fusion d'un bloc de glace \`a{} z\'ero degr\'e),  \emph{C. R. Acad. Sci.
  Paris S\'er. A-B} \textbf{276} (1973), A1461--A1463.

\bibitem[Eva10]{Evans}
\bgroup\scshape{}L.~C. Evans\egroup{}, \emph{Partial differential equations},
  second ed., \emph{Graduate Studies in Mathematics} \textbf{19}, American
  Mathematical Society, Providence, RI, 2010.

\bibitem[EG15]{EvansGariepy}
\bgroup\scshape{}L.~C. Evans\egroup{} and \bgroup\scshape{}R.~F.
  Gariepy\egroup{}, \emph{Measure theory and fine properties of functions},
  revised ed., \emph{Textbooks in Mathematics}, CRC Press, Boca Raton, FL,
  2015.

\bibitem[FRTL23]{genericSignorini3d}
\bgroup\scshape{}X.~Fernández-Real\egroup{} and
  \bgroup\scshape{}C.~Torres-Latorre\egroup{}, Generic regularity of free
  boundaries for the thin obstacle problem,  \emph{Advances in Mathematics}
  \textbf{433} (2023), 109323.

\bibitem[FRY23]{fernandezreal2023genericpropertiesfreeboundary}
\bgroup\scshape{}X.~Fernández-Real\egroup{} and
  \bgroup\scshape{}H.~Yu\egroup{}, Generic properties in free boundary
  problems, 2023, to appear in Amer. J. Math. Available at
  \url{https://arxiv.org/abs/2308.13209}.

\bibitem[FRY25]{fernandezreal2025linearizedequationgenericregularity}
\bgroup\scshape{}X.~Fernández-Real\egroup{} and
  \bgroup\scshape{}H.~Yu\egroup{}, Linearized equation and generic regularity
  in the {Alt-Caffarelli} problem, 2025. Available at
  \url{https://arxiv.org/abs/2510.18330}.

\bibitem[Fig18]{surveyStefan}
\bgroup\scshape{}A.~Figalli\egroup{}, Regularity of interfaces in phase
  transitions via obstacle problems---{F}ields {M}edal lecture,  in
  \emph{Proceedings of the {I}nternational {C}ongress of
  {M}athematicians---{R}io de {J}aneiro 2018. {V}ol. {I}. {P}lenary lectures},
  World Sci. Publ., Hackensack, NJ, 2018, pp.~225--247.

\bibitem[FROS20]{FRSObst}
\bgroup\scshape{}A.~Figalli\egroup{}, \bgroup\scshape{}X.~Ros-Oton\egroup{},
  and \bgroup\scshape{}J.~Serra\egroup{}, Generic regularity of free boundaries
  for the obstacle problem,  \emph{Publ. Math. Inst. Hautes \'Etudes Sci.}
  \textbf{132} (2020), 181--292.

\bibitem[FROS24]{FRS}
\bgroup\scshape{}A.~Figalli\egroup{}, \bgroup\scshape{}X.~Ros-Oton\egroup{},
  and \bgroup\scshape{}J.~Serra\egroup{}, The singular set in the {S}tefan
  problem,  \emph{J. Amer. Math. Soc.} \textbf{37} no.~2 (2024), 305--389.

\bibitem[FS19]{FS19}
\bgroup\scshape{}A.~Figalli\egroup{} and \bgroup\scshape{}J.~Serra\egroup{}, On
  the fine structure of the free boundary for the classical obstacle problem,
  \emph{Invent. Math.} \textbf{215} no.~1 (2019), 311--366.

\bibitem[FZ25]{CinftyObstacle}
\bgroup\scshape{}F.~Franceschini\egroup{} and
  \bgroup\scshape{}W.~Zato\'n\egroup{}, {$C^\infty$} partial regularity of the
  singular set in the obstacle problem,  \emph{Anal. PDE} \textbf{18} no.~1
  (2025), 199--264.

\bibitem[HS85]{HS}
\bgroup\scshape{}R.~Hardt\egroup{} and \bgroup\scshape{}L.~Simon\egroup{}, Area
  minimizing hypersurfaces with isolated singularities,  \emph{J. Reine Angew.
  Math.} \textbf{362} (1985), 102--129.

\bibitem[Lin90]{uniqueContinuationHeat}
\bgroup\scshape{}F.-H. Lin\egroup{}, A uniqueness theorem for parabolic
  equations,  \emph{Comm. Pure Appl. Math.} \textbf{43} no.~1 (1990), 127--136.

\bibitem[LM15]{parabolicMonneau1}
\bgroup\scshape{}E.~Lindgren\egroup{} and \bgroup\scshape{}R.~Monneau\egroup{},
  Pointwise regularity of the free boundary for the parabolic obstacle problem,
   \emph{Calc. Var. Partial Differential Equations} \textbf{54} no.~1 (2015),
  299--347.

\bibitem[MSNP25]{genericRegularityIsoperimetricRegions}
\bgroup\scshape{}K.~Marshall-Stevens\egroup{},
  \bgroup\scshape{}G.~Niu\egroup{}, and \bgroup\scshape{}D.~Parise\egroup{},
  Generic regularity of isoperimetric regions in dimension eight, 2025.
  Available at \url{https://arxiv.org/abs/2511.03795}.

\bibitem[MRO26]{boundSingularSetLipRHS}
\bgroup\scshape{}A.~Martínez\egroup{} and
  \bgroup\scshape{}X.~Ros-Oton\egroup{}, Dimension of the singular set in the
  parabolic obstacle problem, 2026. Available at
  \url{https://arxiv.org/abs/2603.06352}.

\bibitem[Mon03]{monneauGeneric2d}
\bgroup\scshape{}R.~Monneau\egroup{}, On the number of singularities for the
  obstacle problem in two dimensions,  \emph{J. Geom. Anal.} \textbf{13} no.~2
  (2003), 359--389.

\bibitem[RS19]{runge}
\bgroup\scshape{}A.~Rüland\egroup{} and \bgroup\scshape{}M.~Salo\egroup{},
  Quantitative {Runge} approximation and inverse problems,  \emph{Int. Math.
  Res. Not.} \textbf{2019} no.~20 (2019), 6216--6234.

\bibitem[Sch77]{schaefferSingularities}
\bgroup\scshape{}D.~G. Schaeffer\egroup{}, Some examples of singularities in a
  free boundary,  \emph{Ann. Scuola Norm. Sup. Pisa Cl. Sci. (4)} \textbf{4}
  no.~1 (1977), 133--144.

\bibitem[Ste90]{Stefan}
\bgroup\scshape{}J.~Stefan\egroup{}, {\"U}ber die {T}heorie der {E}isbildung,
  \emph{Monatsh. Math. Phys.} \textbf{1} no.~1 (1890), 1--6.

\bibitem[Ves08]{VessellaParabolicUC}
\bgroup\scshape{}S.~Vessella\egroup{}, Quantitative estimates of unique
  continuation for parabolic equations, determination of unknown time-varying
  boundaries and optimal stability estimates,  \emph{Inverse Problems}
  \textbf{24} no.~2 (2008), 81.

\end{thebibliography}
\end{document}